\theoremstyle{plain}
        \newtheorem{theorem}{Theorem}[section]
        \newtheorem*{theorem*}{Theorem}
        \newtheorem*{conj*}{Conjecture}
        \newtheorem{lemma}[theorem]{Lemma}
        \newtheorem{prop}[theorem]{Proposition}
        \newtheorem{cor}[theorem]{Corollary}
\theoremstyle{definition}
        \newtheorem{definition}[theorem]{Definition}
        \newtheorem{rem}[theorem]{Remark}
         \newtheorem{rems}[theorem]{Remarks}
\theoremstyle{remark}
\numberwithin{equation}{section}
\numberwithin{theorem}{section}
\numberwithin{table}{section}
\numberwithin{figure}{section}
\providecommand{\defn}[1]{\emph{#1}}
\newcommand{\diam}  {\operatorname{diam}}
\newcommand{\inte}  {\operatorname{int}}
\newcommand{\id} {\operatorname{id}}
\newcommand{\card} {\operatorname{card}}
\newcommand{\Ind}{\operatorname{Ind}}
\newcommand{\R}{\mathbb{R}}
\newcommand{\B}{\mathbb{B}}    
\newcommand{\C}{\mathbb{C}}      
\newcommand{\N}{\mathbb{N}}      
\newcommand{\Z}{\mathbb{Z}}      
\newcommand{\D}{\mathbb{D}}      
\providecommand{\abs}[1]{\lvert#1\rvert}
\providecommand{\Abs}[1]{\left|#1\right|}
\providecommand{\norm}[1]{\bigg\|#1\bigg\|}
\providecommand{\Norm}[1]{\left\|#1\right\|}
\renewcommand{\:}{\colon}
\def\({\left(}
\def\){\right)}
\def\[{\left[}
\def\]{\right]}
\newcommand{\crit}{\operatorname{crit}}
\newcommand{\post}{\operatorname{post}}
\newcommand{\CC}{\mathcal{C}}
 \newcommand{\DD}{\mathbf{D}}
\newcommand{\PP}{\mathbf{P}}
\newcommand{\X} {\mathbf{X}}
\newcommand{\E} {\mathbf{E}}
\newcommand{\V} {\mathbf{V}}
\newcommand{\CCC}{C}
\newcommand{\PPP}{\mathcal{P}}
\newcommand{\MMM}{\mathcal{M}}
\begin{document}
\title[Periodic Points and the Measure of Maximal Entropy]{Periodic Points and the Measure of Maximal Entropy of an Expanding Thurston Map}
\author{Zhiqiang~Li}
\address{Department of Mathematics, UCLA, Los Angeles CA 90095-1555}
\email{lizq@math.ucla.edu}

\date{\today}

\subjclass[2010]{Primary: 37D20; Secondary: 37B99, 37F15, 37F20, 57M12}

\keywords{Expanding Thurston map, postcritically-finite map, fixed point, periodic point, preperiodic point, visual metric, measure of maximal entropy, maximal measure, equidistribution.}

\begin{abstract}
In this paper, we show that each expanding Thurston map $f\: S^2\rightarrow S^2$ has $1+\deg f$ fixed points, counted with appropriate weight, where $\deg f$ denotes the topological degree of the map $f$. We then prove the equidistribution of preimages and of (pre)periodic points with respect to the unique measure of maximal entropy $\mu_f$ for $f$. We also show that $(S^2,f,\mu_f)$ is a factor of the left shift on the set of one-sided infinite sequences with its measure of maximal entropy, in the category of measure-preserving dynamical systems. Finally, we prove that $\mu_f$ is almost surely the weak$^*$ limit of atomic probability measures supported on a random backward orbit of an arbitrary point.
\end{abstract}

\maketitle

\tableofcontents

\section{Introduction}

A basic theme in dynamics is the investigation of the measure-theoretic entropy and its maximizing measures known as the measures of maximal entropy. By the pioneering work of R.~Bowen, D.~Ruelle, P.~Walters, Ya.~Sinai, M.~Lyubich, R.~Ma\~n\'e and many others, existence and uniqueness results of the measure of maximal entropy are known for uniformly expansive continuous dynamical systems, distance expanding continuous dynamical systems, uniformly hyperbolic smooth dynamical systems and rational maps on the Riemann sphere. In many cases, the measure of maximal entropy is also the asymptotic distribution of the period points (see \cite{Pa64, Si72,Bo75,Ly83,FLM83,Ru89,PU10}).

In this paper, we discuss a class of dynamical systems that are not among the classical dynamical systems mentioned above, namely, expanding Thurston maps on a topological $2$-sphere $S^2$. Thurston maps are branched covering maps on a sphere $S^2$ that generalize rational maps with finitely many post-critical points on the Riemann sphere. More precisely, a branched covering map $f\: S^2 \rightarrow S^2$ is a \emph{Thurston map} if its topological degree is at least 2 and if each of its finitely many critical points is preperiodic. These maps arose in W.P.~Thurston's study of a characterization of rational maps on the Riemann sphere in a general topological context (see \cite{DH93}). We will give a more detailed introduction to Thurston maps in Section~\ref{sctThurstonMap}.

In order to obtain the existence and uniqueness of the measure of maximal entropy of an Thurston map, one has to impose a condition of expansion for the map. More generally, P.~Ha\"issinsky and K.~Pilgrim introduced a notion of expansion for any finite branched coverings between two suitable topological spaces (see \cite[Section~2.1 and Section~2.2]{HP09}). We will use an equivalent definition in the context of Thurston maps formulated by M.~Bonk and D.~Meyer in \cite{BM10}. We will discuss the precise definition in Section~\ref{sctThurstonMap}. We call Thurston maps with this property \emph{expanding Thurston maps}. For a list of equivalent definitions of expanding Thurston maps, we refer to \cite[Proposition~8.2]{BM10}.

As mentioned earlier, words like ``expanding'' and ``expansive'' have been used in different contexts to describe various notions of expansion. Our notion of expansion differs from all of the classical notions (except that of \cite{HP09}), with the closest notion being that of \emph{piecewise expanding maps} from \cite{BS03}. Even though an expanding Thurston map $f$ is expanding, in the sense of \cite{BS03}, when restricted to any $1$-tile $X$ in the cell decompositions that we will discuss in Section~\ref{sctThurstonMap}, it is still not clear why $f$ is piecewise expanding in their sense. 

As a consequence of their general results in \cite{HP09},  P.~Ha\"issinsky and K.~Pilgrim proved that for each expanding Thurston map, there exists a measure of maximal entropy and that the measure of maximal entropy is unique for an expanding Thurston map without periodic critical points. M.~Bonk and D.~Meyer then proved the existence and uniqueness of the measure of maximal entropy for all expanding Thurston maps using an explicit combinatorial construction \cite{BM10}.

In \cite{BM10}, M.~Bonk and D.~Meyer studied various properties of expanding Thurston maps and gave a wealth of combinatorial and analytical tools for such maps. Using the framework set in \cite{BM10}, we investigate in this paper the numbers and locations of the fixed points, periodic points, and preperiodic points of such maps. We establish equidistribution results of preimages of any point, of preperiodic points, and of periodic points, with respect to the measure of maximal entropy. We also generalize some of the results from \cite{BM10} in the development of this paper.

\smallskip

We will now give a brief description of the structure and main results of this paper.

After fixing notation in Section~\ref{sctNotation}, we introduce Thurston maps in Section~\ref{sctThurstonMap} and record, in some cases generalize, a few key concepts and results from \cite{BM10}.

In Section~\ref{sctFixedPts}, we study the fixed points, periodic points, and preperiodic points of the expanding Thurston maps. For the convenience of the reader, we first provide a direct proof in Proposition~\ref{propNoFixedPtsRational}, using knowledge from complex dynamics, of the fact that a rational expanding Thurston map $R$ on the Riemann sphere has exactly $1+\deg R$ fixed points. Then we set out to generalize this result to the class of expanding Thurston maps, and get our first main theorem.

\begin{theorem}  \label{thmNoFixedPts}
Every expanding Thurston map $f\: S^2 \rightarrow S^2$ has $1+\deg f$ fixed points, counted with weight given by the local degree of the map at each fixed point.
\end{theorem}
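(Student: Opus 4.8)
The plan is to count fixed points via a Lefschetz-type argument adapted to the combinatorial structure of expanding Thurston maps. Recall from Section~\ref{sctThurstonMap} that, fixing a Jordan curve $\mathcal{C} \subseteq S^2$ with $\post(f) \subseteq \mathcal{C}$ (after replacing $f$ by an iterate if necessary, which does not change the fixed-point count in a controlled way — though I would prefer to handle $f$ directly), the pair $(f,\mathcal{C})$ induces cell decompositions $\mathbf{D}^n$ of $S^2$ into $n$-tiles, $n$-edges, and $n$-vertices, with $f$ mapping each $(n{+}1)$-cell homeomorphically onto an $n$-cell and $f^n$ mapping each $n$-tile homeomorphically onto one of the two $0$-tiles. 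The key point is that a fixed point $p = f(p)$ forces a relationship between the $1$-tiles (or $1$-cells) containing $p$ and their images: if $p$ lies in the interior of a $1$-tile $X^1$, then $f(X^1)$ is a $0$-tile containing $p$, so $X^1$ is contained in one particular $0$-tile, and $p$ is a fixed point of the homeomorphism $f|_{X^1} \colon X^1 \to f(X^1)$.

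First I would set up the enumeration: classify potential fixed points according to which cell in $\mathbf{D}^1$ contains them in its interior — a $1$-tile, a $1$-edge, or a $1$-vertex. Using expansion, one shows that for a sufficiently high iterate the relevant cells shrink (mesh tends to $0$), so that each fixed point is eventually isolated in a single tile and the local degree $\deg_f(p)$ can be read off combinatorially. The cleanest route is probably to prove the count for $f^n$ in terms of the incidence structure of $\mathbf{D}^n$, using that $f^n$ maps each $n$-tile homeomorphically onto a $0$-tile: a fixed point of $f^n$ in the interior of an $n$-tile $X^n$ corresponds to $X^n$ lying inside the $0$-tile equal to $f^n(X^n)$ and $X^n$ meeting its own image, which is a combinatorial "self-covering" condition one can count. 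Then I would relate the fixed-point count of $f^n$ back to that of $f$ — but that is delicate because $f^n$ has many more fixed points than $f$.

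A better approach, and the one I would actually pursue, is a direct degree/index computation. Away from the critical points $f$ is a local homeomorphism, and at a fixed point $p$ the local Lefschetz index of $f$ equals $\deg_f(p)$ provided $f$ has no "rotation-like" behavior at $p$ — this positivity of the index is the crucial geometric input and is where expansion is used: expansion rules out the possibility that $f$ acts near a fixed point like an irrational rotation or reflects orientation in a way that could make the index zero or negative. Concretely, I would show that near a fixed point $p$, using the visual metric, $f$ moves points strictly away from $p$ (on the relevant scale), so that the map $z \mapsto f(z) - z$ (in a suitable chart, or intrinsically via a degree argument on small circles around $p$) has winding number exactly $\deg_f(p)$ around $0$. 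Summing these local indices and matching against the global Lefschetz number $L(f) = \sum_{i} (-1)^i \operatorname{tr}(f^* \mid H^i(S^2;\mathbb{Q})) = 1 + \deg f$ (since $f$ acts as identity on $H^0$ and as multiplication by $\deg f$ on $H^2$, $H^1(S^2) = 0$) would give the theorem — but making the Lefschetz fixed point theorem applicable requires $f$ to have only finitely many, suitably nondegenerate fixed points, which itself needs the expansion hypothesis.

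The main obstacle will be the two intertwined difficulties of (i) showing the fixed points are isolated with well-defined, positive local indices equal to the local degree, and (ii) justifying the index computation at fixed points that happen to be critical points or that lie on the curve $\mathcal{C}$, where the nice tile structure degenerates. I expect the bulk of the work to go into a careful local analysis near each fixed point using the tiles of $\mathbf{D}^n$ for large $n$ and the expansion property to pin down the combinatorial type of $f$ near $p$ — essentially proving a local normal form — and then converting that normal form into a winding-number (local degree) computation. The global bookkeeping, once the local indices are known to be $\deg_f(p) \geq 1$ and to sum correctly, should follow from the Lefschetz fixed point formula or, alternatively, from a direct induction on the tile decompositions comparing $\#\operatorname{Fix}(f)$ with the known self-intersection data $1 + \deg f$.
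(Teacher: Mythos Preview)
Your main proposal takes the Lefschetz route, which the paper explicitly flags in the introduction as problematic: an expanding Thurston map need not be smooth, and the local fixed-point index is not obviously equal to $\deg_f(p)$. The gap in your argument is precisely the step ``expansion rules out rotation-like behavior, so the local index is $\deg_f(p)$.'' This is a heuristic, not a proof. At a non-critical fixed point you would need to exclude topologically parabolic behavior (where the index can exceed $1$); at a \emph{critical} fixed point $p$ with $\deg_f(p)=d\ge 2$ the situation is worse: the branched-cover normal form $\eta\circ f\circ\varphi^{-1}(z)=z^d$ uses \emph{different} charts at source and target, so in a single chart $f$ looks like $z\mapsto h(z^d)$ for some homeomorphism $h$ fixing $0$, and the index of such a map genuinely depends on $h$ (contracting $h$ gives index $1$, radially expanding $h$ gives index $d$). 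Pinning down which case occurs would require exactly the kind of local combinatorial analysis you defer, and it is not clear that the visual metric gives you clean enough radial expansion at a periodic critical point to read off the winding number directly.

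Your \emph{first} approach --- prove it for $f^n$ combinatorially via the tile decomposition, then descend to $f$ --- is essentially what the paper does, and you abandon it at exactly the point where the paper's key idea enters. The paper first shows (Lemmas~\ref{lmAtLeast1}--\ref{lmAtMost1}) that for large $n$ each relevant $n$-tile contains exactly one fixed point of $f^n$, then uses a degree computation on the invariant curve $\CC$ (Lemmas~\ref{lmDeg_f_C}--\ref{lmNoFixedPts_f_C}) to handle the boundary contributions, obtaining $p_{1,f^n}=1+(\deg f)^n$ for all $n\ge N(f)$. The descent to $f$ is done not by dynamics but by number theory: for a large prime $r\ge N(f)$, decompose the fixed points of $f^r$ into $f$-orbits of length $1$ or $r$, use multiplicativity of local degrees along orbits, and apply Fermat's Little Theorem to conclude $1+\deg f - p_{1,f}\equiv 0\pmod r$; taking $r$ large forces equality. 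This arithmetic trick is the missing ingredient you were looking for when you said the reduction from $f^n$ to $f$ ``is delicate.''
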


Here $\deg f$ denotes the topological degree of the map $f$. The local degree is a natural weight for points on $S^2$ for expanding Thurston maps. P.~Ha\"issinsky and K.~Pilgrim also used the same weight in the general context they considered in \cite{HP09}. For a more detailed discussion on the local degree, we refer to Section~\ref{sctThurstonMap}.  

We first observe that the statement of Theorem~\ref{thmNoFixedPts} agrees with what can be concluded from the Lefschetz fixed-point theorem (see for example, \cite[Chapter~3]{GP10})  if the map $f$ is smooth and the graph of $f$ intersects the diagonal of $S^2\times S^2$ transversely at each fixed point of $f$. However, an expanding Thurston map may not satisfy either of these conditions. It is not clear how to give a proof by using the Lefschetz fixed-point theorem. 

The proof of Theorem~\ref{thmNoFixedPts} that we adopt here is quite different from that of the rational case. It uses the correspondence between the fixed points of $f$ and the $1$-tiles in some cell decomposition of $S^2$ induced by $f$ and its invariant Jordan curve $\CC\subseteq S^2$, for the special case when $f$ has a special invariant Jordan curve $\CC$. In fact, $f$ may not have such a Jordan curve, but by a main result of \cite{BM10}, for each $n$ large enough there exists an $f^n$-invariant Jordan curve. We will need a slightly stronger result as formulated in Lemma~\ref{lmCexists}. Then the general case follows from an elementary number-theoretic argument. One of the advantages of this proof is that we also exhibit an almost one-to-one correspondence between the fixed points and the $1$-tiles in the cell decomposition of $S^2$, which leads to precise information on the location of each fixed point. This information is essential later in the proof of the equidistribution of preperiodic and periodic points of expanding Thurston maps in Section~\ref{sctEquidistribution}. As a corollary of Theorem~\ref{thmNoFixedPts}, we give a formula in Corollary~\ref{corNoPrePeriodicPts} for the number of preperiodic points when counted with the corresponding weight.

In Section~\ref{sctEquidistribution}, the concepts of topological entropy, measure-theoretic entropy, and the measure of maximal entropy are reviewed. Then a number of equidistribution results are proved. More precisely, we first prove in Theorem~\ref{thmWeakConv} the equidistribution of the $n$-tiles in the tile decompositions discussed in Section~\ref{sctThurstonMap}  with respect to the measure of maximal entropy $\mu_f$ of an expanding Thurston map $f$. The proof uses a combinatorial characterization of $\mu_f$ due to M.~Bonk and D.~Meyer \cite{BM10} that we will state explicitly in Theorem~\ref{thmBMCharactMOME}.

We then formulate the equidistribution of preimages with respect to the measure of maximal entropy $\mu_f$ in Theorem~\ref{thmWeakConvPreImg} below. Here we denote by $\delta_x$ the Dirac measure supported on a point $x$ in $S^2$. 

\begin{theorem}[Equidistribution of preimages]  \label{thmWeakConvPreImg}
Let $f\: S^2 \rightarrow S^2$ be an expanding Thurston map with its measure of maximal entropy $\mu_f$. Fix $p\in S^2$ and define the Borel probability measures
\begin{equation}  \label{eqDistrPreImg}
\nu_i=\frac{1}{(\deg f)^i}\sum\limits_{q\in f^{-i}(p)} \deg_{f^i}(q) \delta_q, \qquad \widetilde{\nu}_i=\frac{1}{Z_i}\sum\limits_{q\in f^{-i}(p)}\delta_q,
\end{equation}
for each $i\in\N_0$, where $Z_i=\card\(f^{-i}(p)\)$. Then
\begin{equation} \label{eqWeakConvPreImgWithWeight}
\nu_i \stackrel{w^*}{\longrightarrow} \mu_f \text{ as } i\longrightarrow +\infty,
\end{equation}
\begin{equation} \label{eqWeakConvPreImgWoWeight}
\widetilde{\nu}_i \stackrel{w^*}{\longrightarrow} \mu_f \text{ as } i\longrightarrow +\infty.
\end{equation}
\end{theorem}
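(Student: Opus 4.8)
The plan is to deduce the equidistribution of preimages from the equidistribution of $n$-tiles already established in Theorem~\ref{thmWeakConv}, together with the combinatorial characterization of $\mu_f$ from Theorem~\ref{thmBMCharactMOME}. First I would fix an $f^n$-invariant Jordan curve $\CC$ through $\post f$ (for a suitable $n$, as provided by the results from \cite{BM10}, e.g.\ Lemma~\ref{lmCexists}); replacing $f$ by $f^n$ changes neither $\mu_f$ nor the conclusion, since $\mu_{f^n}=\mu_f$ and a subsequence argument recovers the full limit once the claim is known along multiples of $n$. With such a $\CC$ in hand, one has the cell decompositions $\DD^k(f,\CC)$ of $S^2$ into $k$-tiles, and the key combinatorial fact is that for $q\in f^{-i}(p)$ and $k\ge 0$, the number of $(i+k)$-tiles contained in a given $k$-tile $X^k$ that map under $f^i$ onto a fixed $k$-tile equals $(\deg f)^i$ when counted with the local-degree weight $\deg_{f^i}$; equivalently, $f^i$ restricted to the $(i+k)$-skeleton is a cellular map onto the $k$-skeleton of degree $(\deg f)^i$.

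Next I would estimate $\nu_i(X^k)$ for an arbitrary $k$-tile $X^k$. Each preimage $q\in f^{-i}(p)$ lies in some collection of $i$-tiles, and by pulling back the tile $X^k$ through the branches of $f^i$ one sees that $\sum_{q\in f^{-i}(p)\cap X^k}\deg_{f^i}(q)$ is, up to a bounded multiplicative error coming from boundary effects (points $p$ or $q$ lying on cell boundaries, where the weight gets distributed among several tiles), comparable to $(\deg f)^i$ times the number of $i$-tiles inside $X^k$ of the appropriate color. Dividing by $(\deg f)^i$ and invoking Theorem~\ref{thmWeakConv} — which says the normalized counting measures on $i$-tiles converge weak$^*$ to $\mu_f$, with both colors equidistributed — gives $\nu_i(X^k)\to \mu_f(X^k)$ for every tile $X^k$, hence $\nu_i\stackrel{w^*}{\longrightarrow}\mu_f$ since finite unions of tiles generate the topology and $\mu_f$ assigns no mass to the $1$-skeleton (as $\mu_f$ has no atoms and the boundaries have measure zero, a fact recorded in \cite{BM10}). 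For the unweighted measures $\widetilde\nu_i$, I would show $Z_i=\card(f^{-i}(p))=(\deg f)^i(1+o(1))$: the weighted count is exactly $(\deg f)^i$, and the discrepancy between $\card(f^{-i}(p))$ and $\sum_q \deg_{f^i}(q)$ is governed by the critical points of $f^i$ accumulated along the orbit of $p$, which by expansion contribute a negligible proportion — this is where one uses that critical points are finite in number and that preimages of any point spread out through exponentially many tiles. Then $\widetilde\nu_i$ differs from $\nu_i$ only on a set of points carrying total weight $o((\deg f)^i)$, so the two have the same weak$^*$ limit.

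The main obstacle I anticipate is the bookkeeping of boundary and critical-point effects: a preimage $q$ may lie on the boundary of several $i$-tiles, and the point $p$ itself may lie on $\CC$ or even be a postcritical point, so the clean "$(\deg f)^i$ tiles per tile" count needs to be replaced by a statement with controlled error. I would handle this by working with the interiors of tiles, noting that $\mu_f(\partial X^k)=0$ and that the set of preimages landing on the $(i+k)$-skeleton has weight bounded independently of $i$ relative to the total (or at worst grows sub-exponentially), so these exceptional contributions vanish in the limit. A secondary technical point is that Theorem~\ref{thmWeakConv} is stated for tiles of a fixed color or all tiles; I would need the version giving equidistribution of tiles of each color separately, which follows from the two-to-one structure of the tiling, so that the weighted preimage count can be matched tile-by-tile regardless of the color of $X^k$ or of the tile containing $p$.
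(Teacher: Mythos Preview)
Your treatment of the weighted measures $\nu_i$ is essentially correct and close in spirit to the paper's argument, though the paper is more direct: once $p$ lies in (say) the white $0$-tile, each $q\in f^{-i}(p)$ is contained in exactly $\deg_{f^i}(q)$ white $i$-tiles, and each white $i$-tile contains exactly one such $q$. Thus $\nu_i$ is \emph{literally} a measure of the form covered by Theorem~\ref{thmWeakConv} (with $\alpha_i\equiv 0$ and $\beta_i$ sending each white $i$-tile to its unique preimage of $p$), and no estimation of $\nu_i(X^k)$ or boundary bookkeeping is needed.

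Your argument for the unweighted measures $\widetilde\nu_i$, however, has a genuine gap. The claim $Z_i=(\deg f)^i(1+o(1))$ is \emph{false} whenever $p\in\post f$. For a concrete obstruction: if $f$ has degree $3$, $c$ is a critical point with $\deg_f(c)=2$ and $f(c)=p$, and neither $c$ nor the other preimage $a\in f^{-1}(p)$ lies in $\post f$, then $Z_i=2\cdot 3^{i-1}$ for all $i\ge 1$, so $Z_i/(\deg f)^i=2/3$ identically. In this example one computes $\|\nu_i-\widetilde\nu_i\|=1/3$ for every $i\ge 2$, so the two sequences do \emph{not} become close in total variation; they share the same weak$^*$ limit for a different reason. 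The heuristic that ``critical points contribute a negligible proportion'' fails because a single critical preimage at level $1$ reweights an entire subtree of cardinality comparable to $(\deg f)^i$.

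What the paper does instead is a case analysis on $p$. If $p\notin\post f$ then all local degrees are $1$ and $\widetilde\nu_i=\nu_i$. If $p\in\post f$ is not periodic, one finds $N$ with $f^{-N}(p)\cap\post f=\emptyset$ and writes
\[
\widetilde\nu_{i+N}=\frac{1}{Z_N}\sum_{x\in f^{-N}(p)}\Bigl(\frac{1}{(\deg f)^i}\sum_{q\in f^{-i}(x)}\delta_q\Bigr),
\]
a fixed convex combination of measures each converging to $\mu_f$ by the first case. If $p\in\post f$ is periodic, one separates off the contribution of the periodic orbit (which has asymptotically negligible mass in $\widetilde\nu_i$) and reduces the remainder to the non-periodic cases. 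The essential idea you are missing is this decomposition into measures based at points \emph{outside} $\post f$, rather than a direct comparison of $\widetilde\nu_i$ with $\nu_i$.
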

Here $\deg_{f^i}(x)$ denotes the local degree of the map $f^i$ at a point $x\in S^2$. In (\ref{eqWeakConvPreImgWithWeight}), (\ref{eqWeakConvPreImgWoWeight}), and similar statements below, the convergence of Borel measures is in the weak$^*$ topology, and we use $w^*$ to denote it. Note that the difference of $\nu_i$ and $\widetilde\nu_i$ is the weight at each preimage of $p$ under $f^i$. As mentioned earlier, the local degree is a natural weight for a point in $S^2$ in the context of Thurston maps. On the other hand, it is also natural to assign the same weight for each preimage.

After generalizing Lemma~\ref{lmCoverEdgesBM}, which is due to M.~Bonk and D.~Meyer \cite[Lemma~20.2]{BM10}, in Lemma~\ref{lmCoverEdges}, we prove the equidistribution of preperiodic points with respect to $\mu_f$.

\begin{theorem}[Equidistribution of preperiodic points]  \label{thmWeakConvPrePerPts}
Let $f\: S^2 \rightarrow S^2$ be an expanding Thurston map with its measure of maximal entropy $\mu_f$. For each $m\in\N_0$ and each $n\in\N$ with $m<n$, we define the Borel probability measures
\begin{equation}   \label{eqDistrPrePerPts}
\xi_n^m = \frac{1}{s_n^m} \sum\limits_{f^m(x)=f^n(x)}  \deg_{f^n}(x) \delta_x,   \qquad \widetilde\xi_n^m = \frac{1}{\widetilde s_n^m} \sum\limits_{f^m(x)=f^n(x)} \delta_x,
\end{equation}
where $s_n^m,\widetilde s_n^m$ are the normalizing factors defined in (\ref{eqSetPrePeriodicPts}) and (\ref{eqNoPrePeriodicPts}). If $\{m_n\}_{n\in\N}$ is a sequence in $\N_0$ such that $m_n <n$ for each $n\in\N$, then
\begin{equation}  \label{eqWeakConvPrePerPtsWithWeight}
\xi_n^{m_n}   \stackrel{w^*}{\longrightarrow} \mu_f \text{ as } n\longrightarrow +\infty,
\end{equation}
\begin{equation}    \label{eqWeakConvPrePerPtsWoWeight}
\widetilde \xi_n^{m_n}   \stackrel{w^*}{\longrightarrow} \mu_f \text{ as } n\longrightarrow +\infty.
\end{equation}
\end{theorem}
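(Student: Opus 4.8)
The plan is to express $\xi_n^m$ through the transfer operator of $f$ acting on a measure that counts periodic points of $f^{n-m}$, and then to feed in two ingredients separately: the equidistribution of preimages (Theorem~\ref{thmWeakConvPreImg}) and the equidistribution of periodic points (the case $m_n\equiv 0$, where the fixed-point location information behind Theorem~\ref{thmNoFixedPts} is used). Fix $\phi\in C(S^2)$; by weak$^*$ density it suffices to prove $\int\phi\,d\xi_n^{m_n}\to\int\phi\,d\mu_f$ and the analogue for $\widetilde\xi_n^{m_n}$. For $k\in\N$ set $\zeta_k=\frac{1}{1+(\deg f)^k}\sum_{f^k(y)=y}\deg_{f^k}(y)\,\delta_y$, a Borel probability measure by Theorem~\ref{thmNoFixedPts} applied to $f^k$. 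Since $x\mapsto(f^m(x),x)$ is a bijection from $\{x:f^m(x)=f^n(x)\}$ onto $\{(y,x):f^{n-m}(y)=y,\ x\in f^{-m}(y)\}$, since $\deg_{f^n}(x)=\deg_{f^{n-m}}(f^m(x))\deg_{f^m}(x)$, and since $s_n^m=(\deg f)^m+(\deg f)^n$ by Theorem~\ref{thmNoFixedPts} applied to $f^{n-m}$, one obtains the exact identity
\begin{equation*}
\int\phi\,d\xi_n^m=\int\mathcal{L}^m\phi\,d\zeta_{n-m},\qquad(\mathcal{L}\psi)(y):=\frac{1}{\deg f}\sum_{x\in f^{-1}(y)}\deg_f(x)\,\psi(x).
\end{equation*}
Here $(\mathcal{L}^m\phi)(p)=\int\phi\,d\nu_m$ with $\nu_m$ as in (\ref{eqDistrPreImg}) based at $p$, and $\int\mathcal{L}^m\phi\,d\mu_f=\int\phi\,d\mu_f$, since the Bonk--Meyer characterization $\mu_f(X)=\tfrac{1}{2(\deg f)^j}$ for $j$-tiles $X$ (Theorem~\ref{thmBMCharactMOME}) forces $f_*(\mu_f|_X)=\tfrac{1}{\deg f}\mu_f|_{f(X)}$ on tiles.

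The key input is $\zeta_k\to\mu_f$ as $k\to\infty$. For each large $k$ the sharpened invariant-curve result (Lemma~\ref{lmCexists}) provides a Jordan curve $\CC_k\supseteq\post(f)$ with $f^k(\CC_k)=\CC_k$ and uniform geometric control, so that the $1$-tiles of the expanding Thurston map $f^k$ relative to $\CC_k$ — the closures of the complementary components of $f^{-k}(\CC_k)$ — have diameters going to $0$ uniformly in $k$. The proof of Theorem~\ref{thmNoFixedPts} applied to $f^k$ then sets up a correspondence, bijective up to the fixed points lying on $\CC_k$ (negligible, as $f^k|_{\CC_k}$ has degree $<(\deg f)^k$, otherwise $f^{-k}(\CC_k)=\CC_k$ against expansion), between $\operatorname{Fix}(f^k)$ counted with weight $\deg_{f^k}$ and those $1$-tiles $X$ of $f^k$ with $f^k(X)\supseteq X$, each containing its fixed point. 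By Lemma~\ref{lmCoverEdges} these self-covering tiles number $(\deg f)^k(1+o(1))$ — asymptotically half of all $2(\deg f)^k$ tiles — and are spread throughout $S^2$ in the same proportion as the full collection of tiles; since each $1$-tile of $f^k$ carries $\mu_f$-mass $\tfrac{1}{2(\deg f)^k}$ and the full collection equidistributes against $\mu_f$ (in the spirit of Theorem~\ref{thmWeakConv}) while their mesh tends to $0$, we get $\zeta_k\to\mu_f$.

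Next, the pointwise convergence $(\mathcal{L}^m\phi)(p)\to\int\phi\,d\mu_f$, which is (\ref{eqWeakConvPreImgWithWeight}) read through $(\mathcal{L}^m\phi)(p)=\int\phi\,d\nu_m$, is in fact uniform in $p$: the $\mathcal{L}^m\phi$ are bounded by $\|\phi\|_\infty$ and, because the inverse branches of $f^m$ are uniformly contracting in a visual metric for $f$, equicontinuous with a modulus independent of $m$, so an Arzel\`a--Ascoli argument against the constant limit gives $\eta_m:=\bigl\|\mathcal{L}^m\phi-\int\phi\,d\mu_f\bigr\|_\infty\to0$. As $\zeta_{n-m}$ is a probability measure, $\bigl|\int\phi\,d\xi_n^m-\int\phi\,d\mu_f\bigr|\le\eta_m$. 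Given $\varepsilon>0$, choose $M$ with $\sup_{j\ge M}\eta_j<\varepsilon$; then this estimate already holds for every $n$ with $m_n\ge M$. For each of the finitely many $j\in\{0,\dots,M-1\}$, along $\{n:m_n=j\}$ one has $n-j\to\infty$, so $\int\phi\,d\xi_n^j=\int\mathcal{L}^j\phi\,d\zeta_{n-j}\to\int\mathcal{L}^j\phi\,d\mu_f=\int\phi\,d\mu_f$ by $\zeta_k\to\mu_f$, the continuity of $\mathcal{L}^j\phi$, and the balanced property; hence the estimate holds for all large $n$ with $m_n<M$ too. This establishes (\ref{eqWeakConvPrePerPtsWithWeight}).

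For the unweighted measures, write $\widetilde\xi_n^m=\sum_{y\in\operatorname{Fix}(f^{n-m})}\tfrac{\card(f^{-m}(y))}{\widetilde s_n^m}\,\widetilde\nu_m^{\,y}$, where $\widetilde\nu_m^{\,y}$ is the measure $\widetilde\nu_m$ of (\ref{eqDistrPreImg}) based at $y$. If $n-m_n\to\infty$, the excess weight $s_n^m-\widetilde s_n^m=\sum_{f^m(x)=f^n(x)}(\deg_{f^n}(x)-1)$ is $o((\deg f)^n)$ — it is at most $(\deg f)^m D_{n-m}+2(\deg f)^m$, with $D_k=O(\theta^k)$ and $\theta<\deg f$ the largest geometric mean of local degrees along a periodic critical cycle of $f$ — whence $\widetilde\xi_n^{m_n}-\xi_n^{m_n}\to0$ and (\ref{eqWeakConvPrePerPtsWithWeight}) applies. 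If instead $n-m_n$ stays bounded, pass to a subsequence on which $n-m_n$ equals a constant $c$; then $\operatorname{Fix}(f^c)$ is a fixed finite set, $\widetilde\nu_{m_n}^{\,y}\to\mu_f$ for each of its points by (\ref{eqWeakConvPreImgWoWeight}) (as $m_n\to\infty$), and the convex combination forces $\widetilde\xi_n^{m_n}\to\mu_f$; since every subsequence of $\N$ admits a further subsequence of one of these two types, (\ref{eqWeakConvPrePerPtsWoWeight}) follows. The main obstacle is the periodic-point equidistribution of the second paragraph: one must transfer the fixed-point/tile dictionary of Theorem~\ref{thmNoFixedPts} — cleanest when $f^k$ genuinely preserves a Jordan curve — to \emph{all} large $k$ with uniform control, and establish not merely that $\operatorname{Fix}(f^k)$ falls into tiles of vanishing mesh but the precise way these fixed points, with their weights, are distributed among the tiles, which is exactly what Lemma~\ref{lmCoverEdges} together with Theorem~\ref{thmWeakConv} supply.
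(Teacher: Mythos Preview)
Your reduction is correct and structurally different from the paper's: you factor $\xi_n^m$ through the Ruelle operator as $\int\phi\,d\xi_n^m=\int\mathcal{L}^m\phi\,d\zeta_{n-m}$, and then combine the uniform convergence $\mathcal{L}^m\phi\to\int\phi\,d\mu_f$ (this is exactly the claim inside the proof of Lemma~\ref{lmQInvMeasure}) with the periodic-point equidistribution $\zeta_k\to\mu_f$. The paper instead treats the case $n-m_n\to\infty$ directly by a tile-pairing argument, without isolating the periodic-points step. Your framework is cleaner, and once $\zeta_k\to\mu_f$ is established the rest of your argument (including the unweighted case) works.

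The gap is in your second-paragraph sketch of $\zeta_k\to\mu_f$. You invoke Lemma~\ref{lmCexists} to produce, for each large $k$, an $f^k$-invariant curve $\CC_k$, and then appeal to the tile/fixed-point correspondence of Theorem~\ref{thmNoFixedPts} and to Theorem~\ref{thmWeakConv}. But the curve changes with $k$, and neither the constant $C$ in Lemma~\ref{lmBMCellSizeBounds}, nor the constants $D,L$ in Lemma~\ref{lmCoverEdges}, nor the weights $w,b$ in Theorem~\ref{thmBMCharactMOME} are uniform over a varying family of curves; Lemma~\ref{lmCexists} asserts no ``uniform geometric control,'' and your claim that each $1$-tile of $f^k$ carries $\mu_f$-mass $\tfrac{1}{2(\deg f)^k}$ is in general false (white and black tiles have masses $w(\deg f)^{-k}$ and $b(\deg f)^{-k}$, with $w,b$ depending on $\CC_k$). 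The paper's fix --- which is exactly its Case~1 --- is to choose a \emph{single} $f^N$-invariant curve $\CC$ once and for all via Corollary~\ref{corCexists} and to work with $\DD^i(f,\CC)$ for \emph{all} $i$, not just multiples of $N$: the pairing $\PP_i$ together with Brouwer's theorem still yields a fixed point of $f^i$ in each pair disjoint from $\CC$, and Lemma~\ref{lmCoverEdges} (generalized from Lemma~\ref{lmCoverEdgesBM} precisely to drop the congruence restriction $i\equiv 0\pmod N$) supplies the boundary control. With that repair your argument goes through; since the paper's Case~1 is essentially the proof of $\zeta_k\to\mu_f$ with the $f^{-m_n}$-pullback already folded in, your reduction does not save work but does make the logical structure more transparent.
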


We prove in Corollary~\ref{corNoPrePeriodicPts} that $s_n^m = (\deg f)^n + (\deg f)^m$ for $m\in\N_0$, $n\in\N$ with $m<n$.

As a special case of Theorem~\ref{thmWeakConvPrePerPts}, we get the equidistribution of periodic points with respect to $\mu_f$.

\begin{cor}[Equidistribution of periodic points]   \label{corWeakConvPerPts}
Let $f\: S^2 \rightarrow S^2$ be an expanding Thurston map with its measure of maximal entropy $\mu_f$. Then
\begin{equation}  \label{eqWeakConvPerPts1}
\frac{1}{1+(\deg f)^n} \sum\limits_{x=f^n(x)}  \deg_{f^n} (x) \delta_x \stackrel{w^*}{\longrightarrow} \mu_f \text{ as } n\longrightarrow +\infty,
\end{equation}
\begin{equation}  \label{eqWeakConvPerPts2}
\frac{1}{\card \{x\in S^2 \,|\, x=f^n(x)\}} \sum\limits_{x=f^n(x)}  \delta_x \stackrel{w^*}{\longrightarrow} \mu_f \text{ as } n\longrightarrow +\infty,
\end{equation}
\begin{equation}  \label{eqWeakConvPerPts3}
\frac{1}{(\deg f)^n} \sum\limits_{x=f^n(x)}  \delta_x \stackrel{w^*}{\longrightarrow} \mu_f \text{ as } n\longrightarrow +\infty.
\end{equation}
\end{cor}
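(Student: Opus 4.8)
The plan is to read off all three convergences from Theorem~\ref{thmWeakConvPrePerPts} by taking the constant sequence $m_n = 0$, and then to reconcile the three different normalizing constants using Corollary~\ref{corNoPrePeriodicPts} together with the fact (recalled in Section~\ref{sctThurstonMap}) that an expanding Thurston map has no periodic critical points.

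First I would note that the constant sequence $m_n = 0$ is admissible in Theorem~\ref{thmWeakConvPrePerPts}, since $0 < n$ for every $n \in \N$, and that $\{x \in S^2 : f^0(x) = f^n(x)\} = \{x \in S^2 : x = f^n(x)\}$. Hence the measures $\xi_n^0$ and $\widetilde\xi_n^0$ from \eqref{eqDistrPrePerPts} are the atomic measures carried by the periodic points $x$ with $f^n(x) = x$, weighted by $\deg_{f^n}$ in the first case and with equal weights in the second, and Theorem~\ref{thmWeakConvPrePerPts} gives $\xi_n^0 \stackrel{w^*}{\longrightarrow} \mu_f$ and $\widetilde\xi_n^0 \stackrel{w^*}{\longrightarrow} \mu_f$. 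By Corollary~\ref{corNoPrePeriodicPts}, $s_n^0 = (\deg f)^n + (\deg f)^0 = 1 + (\deg f)^n$, so $\xi_n^0$ is exactly the measure in \eqref{eqWeakConvPerPts1}; and $\widetilde s_n^0 = \card\{x \in S^2 : x = f^n(x)\}$ by definition, so $\widetilde\xi_n^0$ is exactly the measure in \eqref{eqWeakConvPerPts2}. This proves \eqref{eqWeakConvPerPts1} and \eqref{eqWeakConvPerPts2}.

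It remains to handle \eqref{eqWeakConvPerPts3}, whose normalizing constant is $(\deg f)^n$ rather than $s_n^0$ or $\widetilde s_n^0$; note that its left-hand side is not a priori a probability measure, so one has to check that its total mass tends to $1$. This is where I would use that $f$ has no periodic critical points: if $f^n(x) = x$, then $x, f(x), \dots, f^{n-1}(x)$ are all periodic and hence none of them is a critical point of $f$, so $\deg_{f^n}(x) = \prod_{j=0}^{n-1}\deg_f(f^j(x)) = 1$; thus $\widetilde s_n^0 = s_n^0 = 1 + (\deg f)^n$ and the measures in \eqref{eqWeakConvPerPts1} and \eqref{eqWeakConvPerPts2} actually coincide. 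The measure in \eqref{eqWeakConvPerPts3} is then $\frac{(\deg f)^n + 1}{(\deg f)^n}$ times that common measure, and since $\frac{(\deg f)^n + 1}{(\deg f)^n} \to 1$ and rescaling a weak$^*$-convergent sequence of finite measures by scalars tending to $1$ preserves the limit (test against an arbitrary continuous function $g\: S^2 \to \R$), \eqref{eqWeakConvPerPts3} follows. The only genuine content is Theorem~\ref{thmWeakConvPrePerPts} and Corollary~\ref{corNoPrePeriodicPts}; everything else is bookkeeping around the normalization, the one point requiring care being the total-mass check just described for \eqref{eqWeakConvPerPts3}.
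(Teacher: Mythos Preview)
Your derivation of \eqref{eqWeakConvPerPts1} and \eqref{eqWeakConvPerPts2} from Theorem~\ref{thmWeakConvPrePerPts} with $m_n=0$ is exactly what the paper does, and is correct.

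The gap is in your treatment of \eqref{eqWeakConvPerPts3}. You assert, citing Section~\ref{sctThurstonMap}, that an expanding Thurston map has no periodic critical points. That is false in general: it holds for \emph{rational} expanding Thurston maps (see Proposition~\ref{propNoFixedPtsRational} and the reference there to \cite[Proposition~19.1]{BM10}), but not for arbitrary expanding Thurston maps. Indeed, the proof of Lemma~\ref{lmSumLocalDegreeBound} explicitly treats the case where $f$ has periodic critical points, and the introduction notes that Ha\"issinsky and Pilgrim's uniqueness result was originally restricted to maps without periodic critical points. So your conclusion that $\widetilde s_n^0 = s_n^0 = 1+(\deg f)^n$ is not available, and your argument for \eqref{eqWeakConvPerPts3} collapses.

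What the paper does instead is invoke Corollary~\ref{corAsympRatioNoPerPts}, which gives $\card\{x:f^n(x)=x\}/(\deg f)^n \to 1$; this is a genuine consequence of the quantitative estimate \eqref{eqPfThmWeakConvPrePerPts4} extracted from the proof of Theorem~\ref{thmWeakConvPrePerPts} (and ultimately from Lemma~\ref{lmCoverEdges}). Once you have that ratio tending to $1$, your rescaling argument goes through: the measure in \eqref{eqWeakConvPerPts3} equals $\frac{\widetilde s_n^0}{(\deg f)^n}\,\widetilde\xi_n^0$, and the scalar factor tends to $1$. So the fix is simply to replace your unjustified equality $\widetilde s_n^0 = 1+(\deg f)^n$ by the asymptotic $\widetilde s_n^0/(\deg f)^n \to 1$ from Corollary~\ref{corAsympRatioNoPerPts}.
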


The equidistribution (\ref{eqWeakConvPreImgWithWeight}), (\ref{eqWeakConvPreImgWoWeight}), (\ref{eqWeakConvPerPts1}), and (\ref{eqWeakConvPerPts2}) are analogs of corresponding results for rational maps on the Riemann sphere by M.~Lyubich \cite{Ly83}. Some ideas from \cite{Ly83} are used in the proofs of Theorem~\ref{thmWeakConvPreImg} and Theorem~\ref{thmWeakConvPrePerPts} as well. P.~Ha\"issinsky and K.~Pilgrim also proved (\ref{eqWeakConvPreImgWithWeight}) and (\ref{eqWeakConvPerPts1}) in their general context \cite{HP09}, which includes expanding Thurston maps. 

The equidistribution (\ref{eqWeakConvPrePerPtsWithWeight}) and (\ref{eqWeakConvPrePerPtsWoWeight}) are inspired by the recent work of M.~Baker and L.~DeMarco \cite{BD11}. They used some equidistribution result of preperiodic points of rational maps on the Riemann sphere in the context of arithmetic dynamics. 

We show in Corollary~\ref{corAsympRatioNoPerPts} that for each expanding Thurston map $f$, the exponential growth rate of the cardinality of the set of fixed points of $f^n$ is equal to the topological entropy $h_{\operatorname{top}}(f)$ of $f$, which is known to be equal to $\log(\deg f)$ (see for example, \cite[Corollary~20.8]{BM10}). This is analogous to the corresponding result for expansive homeomorphisms on compact metric spaces with the \emph{specification property} (see for example, \cite[Theorem~18.5.5]{KH95}).

In Section~\ref{sctFactor}, we prove in Theorem~\ref{thmLfactor} that for each expanding Thurston map $f$ with its measure of maximal entropy $\mu_f$, the measure-preserving dynamical system $(S^2,f,\mu_f)$ is a factor, in the category of measure-preserving dynamical systems, of the measure-preserving dynamical system of the left-shift operator on the one-sided infinite sequences of $\deg f$ symbols together with its measure of maximal entropy. This generalizes the corresponding result in \cite{BM10} in the category of topological dynamical systems, reformulated in Theorem~\ref{thmBMfactor}.

Finally, in Section~\ref{sctIteration}, we follow the idea of J.~Hawkins and M.~Taylor \cite{HT03} to prove in Theorem~\ref{thmRandomIntConv} that for each $p\in S^2$, the measure of maximal entropy $\mu_f$ of an expanding Thurston map $f$ is almost surely the limit of
$$
\frac1n \sum\limits_{i=0}^{n-1} \delta_{q_i}
$$
as $n\longrightarrow+\infty$ in the weak* topology, where $q_i$ is one of the points in $f^{-1}(q_{i-1})$, chosen with probability proportional to the weight given by the local degree of $f$ at each point in $f^{-1}(q_{i-1})$, for all $i\in\N$, and $q_0=p$. A similar result for certain hyperbolic rational maps on $S^2$ was proved by M.~Barnsley \cite{Bar88}. J.~Hawkins and M.~Taylor generalized it to any rational map on the Riemann sphere of degree $d\geq 2$ \cite{HT03}.

\bigskip
\noindent
\textbf{Acknowledgments.} The author wants to express his gratitude to M.~Bonk for introducing him to the subject of expanding Thurston maps and patiently teaching and guiding him as an advisor.

\section{Notation} \label{sctNotation}
Let $\C$ be the complex plane and $\widehat{\C}$ be the Riemann sphere. Let $\D$ be the open unit disk on $\C$. We use the convention that $\N=\{1,2,3,\dots\}$ and $\N_0 = \{0\} \cup \N$. We always use base $e$ for logarithm unless otherwise specified.

The cardinality of a set $A$ is denoted by $\card{A}$. For each $x\in\R$, we define $\lfloor x\rfloor$ as the greatest integer smaller than or equal to $x$, and $\lceil x \rceil$ the smallest integer greater than or equal to $x$.

Let $(X,d)$ be a metric space. For subsets $A,B\subseteq X$, we set $d(A,B)=\sup \{d(x,y)\,|\, x\in A,y\in B\}$, and $d(A,x)=d(x,A)=d(A,\{x\})$ for $x\in X$. For each subset $S\subseteq X$, we denote the diameter of $S$ by $\diam_d(S)=\sup\{d(x,y)\,|\,x,y\in S\}$. For each $r>0$, we set $N^r_d(A)$ to be the open $r$-neighborhood $\{y\in X \,|\, d(y,A)<r\}$ of $A$, and $\overline{N^r_d}(A)$ the closed $r$-neighborhood $\{y\in X \,|\, d(y,A)\leq r\}$ of $A$. The identity map $\id_X\: X\rightarrow X$ maps each $x\in X$ to $x$ itself. We denote by $\CCC(X)$ the space of continuous functions from $X$ to $\R$, by $\MMM(X)$ the set of finite signed Borel measures, and $\PPP(X)$ the set of Borel probability measures on $X$. We use $\Norm{\cdot}$ to denote the total variation norm on $\MMM(X)$. For a point $x\in X$, we define $\delta_x$ as the Dirac measure supported on $\{x\}$. For $g\in\CCC(X)$ we set $\MMM(X,g)$ to be the set of $g$-invariant Borel probability measures on $X$.

\section{Thurston maps} \label{sctThurstonMap}

In this section, we briefly review some key concepts and results on Thurston maps, and expanding Thurston maps in particular. For a more thorough treatment of the subject, we refer to \cite{BM10}. Towards the end of this section, we state and prove a slightly stronger version of one of the main theorems in \cite{BM10}, which we will repeatedly use in the following sections.

Let $S^2$ denote an oriented topological $2$-sphere. A continuous map $f\:S^2\rightarrow S^2$ is called a \defn{branched covering map} on $S^2$ if for each point $x\in S^2$, there exists $d\in \N$, open neighborhoods $U$ of $x$ and $V$ of $y=f(x)$, $U'$ and $V'$ of $0$ in $\widehat{\C}$, and orientation-preserving homeomorphisms $\varphi\:U\rightarrow U'$ and $\eta\:V\rightarrow V'$ such that $\varphi(x)=0$, $\eta(y)=0$ and
$$
(\eta\circ f\circ\varphi^{-1})(z)=z^d
$$
for each $z\in U'$. The positive integer $d$ above is called the \defn{local degree} of $f$ at $p$ and is denoted by $\deg_f (p)$. The \defn{(global) degree} of $f$ is defined as
\begin{equation}   \label{eqDeg=SumLocalDegree}
\deg f=\sum\limits_{x\in f^{-1}(y)} \deg_f (x)
\end{equation}
for each $y\in S^2$. It is independent of $y\in S^2$. It is true that if $f\:S^2\rightarrow S^2$ and $g\:S^2\rightarrow S^2$ are two branched covering maps on $S^2$, then
\begin{equation} \label{eqLocalDegreeProduct}
 \deg_{f\circ g}(x) = \deg_g(x)\deg_f(g(x)), \qquad \text{for each } x\in S^2,
\end{equation}   
and moreover, 
\begin{equation}  \label{eqDegreeProduct}
\deg(f\circ g) =  (\deg f)( \deg g).
\end{equation}

A point $x\in S^2$ is a \defn{critical point} of $f$ if $\deg_f(x) \geq 2$. The set of critical points of $f$ is denoted by $\crit f$. A point $y\in S^2$ is a \defn{postcritical point} of $f$ if $y \in \bigcup\limits_{n\in\N}\{f^n(x)\,|\,x\in\crit f\}$. The set of postcritical points of $f$ is denoted by $\post f$. Note that $\post f=\post f^n$ for all $n\in\N$.

\begin{definition} [Thurston maps] \label{defThurstonMap}
A Thurston map is a branched covering map $f\:S^2\rightarrow S^2$ on $S^2$ with $\deg f\geq 2$ and $\card(\post f)<+\infty$.
\end{definition}

We define two notions of equivalence for Thurston maps. The first one is the usual topological conjugation. We call the maps $f$ and $g$ \defn{topologically conjugate} if there exists a homeomorphism $h\: S^2\rightarrow S^2 $ such that $h\circ f = g\circ h$. The second one is a weaker notion due to W.P.~Thurston \cite{DH93}.
  
\begin{definition}[Thurston equivalence]\label{defThequiv}
 Two Thurston maps $f\: S^2\rightarrow S^2$ and $g\: S^2\rightarrow  S^2$ are called \defn{Thurston equivalent} if there exist homeomorphisms 
  $h_0,h_1 \:S^2\rightarrow  S^2 $ that are  isotopic rel.\ $\post f$  and satisfy  $    h_0\circ f = g\circ h_1$.
\end{definition} 
For the usual definition of an isotopy, we refer to \cite[Section~3]{BM10}.

We now set up the notation for cell decompositions of $S^2$. A \defn{cell of dimension $n$} in $S^2$, $n \in \{1,2\}$, is a subset $c\subseteq S^2$ that is homeomorphic to the closed unit ball $\overline{\B^n}$ in $\R^n$. We define the \defn{boundary of $c$}, denoted by $\partial c$, to be the set of points corresponding to $\partial\B^n$ under such a homeomorphism between $c$ and $\overline{\B^n}$. The \defn{interior of $c$} is defined to be $\inte c = c \setminus \partial c$. A cell $c$ of dimension 0 is a singleton set $\{x\}$ for some point $x\in S^2$. For cells $c$ with dimension $0$, we adopt the convention that $\partial c=\emptyset$ and $\inte c =c$. 

The following three definitions are from \cite{BM10}.

\begin{definition}[Cell decompositions]\label{defcelldecomp}
Let $\DD$ be a collection of cells in $S^2$.  We say that $\DD$ is a \defn{cell decomposition of $S^2$} if the following conditions are satisfied:

\begin{itemize}

\smallskip
\item[(i)]
the union of all cells in $\DD$ is equal to $S^2$,

\smallskip
\item[(ii)] for $c_1,c_2 \in \DD$ with $c_1 \ne c_1$, we have $\inte c_1 \cap \inte c_2= \emptyset$,  

\smallskip
\item[(iii)] if $c\in \DD$, then $\partial c$ is a union of cells in $\DD$,

\smallskip
\item[(iv)] every point in $S^2$ has a neighborhood that meets only finitely many cells in $\DD$.

\end{itemize}
\end{definition}

\begin{definition}[Refinements]\label{defrefine}
Let $\DD'$ and $\DD$ be two cell decompositions of $S^2$. We
say that $\DD'$ is a \defn{refinement} of $\DD$ if the following conditions are satisfied:
\begin{itemize}

\smallskip
\item[(i)] for every cell $c'\in \DD'$ there exits  a cell $c\in \DD$ with $c'\subseteq c$,

\smallskip
\item[(ii)] every cell $c\in \DD$ is the union of all cells $c'\in \DD'$ with $c'\subseteq c$.

\end{itemize}
\end{definition}

\begin{definition}[Cellular maps and cellular Markov partitions]\label{defcellular}
Let $\DD'$ and $\DD$ be two cell decompositions of  $S^2$. We say that a continuous function $f \: S^2 \rightarrow S^2$ is \defn{cellular} for  $(\DD', \DD)$ if for every cell $c\in \DD'$, the restriction $f|_c$ is a homeomorphism of $c$ onto a cell in $\DD$. We say that $(\DD',\DD)$ is a \defn{cellular Markov partition} for $f$ if $f$ is cellular for $(\DD',\DD)$ and $\DD'$ is a refinement of $\DD$.
\end{definition}

Let $f\:S^2 \rightarrow S^2$ be a Thurston map, and $\CC\subseteq S^2$ be a Jordan curve such that $\post f\subseteq \CC$. Then the pair $(f,\CC)$ induces natural cell decompositions $\DD^n(f,\CC)$ of $S^2$, for $n\in\N_0$, in the following way:

By the Jordan curve theorem, the set $S^2\setminus\CC$ has two connected components. We call the closure of one of them the \defn{white $0$-tile} for $(f,\CC)$, denoted by $X^0_w$, and the closure of the other one the \defn{black $0$-tile} for $(f,\CC)$, denoted by $X^0_b$. The set of \defn{$0$-tiles} is $\X^0(f,\CC)=\{X_b^0,X_w^0\}$. The set of \defn{$0$-vertices} is $\V^0(f,\CC)=\post f$. We define $\overline\V^0(f,\CC)$ to be $\{ \{x\} \,|\, x\in \V^0(f,\CC) \}$. The set of \defn{$0$-edges} $\E^0(f,\CC)$ is the set of connected components of $\CC \setminus  \post f$. Then we get a cell decomposition 
$$
\DD^0(f,\CC)=\X^0(f,\CC) \cup \E^0(f,\CC) \cup \overline\V^0(f,\CC)
$$
of $S^2$ consisting of \defn{$0$-cells}.

One can recursively define, for each $n\in\N$, the unique cell decomposition $\DD^n(f,\CC)$ consisting of \defn{$n$-cells} such that $f$ is cellular for $(\DD^{n+1}(f,\CC),\DD^n(f,\CC))$. For details, we refer to \cite[Lemma~5.4]{BM10}. We denote by $\X^n(f,\CC)$ the set of $n$-cells of dimension 2, called \defn{$n$-tiles}; by $\E^n(f,\CC)$ the set of $n$-cells of dimension 1, called \defn{$n$-edges}; by $\overline\V^n(f,\CC)$ the set of $n$-cells of dimension 0; and by $\V^n(f,\CC)$ the set $\{x\,|\, {x}\in \overline\V^n(f,\CC)\}$, called \defn{$n$-vertices}.

For the convenience of the reader, we record Proposition~6.1 of \cite{BM10} here in order to summarize properties of the cell decompositions $\DD^n(f,\CC)$.

\begin{prop} \label{propCellDecomp}
 Let $k,n\in \N_0$, let   $f\: S^2\rightarrow S^2$ be a Thurston map,  $\CC\subseteq S^2$ be a Jordan curve with $\post f \subseteq \CC$, and   $m=\card(\post f)$. 
 
\smallskip
\begin{itemize}

\smallskip
\item[(i)] The map  $f^k$ is cellular for $(\DD^{n+k}(f,\CC), \DD^n(f,\CC))$. In particular, if  $c$ is any $(n+k)$-cell, then $f^k(c)$ is an $n$-cell, and $f^k|_c$ is a homeomorphism of $c$ onto $f(c)$.

\smallskip
\item[(ii)]  Let  $c$ be  an $n$-cell.  Then $f^{-k}(c)$ is equal to the union of all 
$(n+k)$-cells $c'$ with $f^k(c')=c$.

\smallskip
\item[(iii)] The  $0$-skeleton of $\DD^n(f,\CC)$ is the set $\V^n(f,\CC)=f^{-n}(\post f )$, and we have $\V^n(f,\CC) \subseteq \V^{n+k}(f,\CC)$.  The $1$-skeleton of $\DD^n(f,\CC)$ is  equal to  $f^{-n}(\CC)$. 

\smallskip
\item[(iv)] $\card (\V^n(f,\CC)) \leq m (\deg f)^n$,  $\card(\E^n(f,\CC))=m(\deg f)^n$,  and 
$\card(\X^n(f,\CC))=2(\deg f)^n$.

\smallskip
\item[(v)] The $n$-edges are precisely the closures of the connected components of $f^{-n}(\CC)\setminus f^{-n}(\post f )$. The $n$-tiles are precisely the closures of the connected components of $S^2\setminus f^{-n}(\CC)$.

\smallskip
\item[(vi)] Every $n$-tile  is an $m$-gon, i.e., the number of $n$-edges and the number of $n$-vertices contained in its boundary are equal to $m$.  

\end{itemize}
\end{prop}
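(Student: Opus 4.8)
The plan is to induct on $n$, the engine being the recursive construction of $\DD^n(f,\CC)$ as the pullback of $\DD^{n-1}(f,\CC)$ under $f$. For the base case $n=0$ one checks directly from the Jordan curve theorem that $\DD^0 = \X^0 \cup \E^0 \cup \overline\V^0$ satisfies the four axioms of Definition~\ref{defcelldecomp}: the circle $\CC$ bounds two closed Jordan domains (the $0$-tiles), and removing the $m = \card(\post f)$ points of $\post f$ from $\CC$ leaves $m$ open arcs (the $0$-edges). For $n = 0$, assertions (iii)--(vi) are then transparent: the $0$-skeleton is $\post f = f^{-0}(\post f)$, the $1$-skeleton is $\CC$, there are $2$ tiles, $m$ edges and $m$ vertices, and each $0$-tile has boundary all of $\CC$, hence is an $m$-gon. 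For the inductive step one invokes \cite[Lemma~5.4]{BM10}: the critical values of $f$ lie in $f(\crit f) \subseteq \post f = \V^0$, and inductively $\V^n$ is the $0$-skeleton of $\DD^n$, so the closures of the connected components of the sets $f^{-1}(\inte\sigma)$, as $\sigma$ ranges over the cells of $\DD^n$, form a cell decomposition $\DD^{n+1}$ for which $f$ is cellular. This is precisely assertion (i) in the case $k = 1$.

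Granting this, (i) for general $k$ follows by composing cellular maps along the chain $\DD^{n+k} \to \DD^{n+k-1} \to \dots \to \DD^n$: in each step the restriction of $f$ to a cell is a homeomorphism onto a cell, and a composition of such restrictions is a homeomorphism onto a cell, so $f^k$ carries each $(n+k)$-cell $c$ homeomorphically onto the $n$-cell $f^k(c)$. For (ii) the essential inputs are that a branched covering map is open and that the interiors of the cells of a fixed dimension partition $S^2$: given an $n$-cell $c$ and a point $x \in f^{-k}(c)$, let $c'$ be the $(n+k)$-cell with $x \in \inte c'$; then $f^k(\inte c') = \inte(f^k(c'))$ is an $n$-cell interior meeting $c$, which forces $f^k(c') \subseteq c$. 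If $f^k(x) \in \inte c$ then $f^k(c') = c$; if $f^k(x) \in \partial c$, one uses openness of $f^k$ to approximate $x$ by points whose image lies in $\inte c$, so that $x$ belongs to the (closed) union $\bigcup\{c'' : f^k(c'') = c\}$. Since the reverse inclusion is obvious, $f^{-k}(c) = \bigcup\{c'' : f^k(c'') = c\}$.

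Assertion (iii) is read off directly from the pullback construction: the $0$-skeleton of $\DD^{n+1}$ is the $f$-preimage of the $0$-skeleton of $\DD^n$, i.e.\ $f^{-1}(f^{-n}(\post f)) = f^{-(n+1)}(\post f)$, and the $1$-skeleton of $\DD^{n+1}$ is $f^{-1}(f^{-n}(\CC)) = f^{-(n+1)}(\CC)$; the inclusion $\V^n \subseteq \V^{n+k}$ follows from forward-invariance $f(\post f) \subseteq \post f$, which gives $\post f \subseteq f^{-k}(\post f)$ and hence $f^{-n}(\post f) \subseteq f^{-(n+k)}(\post f)$. For (v) one uses the general feature of the pullback decomposition that the interiors of its $1$-cells are the connected components of the $1$-skeleton minus the $0$-skeleton and the interiors of its $2$-cells the connected components of the complement of the $1$-skeleton; combined with (iii) this is exactly the claimed description of the $n$-edges and $n$-tiles. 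For the counting in (iv), $f^n$ has degree $(\deg f)^n$, and any $p \notin \post f$ fails to be a critical value of $f^n$ because $f^n(\crit f^n) \subseteq \post f$, so $\card((f^n)^{-1}(p)) = (\deg f)^n$. Choosing $p$ in the interior of a $0$-tile (resp.\ of a $0$-edge $e$), each preimage lands in the interior of an $n$-tile mapping onto that $0$-tile (resp.\ an $n$-edge mapping onto $e$), and injectivity of $f^n$ on each cell makes $q \mapsto (\text{the cell through } q)$ a bijection; thus there are $(\deg f)^n$ $n$-tiles over each of the $2$ $0$-tiles and $(\deg f)^n$ $n$-edges over each of the $m$ $0$-edges, giving $\card(\X^n) = 2(\deg f)^n$ and $\card(\E^n) = m(\deg f)^n$, while $\card(\V^n) = \card(f^{-n}(\post f)) \leq (\deg f)^n \card(\post f) = m(\deg f)^n$. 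Finally, (vi) holds because $f^n$ restricts to a homeomorphism of an $n$-tile $X$ onto a $0$-tile taking $\partial X$ onto $\CC$, and being cellular it maps the $n$-cells contained in $\partial X$ bijectively onto the $0$-cells contained in $\CC$, of which there are $m$ edges and $m$ vertices.

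The one genuinely substantial ingredient is the one deferred above, \cite[Lemma~5.4]{BM10}: that pulling a cell decomposition back through a branched cover all of whose critical values are vertices again yields a cell decomposition for which the map is cellular. Establishing it requires analyzing, near each preimage of a $0$-vertex, the local normal form $z \mapsto z^d$, under which a vertex star of $N$ cells pulls back to a star of $dN$ cells, together with a global argument that the closure of each complementary component of the pulled-back $1$-skeleton is a closed topological disk; this is the only place where the branched-covering structure enters essentially. Once that lemma is in hand, the inductions for (i) and (vi), the openness argument for (ii), and the degree computation for (iv) are all routine, the only point needing attention being that the ``generic'' base points in (iv) must be chosen off $\post f$ so that no critical behaviour of $f^n$ disturbs the count.
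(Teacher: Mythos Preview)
The paper does not actually prove this proposition: it is recorded verbatim from \cite[Proposition~6.1]{BM10} ``for the convenience of the reader,'' with no argument given. Your proposal, by contrast, supplies a genuine proof sketch, and the approach you take---induction on $n$ with the engine being the pullback cell decomposition of \cite[Lemma~5.4]{BM10}, followed by composition for (i), an openness/partition argument for (ii), and degree counting over generic base points for (iv)---is the natural one and is essentially how \cite{BM10} itself proceeds.

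Your sketch is correct. A few places are terse but sound: in (ii), the step ``$\inte(f^k(c'))$ meets $c$, which forces $f^k(c')\subseteq c$'' relies on the general fact that in a cell decomposition, if the interior of a cell $\sigma$ meets a cell $\tau$ then $\sigma\subseteq\tau$ (since $\tau$ is a disjoint union of cell interiors and $\inte\sigma$ can meet only one of them); and the approximation argument for points landing on $\partial c$ is fine because $f^k$ is open and the union of the relevant $(n+k)$-cells is closed. In (iv), your observation that any $p\notin\post f$ is a regular value of $f^n$ (since $f^n(\crit f^n)\subseteq\post f$) is exactly the right justification for the preimage count. The deferral of the substantive content to \cite[Lemma~5.4]{BM10} is appropriate and matches what the paper does implicitly by citing the proposition wholesale.
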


Here the \defn{$n$-skeleton}, for $n\in\{0,1,2\}$, of a cell decomposition of $S^2$ is the union of all $n$-cells in this cell decomposition.

For $n\in \N_0$, we define \defn{the set of black $n$-tiles} as
$$
\X_b^n(f,\CC)=\{X\in\X^n(f,\CC) \, |\,  f^n(X)=X_b^0\},
$$
and the \defn{set of white $n$-tiles} as
$$
\X_w^n(f,\CC)=\{X\in\X^n(f,\CC) \, |\, f^n(X)=X_w^0\}.
$$
Moreover, for $n\in\N$, we define \defn{the set of black $n$-tiles contained in a white $(n-1)$-tile} as
$$
\X_{bw}^n(f,\CC) = \{ X\in \X_b^n(f,\CC) \, |\, \exists X'\in \X_w^{n-1}(f,\CC), \, X\subseteq X' \},
$$
\defn{the set of black $n$-tiles contained in a black $(n-1)$-tile} as
$$
\X_{bb}^n(f,\CC) = \{ X\in \X_b^n(f,\CC) \, |\, \exists X'\in \X_b^{n-1}(f,\CC),\, X\subseteq X' \},
$$
\defn{the set of white $n$-tiles contained in a black $(n-1)$-tile} as
$$
\X_{wb}^n(f,\CC) = \{ X\in \X_w^n(f,\CC) \, |\, \exists X'\in \X_b^{n-1}(f,\CC),\, X\subseteq X' \},
$$
\defn{and the set of white $n$-tiles contained in a while $(n-1)$-tile} as
$$
\X_{ww}^n(f,\CC) = \{ X\in \X_w^n(f,\CC) \, |\, \exists X'\in \X_w^{n-1}(f,\CC),\, X\subseteq X' \}.
$$
In other words, for example, a black $n$-tile is an $n$-tile that is mapped by $f^n$ to the black $0$-tile, and a black $n$-tile contained in a white $(n-1)$-tile is an $n$-tile that is contained in some white $(n-1)$-tile as a set, and is mapped by $f^n$ to the black $0$-tile.

From now on, we will say the cell decompositions induced by the pair $(f,\CC)$ and induced by $f$ and $\CC$ interchangeably. If the pair $(f,\CC)$ is clear from the context, we will sometimes omit $(f,\CC)$ in the notation above.

We now define two notions of expansion by M.~Bonk and D.~Meyer \cite{BM10}.

It is proved in \cite[Corollary~6.4]{BM10} that for each expanding Thurs\-ton map $f$ (see Definition \ref{defExpanding} below), we have $\card(\post f) \geq 3$.

\begin{definition}[Joining opposite sides]  \label{defConnectop} 
Fix a Thurston map $f$ with $\card(\post f) \geq 3$ and an $f$-invariant Jordan curve $\CC$ containing $\post f$.  A set $K\subseteq S^2$ \defn{joins opposite sides} of $\CC$ if $K$ meets two disjoint $0$-edges when $\card( \post f)\geq 4$, or $K$ meets  all  three $0$-edges when $\card(\post f)=3$. 
 \end{definition}

\begin{definition}[Combinatorial expansion]\label{defCombExpanding}
Let $f$ be a Thurston map.  We say that $f$ is \defn{combinatorially expanding} if $\card (\post f)\geq 3$, and there exists an $f$-invariant Jordan curve $\CC\subseteq S^2$ (i.e., $f(\CC)\subseteq \CC$) with $\post f \subseteq \CC$, and there exists a number $n_0\in \N$ such that none of the $n_0$-tiles in $\X^{n_0}(f,\CC)$ joins opposite sides of $\CC$. 
\end{definition} 
 
\begin{definition} [Expansion] \label{defExpanding}
A Thurston map $f\:S^2\rightarrow S^2$ is called \defn{expanding} if there exist a metric $d$ on $S^2$ that induces the standard topology on $S^2$ and a Jordan curve $\CC\subseteq S^2$ containing $\post f$ such that $\lim\limits_{n\to+\infty}\max \{\diam_d(X) \,|\, X\in \X^n(f,\CC)\}=0$.
\end{definition}

\begin{rems}  \label{rmExpanding}
We observe that being expanding is a purely topological property of a Thurston map and independent of the choice of the metric $d$ that generates the standard topology on $S^2$. By Lemma~8.1 in \cite{BM10}, it is also independent of the choice of the Jordan curve $\CC$ containing $\post f$. More precisely, if $f$ is an expanding Thurston map, then
$$
\lim\limits_{n\to+\infty}\max \{\diam_{\widetilde{d}}(X) \,|\, X\in \X^n(f,\widetilde\CC)\}=0,
$$
for each metric $\widetilde{d}$ that generates the standard topology on $S^2$ and each Jordan curve $\widetilde\CC\subseteq S^2$ that contains $\post f$. From the definition, it is also clear that if $f$ is an expanding Thurston map, so is $f^n$ for each $n\in\N$.
\end{rems}

P. Ha\"{\i}ssinsky and K. Pilgrim developed a more general notion of expansion for finite branched coverings between two Hausdorff, locally compact, locally connected topological spaces (see \cite[Section~2.1 and Section~2.2]{HP09}). When restricted to Thurston maps, their notion of expansion is equivalent to our notion defined above (see \cite[Proposition~8.2]{BM10}). Such notions of expansion are the natural analogs in the context of finite branched coverings and Thurston maps to some of the more classical notions of expansion, such as expansive homeomorphisms and forward-expansive continuous maps between compact metric spaces (see for example, \cite[Definition~3.2.11]{KH95}), and distance-expanding maps between compact metric spaces (see for example, \cite[Chapter~4]{PU10}). Our notion of expansion is not equivalent to any of such classical notions in the context of Thurston maps.

M.~Bonk and D.~Mayer proved that if two expanding Thurston maps are Thurston equivalent, then they are topologically conjugate (see \cite[Theorem~10.4]{BM10}).

For an expanding Thurston map $f$, we can fix a metric $d$ for $f$ on $S^2$ called a visual metric. For the existence and properties of such metrics, see \cite[Chapter~8]{BM10}. In particular, we will need the fact that $d$ induces the standard topology on $S^2$ (\cite[Proposition~8.9]{BM10}). One major advantage of visual metrics $d$ is that in $(S^2,d)$ we have good quantitative control over the sizes of the cells in the cell decompositions discussed above, see \cite[Lemma~8.10]{BM10}. More precisely,

\begin{lemma}[M.~Bonk \& D.~Meyer, 2010]  \label{lmBMCellSizeBounds}
Let $f\: S^2\rightarrow S^2$ be an expanding Thurston map, $\CC\subseteq S^2$ be a Jordan curve with $\post f\subseteq \CC$, and $d$ a visual metric for $f$. Then there exists a constant $\Lambda>1$ called the expansion factor, and a constant $C\geq 1$ such that for each $n\in\N_0$,

\begin{itemize}

\smallskip

\item[(i)] $d(\delta,\tau) \geq \frac{1}{C} \Lambda^{-n}$ whenever $\delta$ and $\tau$ are disjoint $n$-cells,

\smallskip

\item[(ii)] $\frac{1}{C} \Lambda^{-n} \leq \diam_d(\tau) \leq C\Lambda^{-n}$ for all $n$-edges and all $n$-tiles $\tau$.

\end{itemize}
\end{lemma}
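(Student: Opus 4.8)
\emph{The plan} is to deduce the lemma from the defining property of a visual metric together with the combinatorics of the decompositions $\DD^n(f,\CC)$. Recall from \cite[Chapter~8]{BM10} that, for our fixed $f$ and $\CC$, a metric $d$ generating the standard topology is visual with expansion factor $\Lambda>1$ if and only if there is a constant $C_0\geq 1$ with
$$
\frac{1}{C_0}\Lambda^{-m(x,y)}\;\leq\; d(x,y)\;\leq\; C_0\,\Lambda^{-m(x,y)} \qquad\text{for all }x,y\in S^2,
$$
where $m(x,y)=m_{f,\CC}(x,y)\in\N_0\cup\{+\infty\}$ is the largest $m$ for which there exist $m$-tiles $X\ni x$ and $Y\ni y$ with $X\cap Y\neq\emptyset$ (and $m(x,x)=+\infty$); expansion guarantees $m(x,y)<+\infty$ for $x\neq y$, and by \cite[Lemma~8.1]{BM10} the resulting estimates are insensitive to the choice of Jordan curve, so we may use the curve $\CC$ from the statement. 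With this, the whole lemma is just two-sided control of $m(x,y)$ in terms of $n$.

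\emph{The two routine halves.} For the upper bound in (ii): if $\tau$ is an $n$-tile, then for any $x,y\in\tau$ the tile $\tau$ itself shows $m(x,y)\geq n$, hence $d(x,y)\leq C_0\Lambda^{-n}$; if $\tau$ is an $n$-edge it lies on the boundary of an $n$-tile by Proposition~\ref{propCellDecomp}, so the same bound holds. Thus $\diam_d(\tau)\leq C_0\Lambda^{-n}$ in both cases. The lower bound in (ii) I would obtain from (i): by Proposition~\ref{propCellDecomp}(i), $f^n|_\tau$ is a homeomorphism of $\tau$ onto a $0$-cell, which (for a tile) is an $m$-gon with $m=\card(\post f)\geq 3$ by Proposition~\ref{propCellDecomp}(vi) and \cite[Corollary~6.4]{BM10}, and (for an edge) is an arc with two endpoints; pulling two distinct vertices of that $0$-cell back through $(f^n|_\tau)^{-1}$ yields two distinct $n$-vertices $v,w\in\tau$, i.e.\ disjoint $n$-cells $\{v\},\{w\}$, so by (i) $\diam_d(\tau)\geq d(v,w)\geq \tfrac1C\Lambda^{-n}$.

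\emph{The crux is (i).} I would prove the combinatorial statement: there is $K=K(f,\CC)\in\N$ such that if $\delta,\tau$ are disjoint $n$-cells, then no $(n+K)$-tile meeting $\delta$ intersects an $(n+K)$-tile meeting $\tau$; equivalently $m(x,y)\leq n+K$ for all $x\in\delta$, $y\in\tau$. Granting this, $d(x,y)\geq \tfrac{1}{C_0}\Lambda^{-m(x,y)}\geq \tfrac{1}{C_0\Lambda^{K}}\Lambda^{-n}$, so taking $C:=C_0\Lambda^{K}\ (\geq 1)$ proves (i) and, together with the previous paragraph, all of (ii). To prove the combinatorial claim I would transport the picture to level $0$: since $\DD^{n+j}$ refines $\DD^n$ (Proposition~\ref{propCellDecomp}), an $(n+K)$-tile meeting $\delta$ lies in some $n$-tile $T$ meeting $\delta$, and $f^n|_T$ is a homeomorphism of $T$ onto a $0$-tile carrying the trace of $\DD^{n+j}$ on $T$ to the trace of $\DD^{j}$ on that $0$-tile (Proposition~\ref{propCellDecomp}(i) applied inside $T$); this reduces the question to the finitely many configurations of disjoint cells inside a fixed $0$-tile. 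For each such configuration the expansion hypothesis — concretely the combinatorial-expansion constant $n_0$ of \cite[Proposition~8.2]{BM10}, which prevents deep subdivisions of a $0$-tile from reaching across it — supplies a bound, and one lets $K$ be the maximum of these finitely many bounds. Edges and vertices $\delta$ are handled identically, using that $\delta$ lies on the boundary of finitely many $n$-tiles, one of which must contain the relevant $(n+K)$-tile.

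\emph{Expected obstacle.} The only real difficulty is this last step — extracting a single $K$ valid for all $n$ and all disjoint $n$-cells — and it is the one place where expansion, rather than the mere refinement/Markov structure, is used essentially: two disjoint $n$-cells can lie in $n$-tiles that meet (e.g.\ two non-adjacent edges of a single tile), so no uniform separation holds at level $n$ itself and one genuinely needs a bounded number of further subdivisions. A secondary technical point is checking that the level-$n$-to-level-$0$ transport is compatible with passing to all finer decompositions simultaneously, and the bookkeeping when $\delta$ or $\tau$ is a lower-dimensional cell lying on several tiles.
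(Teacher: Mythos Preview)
The paper does not give its own proof of this lemma: it is quoted verbatim from \cite[Lemma~8.10]{BM10} and attributed to Bonk and Meyer, with no argument supplied. So there is nothing in the paper to compare your proposal against.

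That said, your approach is the standard one and is essentially how the result is proved in \cite{BM10}: one unwinds the defining two-sided estimate $d(x,y)\asymp\Lambda^{-m_{f,\CC}(x,y)}$ for a visual metric and reduces both parts to combinatorial control of $m_{f,\CC}(x,y)$. The upper bound in (ii) and the derivation of the lower bound in (ii) from (i) are exactly right. For (i), your reduction---mapping down by $f^n$ to the finitely many level-$0$ configurations and invoking expansion to get a uniform $K$---is the correct idea and matches the argument in \cite[Chapter~8]{BM10}; the ``expected obstacle'' you flag (that disjoint $n$-cells may sit in a common $n$-tile, so one genuinely needs $K>0$ further subdivisions) is real and is precisely where the expansion hypothesis enters. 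One small quibble: your phrasing ``equivalently $m(x,y)\leq n+K$'' is slightly loose---your combinatorial claim actually gives $m(x,y)<n+K$---but this is harmless since it only affects the constant. The bookkeeping for lower-dimensional cells that you anticipate is handled in \cite{BM10} via the notion of flowers, which packages the ``finitely many $n$-tiles containing a given cell'' argument cleanly.
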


A Jordan curve $\CC\subseteq S^2$ is $f$-invariant if $f(\CC)\subseteq \CC$. We are interested in $f$-invariant Jordan curves that contain $\post f$, since for such a curve $\CC$, the partition $(\DD^1(f,\CC),\DD^0(f,\CC))$ is then a cellular Markov partition for $f$. According to Example~15.5 in \cite{BM10}, $f$-invariant Jordan curves containing $\post{f}$ need not exist. However, M.~Bonk and D.~Meyer proved in \cite[Theorem~1.2]{BM10} that for each sufficiently large $n$ depending on $f$, an $f^n$-invariant Jordan curve $\CC$ containing $\post{f}$ always exists. We will need a slightly stronger version in this paper. Its proof is almost the same as that of \cite[Theorem~1.2]{BM10}. For the convenience of the reader, we include the proof here.

\begin{lemma}  \label{lmCexists}
Let $f\:S^2\rightarrow S^2$ be an expanding Thurston map, and $\widetilde{\CC}\subseteq S^2$ be a Jordan curve with $\post f\subseteq \widetilde{\CC}$. Then there exists an integer $N(f,\widetilde{\CC}) \in \N$ such that for each $n\geq N(f,\widetilde{\CC})$ there exists an $f^n$-invariant Jordan curve $\CC$ isotopic to $\widetilde{\CC}$ rel.\ $\post f$ such that no $n$-tile in $\DD^n(f,\CC)$ joins opposite sides of $\CC$.
\end{lemma}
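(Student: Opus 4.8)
The plan is to follow the proof of \cite[Theorem~1.2]{BM10} and to carry along the extra bookkeeping needed for the two strengthenings here --- that the invariant curve can be taken isotopic to the prescribed $\widetilde{\CC}$, and that it has the stated ``no opposite sides'' property. Fix a visual metric $d$ for $f$ with expansion factor $\Lambda>1$. The first step is to observe that $f$ is combinatorially expanding with respect to $\widetilde{\CC}$: by Lemma~\ref{lmBMCellSizeBounds} the diameters of the $n$-tiles of $(f,\widetilde{\CC})$ tend to $0$ as $n\to\infty$, while disjoint $0$-edges of $\widetilde{\CC}$ stay a fixed positive distance apart, so there is an $n_0\in\N$ such that no $n_0$-tile of $\DD^{n_0}(f,\widetilde{\CC})$ joins opposite sides of $\widetilde{\CC}$; since every $n$-tile with $n\geq n_0$ lies in an $n_0$-tile, no $n$-tile of $\DD^n(f,\widetilde{\CC})$ joins opposite sides of $\widetilde{\CC}$ for any $n\geq n_0$.

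Next, I would fix $n\geq n_0$, set $g:=f^n$, and push $\widetilde{\CC}$ into the fine skeleton. The argument of \cite{BM10} uses the combinatorial expansion property above to construct an isotopy $\Phi$ of $S^2$ rel $\post f$ with $\Phi_0=\id_{S^2}$ carrying $\widetilde{\CC}$ onto a Jordan curve $\CC_1\subseteq f^{-n}(\widetilde{\CC})$ that is a union of $n$-edges of $(f,\widetilde{\CC})$; then $g(\CC_1)\subseteq\widetilde{\CC}$ and $\CC_1$ is isotopic to $\widetilde{\CC}$ rel $\post f$. Since the critical values of $g^k=f^{nk}$ lie in $\post f^{nk}=\post f$, the isotopy $\Phi$ lifts through each $g^k$; write $\Phi^{(k)}$ for its lift through $g^k$, so $\Phi^{(0)}=\Phi$, $\Phi^{(k)}_0=\id_{S^2}$, $\Phi^{(k)}_t|_{\post f}=\id$, and $g\circ\Phi^{(k)}_t=\Phi^{(k-1)}_t\circ g$. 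With $\CC_0:=\widetilde{\CC}$ and $\CC_{k+1}:=\Phi^{(k)}_1(\CC_k)$, a routine induction based on $g\circ\Phi^{(k)}_1=\Phi^{(k-1)}_1\circ g$ gives $\CC_{k+1}\subseteq g^{-1}(\CC_k)$, hence $g(\CC_{k+1})\subseteq\CC_k$ and $\CC_k\subseteq g^{-k}(\widetilde{\CC})=f^{-nk}(\widetilde{\CC})$, while $\CC_{k+1}$ is isotopic to $\CC_k$ rel $\post f$.

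I would then take the limit. The expansion estimates of \cite{BM10}, which rest on Lemma~\ref{lmBMCellSizeBounds}, bound the displacement of $\Phi^{(k)}$ by $C'\Lambda^{-nk}$ with $C'$ independent of $k$, so the curves $\CC_k$---with suitable parametrizations---form a uniformly convergent sequence, and the standard argument of \cite{BM10} shows the limit $\CC:=\lim_k\CC_k$ is a Jordan curve with $\post f\subseteq\CC$. Letting $k\to\infty$ in $g(\CC_{k+1})\subseteq\CC_k$ gives $f^n(\CC)=g(\CC)\subseteq\CC$, so $\CC$ is $f^n$-invariant; and $\CC$ is isotopic to $\widetilde{\CC}$ rel $\post f$, being a uniform limit of parametrized Jordan curves each isotopic to $\widetilde{\CC}$ rel $\post f$ with $\post f$ lying on all of them in the same cyclic order. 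Finally, since $\CC$ is $f^n$-invariant, $(\DD^n(f,\CC),\DD^0(f,\CC))$ is a cellular Markov partition for $g$, so the $n$-tiles of $\DD^n(f,\CC)$ are exactly the $1$-tiles of $(g,\CC)$; the assertion that no $n$-tile of $\DD^n(f,\CC)$ joins opposite sides of $\CC$ is the statement that $g$, with the curve $\CC$, is combinatorially expanding already at level $1$. This is the property built into the construction --- the approximating curves $\CC_k$ lie in skeleta $f^{-nk}(\widetilde{\CC})$ in which no $nk$-tile joins opposite sides of $\widetilde{\CC}$ --- and checking that it passes to the invariant limit $\CC$ is carried out as in \cite{BM10}, via the estimates discussed below.

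The main obstacle is the second step: extracting from the bare combinatorial expansion of $f$ with respect to $\widetilde{\CC}$ an actual Jordan curve $\CC_1$ made of $n$-edges and isotopic to $\widetilde{\CC}$ rel $\post f$. This is the combinatorial heart of \cite[Theorem~1.2]{BM10}: one analyzes how the $n$-tiles of $(f,\widetilde{\CC})$ sit inside the two $0$-tiles of $\widetilde{\CC}$ and along $\widetilde{\CC}$, and splices $n$-edges into a single Jordan curve in the correct isotopy class, the ``no opposite sides'' hypothesis being exactly what makes this possible. A secondary but also delicate point --- the content that goes slightly beyond the bare statement of \cite[Theorem~1.2]{BM10} --- is making the displacement bounds of the third step uniform in $n$, so that the limit curve $\CC$ is genuinely isotopic to the given $\widetilde{\CC}$ and genuinely inherits the ``no $n$-tile joins opposite sides'' property; this is what forces one to keep the entire construction tethered to the fixed cell decompositions $\DD^n(f,\widetilde{\CC})$, $n\in\N_0$.
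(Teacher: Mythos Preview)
Your approach differs substantially from the paper's. You build the invariant curve as a limit of iterated lifts $\CC_k$; the paper instead argues as follows. From \cite[Lemma~15.9]{BM10} one obtains, for each $n$ large, a curve $\CC'\subseteq f^{-n}(\widetilde{\CC})$ isotopic to $\widetilde{\CC}$ rel $\post f$ such that no $n$-tile for $(f,\widetilde{\CC})$ joins opposite sides of $\CC'$. With $F=f^n$ and the isotopy $H$ carrying $\widetilde{\CC}$ to $\CC'$, the map $H_1\circ F$ is combinatorially expanding for $\CC'$; \cite[Corollary~13.18]{BM10} then supplies a homeomorphism $\phi$ isotopic to $\id$ rel $\post f$ with $\phi(\CC')=\CC'$ and $G=\phi\circ H_1\circ F$ expanding. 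Now $F$ and $G$ are Thurston equivalent expanding maps, so by \cite[Theorem~10.4]{BM10} they are topologically conjugate via some $h$ isotopic to $\id$ rel $F^{-1}(\post F)$. One sets $\CC=h(\CC')$ and reads off invariance, the isotopy class, and --- crucially --- the ``no opposite sides'' property, all by transporting through the homeomorphism $h$.

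Two issues with your argument. First, a minor one: your claim that every $n$-tile with $n\geq n_0$ lies in an $n_0$-tile requires $\DD^n(f,\widetilde{\CC})$ to refine $\DD^{n_0}(f,\widetilde{\CC})$, which fails unless $\widetilde{\CC}$ is already $f$-invariant. The conclusion you want still holds directly from the diameter bound, so this is easily repaired.

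The real gap is in your last step. You need that no $1$-tile of $(f^n,\CC)$ joins opposite sides of $\CC$, but what your construction controls is how tiles of $(f,\widetilde{\CC})$ sit relative to $\widetilde{\CC}$ (or to the $\CC_k$). These are different cell decompositions and different curves; passing from one to the other under a limit is exactly the point where hand-waving to \cite{BM10} is not enough. There is no uniform (in $n$) bound on the diameters of $1$-tiles of $(f^n,\CC)$ available a priori, since the constant in Lemma~\ref{lmBMCellSizeBounds} depends on the curve, and the curve depends on $n$. The paper's route through \cite[Theorem~10.4]{BM10} sidesteps this entirely: the conjugacy $h$ carries $1$-cells of $(G,\CC')$ bijectively to $1$-cells of $(F,\CC)$ and carries $\CC'$ to $\CC$, so the combinatorial ``no opposite sides'' property transfers on the nose, with no limiting argument needed. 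If you want to salvage the iterative approach, you would have to prove that the composed lifts converge to a genuine conjugacy and then argue as the paper does --- at which point you have essentially reproduced \cite[Theorem~10.4]{BM10}.
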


\begin{proof}
By \cite[Lemma~15.9]{BM10}, there exists an integer $N(f,\widetilde{\CC})\in\N$ such that for each $n \geq N(f,\widetilde{\CC})$, there exists a Jordan curve $\CC' \subseteq f^{-n} (\widetilde{\CC})$ that is isotopic to $\widetilde{\CC}$ rel.\ $\post f$, and no $n$-tile for $(f,\widetilde{\CC})$ joins opposite sides of $\CC'$. Let $H\:S^2 \times [0,1] \rightarrow S^2$ be this isotopy rel.\ $\post f$. We set $H_t(x)=H(x,t)$ for $x\in S^2, t\in [0,1]$. We have $H_0=\id_{S^2}$ and $\CC'=H_1(\widetilde{\CC}) \subseteq f^{-n}(\widetilde{\CC})$.

If we set $F=f^n$, then $\post F=\post f$ and $F$ is also an expanding Thurston map (\cite[Lemma~8.4]{BM10}). Note that $F$ is cellular for $(\DD^n(f,\widetilde{\CC}), \DD^0(f,\widetilde{\CC}))$. So $\DD^1(F,\widetilde{\CC})=\DD^n(f,\widetilde{\CC})$ (see \cite[Lemma~5.4]{BM10}). Thus no $1$-cell for $(H_1 \circ F, \CC')$ joins opposite sides of $\CC'$, and thus $H_1 \circ F$ is combinatorially expanding for $\CC'$. Note that $\CC'$ contains $\post(H_1\circ F)=\post F=\post f$. By Corollary~13.18 in \cite{BM10}, there exists a homeomorphism $\phi\:S^2\rightarrow S^2$ that is isotopic to the identity rel.\ $\post{(H_1\circ F)}$ such that $\phi(\CC')=\CC'$ and $G=\phi \circ H_1 \circ F$ is an expanding Thurston map. Since $\phi\circ H_1$ is isotopic to the identity on $S^2$ rel.\ $\post F$, the pair $F$ and $G$ are Thurston equivalent. By Theorem~10.4 in \cite{BM10}, there exists a homeomorphism $h\:S^2\rightarrow S^2$ that is isotopic to the identity on $S^2$ rel.\ $F^{-1}(\post F)$ with $F\circ h=h\circ G$. Set $\CC = h(\CC')$. Then $\CC$ is a Jordan curve in $S^2$ that is isotopic to $\CC'$ rel.\ $F^{-1}(\post F)$ and thus isotopic to $\widetilde{\CC}$ rel.\ $\post F$. Since $F(\CC)=F(h(\CC'))=h(G(\CC'))=h(\phi(H_1(F(\CC')))) \subseteq h(\phi(\CC'))=h(\CC')=\CC$, we get that $\CC$ is $F$-invariant.

Moreover, since no $1$-cell for $(H_1 \circ F, \CC')$ joins opposite sides of $\CC'$, $H_1\circ F (\CC')\subseteq H_1(\widetilde{\CC})=\CC'$, $\phi\:S^2 \rightarrow S^2$ is a homeomorphism with $\phi(\CC')=\CC'$, $G=\phi \circ H_1 \circ F$, we can conclude that $G(\CC') \subseteq \CC'$ and no $1$-cell for $(G,\CC')$ joins opposite sides of $\CC'$.  Since $h\:S^2\rightarrow S^2$ is a homeomorphism, $\CC=h(\CC')$, and $F\circ h=h \circ G$, we can finally conclude that no $1$-cell for $(F,\CC)$ joins opposite sides of $\CC$. Therefore no $n$-cell for $(f,\CC)$ joins opposite sides of $\CC$.
\end{proof}

In fact, we will only need the following corollary of Lemma~\ref{lmCexists} in the following sections.

\begin{cor}  \label{corCexists}
Let $f\:S^2\rightarrow S^2$ be an expanding Thurston map. Then there exists a constant $N(f)>0$ such that for each $n \geq N(f)$, there exists an $f^n$-invariant Jordan curve $\CC$ containing $\post f$ such that no $n$-tile in $\DD^n(f,\CC)$ joins opposite sides of $\CC$.
\end{cor}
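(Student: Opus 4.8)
The plan is to deduce Corollary~\ref{corCexists} directly from Lemma~\ref{lmCexists}, which is the substantive statement; the corollary merely drops the extra data (the reference curve $\widetilde\CC$, the isotopy class, the bound depending on $\widetilde\CC$) in favor of a clean existence statement. First I would fix, once and for all, an arbitrary Jordan curve $\widetilde\CC\subseteq S^2$ with $\post f\subseteq\widetilde\CC$. Such a curve exists for any Thurston map: since $\post f$ is a finite set (by Definition~\ref{defThurstonMap}) with at least three points (by \cite[Corollary~6.4]{BM10}, as recorded before Definition~\ref{defConnectop}), one can choose an embedded arc through the points of $\post f$ and close it up, or simply invoke that any finite subset of $S^2$ lies on some Jordan curve. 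With $\widetilde\CC$ chosen, I would apply Lemma~\ref{lmCexists} to the pair $(f,\widetilde\CC)$ to obtain the integer $N(f,\widetilde\CC)\in\N$ with the stated property.

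Next I would set $N(f) = N(f,\widetilde\CC)$, which is a positive constant depending only on $f$ (since $\widetilde\CC$ was chosen depending only on $f$). Then for each $n\geq N(f)$, Lemma~\ref{lmCexists} furnishes an $f^n$-invariant Jordan curve $\CC$ that is isotopic to $\widetilde\CC$ rel.\ $\post f$ and such that no $n$-tile in $\DD^n(f,\CC)$ joins opposite sides of $\CC$. The only remaining point to check is that $\CC$ contains $\post f$: since $\CC$ is isotopic to $\widetilde\CC$ rel.\ $\post f$, the isotopy fixes every point of $\post f$ throughout, so in particular $\post f\subseteq\widetilde\CC$ implies $\post f\subseteq\CC$. (Alternatively, an $f$-invariant Jordan curve arising this way automatically contains $\post f^n=\post f$ by the standard fact that an invariant curve through finitely many of the postcritical points must contain all of them; but the isotopy argument is immediate.) This gives exactly the conclusion of the corollary, and I would simply discard the unused extra information (the isotopy class and the particular $\widetilde\CC$).

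There is essentially no obstacle here: the corollary is a strict weakening of Lemma~\ref{lmCexists} obtained by existential quantification over the auxiliary curve $\widetilde\CC$. The only thing requiring a word of justification — and the single place one could slip — is the initial claim that a Jordan curve containing the finite set $\post f$ exists at all, which is elementary, together with the observation that isotopy rel.\ $\post f$ preserves the containment $\post f\subseteq\CC$. Accordingly the proof is two lines: choose $\widetilde\CC\supseteq\post f$, set $N(f)=N(f,\widetilde\CC)$, and invoke Lemma~\ref{lmCexists}.
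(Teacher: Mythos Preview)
Your proposal is correct and follows essentially the same approach as the paper: choose an arbitrary Jordan curve $\widetilde\CC\supseteq\post f$, set $N(f)=N(f,\widetilde\CC)$, and invoke Lemma~\ref{lmCexists}. The paper's proof is the two-line version you describe at the end; your added remarks on the existence of $\widetilde\CC$ and on why $\post f\subseteq\CC$ are fine but are taken as routine in the paper.
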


\begin{proof}
We can choose an arbitrary Jordan curve $\widetilde\CC\subseteq S^2$ containing $\post f$ and set $N(f)= N(f,\widetilde\CC)$, and $\CC$ an $f^n$-invariant Jordan curve containing $\post f$ as in Lemma~\ref{lmCexists}.
\end{proof}

\begin{lemma}   \label{lmPreImageDense}
Let $f\:S^2\rightarrow S^2$ be an expanding Thurston map. Then for each $p\in S^2$, the set $\bigcup\limits_{n=1}^{+\infty}  f^{-n}(p)$ is dense in $S^2$, and
\begin{equation}   \label{eqCardn-preimgGoToInfty}
\lim\limits_{n\to +\infty}  \card(f^{-n}(p))  = +\infty.
\end{equation}
\end{lemma}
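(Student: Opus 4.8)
The plan is to prove the two assertions separately, deriving the cardinality statement from the density statement.

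\textbf{Density.} Fix $p\in S^2$ and choose an arbitrary Jordan curve $\CC\subseteq S^2$ with $\post f\subseteq\CC$, so that the cell decompositions $\DD^n(f,\CC)$ from Section~\ref{sctThurstonMap} are available. Fix a visual metric $d$ for $f$, a point $x\in S^2$, and $\ve>0$. By expansion (Definition~\ref{defExpanding}, or Lemma~\ref{lmBMCellSizeBounds}(ii)) we may choose $n\in\N$ so large that $\diam_d(X)<\ve/2$ for every $X\in\X^n(f,\CC)$. Since the $n$-tiles cover $S^2$, the point $x$ lies in some $n$-tile $X^n$; by Proposition~\ref{propCellDecomp}(vi) the boundary $\partial X^n$ contains an $n$-edge $e$, which — by the structure of these cell decompositions \cite{BM10} — is contained in exactly two $n$-tiles, namely $X^n$ and one other $n$-tile $Y^n$, and these two tiles have opposite colors (one black, one white). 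The point $p$ lies in at least one of the two $0$-tiles $X^0_b, X^0_w$, say $p\in X^0$; then exactly one of $X^n, Y^n$, call it $Z^n$, satisfies $f^n(Z^n)=X^0$. Since $f^n|_{Z^n}$ is a homeomorphism of $Z^n$ onto $X^0$ (Proposition~\ref{propCellDecomp}(i)) and $p\in X^0$, there is a point $q\in Z^n$ with $f^n(q)=p$, i.e.\ $q\in f^{-n}(p)$. Both $x$ and $q$ lie in $X^n\cup Z^n$, a set of diameter less than $\ve$ (the two tiles share the edge $e$, so $\diam_d(X^n\cup Z^n)\le\diam_d(X^n)+\diam_d(Z^n)<\ve$), whence $d(x,q)<\ve$. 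As $x$ and $\ve$ were arbitrary, $\bigcup_{n\ge1}f^{-n}(p)$ is dense in $S^2$.

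\textbf{Growth of $\card(f^{-n}(p))$.} First note that $f$ is surjective: for each $y\in S^2$, $\deg f=\sum_{x\in f^{-1}(y)}\deg_f(x)\ge2$, so $f^{-1}(y)\ne\emptyset$. Next, each $f^{-n}(p)$ is finite, since $\sum_{q\in f^{-n}(p)}\deg_{f^n}(q)=\deg(f^n)=(\deg f)^n$ by \eqref{eqDeg=SumLocalDegree} and \eqref{eqDegreeProduct} and every term is at least $1$. Surjectivity gives $f\bigl(f^{-(n+1)}(p)\bigr)=f^{-n}(p)$, so $\card(f^{-(n+1)}(p))\ge\card(f^{-n}(p))$: the sequence is non-decreasing, and it suffices to show it is unbounded. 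Suppose not; then there are $N,M\in\N$ with $\card(f^{-n}(p))=M$ for all $n\ge N$, so $f\colon f^{-(n+1)}(p)\to f^{-n}(p)$ is a bijection for each such $n$. Fix $n\ge N$ and $q\in f^{-n}(p)$. Every $x\in f^{-1}(q)$ satisfies $f^{n+1}(x)=p$, hence $f^{-1}(q)\subseteq f^{-(n+1)}(p)$; since $f$ is injective on $f^{-(n+1)}(p)$, this forces $\card(f^{-1}(q))=1$, and then \eqref{eqDeg=SumLocalDegree} gives $\deg_f(q')=\deg f\ge2$ for the unique $q'\in f^{-1}(q)$. Thus, for every $n\ge N$, the set $f^{-(n+1)}(p)$ is contained in the finite set $S=\{x\in S^2\,|\,\deg_f(x)=\deg f\}\subseteq\crit f$. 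Consequently $\bigcup_{n\ge1}f^{-n}(p)\subseteq S\cup\bigcup_{n=1}^{N}f^{-n}(p)$ is finite, contradicting the density just proved (a finite subset of $S^2$ is not dense). Hence $\card(f^{-n}(p))$ is unbounded, and being non-decreasing it tends to $+\infty$, proving \eqref{eqCardn-preimgGoToInfty}.

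\textbf{Main obstacle.} The substantive step is the density argument, and within it the use of the combinatorial fact that in $\DD^n(f,\CC)$ every $n$-edge borders exactly two $n$-tiles of opposite colors; this is precisely what allows us to locate, near an \emph{arbitrary} point of $S^2$, an $n$-tile that $f^n$ carries onto the \emph{specific} $0$-tile containing $p$. Once density is available, the finiteness and monotonicity of $\card(f^{-n}(p))$ together with the elementary degree identity make the second assertion a short formal deduction.
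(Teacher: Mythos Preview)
Your proof is correct. The density argument is essentially identical to the paper's: both find, near an arbitrary point, an $n$-tile of the appropriate color (the one mapping onto the $0$-tile containing $p$) by using that every $n$-tile is adjacent across an $n$-edge to an $n$-tile of the opposite color, and then invoke that tile diameters tend to zero.

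For \eqref{eqCardn-preimgGoToInfty} the two arguments diverge. The paper argues directly from the geometry: since every white $n$-tile contains a point of $f^{-n}(p)$ and every black $n$-tile meets a white one, the set $f^{-n}(p)$ is a $2\,\mesh_n$-net in $S^2$; a bounded number of points cannot form arbitrarily fine nets in a space of positive diameter. Your route is instead algebraic: you show the sequence $\card(f^{-n}(p))$ is non-decreasing, and that if it stabilized then every point of $f^{-n}(p)$ (for $n$ large) would have a \emph{unique} $f$-preimage, forcing that preimage to have local degree $\deg f\ge2$ and hence to lie in the finite set $\crit f$; this makes the full backward orbit finite, contradicting density. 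Both arguments are short; the paper's is marginally more direct (it does not need to pass through monotonicity or the structure of $\crit f$), while yours makes explicit the pleasant fact that boundedness of $\card(f^{-n}(p))$ would force the backward orbit into the critical set.
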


\begin{proof}
Let $\CC \subseteq S^2$ be a Jordan curve containing $\post f$. Let $d$ be any metric on $S^2$ that generates the standard topology on $S^2$.

Without loss of generality, we assume that $p\in X^0_w$ where $X^0_w \in \X^0_w(f,\CC)$ is the white $0$-tile in the cell decompositions induced by $(f,\CC)$. The proof for the case when $p\in X^0_b$ where $X^0_b \in \X^0_b(f,\CC)$ is the black $0$-tile is similar.

By Proposition~\ref{propCellDecomp}(ii), for each $n\in\N$ and each white $n$-tile $X^n_w\in\X^n_w(f,\CC)$, there is a point $q\in X^n_w$ with $f^n(q)=p$. Since $f$ is an expanding Thurston map,
\begin{equation}  \label{eqMeshGoTo0}
\lim\limits_{n\to+\infty}\max \{\diam_d(X) \,|\, X\in \X^n(f,\CC)\}=0.
\end{equation}
Then the density of the set $\bigcup\limits_{n=1}^{+\infty}  f^{-n}(p)$ follows from the observation that for each $n\in\N$, each black $n$-tile $X^n_b\in\X^n_b(f,\CC)$ intersects nontrivially with some white $n$-tile $X^n_w \in \X^n_w(f,\CC)$.

By the above observation, the triangular inequality, and the fact that $\diam_d(S^2) > 0$ and $S^2$ is connected in the standard topology, the equation (\ref{eqCardn-preimgGoToInfty}) follows from (\ref{eqMeshGoTo0}).
\end{proof}

\section{Fixed points of expanding Thurston maps}  \label{sctFixedPts}

The main goal of this section is to prove Theorem~\ref{thmNoFixedPts}; namely, that the number of fixed points, counted with an appropriate weight, of an expanding Thurston map $f$ is exactly $1+\deg f$. In order to prove Theorem~\ref{thmNoFixedPts}, we first establish in Lemma~\ref{lmAtLeast1} and Lemma~\ref{lmAtMost1} an almost one-to-one correspondence between fixed points and $1$-tiles in the cell decomposition $\DD^1(f,\CC)$ for an expanding Thurston map $f$ with an $f$-invariant Jordan curve $\CC$ containing $\post f$. As a consequence, we establish in Corollary~\ref{corNoPrePeriodicPts} an exact formula for the number of preperiodic points, counted with appropriate weight. We end this section by establishing a formula for the exact number of periodic points with period $n$, $n\in\N$, for expanding Thurston maps without periodic critical points.

Let $f$ be a Thurston map and $p\in S^2$ a periodic point of $f$ of period $n\in\N$, we define \defn{the weight of $p$ (with respect to $f$)} as the local degree $\deg_{f^n} (p)$ of $f^n$ at $p$. When $f$ is understood from the context and $p$ is a fixed point of $f$, we abbreviate it as  \defn{the weight of $p$}. We will prove in this section that each expanding Thurston map $f$ has exactly $1+\deg f$ fixed points, counted with weight.

Note the difference between the weight and \defn{the multiplicity} of a fixed point of a rational map (see \cite[Chapter 12]{Mi06}). In comparison, the multiplicity of a fixed point $p\in\C$ of a rational map $R\:\widehat\C\rightarrow\widehat\C$ is $\deg_{\widetilde R}(p)$, where $\widetilde R(z) = R(z)-z$. For every expanding rational Thurston map $R$, M.~Bonk and D.~Meyer proved that $R$ has no periodic critical points (see \cite[Proposition~19.1]{BM10}). So the weight of every fixed point of $R$ is 1. We can prove that $R$ has exactly $1+\deg R$ fixed points by using basic facts in complex dynamics, even though it will follow as a special case of our general result in Theorem~\ref{thmNoFixedPts}. For the relevant definitions and general background of complex dynamics, see \cite{CG93} and \cite{Mi06}.

\begin{prop}   \label{propNoFixedPtsRational}
Let $R\:\widehat\C\rightarrow\widehat\C$ be a expanding rational Thurston map, then $R$ has exactly $1+\deg R$ fixed points. Moreover, the weight $\deg_R(q)$ of each fixed point $q$ of $R$ is equal to 1.
\end{prop}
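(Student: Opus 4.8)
The plan is to use basic complex dynamics. First I would recall that an expanding rational Thurston map $R$ is postcritically finite with $\deg R \geq 2$, and that by \cite[Proposition~19.1]{BM10} such a map has no periodic critical points; in particular no fixed point of $R$ is a critical point, so $\deg_R(q) = 1$ for every fixed point $q$, which gives the ``moreover'' part immediately. It then remains to show that $R$ has exactly $1+\deg R$ distinct fixed points.

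For the count, I would start from the general fact that a rational map of degree $d$ on $\widehat{\C}$ has exactly $d+1$ fixed points counted with multiplicity (apply the argument principle / Riemann--Hurwitz to $R(z)-z$ in a coordinate where $\infty$ is not fixed, then account for the point at infinity separately; this is standard, e.g.\ \cite[Chapter~12]{Mi06}). So the desired statement is equivalent to the assertion that every fixed point of $R$ has multiplicity $1$, i.e.\ that $R$ has no multiple fixed points. A fixed point $q$ of $R$ has multiplicity $\geq 2$ precisely when its multiplier $R'(q)$ equals $1$ (a parabolic fixed point with multiplier exactly $1$), or more generally when the map $\widetilde{R}(z) = R(z) - z$ has a zero of order $\geq 2$ at $q$; the key point is that in all such cases the fixed point is \emph{not repelling}: its multiplier satisfies $|R'(q)| \leq 1$.

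The main step, then, is to rule out non-repelling fixed points. Here I would invoke expansion: for an expanding Thurston map the Julia set is all of $S^2$ (equivalently, there are no Fatou components), so every periodic point is repelling. One can see this directly from the definition of expansion together with the control on tile diameters: if $q$ were a fixed point with $|R'(q)| \leq 1$, then $q$ would lie in a Fatou component (an attracting/parabolic basin or a Siegel disc/Herman ring), on which iterates of $R$ do not shrink all neighborhoods to points, contradicting $\lim_{n\to\infty}\max\{\diam_d(X) : X \in \X^n(f,\CC)\} = 0$ since $q$ lies in the interior of some $n$-tile for all large $n$ (as $q\notin\post R$, being non-critical and... more carefully: $q$ is contained in a tile of small diameter, and a full neighborhood of $q$ is covered by finitely many such tiles, forcing local uniform contraction of inverse branches near $q$, incompatible with $|R'(q)|\le 1$). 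Thus every fixed point is repelling, hence simple, hence there are exactly $1+\deg R$ of them.

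The main obstacle I expect is making the last paragraph rigorous without circularity: one must either cite that expanding Thurston maps have empty Fatou set (this should follow from \cite{BM10}, e.g.\ from the equivalent characterizations of expansion in \cite[Proposition~8.2]{BM10} combined with standard complex dynamics), or give a self-contained argument that expansion forces $|R'(q)| > 1$ at every fixed point $q$. The cleanest route is probably: expansion implies $R$ is ``expanding on $\widehat{\C}$'' in the metric sense for a visual metric (cf.\ \cite[Chapter~8, Chapter~18]{BM10}), so no periodic point can be non-repelling, and then quote the multiplicity-$1$ criterion from \cite[Chapter~12]{Mi06}. Everything else is routine.
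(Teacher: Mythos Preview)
Your overall strategy coincides with the paper's: cite \cite[Proposition~19.1]{BM10} to get $\deg_R(q)=1$ at every fixed point, invoke \cite[Chapter~12]{Mi06} for the count $1+\deg R$ with multiplicity, and then argue that no fixed point can have multiplicity $\geq 2$. The difference lies entirely in this last step.

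The paper is more direct here. A fixed point of multiplicity $\geq 2$ has multiplier exactly $1$, so it is parabolic; the Leau--Fatou flower theorem then furnishes an attracting petal $U\subsetneq S^2$ with $R(U)\subseteq U$. The paper observes that expansion makes $R$ \emph{eventually onto} (for every nonempty open $V$ there is $m$ with $R^m(V)=S^2$), which it proves in two lines from the tile decomposition: some black and some white $m$-tile eventually lie inside $V$. A forward-invariant proper open set is incompatible with this, and the proof is done.

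Your route instead aims to show that every fixed point is repelling by appealing to $\mathcal{J}(R)=\widehat\C$ or to metric expansion in a visual metric. This works, but two points deserve care. First, your sentence ``$q$ would lie in a Fatou component'' is not correct for the case at hand: a parabolic fixed point with multiplier $1$ lies on the Julia set; it is the attracting petals, not $q$ itself, that sit in the Fatou set. Second, establishing $\mathcal{J}(R)=\widehat\C$ from expansion is precisely what the paper's eventually-onto argument accomplishes, so your ``cleanest route'' ultimately passes through the same fact. The paper's version avoids the detour through the full Fatou--Julia classification and needs only the single Leau--Fatou input, which is why it is shorter and more self-contained.
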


\begin{proof}
Conjugating $R$ by a fractional linear automorphism of the Riemann sphere if necessary, we may assume that the point at infinity is not a fixed point of $R$.

Since $R$ is expanding, $R$ is not the identity map. By Lemma~12.1 in \cite{Mi06}, which is basically an application of the fundamental theorem of algebra, we can conclude that $R$ has $1+\deg R$ fixed points, counted with multiplicity. For rational Thurston maps, being expanding is equivalent to having no periodic critical points (see \cite[Proposition~19.1]{BM10}). So the weight $\deg_R(q)$ of every fixed point $q$ of $R$ is exactly 1. Thus it suffices now to prove that each fixed point $q$ of $R$ has multiplicity 1.

Suppose a fixed point $p$ of $R$ has multiplicity $m>1$. In the terminology of complex dynamics, $q$ is then a parabolic fixed point with multiplier $1$ and multiplicity $m$. Then by Leau-Fatou flower theorem (see for example, \cite[Chapter~10]{Mi06} or \cite[Theorem~2.12]{Br10}), there exists an open set $U\subseteq S^2$ such that $f(U)\subseteq U$ and $U\neq S^2$ (by letting $U$ be one of the attracting petals, for example). This contradicts the fact that the function $R$, as an expanding Thurston map, is \emph{eventually onto}, i.e., for each nonempty open set $V\subseteq S^2$, there exists a number $m\in\N$ such that $R^m(V)=S^2$.

In order to see that $R$ is eventually onto, let $d$ be a metric on $S^2$ and $\CC\subseteq S^2$ be a Jordan curve, as given in Definition~\ref{defExpanding}. Since $V$ is open, it contains some open ball in the metric space $(S^2,d)$. Then since $R$ is expanding, by Definition~\ref{defExpanding}, we can conclude that there exists a constant $m\in\N$, a black $m$-tile $X^m_b\in\X^m_b(R,\CC)$ and a white $m$-tile $X^m_w\in\X^m_w(R,\CC)$ such that $X^m_b\cup X^m_w \subseteq V$. Thus $R^m(V)\supseteq R^m(X^m_b\cup X^m_w) = S^2$. Therefore, $R$ is eventually onto.
\end{proof}

For general expanding Thurston maps, we need to use the combinatorial information from \cite{BM10}. Recall that cells in the cell decompositions are by definition closed sets.

\begin{lemma}  \label{lmAtLeast1}
Let $f$ be an expanding Thurston map with an $f$-invariant Jordan curve  $\CC$ containing $\post f$. If $X\in \X^1_{ww}(f,\CC) \cup \X^1_{bb}(f,\CC)$ is a white $1$-tile contained in the while $0$-tile $X^0_w$ or a black $1$-tile contained in the black $0$-tile $X^0_b$, then $X$ contains at least one fixed point of $f$. If $X\in \X^1_{wb}(f,\CC) \cup \X^1_{bw}(f,\CC)$ is a white $1$-tile contained in the black $0$-tile $X^0_b$ or a black $1$-tile contained in the white $0$-tile $X^0_w$, then $\inte X$ contains no fixed points of $f$.
\end{lemma}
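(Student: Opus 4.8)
The plan is to use the fact that $\CC$ is $f$-invariant, so $(\DD^1(f,\CC),\DD^0(f,\CC))$ is a cellular Markov partition, and then exploit two elementary topological facts: a continuous self-map of a closed cell (homeomorphic to a disk) that maps the cell onto itself has a fixed point by Brouwer, and a map that carries a set strictly off itself cannot fix an interior point of that set.

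\textbf{The ``at least one fixed point'' part.} Suppose $X\in\X^1_{ww}(f,\CC)$, so $X$ is a white $1$-tile with $X\subseteq X^0_w$ and $f(X)=X^0_w$. By Proposition~\ref{propCellDecomp}(i), $f|_X$ is a homeomorphism of $X$ onto $X^0_w\supseteq X$. Hence $g=(f|_X)^{-1}\circ\iota$ is not quite what I want; rather, consider $h=f|_X\colon X\to X^0_w$. Since $X\subseteq X^0_w$ and $X$ is homeomorphic to a closed disk, the map $h^{-1}|_X\colon X\to X$ (the inverse homeomorphism $X^0_w\to X$ restricted to $X$) is a continuous self-map of the disk $X$, so by the Brouwer fixed-point theorem it has a fixed point $x_0\in X$; then $x_0=h^{-1}(x_0)$, i.e. $f(x_0)=h(x_0)=x_0$. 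So $X$ contains a fixed point of $f$. The argument for $X\in\X^1_{bb}(f,\CC)$ is identical with $X^0_b$ in place of $X^0_w$. Here I am using that the $0$-tiles $X^0_w$, $X^0_b$ are themselves cells, hence homeomorphic to closed disks, so the composition of homeomorphisms is a genuine continuous self-map of a disk.

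\textbf{The ``no interior fixed points'' part.} Suppose $X\in\X^1_{wb}(f,\CC)$, so $X$ is a white $1$-tile with $X\subseteq X^0_b$ and $f(X)=X^0_w$. Suppose for contradiction that $x_0\in\inte X$ is a fixed point of $f$. Since $X\subseteq X^0_b$, we have $x_0\in X^0_b$, and since $x_0$ is fixed, $x_0=f(x_0)\in f(X)=X^0_w$. Thus $x_0\in X^0_w\cap X^0_b=\CC$, which is the $1$-skeleton $\partial X^0_w=\partial X^0_b$. But $\inte X$ is disjoint from the $1$-skeleton of $\DD^1(f,\CC)$, which contains $\CC$ (the $0$-skeleton sits inside $f^{-1}(\CC)$, the $1$-skeleton of $\DD^1$, which contains $\CC$ itself by $f$-invariance). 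Hence $x_0\notin\inte X$, a contradiction. The case $X\in\X^1_{bw}(f,\CC)$ (black $1$-tile in $X^0_w$, mapped to $X^0_b$) is symmetric.

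\textbf{Where the care is needed.} The Brouwer-type argument is the substantive step; the mild subtlety is making sure the relevant maps really are continuous self-maps of something homeomorphic to a closed ball. The containment $X\subseteq X^0_w$ (resp.\ $X\subseteq X^0_b$) is exactly what the subscript notation $\X^1_{ww}$ (resp.\ $\X^1_{bb}$) encodes, and $f|_X\colon X\to X^0_w$ being a homeomorphism is Proposition~\ref{propCellDecomp}(i); composing its inverse with the inclusion $X\hookrightarrow X^0_w$ gives the desired self-map, and $X\cong\overline{\B^2}$ makes Brouwer applicable. For the second part the only thing to verify is that $\inte X$ avoids $\CC$, which is immediate since $\CC\subseteq f^{-1}(\CC)$ is the $1$-skeleton of $\DD^1(f,\CC)$ and interiors of $1$-tiles are by construction the connected components of the complement of that $1$-skeleton. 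I expect no real obstacle here beyond stating these identifications cleanly.
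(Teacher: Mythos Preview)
Your proof is correct and follows essentially the same approach as the paper's: apply Brouwer's fixed-point theorem to the inverse of the homeomorphism $f|_X$ for the first part, and use the disjointness of $\inte X$ from the image $0$-tile for the second. The paper applies Brouwer to $(f|_X)^{-1}$ as a self-map of $f(X)$ rather than restricting it to $X$ as you do, and for the second part it invokes $\inte X\subseteq\inte X^0_b$ and $X^0_w\cap\inte X^0_b=\emptyset$ directly rather than routing through the $1$-skeleton, but these are cosmetic differences.
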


Recall the set of $0$-tiles $\X^0(f,\CC)$ consists of the white $0$-tile $X^0_w$ and the black $0$-tile $X^0_b$.

\begin{proof}
If $X\in \X^1_{ww}(f,\CC) \cup \X^1_{bb}(f,\CC)$, then $X\subseteq f(X)$. By Proposition~\ref{propCellDecomp}(i), $f|_X$ is a homeomorphism from $X$ to $f(X)$, which is one of the two $0$-tiles. Hence, $f(X)$ is homeomorphic to the closed unit disk. So by Brouwer's fixed point theorem, $(f|_X)^{-1}$ has a fixed point $p$. Thus $p$ is also a fixed point of $f$.

If $X\in \X^1_{wb}(f,\CC)$, then $\inte X \subseteq \inte X_b^0$ and $f(X)=X_w^0$. Since $ X_w^0 \cap \inte X_b^0 = \emptyset$, the map $f$ has no fixed points in $\inte X$. The case when $X\in \X^1_{bw}(f,\CC)$ is similar.
\end{proof}

\begin{lemma}   \label{lmAtMost1}
Let $f$ be an expanding Thurston map with an $f$-invariant Jordan curve  $\CC$ containing $\post f$ such that no $1$-tile in $\DD^1(f,\CC)$ joins opposite sides of $\CC$. Then for every $n\in\N$, each $n$-tile $X^n \in\X^n(f,\CC)$ contains at most one fixed point of $f^n$.
\end{lemma}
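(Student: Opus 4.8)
First reduce to $n=1$. Put $F:=f^n$; then $F(\CC)\subseteq\CC$, $\post F=\post f\subseteq\CC$, $F$ is an expanding Thurston map (Remarks~\ref{rmExpanding}), and $\DD^k(F,\CC)=\DD^{nk}(f,\CC)$, so the $n$-tiles of $(f,\CC)$ are exactly the $1$-tiles of $(F,\CC)$. Since a set that joins opposite sides of $\CC$ forces every larger set to join opposite sides, and every $(k{+}1)$-tile of $(f,\CC)$ lies in a $k$-tile, the hypothesis on the $1$-tiles of $(f,\CC)$ passes to all $n$-tiles; in particular no $1$-tile of $(F,\CC)$ joins opposite sides of $\CC$. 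Hence it suffices to prove: if $f$ is an expanding Thurston map with an $f$-invariant Jordan curve $\CC\supseteq\post f$ such that no $1$-tile of $\DD^1(f,\CC)$ joins opposite sides of $\CC$, then each $1$-tile $X\in\X^1(f,\CC)$ contains at most one fixed point of $f$. Suppose not, and let $p\ne q$ be fixed points of $f$ in $X$. Writing $X^0:=f(X)\in\X^0(f,\CC)$ (Proposition~\ref{propCellDecomp}(i)), we have $p=f(p)\in X^0$ and $q=f(q)\in X^0$, so $p,q\in X\cap X^0$.

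\textbf{Case A: $X\in\X^1_{ww}(f,\CC)\cup\X^1_{bb}(f,\CC)$}, i.e. $X\subseteq X^0$. Let $g:=(f|_X)^{-1}\colon X^0\to X$; since $f|_X(p)=p$ and $f|_X(q)=q$, $g$ fixes $p$ and $q$. By Proposition~\ref{propCellDecomp}(i) the homeomorphism $f|_X$ restricts, for each $j\ge 0$, to a bijection between the $(j{+}1)$-cells contained in $X$ and the $j$-cells contained in $X^0$ (injectivity of $f|_X$ plus $f(X)=X^0$), so $g$ carries $j$-cells of $X^0$ to $(j{+}1)$-cells of $X$. Set $X_0:=X^0$ and $X_{k+1}:=g(X_k)$. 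Because $X\subseteq X^0$, an induction shows that $X_k$ is a $k$-tile and $X_{k+1}\subseteq X_k$ (note $X_1=g(X^0)=(f|_X)^{-1}(f(X))=X$). As $g$ fixes $p$ and $q$ and $p,q\in X_0$, we get $p,q\in\bigcap_k X_k$; but $\diam_d(X_k)\to 0$ as $k\to\infty$ for any metric $d$ generating the topology, by the definition of expansion, so $\bigcap_k X_k$ is a single point and $p=q$, a contradiction. (This case uses neither Lemma~\ref{lmAtLeast1} nor the hypothesis on opposite sides.)

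\textbf{Case B: $X\in\X^1_{wb}(f,\CC)\cup\X^1_{bw}(f,\CC)$.} By Lemma~\ref{lmAtLeast1}, $\inte X$ contains no fixed point of $f$, so $p,q\in\partial X$; since $\partial X$ lies on the $1$-skeleton $f^{-1}(\CC)$ (Proposition~\ref{propCellDecomp}(iii)) and $f(p)=p$, $f(q)=q$, it follows that $p,q\in\CC$. Now the hypothesis enters: because $X$ does not join opposite sides of $\CC$ and $\inte X\cap\CC=\emptyset$, the $0$-edges met by $X$ are pairwise non-disjoint, and since $\card(\post f)\ge 3$ this confines $X\cap\CC$ — hence $\{p,q\}$ — to the union of the closures of at most two $0$-edges sharing a common $0$-vertex. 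The plan from here is to reduce back to Case A along $\CC$: if a fixed point $p$ lies in the interior of a $0$-edge, the $1$-edge $e\subseteq\CC$ through $p$ is a side of exactly two $1$-tiles, one white and one black, each containing $p$; a colour/orientation count (using $f(e)$ is the $0$-edge through $p$ and that $f$ preserves orientation) produces an $\X^1_{ww}$- or $\X^1_{bb}$-tile containing $p$, and likewise for $q$. Because $p$ and $q$ sit in the side-cycle of a single $1$-vertex of $X$ (this is exactly what the confinement to two adjacent $0$-edges buys), one expects that tile to be the \emph{same} for $p$ and $q$, whereupon Case A forces $p=q$. The residual configurations — $p$ or $q$ a postcritical vertex, $p$ and $q$ in distinct $1$-edges of one $0$-edge, or the pivotal vertex being a critical point — are to be handled by the one-dimensional analogue of the nesting in Case A along $\CC$ (using $f(\CC)\subseteq\CC$ and that $f$ maps $1$-edges of $\CC$ homeomorphically onto $0$-edges), the one delicate point being an edge $e\subseteq\CC$ with $f(e)=e$: iterating the subdivision of $e$ shows it would be a $k$-edge for every $k$, forcing $\diam_d(e)=0$ by expansion — impossible since an edge is a nondegenerate arc.

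\textbf{Where the difficulty lies.} Case A is routine bookkeeping with Proposition~\ref{propCellDecomp}. The real work is Case B: fixed points genuinely occur on $\CC$, and the content of the lemma is that two of them cannot be squeezed into a single tile abutting $\CC$. The hardest single point is closing the exceptional edge configuration ($f(e)=e$ with $e$ an edge, or equivalently a vertex whose local behaviour is too tame to separate $p$ from $q$), which is where expansion — rather than pure combinatorics — has to be invoked; organizing the Case-B combinatorics near $\CC$ so that every subcase lands on Case A or on this edge argument is the main obstacle.
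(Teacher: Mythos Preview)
Your reduction to $n=1$ and your Case~A are correct; Case~A is essentially the nesting argument the paper uses (there phrased with one point assumed interior, but your version with $X\subseteq f(X)$ and $g=(f|_X)^{-1}$ is cleaner and already covers boundary points).

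The gap is in Case~B. Your plan is to locate, for each fixed point $p\in\CC$, a tile in $\X^1_{ww}\cup\X^1_{bb}$ containing $p$, and then argue that the \emph{same} such tile serves for $q$, reducing to Case~A. But the first step already fails. If $p$ lies in the interior of a $1$-edge $e\subseteq\CC$, exactly two $1$-tiles contain $p$, one on each side of $\CC$; one is white and one is black, and one lies in $X^0_w$, the other in $X^0_b$. These pair up as \emph{either} $\{\X^1_{ww},\X^1_{bb}\}$ \emph{or} $\{\X^1_{bw},\X^1_{wb}\}$, depending on whether $f|_\CC$ preserves or reverses orientation at $p$. In the orientation-reversing case (which genuinely occurs; cf.\ the discussion around Lemma~\ref{lmNoFixedPts_f_C}) both tiles are of mixed type, so there is no tile in $\X^1_{ww}\cup\X^1_{bb}$ through $p$ at all, and your reduction cannot start. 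Your fallback one-dimensional nesting handles two fixed points in the \emph{same} $1$-edge, but you have not dealt with $p,q$ on distinct $1$-edges of $\partial X$; confining $\{p,q\}$ to two adjacent $0$-edges does not force them into adjacent $1$-edges of $\partial X$, nor does it put them in the ``side-cycle of a single $1$-vertex of $X$'' as you assert.

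The paper sidesteps this by organizing the boundary situation around the $1$-edges of $\partial X$ rather than around tile colours. If $p,q\in\partial X$, pick $1$-edges $e_1,e_2\subseteq\partial X$ with $p\in e_1$, $q\in e_2$. If $e_1=e_2$, one of $p,q$ is interior to $e_1$ (else both are $0$-vertices and $X$ joins opposite sides), and your one-dimensional nesting applies. If $e_1\cap e_2=\{v\}$, one checks $p\in\inte e_1$, $q\in\inte e_2$ (else again $X$ joins opposite sides via the three $0$-edges $f(e_1),f(e_2)$ and the one through the $0$-vertex), whence $e_i\subseteq f(e_i)$ and $v=f(v)$, reducing to the same-edge case with $p,v\in e_1$. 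If $e_1\cap e_2=\emptyset$, then $f(e_1),f(e_2)$ are disjoint $0$-edges of $f(X)$, both met by $X$ (at $p$ and $q$), so $X$ joins opposite sides---contradiction. This edge-based trichotomy is exactly where the opposite-sides hypothesis bites, and it avoids the orientation issue entirely.
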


\begin{proof}
Fix an arbitrary $n\in\N$. We denote $F=f^n$ and consider the cell decompositions induced by $F$ and $\CC$ in this proof. Note that $F$ is also an expanding Thurston map and there is no $1$-tile in $\DD^1(F,\CC)$ joining opposite sides of $\CC$.

It suffices to prove that each $1$-tile $X^1\in\X^1$ contains at most one fixed point of $F$.

Suppose that there are two distinct fixed points $p,q$ of $F$ in a $1$-tile $X^1$. We prove that there is a contradiction in each of the following cases.

\smallskip

Case 1: one of the fixed points, say $p$, is in $\inte X^1 $.  Then $X^1 \in \X^1_{ww}\cup \X^1_{bb}$ by Lemma \ref{lmAtLeast1}. Since $p$ is contained in the interior of $X_1\cap F(X_1)$, we get that $X_1\subset F(X_1)$. Since $F|_{X^1}$ is a homeomorphism from $X^1$ to $F(X^1)$ (see Proposition~\ref{propCellDecomp}(i)), we define a $2$-tile $X^2= (F|_{X^1})^{-1}(X^1) \subseteq X^1$. Then we get that $p\in \inte X^2$ and $F(X^2)=X^1$. On the other hand, the point $q$ must be in $X^2$ as well for otherwise there exists $q'\neq q$ such that $q'\in X^2$ and $F(q')=q$, thus $q'$ and $q$ are two distinct points in $X^1$ whose images under $F$ are $q$, contradicting the fact that $F|_{X^1}$ is a homeomorphism from $X^1$ to $F(X^1)$  and $X^1\subseteq F(X^1)$. Thus we can inductively construct an $(n+1)$-cell $X^{n+1}\subseteq X^n$ such that $F(X^{n+1})=X^n$, $p \in \inte(X^{n+1})$, and $q\in X^{n+1}$, for each $n\in\N$. This contradicts the fact that $F$ is an expanding Thurston map, see Remark~\ref{rmExpanding}.

\smallskip

Case 2: there exists a $1$-edge $e\in\E^1$ such that $p,q\in e$. Note that $e\subseteq X^1$. Then one of the fixed points $p$ and $q$, say $p$, must be contained in the interior of $e$, for otherwise $p$, $q$ are distinct $1$-vertices that are fixed by $F$, thus they are both 0-vertices, hence $X^1$ joins opposite sides, a contradiction. Since $F(e)$ is a $0$-edge by Proposition~\ref{propCellDecomp}, and $p\in F(e)$, there exists a $1$-edge $e'\subseteq F(e)$ with $p\in e'$. Thus $e'$ intersects with $e$ at the point $p$, which is an interior point of $e$. So $e'=e$, and $e\subseteq F(e)$. Then by the same argument as when $p \in \inte X^1 $ in Case 1, we can get a contradiction to the fact that $F$ is an expanding Thurston map.

\smallskip

Case 3: the points $p$, $q$ are contained in two distinct $1$-edges $e_1,e_2$ of $X^1$, respectively, and $e_1\cap e_2 \neq \emptyset$. Since $F$ is an expanding Thurston map, we have $m=\card(\post F)\geq 3$ (see \cite[Corollary 6.4]{BM10}). So $X^1$ is an $m$-gon (see Proposition \ref{propCellDecomp}(vi)). Since $e_1\cap e_2 \neq \emptyset$, we get $\card(e_1\cap e_2)=1$, say $e_1\cap e_2 =\{v\}$. By Case 2, we get that $v\neq p$ and $v\neq q$. Note that $p\in F(e_1)$, $q\in F(e_2)$, and $F(e_1),F(e_2)$ are $0$-edges. If at least one of $p$ and $q$ is a $1$-vertex, thus a 0-vertex as well, then since Proposition~\ref{propCellDecomp}(i) implies that  $F(e_1)\neq F(e_2)$, we can conclude that $X^1$ touches at least three $0$-edges, thus joins opposite sides of $\CC$, a contradiction. Hence $p\in\inte e_1$ and $q\in\inte e_2$. So $e_1\subseteq F(e_1)$, $e_2\subseteq F(e_2)$, and
$$
\{v\} = e_1 \cap e_2 \subseteq F(e_1)\cap F(e_2) = F(e_1\cap e_2) = F(\{v\}),
$$
by Proposition~\ref{propCellDecomp}(i). Thus $F(v)=v$. Then $e_1$ contains two distinct fixed points $p$ and $v$ of $F$, which was already proven to be impossible in Case 2.

\smallskip

Case 4: the points $p$, $q$ are contained in two distinct $1$-edges $e_1,e_2$ of $X^1$, respectively, and $e_1\cap e_2 = \emptyset$. Thus $F(e_1)$ and $F(e_2)$ are a pair of disjoint edges of $F(X^1)$. But $p=F(p)\in F(e_1)$, $q=F(q)\in F(e_2)$, so $X^1$ joins opposite sides of $\CC$, a contradiction.

\smallskip

Combining all cases above, we can conclude, therefore, that each $1$-tile $X^1\in\X^1$ contains at most one fixed point of $F$.
\end{proof}

We can immediately get an upper bound of the number of periodic points of an expanding Thurston map from Lemma~\ref{lmAtMost1}.

\begin{cor}   \label{corNoFixedPtsUpperBound}
Let $f$ be an expanding Thurston map. Then for each $n\in\N$ sufficiently large, the number of fixed points of $f^n$ is $\leq 2(\deg f)^n$. In particular, the number of fixed points of $f$ is finite.
\end{cor}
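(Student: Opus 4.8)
The plan is to combine Corollary~\ref{corCexists} with Lemma~\ref{lmAtMost1} and a simple counting argument. First I would fix an expanding Thurston map $f$ and invoke Corollary~\ref{corCexists}: there is a constant $N(f)$ such that for every $n\geq N(f)$ we can choose an $f^n$-invariant Jordan curve $\CC_n$ containing $\post f$ with the property that no $n$-tile in $\DD^n(f,\CC_n)$ joins opposite sides of $\CC_n$. This places us exactly in the hypotheses of Lemma~\ref{lmAtMost1} (with the same curve $\CC_n$ playing the role of $\CC$), so each $n$-tile $X^n\in\X^n(f,\CC_n)$ contains at most one fixed point of $f^n$.

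Next I would observe that every fixed point $x$ of $f^n$ lies in $S^2$, which is the union of all $n$-tiles in $\X^n(f,\CC_n)$ (Proposition~\ref{propCellDecomp}, or simply because the tiles cover $S^2$). Hence the number of fixed points of $f^n$ is at most the number of $n$-tiles, and by Proposition~\ref{propCellDecomp}(iv) we have $\card(\X^n(f,\CC_n))=2(\deg f)^n$. This gives the bound $\#\{x\in S^2 : f^n(x)=x\}\leq 2(\deg f)^n$ for all $n\geq N(f)$, which is the first assertion. For the ``in particular'' clause, I would apply this to a single value $n=n_0\geq N(f)$: the set of fixed points of $f^{n_0}$ is finite, and every fixed point of $f$ is in particular a fixed point of $f^{n_0}$, so $f$ has finitely many fixed points as well.

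One small technical point to be careful about: Lemma~\ref{lmAtMost1} counts fixed points \emph{without} multiplicity (it asserts a tile contains at most one fixed point as a set), which is exactly what we want for the cardinality bound here, so there is no weighting subtlety to address. The only genuine content of the argument is making sure the curve $\CC_n$ supplied by Corollary~\ref{corCexists} simultaneously satisfies both the invariance hypothesis and the ``no $n$-tile joins opposite sides'' hypothesis needed by Lemma~\ref{lmAtMost1} — and Corollary~\ref{corCexists} gives precisely both. I expect the main (and essentially only) obstacle to be bookkeeping: confirming that ``$n$-tile for $(f,\CC_n)$'' in the corollary matches the ``$1$-tile for $(F,\CC)$ with $F=f^n$'' framing used inside the proof of Lemma~\ref{lmAtMost1}, which follows from the identification $\DD^1(f^n,\CC)=\DD^n(f,\CC)$ already recorded in the proof of Lemma~\ref{lmCexists}. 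Everything else is immediate.
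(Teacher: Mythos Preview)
Your proposal is correct and follows essentially the same approach as the paper: invoke Corollary~\ref{corCexists} to obtain the curve $\CC$, pass to $F=f^n$ so that the $n$-tiles become $1$-tiles satisfying the hypotheses of Lemma~\ref{lmAtMost1}, count tiles via Proposition~\ref{propCellDecomp}(iv), and deduce finiteness for $f$ itself from the inclusion of fixed points of $f$ among those of $f^n$. The bookkeeping point you flag about identifying $\DD^n(f,\CC)$ with $\DD^1(f^n,\CC)$ is exactly what the paper makes explicit as well.
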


\begin{proof}
By Corollary~\ref{corCexists}, for each $n\geq N(f)$, where $N(f)\in\N$ is a constant as given in Corollary~\ref{corCexists}, there exists an $f^n$-invariant Jordan curve $\CC$ containing $\post f$ such that no $n$-tile in $\DD^n(f,\CC)$ joins opposite sides of $\CC$. Let $F=f^n$. So $F$ is an expanding Thurston map, and $\CC$ is an $F$-invariant Jordan curve containing $\post F$ such that no $1$-tile in $\DD^1(F,\CC)$ joins opposite sides of $\CC$. By Proposition~\ref{propCellDecomp}(iv), the number of $1$-tiles in $\X^1(F,\CC)$ is exactly $2\deg F= 2 (\deg f)^n$. By Lemma~\ref{lmAtMost1}, we can conclude that there are at most $2(\deg f)^n$ fixed points of $F=f^n$.

Since each fixed point of $f$ is also a fixed point of $f^n$, for each $n\in\N$, the number of fixed points of $f$ is finite.
\end{proof}

\begin{lemma}  \label{lmDeg_f_C}
Let $f$ be an expanding Thurston map with an $f$-invariant Jordan curve $\CC$ containing $\post f$. Then
\begin{align}
 \deg(f|_{\CC}) &= \card (\X_{ww}^1(f,\CC))  - \card (\X_{bw}^1(f,\CC)) \label{eqDeg_f_C} \\   
                &= \card (\X_{bb}^1(f,\CC))  - \card (\X_{wb}^1(f,\CC)).    \notag
\end{align}
\end{lemma}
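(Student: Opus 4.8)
The plan is to reduce both identities to one ``checkerboard'' fact about $\DD^1(f,\CC)$ together with a computation of the topological degree of the circle map $f|_{\CC}\colon\CC\to\CC$. The second equality in (\ref{eqDeg_f_C}) is elementary: every white $1$-tile is mapped by $f$ homeomorphically onto $X^0_w$, so $\card(\X^1_w(f,\CC)) = \card(\X^1_b(f,\CC)) = \deg f$ (cf.\ Proposition~\ref{propCellDecomp}), and since the interior of each white $1$-tile lies in exactly one component of $S^2\setminus\CC$, hence each white $1$-tile lies in exactly one of $X^0_w$, $X^0_b$, we get $\card(\X^1_{ww}) + \card(\X^1_{wb}) = \deg f = \card(\X^1_{bw}) + \card(\X^1_{bb})$, which rearranges to $\card(\X^1_{ww}) - \card(\X^1_{bw}) = \card(\X^1_{bb}) - \card(\X^1_{wb})$. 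It thus remains to prove $\deg(f|_{\CC}) = \card(\X^1_{ww}) - \card(\X^1_{bw})$.

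The key observation is that \emph{every $1$-edge $e$ lies on the boundary of exactly two $1$-tiles, one white and one black}. To see this, pick $x\in\inte e$ with $x\notin\crit f$, so $f$ is a homeomorphism of a neighborhood $U$ of $x$ onto a neighborhood of $f(x)$. Since $f(x)$ lies in the interior of the $0$-edge $f(e)\subseteq\CC$, the curve $\CC$ separates $f(U)$ into its intersections with $\inte X^0_w$ and with $\inte X^0_b$; and since the $1$-skeleton $f^{-1}(\CC)$ of $\DD^1(f,\CC)$ coincides with $e$ near $x$ (Proposition~\ref{propCellDecomp}(iii)), the edge $e$ separates $U$ into two half-disks, lying in the two $1$-tiles adjacent to $e$, whose $f$-images lie in $\inte X^0_w$ and $\inte X^0_b$ respectively. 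As $f$ restricted to any $1$-tile is a homeomorphism onto a $0$-tile (Proposition~\ref{propCellDecomp}(i)), the two $1$-tiles meeting $e$ are mapped onto the two different $0$-tiles.

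Now fix an orientation of $S^2$ and orient $\CC$ as $\partial X^0_w$. For any $0$-edge $e_0$ and any $y\in\inte e_0$, the set $(f|_{\CC})^{-1}(y)$ consists of one interior point of each $1$-edge $e\subseteq\CC$ with $f(e)=e_0$, near which $f|_{\CC}$ is a homeomorphism onto a neighborhood of $y$; hence $\deg(f|_{\CC}) = \sum_{e\subseteq\CC,\, f(e)=e_0}\epsilon(e)$, where $\epsilon(e)=+1$ if $f|_e$ preserves the orientation that $e$ and $f(e)$ inherit from $\CC$, and $\epsilon(e)=-1$ otherwise. Writing $X_L(e)$ for the $1$-tile meeting $e$ on the $\inte X^0_w$-side of $\CC$, we have $\inte X_L(e)\subseteq\inte X^0_w$, so $X_L(e)\in\X^1_{ww}\cup\X^1_{bw}$; and since $f|_{X_L(e)}$ is an orientation-preserving homeomorphism onto $f(X_L(e))$ while $e$ with its $\CC$-orientation is a boundary edge of $X_L(e)$ carrying the boundary orientation, a direct check of orientations shows $\epsilon(e)=+1$ iff $f(X_L(e))=X^0_w$, i.e.\ $X_L(e)\in\X^1_{ww}$, and $\epsilon(e)=-1$ iff $X_L(e)\in\X^1_{bw}$. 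Summing $\deg(f|_{\CC}) = \sum_{e\subseteq\CC,\, f(e)=e_0}\epsilon(e)$ over all $m=\card(\post f)$ choices of $e_0$ gives $m\deg(f|_{\CC}) = \sum_{e\subseteq\CC}\epsilon(e)$. On the other hand, weight each $1$-tile contained in $X^0_w$ by $+1$ if it is white and $-1$ if it is black, and put $\Sigma := \sum_{X\subseteq X^0_w}(\text{weight of }X)\,\card\{1\text{-edges of }\partial X\}$. Since every $1$-tile is an $m$-gon (Proposition~\ref{propCellDecomp}(vi)), $\Sigma = m(\card(\X^1_{ww}) - \card(\X^1_{bw}))$. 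Evaluating $\Sigma$ instead by summing over incidences (edge, $1$-tile in $X^0_w$ containing it in its boundary): a $1$-edge $e$ with $\inte e\subseteq\inte X^0_w$ meets two $1$-tiles, both contained in $X^0_w$ and of opposite colors by the key observation, so it contributes $0$; a $1$-edge $e\subseteq\CC$ meets exactly one such $1$-tile, namely $X_L(e)$, contributing $\epsilon(e)$. Hence $\Sigma = \sum_{e\subseteq\CC}\epsilon(e) = m\deg(f|_{\CC})$, and comparison of the two evaluations yields the claim.

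I expect the orientation bookkeeping in the degree computation to be the main technical nuisance: one must treat $\deg(f|_{\CC})$ as the honest topological degree of a circle map — it may well be $0$, and $f|_{\CC}$ need not even be surjective, as already happens for $z\mapsto z^2$ on $\widehat{\C}$ with $\CC=\widehat{\R}$ — and verify carefully that the local sign $\epsilon(e)$ is correctly matched with the colour of $X_L(e)$. The ``checkerboard'' observation is short, and the remaining incidence counts are routine once it is in hand.
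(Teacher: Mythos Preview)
Your proof is correct, and the underlying cancellation structure---interior $1$-edges in $X^0_w$ cancel because each borders one white and one black $1$-tile, leaving only the edges on $\CC$ to record the degree---is exactly the same as the paper's. The execution differs in a useful way, though. The paper invokes the Schoenflies theorem to place $X^0_w$ as the unit disk in $\widehat\C$ and then computes variation of argument: each $1$-tile in $X^0_w$ contributes $\pm 2\pi$ according to colour, interior edges cancel in pairs, and the surviving boundary sum is $2\pi\,\deg(f|_\CC)$ by the winding-number definition. You instead use the signed-preimage formula for the degree of a circle map at a regular value $y\in\inte e_0$, then double-count tile--edge incidences weighted by colour, picking up the factor $m=\card(\post f)$ on both sides. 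Your route is more elementary: it avoids Schoenflies and any complex-analytic machinery, and it makes explicit the checkerboard fact that drives the cancellation (the paper uses this implicitly but does not isolate it). The cost is the extra bookkeeping of the factor $m$ and the orientation check matching $\epsilon(e)$ to the colour of $X_L(e)$, which you have handled correctly.
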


Here $\deg(f|_\CC)$ is the \emph{degree} of the map $f|_\CC\: \CC\rightarrow \CC$ (see for example, \cite[Section~2.2]{Ha02}).

Note that the first equality in (\ref{eqDeg_f_C}), for example, says that the degree of $f$ restricted to $\CC$ is equal to the number of white $1$-tiles contained in the white $0$-tile minus the number of black $1$-tiles contained in the white $0$-tile.

Recall that for each continuous path $\gamma\: [a,b]\rightarrow \C\setminus \{0\}$ on the Riemann sphere $\widehat\C$, with $a,b\in\R$ and $a<b$, we can define the \emph{variation of the argument along $\gamma$}, denoted by $V(\gamma)$, as the change of the imaginary part of the logarithm along $\gamma$. Note that $V(\gamma)$ is invariant under an orientation-preserving reparametrization of $\gamma$ and if $\widetilde\gamma\: [a,b]\rightarrow \widehat\C$ reverses the orientation of $\gamma$, i.e., $\widetilde\gamma(t)=\gamma(b-t)$, then $V(\widetilde\gamma)=-V(\gamma)$. We also note that if $\gamma$ is a loop, then $V(\gamma)= 2\pi \Ind_\gamma(0)$, where $\Ind_\gamma(0)$ is the \emph{winding number of $\gamma$ with respect to $0$} \cite[Chapter~IV]{Bu79}. 

\begin{proof}
Consider the cell decompositions induced by $(f,\CC)$. Let $X_w^0$ be the white $0$-tile.

We start with proving the first equality in (\ref{eqDeg_f_C}).

By the Schoenflies theorem (see, for example, \cite[Theorem~10.4]{Mo77}), we can assume that $S^2$ is the Riemann sphere $\widehat{\C}$, and $X_w^0$ is the unit disk with the center $0$ disjoint from $f^{-1}(\CC)$.

For each $1$-edge $e\in\E^1$, we choose a parametrization $\gamma^+_e\: [0,1]\rightarrow \C\setminus\{0\}$ of $e$ with positive orientation (i.e., with the white $1$-tile on the left), and a parametrization $\gamma^-_e\: [0,1]\rightarrow \C\setminus\{0\}$ of $e$ with negative orientation. Then $f\circ\gamma^+_e$ and $f\circ\gamma^-_e$ are parametrizations of one of the $0$-edges on the unit circle $\CC$, with positive orientation and negative orientation, respectively.

We claim that
\begin{align}   \label{eqSumVarArg}
     & \sum\limits_{X\in\X^1_{ww}} \sum\limits_{e\in\E^1,e\subseteq\partial X} V(f\circ\gamma^+_e) - \sum\limits_{X\in\X^1_{bw}} \sum\limits_{e\in\E^1,e\subseteq\partial X} V(f\circ\gamma^+_e) \\
  =  & \sum\limits_{e\in\E^1, e\subseteq\CC} V(f\circ\gamma_e),   \notag
\end{align}
where on the right-hand side, $\gamma_e=\gamma^+_e$ if $e\subseteq\CC\cap X$ for some $X\in\X^1_{ww}$ and $\gamma_e=\gamma^-_e$ if $e\subseteq\CC\cap X$ for some $X\in\X^1_{bw}$, or equivalently, $\gamma_e$ parametrizes $e$ in such a way that $X_w^0$ is always on the left of $e$ for each $e\in \E^1$ with $e\subseteq\CC$.

We observe that the left-hand side of (\ref{eqSumVarArg}) is the sum of $V(f\circ\gamma^+_e)$ over all $1$-edges $e$ in the boundary of a white $1$-tile $X\subseteq X_w^0$ plus the sum of $V(f\circ\gamma^-_e)$ over all $1$-edges $e$ in the boundary of a black $1$-tile $X\subseteq X_w^0$. Since each $1$-edge $e$ with $\inte e\subseteq X_w^0$ is the intersection of exactly one $1$-tile in $\X^1_{ww}$ and one $1$-tile in $\X^1_{bw}$, the two terms corresponding to a $1$-edge $e$ that is not contained in $\CC$ cancel each other. Moreover, there is exactly one term for each $1$-edge $e\subseteq X^0_w$ that is contained in $\CC$, and $e$ that corresponds to such a term is parametrized in such a way that $X_w^0$ is on the left of $e$. The claim now follows.

We then note that by Proposition~\ref{propCellDecomp}(i) and the definition of branched covering maps on $S^2$ in the beginning of Section~\ref{sctThurstonMap}, the map $f$ is an orientation-preserving local homeomorphism. Thus the left-hand side of (\ref{eqSumVarArg}) is equal to
\begin{equation*}
\sum\limits_{X\in\X^1_{ww}} 2 \pi - \sum\limits_{X\in\X^1_{bw}} 2 \pi  = 2\pi  \(\card (\X_{ww}^1)  - \card (\X_{bw}^1) \),
\end{equation*}
and the right-hand side of (\ref{eqSumVarArg}) is equal to
\begin{equation*}
2\pi \Ind_{f\circ\gamma_\CC}(0) = 2\pi \deg (f|_\CC),
\end{equation*}
where $\gamma_\CC$ is a parametrization of $\CC$ with positive orientation. Hence the first equality in (\ref{eqDeg_f_C}) follows.

The second equality in (\ref{eqDeg_f_C}) follows from the fact that 
$$
\card (\X_{ww}^1)  + \card (\X_{wb}^1)  = \deg f = \card (\X_{bb}^1) + \card (\X_{bw}^1). 
$$
\end{proof}

Let $f$ be an expanding Thurston map with an $f$-invariant Jordan curve $\CC$ containing $\post f$. We orient $\CC$ in such a way that the white $0$-tile lies on the left of $\CC$. Let $p\in \CC$ be a fixed point of $f$. We say that $f|_\CC$ \defn{preserves the orientation at $p$} (resp.\ \defn{reverses the orientation at $p$}) if there exists an open arc $l\subseteq\CC$ with $p\in l$ such that $f$ maps $l$ homeomorphically to $f(l)$ and $f|_\CC$ preserves (resp.\ reverses) the orientation on $l$. More concretely, when $p$ is a $1$-vertex, let $l_1,l_2\subseteq \CC$ be the two distinct $1$-edges on $\CC$ containing $p$; when $p\in\inte e$ for some $1$-edge $e\subseteq\CC$, let $l_1,l_2$ be the two connected components of $e \setminus \{p\}$. Then $f|_\CC$ preserves the orientation at $p$ if $l_1\subseteq f(l_1)$ and $l_2\subseteq f(l_2)$, and reverses the orientation at $p$ if $l_2\subseteq f(l_1)$ and $l_1\subseteq f(l_2)$. Note that it may happen that $f|_\CC$ neither preserves nor reverses the orientation at $p$, because $f|_\CC$ need not be a local homeomorphism near $p$, where it may behave like a ``folding map''. 

\begin{lemma}  \label{lmNoFixedPts_f_C}
Let $f$ be an expanding Thurston map with an $f$-invariant Jordan curve $\CC$ containing $\post f$. Then the number of fixed points of $f|_\CC$ where $f|_\CC$ preserves the orientation minus the number of fixed points of $f|_\CC$ where $f|_\CC$ reverses the orientation is equal to $\deg (f|_\CC) -1$.
\end{lemma}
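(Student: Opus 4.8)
The plan is to reduce the statement to a winding-number computation on the circle $\CC$ by lifting the restriction $f|_\CC\:\CC\to\CC$ and comparing local behavior at each fixed point. First I would normalize via the Schoenflies theorem so that $S^2=\widehat\C$ and $\CC$ is the unit circle oriented counterclockwise (so that the white $0$-tile $X^0_w$ lies to the left). Lift $f|_\CC$ to a continuous map $F\:\R\to\R$ with $g(t+1)=g(t)+\deg(f|_\CC)$ via the exponential parametrization $\gamma(t)=e^{2\pi i t}$; then the fixed points of $f|_\CC$ correspond to solutions of $g(t)\equiv t\pmod 1$, and $\deg(f|_\CC)$ is the degree of the circle map. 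The key point is that a fixed point $p$ where $f|_\CC$ preserves the orientation is, in the lift, a point where the continuous function $\phi(t)=g(t)-t$ (which has period $1$ up to the additive constant $\deg(f|_\CC)-1$, i.e.\ $\phi(t+1)=\phi(t)+(\deg(f|_\CC)-1)$) crosses an integer value \emph{transversally upward in the combinatorial sense} — the two $1$-edges $l_1,l_2$ adjacent to $p$ satisfy $l_i\subseteq f(l_i)$, meaning $g$ ``overshoots'' $p$ on both sides — while a reversing fixed point is a crossing where $g$ ``undershoots'' on both sides (a downward crossing), and a fixed point that is neither corresponds to a point where $\phi$ touches an integer without a genuine sign change.

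The main step is to make precise the claim that each fixed point of $f|_\CC$ is of one of these three types with respect to the cell structure, and that the net signed count of preserving minus reversing crossings of $\phi$ over one period equals $\phi(t+1)-\phi(t)=\deg(f|_\CC)-1$. For this I would argue as follows: because $\CC$ is $f$-invariant and the $1$-skeleton $f^{-1}(\CC)$ refines $\CC$, each $1$-edge $e\subseteq\CC$ is mapped homeomorphically by $f$ onto a $0$-edge of $\CC$, and $f|_\CC$ restricted to the \emph{open} arc $\inte e$ is a local homeomorphism. So the only possible failures of local homeomorphy are at $1$-vertices on $\CC$ (the ``folding'' points). At a fixed point $p\in\inte e$ interior to a $1$-edge, $f|_\CC$ is automatically a local homeomorphism, hence either preserves or reverses the orientation at $p$, and the lift $\phi$ changes sign across $p$ (preserving = upward, reversing = downward). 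At a fixed $1$-vertex $p$, the two adjacent $1$-edges $l_1,l_2$ are each mapped homeomorphically onto $0$-edges; the four combinatorial possibilities for where the images land determine whether $\phi$ crosses upward, downward, or has a local max/min (the folding case) — and in the folding case $\phi$ has no net sign change and contributes $0$ to the signed count. Summing the signed contributions of all the crossings of $\phi$ over the period interval $[t_0,t_0+1)$ telescopes to the total increment $\phi(t_0+1)-\phi(t_0)=g(t_0+1)-g(t_0)-1=\deg(f|_\CC)-1$.

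The hard part will be the bookkeeping at the fixed $1$-vertices: ruling out pathologies (for instance, a fixed vertex that is also a postcritical point, where several $1$-edges of $\CC$ may emanate, or where $f|_\CC$ could in principle be constant on a neighborhood — impossible here since $f$ restricted to any $1$-edge is a homeomorphism, but this needs to be invoked cleanly), and verifying that the ``neither preserves nor reverses'' vertices really do contribute zero to the signed winding computation rather than $\pm 1$. I would handle this by working entirely with the piecewise-monotone lift $g$: on each $1$-edge of $\CC$ (a maximal interval on which $g$ is a homeomorphism onto its image, an interval of length $1/\card(\E^0)$ in the right normalization), $g$ is strictly monotone, and the sign of $g'$ (combinatorially, whether $g$ is increasing or decreasing there) is determined by whether that $1$-edge, as a white-adjacent or black-adjacent edge, maps orientation-preservingly or -reversingly onto its target $0$-edge. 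Then $\phi=g-\mathrm{id}$ is piecewise monotone with known slopes, its integer-level crossings are exactly the fixed points, and the signed count is a routine consequence of the intermediate value theorem applied piece by piece. An alternative, cleaner route — which I would mention but probably not pursue — is to invoke the formula from Lemma~\ref{lmDeg_f_C} together with Lemma~\ref{lmAtLeast1}: a preserving fixed point on $\CC$ lies between (the closures of) a white $1$-tile in $X^0_w$ and a black $1$-tile in $X^0_b$ — i.e.\ it is ``shared'' by an $\X^1_{ww}$-tile and an $\X^1_{bb}$-tile — whereas a reversing one is flanked by an $\X^1_{wb}$-tile and an $\X^1_{bw}$-tile, so that walking once around $\CC$ and counting tile-color transitions recovers $\card(\X^1_{ww})-\card(\X^1_{bw})-1=\deg(f|_\CC)-1$ by Lemma~\ref{lmDeg_f_C}. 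Either way the engine is the same winding/degree count; the winding-number formulation seems the most self-contained and least error-prone to write down.
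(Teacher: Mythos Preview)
Your proposal is correct and is essentially the same argument as the paper's: both lift $f|_\CC$ to a map $G\:[0,1]\to\R$ (equivalently your $g\:\R\to\R$ with $g(t+1)=g(t)+\deg(f|_\CC)$), identify fixed points with integer crossings of $G(t)-t$ (equivalently, crossings of the graph of $G$ with the diagonal lines $l_m$), and check case by case that preserving/reversing/neither correspond to upward/downward/non-crossings, so the signed count equals $G(1)-G(0)-1=\deg(f|_\CC)-1$. The only cosmetic difference is that the paper phrases the count via the graph of $G$ against the family of lines $y=x+m$ rather than via your function $\phi=g-\mathrm{id}$, but these are the same computation.
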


\begin{proof}
Let $\psi\:[0,1]\rightarrow\CC$ be a continuous map such that $\psi|_{(0,1)}\:(0,1)\rightarrow \CC\setminus\{x_0\}$ is an orientation-preserving homeomorphism, and $\psi(0)=\psi(1)=x_0$ for some $x_0\in\CC$ that is not a fixed point of $f|_\CC$. Note that for each $x\in\CC$ with $x\neq x_0$, $\psi^{-1}(x)$ is a well-defined number in $(0,1)$. In particular, $\psi^{-1}(y)$ is a well-defined number in $(0,1)$ for each fixed point $y$ of $f|_\CC$. Define $\pi\:\R\rightarrow\CC$ by $\pi(x)=\psi(x-\lfloor x \rfloor)$. Then $\pi$ is a covering map. We lift $f|_\CC\circ \psi$ to $G\:[0,1]\rightarrow\R$ such that $\pi\circ G = f|_\CC\circ\psi$ and $G(0)=\psi^{-1}(f(x_0))\in(0,1)$. So we get the following commutative diagram:
\begin{equation*}
\xymatrix{ & \R \ar[d]^\pi \\ 
[0,1] \ar[r]_{f|_\CC \circ \psi} \ar[ur]^G & \CC.}
\end{equation*}
Then $G(1)-G(0)\in\Z$ and
\begin{equation}
\deg(f|_\CC)=G(1)-G(0).
\end{equation}

Observe that $y\in\CC$ is a fixed point of $f|_\CC$ if and only if $G(\psi^{-1}(y))-\psi^{-1}(y)\in\Z$. Indeed, if $y\in\CC$ is a fixed point of $f|_\CC$, then $\pi\circ G \circ \psi^{-1}(y)=f|_\CC(y)=y$. Thus $G \circ \psi^{-1}(y)- \psi^{-1}(y) \in \Z$. Conversely, if $G \circ \psi^{-1}(y)- \psi^{-1}(y) \in \Z$, then $y \neq x_0$, thus
$$
f|_\CC(y)=f|_\CC \circ \psi \circ \psi^{-1} (y) = \pi \circ G \circ \psi^{-1} (y) = \pi \circ \psi^{-1} (y) = y.
$$

For each $m\in\Z$, we define the line $l_m$ to be the graph of the function $x\mapsto x+m$ from $\R$ to $\R$.

Let $y\in\CC$ be any fixed point of $f|_\CC$. Since by Corollary~\ref{corNoFixedPtsUpperBound} fixed points of $f$ are isolated, there exists a neighborhood $(s,t)\subseteq (0,1)$ such that $\psi^{-1}(y)\in(s,t)$ and for each fixed point $z\in\CC \setminus\{y\}$ of $f|_\CC$, $\psi^{-1}(z)\notin(s,t)$. Define $k=G(\psi^{-1}(y))-\psi^{-1}(y)$; then $k\in\Z$. Moreover, $z\in\CC$ is a fixed point of $f|_\CC$ if and only if the graph of $G$ intersects with $l_m$ at the point $(\psi^{-1}(z), G\(\psi^{-1}(z)\)$ for some $m\in\Z$.

\begin{figure}
    \centering
    \begin{overpic}
    [width=10cm, 
    tics=20]{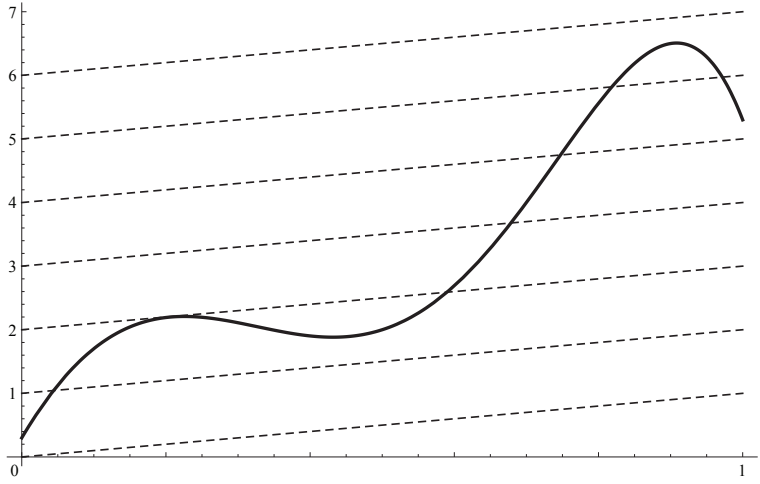}
   \end{overpic}
    \caption{The lines $l_k$ for $k\in\Z$ and an example of the graph of $G$.}
    \label{figPlotG}
\end{figure}

Depending on the orientation of $f|_\CC$ at the fixed point $y\in\CC$, we get one of the following cases:
\begin{enumerate}

\smallskip
\item If $f|_\CC$ preserves the orientation at $y$, then the graph of $G|_{(s,\psi^{-1}(y))}$ lies strictly between the lines $l_{k-1}$ and $l_k$, and the graph of $G|_{(\psi^{-1}(y),t)}$ lies strictly between the lines $l_k$ and $l_{k+1}$.

\smallskip
\item If $f|_\CC$ reverses the orientation at $y$, then the graph of $G|_{(s,\psi^{-1}(y))}$ lies strictly between the lines $l_k$ and $l_{k+1}$, and the graph of $G|_{(\psi^{-1}(y),t)}$ lies strictly between the lines $l_{k-1}$ and $l_k$.

\smallskip
\item If $f|_\CC$ neither preserves nor reverses the orientation at $y$, then the graph of $G|_{(s,t)\setminus\{\psi^{-1}(y)\}}$ either lies strictly between the lines $l_{k-1}$ and $l_k$ or lies strictly between the lines $l_k$ and $l_{k+1}$.

\end{enumerate}
Thus the number of fixed points of $f|_\CC$ where $f|_\CC$ preserves the orientation is exactly the number of intersections between the graph of $G$ and the lines $l_m$ with $m\in\Z$, where the graph of $G$ crosses the lines from below, and the number of fixed points of $f|_\CC$ where $f|_\CC$ reserves the orientation is exactly the number of intersections between the graph of $G$ and the lines $l_m$ with $m\in\Z$, where the graph of $G$ crosses the lines from above. Therefore the number of fixed points of $f|_\CC$ where $f|_\CC$ preserves the orientation minus the number of fixed points of $f|_\CC$ where $f|_\CC$ reverses the orientation is equal to $G(1)-G(0)-1=\deg (f|_\CC) -1$.
\end{proof}

For each $n\in\N$ and each expanding Thurston map $f\:S^2\rightarrow S^2$, we denote by
\begin{equation}
P_{n,f}=\{x\in S^2\,|\, f^n(x)=x, f^k(x)\neq x,k\in\{1,2,\dots,n-1\}\}
\end{equation}
the \defn{set of periodic points of $f$ with period $n$}, and by
\begin{equation}
p_{n,f}=\sum\limits_{x\in{P_{n,f}}} \deg_{f^n}(x), \qquad  \widetilde{p}_{n,f}= \card P_{n,f}
\end{equation}
the numbers of periodic points $x$ of $f$ with period $n$, counted with and without weight $\deg_{f^n}(x)$, respectively, at each $x$. In particular, $P_{1,f}$ is the set of fixed points of $f$ and $p_{1,f}=1+\deg f$ as we will see in the proof of Theorem~\ref{thmNoFixedPts} below. More generally, for all $m\in\N_0$ and $n\in \N$ with $m < n$, we denote by
\begin{equation}  \label{eqSetPrePeriodicPts}
S_{n}^m = \{ x\in S^2 \,|\,  f^m(x)=f^n(x)  \}
\end{equation}
the \defn{set of preperiodic points of $f$ with parameters $m,n$} and by
\begin{equation}   \label{eqNoPrePeriodicPts}
s_{n}^m = \sum\limits_{x\in S_{n}^m}  \deg_{f^n} (x),  \qquad   \widetilde{s}_{n}^m = \card S_{n}^m
\end{equation}
the numbers of preperiodic points of $f$ with parameters $m,n$, counted with and without weight $\deg_{f^n}(x)$, respectively, at each $x$. Note that in particular, for each $n\in\N$, $S_{n}^0 = P_{1,f^n}$ is the set of fixed points of $f^n$.

\begin{proof}[Proof of Theorem~\ref{thmNoFixedPts}]

The idea of the proof is to first prove the theorem for $F=f^n$ for sufficiently large $n$ so that we can assume the existence of some $F$-invariant Jordan curve containing $\post F$. This enables us to make use of the combinatorial information from the cell decompositions induced by $(F,\CC)$. Then we can generalize the conclusion to arbitrary expanding Thurston maps by an elementary number-theoretic argument.

We first prove the theorem for $F=f^n$ for $n \geq N(f)$ where $N(f)$ is a constant as given in Corollary~\ref{corCexists} depending only on $f$. Let $\CC$ be an $f^n$-invariant Jordan curve containing $\post f$ such that no $n$-tile in $\DD^n(f,\CC)$ joins opposite sides of $\CC$ as given in Corollary~\ref{corCexists}. So $\CC$ is an $F$-invariant Jordan curve containing $\post F$ such that no $1$-tile in $\DD^1(F,\CC)$ joins opposite sides of $\CC$. 

Unless otherwise stated, we consider the cell decompositions induced by $(F,\CC)$ in this proof. Let $w_w=\card \X^1_{ww} $ be the number of white $1$-tiles contained in the white $0$-tile, $b_w=\card \X^1_{bw} $ be the number of black $1$-tiles contained in the white $0$-tile, $w_b=\card \X^1_{wb} $ be the number of white $1$-tiles contained in the black $0$-tile, and $b_b=\card \X^1_{bb} $ be the number of black $1$-tiles contained in the black $0$-tile. Note that $w_w+w_b=b_w+b_b=\deg F$.

By Corollary~\ref{corNoFixedPtsUpperBound}, we know that fixed points of $F$ are isolated.

\smallskip

Note that 
\begin{equation}    \label{eqPfThmNoFixedPts}
w_w+b_b=\deg F +\deg (F|_\CC),
\end{equation}
which follows from the equation $w_w-b_w=\deg(F|_\CC)$ by Lemma~\ref{lmDeg_f_C}, and the equation $b_w + b_b=\deg F$.

\smallskip

We define sets
$$A=\{X \in \X^1_{ww}\,|\,\text{there exists $p\in\CC\cap X$ with } F(p)=p\},$$
$$B=\{X \in \X^1_{bw}\,|\,\text{there exists $p\in\CC\cap X$ with } F(p)=p\},$$
and let $a=\card A$, $b=\card B$.

\smallskip

We then claim that
\begin{equation}
a-b=\deg(F|_\CC)-1.
\end{equation}

In order to prove this claim, we will first prove that $a-b$ is equal to the number of fixed points of $F|_\CC$ where $F|_\CC$ preserves the orientation minus the number of fixed points of $F|_\CC$ where $F|_\CC$ reverses the orientation.

So let $p\in\CC$ be a fixed point of $F|_\CC$.

\begin{figure}
    \centering
    \begin{overpic}
    [width=6cm, 
    tics=20]{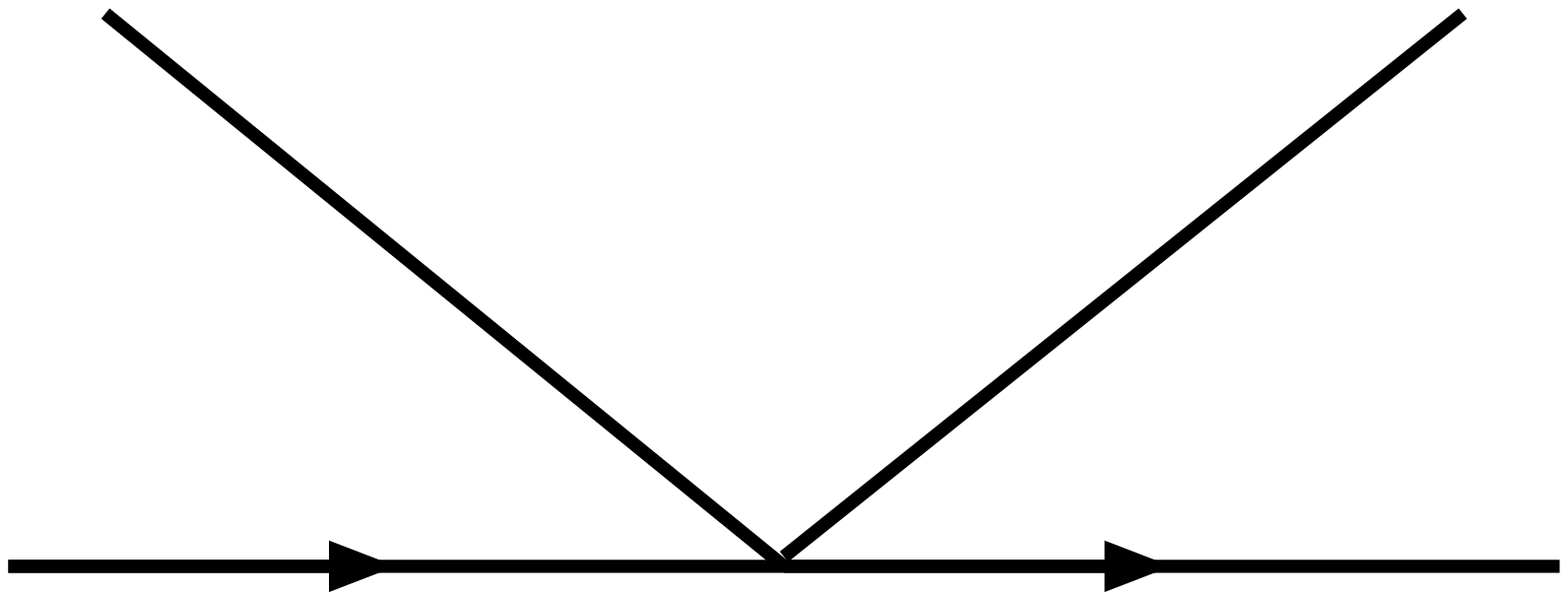}
    \put(75,40){$bw$}
    \put(25,14){$ww$}
    \put(120,14){$ww$}
    \put(25,-8){$e_1$}
    \put(120,-8){$e_2$}
    \put(80,-8){$p$}
    \end{overpic}
    \caption{Case (2)(a) where $F(e_1)\supseteq e_1$ and $F(e_2)\supseteq e_2$.}
    \label{figPlota}

    \centering
    \begin{overpic}
    [width=6cm, 
    tics=20]{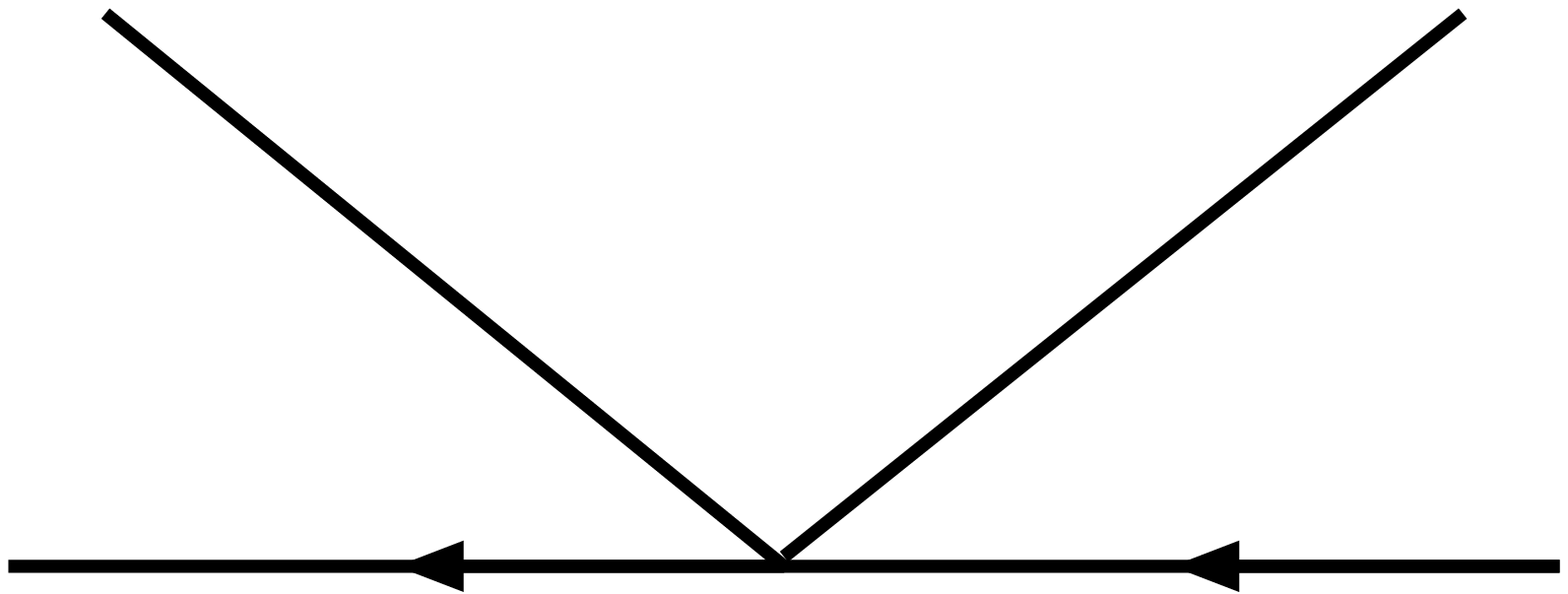}
    \put(75,40){$ww$}
    \put(25,14){$bw$}
    \put(120,14){$bw$}
    \put(25,-8){$e_1$}
    \put(120,-8){$e_2$}
    \put(80,-8){$p$}
    \end{overpic}
    \caption{Case (2)(b) where $F(e_1)\supseteq e_2$ and $F(e_2)\supseteq e_1$.}
    \label{figPlotb}

    \centering
    \begin{overpic}
    [width=6cm, 
    tics=20]{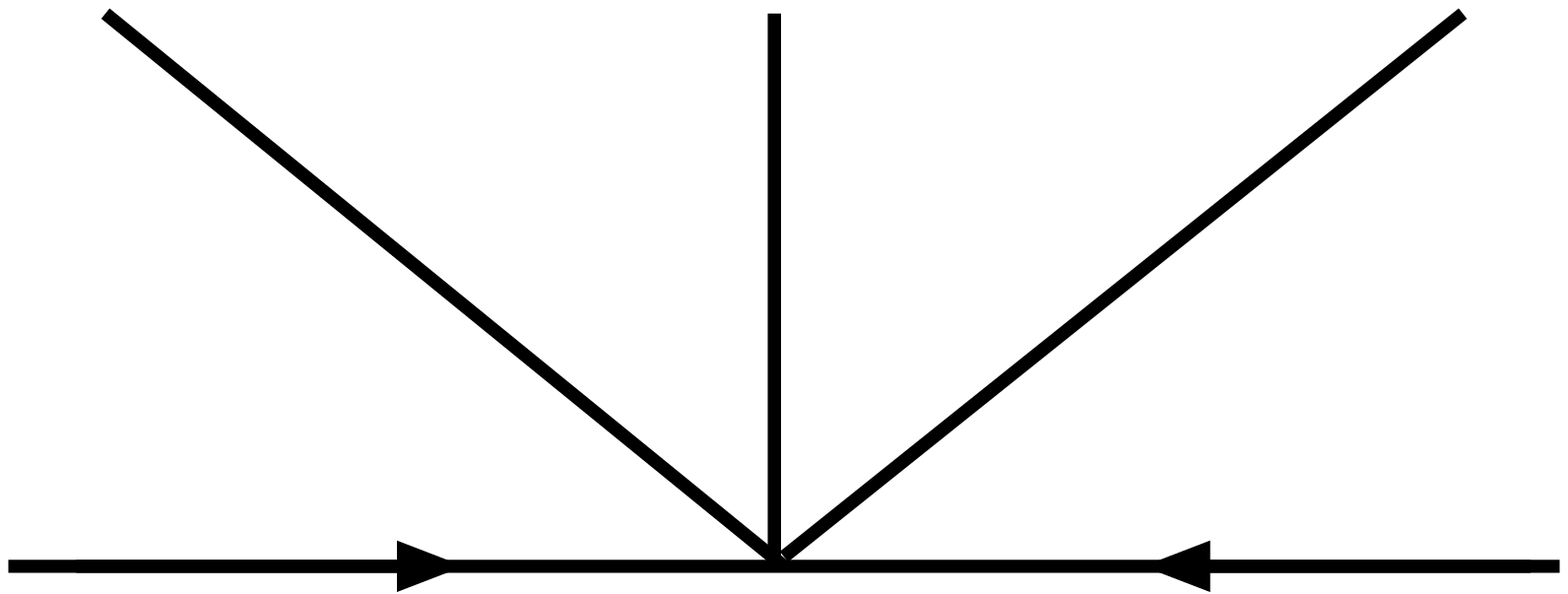}
    \put(55,40){$bw$}
    \put(95,40){$ww$}
    \put(25,14){$ww$}
    \put(120,14){$bw$}
    \put(25,-8){$e_1$}
    \put(120,-8){$e_2$}
    \put(80,-8){$p$}
    \end{overpic}
    \caption{Case (2)(c) where $F(e_1)=F(e_2)\supseteq e_1$.}
    \label{figPlotc}

    \centering
    \begin{overpic}
    [width=6cm, 
    tics=20]{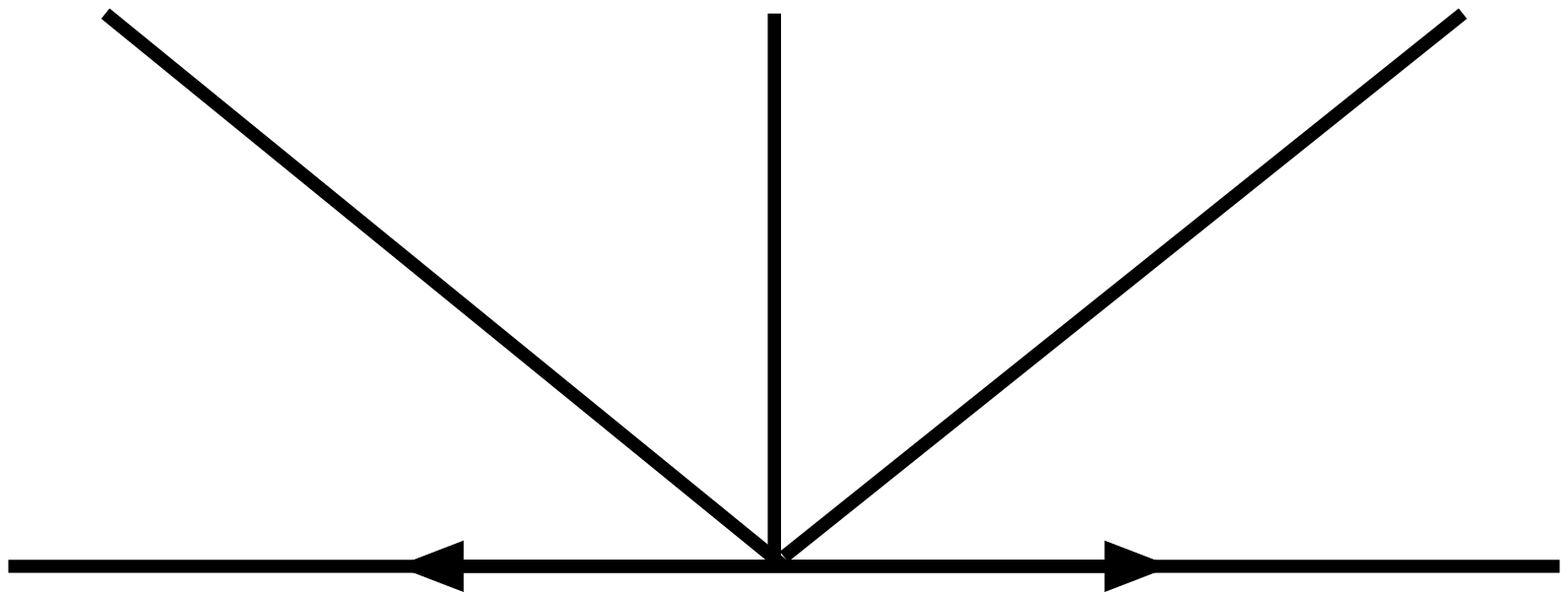}
    \put(55,40){$ww$}
    \put(95,40){$bw$}
    \put(25,14){$bw$}
    \put(120,14){$ww$}
    \put(25,-8){$e_1$}
    \put(120,-8){$e_2$}
    \put(80,-8){$p$}
    \end{overpic}
    \caption{Case (2)(d) where $F(e_1)=F(e_2)\supseteq e_2$.}
    \label{figPlotd}
\end{figure}

\begin{enumerate}

\smallskip
\item If $p$ is not a critical point of $F$, then either $F|_\CC$ preserves or reverses the orientation at $p$. In this case, the point $p$ is contained in exactly one white $1$-tile and one black $1$-tile.
\begin{enumerate}

\smallskip
\item If $F|_\CC$ preserves the orientation at $p$, then $p$ is contained in exactly one white $1$-tile that is contained in the white $0$-tile, and $p$ is not contained in any black $1$-tile that is contained in the while $0$-tile.

\smallskip
\item If $F|_\CC$ reverses the orientation at $p$, then $p$ is contained in exactly one black $1$-tile that is contained in the white $0$-tile, and $p$ is not contained in any white $1$-tile that is contained in the while $0$-tile.
\end{enumerate}

\smallskip
\item If $p$ is a critical point of $F$, then $p=F(p)\in\post f$ and so there are two distinct $1$-edges $e_1,e_2\subseteq\CC$ such that $\{p\}=e_1\cap e_2$. We refer to Figures \ref{figPlota} to \ref{figPlotd}.
\begin{enumerate}

\smallskip
\item If $e_1\subseteq F(e_1)$ and $e_2\subseteq F(e_2)$, then $p$ is contained in exactly $k$ white and $k-1$ black $1$-tiles that are contained in the white $0$-tile, for some $k\in\N$. Note that in this case $F|_\CC$ preserves the orientation at $p$.

\smallskip
\item If $e_2\subseteq F(e_1)$ and $e_1\subseteq F(e_2)$, then $p$ is contained in exactly $k-1$ white and $k$ black $1$-tiles that are contained in the white $0$-tile, for some $k\in\N$. Note that in this case $F|_\CC$ reverses the orientation at $p$.

\smallskip
\item If $e_1\subseteq F(e_1)= F(e_2)$, then $p$ is contained in exactly $k$ white and $k$ black $1$-tiles that are contained in the white $0$-tile, for some $k\in\N$. Note that in this case $F|_\CC$ neither preserves nor reverses the orientation at $p$.

\smallskip
\item If $e_2\subseteq F(e_1)= F(e_2)$, then $p$ is contained in exactly $k$ white and $k$ black $1$-tiles that are contained in the white $0$-tile, for some $k\in\N$. Note that in this case $F|_\CC$ neither preserves nor reverses the orientation at $p$.
\end{enumerate}
\end{enumerate}
It follows then that $a-b$ is equal to the number of fixed points of $F|_\CC$ where $F|_\CC$ preserves the orientation minus the number of fixed points of $F|_\CC$ where $F|_\CC$ reverses the orientation.

Then the claim follows from Lemma \ref{lmNoFixedPts_f_C}.

\smallskip

Next, we are going to prove that the number of fixed points of $F$, counted with weight given by the local degree, is equal to
\begin{equation} \label{eqDegF}
w_w+b_b-a+b,
\end{equation}
which, by (\ref{eqPfThmNoFixedPts}) and the claim above, is equal to 
\begin{equation*}
\deg F +\deg(F|_\CC)-(\deg(F|_\CC)-1)=1+\deg F.
\end{equation*}

Indeed, by Lemma~\ref{lmAtLeast1} and Lemma~\ref{lmAtMost1}, each $1$-tile that contributes in (\ref{eqDegF}), i.e., each $1$-tile in $\X^1_{ww}\cup\X^1_{bb}\cup B \cup A$, contains exactly one fixed point (not counted with weight) of $F$. On the other hand, each fixed point is contained in at least one of the $1$-tiles in $\X^1_{ww}\cup\X^1_{bb}\cup B \cup A$. Let $p$ be a fixed point of $F$, then one of the following happens:
\begin{enumerate}

\smallskip
\item If $p\notin\CC$, then $p$ is not contained in any $1$-edges $e$ since $F(e)\subseteq\CC$. So $p\in\inte X$ for some $X \in \X^1_{ww}\cup \X^1_{bb}\setminus \(A\cup B\)$, by Lemma~\ref{lmAtLeast1}. So each such $p$ contributes $1$ to (\ref{eqDegF}).

\smallskip
\item If $p\in\CC$ but $p\notin\crit F$, then $p$ is not a $1$-vertex, so either $p$ is contained in exactly two $1$-tiles $X\in\X^1_{ww}$ and $X'\in\X^1_{bb}$, or either $p$ is contained in exactly two $1$-tiles $X\in\X^1_{bw}$ and $X'\in\X^1_{wb}$. In either case, $p$ contributes $1$ to (\ref{eqDegF}).

\smallskip
\item If $p\in\CC$ and $p\in\crit F$, then $p$ is a $0$-vertex, so the part that $p$ contributes in (\ref{eqDegF}) counts the number of black $1$-tiles that contains $p$, which is exactly the weight $\deg_F(p)$ of $p$.
\end{enumerate}

Hence we have proved the theorem with $f$ replaced by $F=f^n$, for each $n \geq N(f)$ where $N(f)$ is the constant as given in Corollary~\ref{corCexists} depending only on $f$. We are now going to remove this restriction by an elementary number-theoretic argument.

Choose a prime $r\geq N(f)$. Note that the set of fixed points of $f^r$ can be decomposed into orbits under $f$ of length $r$ or 1, since $r$ is a prime. Let $p$ be a fixed point of $f^r$. By using the following formula derived from (\ref{eqLocalDegreeProduct}),
\begin{equation}
\deg_{f^r}(p) = \deg_f(p)\deg_f(f(p))\deg_f(f^2(p))\cdots \deg_f(f^{r-1}(p)),
\end{equation}
we can conclude that
\begin{enumerate}

\smallskip
\item[(1)] if $p\notin\crit(f^r)$, or equivalently, $\deg_{f^r}(p)=1$, and
\begin{enumerate}

\smallskip
\item[(i)] if $p$ is in an orbit of length $r$, then $p,f(p),\dots,f^{r-1}(p)\notin \crit f$, or equivalently, the local degrees of $f^r$ at these points are all 1;

\smallskip
\item[(ii)] if $p$ is in an orbit of length 1, then $p\notin\crit f$, or equivalently, $\deg_f(p)=1$;
\end{enumerate}

\smallskip
\item[(2)] if $p\in\crit(f^r)$, and
\begin{enumerate}

\smallskip
\item[(i)] if $p$ is in an orbit of length $r$, then all $p,f(p),\dots,f^{r-1}(p)$ are fixed points of $f^r$ with the same weight $\deg_{f^r}(p)=\deg_{f^r}(f^k(p))$ for each $k\in\N$;

\smallskip
\item[(ii)] if $p$ is in an orbit of length 1, then $p\in\crit f$ and the weight of $f^r$ at $p$ is $\deg_{f^r}(p)=(\deg_f (p))^r$.
\end{enumerate}
\end{enumerate}
Note that a fixed point $p\in S^2$ of $f^r$ is a fixed point of $f$ if and only if $p$ is in an orbit of length 1 under $f$. So by first summing the weight of the fixed points of $f^r$ in the same orbit then summing over all orbits and applying Fermat's Little Theorem, we can conclude that
\begin{align*}
p_{1,f^r} & = \sum\limits_{x\in P_{1,f^r}} \deg_{f^r}(x)  \\
          & = \sum\limits_{\text{(1)(i)}} r  + \sum\limits_{\text{(1)(ii)}} 1 + \sum\limits_{\text{(2)(i)}} r\deg_{f^r}(p) + \sum\limits_{\text{(2)(ii)}} (\deg_f(p))^r \\
          & \equiv \sum\limits_{\text{(1)(ii)}} 1 + \sum\limits_{\text{(2)(ii)}} \deg_f(p)\\
          & =  p_{1,f}  \pmod{r},
\end{align*}
where on the second line, the first sum ranges over all orbits in Case (1)(i), the second sum ranges over all orbits in Case (1)(ii), the third sum ranges over all orbits $\{p, f(p),\dots,f^{r-1}(p)\}$ in Case (2)(i), the last sum ranges over all orbits $\{p\}$ in Case (2)(ii). Thus by (\ref{eqDegreeProduct}) and Fermat's Little Theorem again, we have
\begin{align*}
0 = & \deg(f^r)+1-p_{1,f^r}  \\
  \equiv & (\deg f)^r+1-p_{1,f} \\
  \equiv & 1+\deg f-p_{1,f}  \pmod{r}.
\end{align*}
By choosing the prime $r$ larger than
$$\Abs{1+\deg f-p_{1,f}  },$$
we can conclude that
$$
p_{1,f}  = 1+\deg f.
$$
\end{proof}

In particular, we have the following corollary in which the weight for all points are trivial.

\begin{cor}
If $f$ is an expanding Thurston map with no critical fixed points, then there are exactly $1+\deg f$ distinct fixed points of $f$. Moreover, if $f$ is an expanding Thurston map with no periodic critical points, then there are exactly $1+(\deg f)^n$ distinct fixed points of $f^n$, for each $n\in\N$.
\end{cor}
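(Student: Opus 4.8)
The plan is to deduce both assertions directly from Theorem~\ref{thmNoFixedPts}, the point being that under the stated hypotheses every relevant fixed point carries weight exactly $1$, so the weighted count of fixed points agrees with the naive count.

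For the first assertion, I would argue that if $f$ has no critical fixed points, then every fixed point $p$ of $f$ satisfies $\deg_f(p)=1$, i.e., it carries weight $1$ in the sense of the definition preceding Proposition~\ref{propNoFixedPtsRational}. Hence $p_{1,f}=\sum_{x\in P_{1,f}}\deg_f(x)=\card P_{1,f}$, and Theorem~\ref{thmNoFixedPts} gives $\card P_{1,f}=p_{1,f}=1+\deg f$.

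For the second assertion, I would first observe that if $f$ has no periodic critical points, then $f^n$ has no critical fixed points for any $n\in\N$. Indeed, if $p=f^n(p)$ with $\deg_{f^n}(p)\geq 2$, then by (\ref{eqLocalDegreeProduct}) we have
$$
\deg_{f^n}(p)=\deg_f(p)\deg_f(f(p))\cdots\deg_f(f^{n-1}(p)),
$$
so $\deg_f(f^k(p))\geq 2$ for some $0\leq k\leq n-1$; but $f^k(p)$ is periodic for $f$ (its orbit under $f$ returns to it after $n$ steps), contradicting the hypothesis. Since $f^n$ is again an expanding Thurston map by Remarks~\ref{rmExpanding} and $\deg(f^n)=(\deg f)^n$ by (\ref{eqDegreeProduct}), applying the first assertion to $f^n$ yields exactly $1+(\deg f)^n$ distinct fixed points of $f^n$.

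This is an immediate corollary of Theorem~\ref{thmNoFixedPts} once these weight bookkeeping observations are made; the only mild subtlety — which I would regard as the main point rather than an obstacle — is the implication "no periodic critical points of $f$" $\Rightarrow$ "no critical fixed points of $f^n$," which rests on the multiplicativity of the local degree along orbits and on the fact that any fixed point of $f^n$ is periodic for $f$.
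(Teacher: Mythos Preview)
Your argument is correct and essentially identical to the paper's own proof: both deduce the first assertion directly from Theorem~\ref{thmNoFixedPts} via the observation that fixed points have weight $1$ in the absence of critical fixed points, and both handle the second assertion by noting that $f^n$ is again an expanding Thurston map and using the multiplicativity formula $\deg_{f^n}(p)=\prod_{k=0}^{n-1}\deg_f(f^k(p))$ to show that a fixed point of $f^n$ with weight $>1$ would force a periodic critical point of $f$. The only cosmetic difference is that you phrase the second step as ``apply the first assertion to $f^n$,'' while the paper applies Theorem~\ref{thmNoFixedPts} to $f^n$ directly after verifying weights are $1$; these are the same argument.
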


\begin{proof}
The first statement follows immediately from Theorem \ref{thmNoFixedPts}.

To prove the second statement, we first recall that if $f$ is an expanding Thurston map, so is $f^n$ for each $n\in\N$. Next we note that for each fixed point $p\in S^2$ of $f^n$, $n\in\N$, we have $\deg_{f^n}(p)=1$. For otherwise, suppose $\deg_{f^n}(p)>1$ for some $n\in\N$, then
$$
1< \deg_{f^n}(p) = \deg_f(p)\deg_f(f(p))\deg_f(f^2(p))\cdots \deg_f(f^{n-1}(p)).
$$
Thus at least one of the points $p, f(p), f^2(p),\dots, f^{n-1}(p)$ is a periodic critical point of $f$, a contradiction. The second statement now follows.
\end{proof}

We recall the definition of $s_n^m$ in (\ref{eqSetPrePeriodicPts}) and (\ref{eqNoPrePeriodicPts}).

\begin{cor}  \label{corNoPrePeriodicPts}
Let $f$ be an expanding Thurston map. For each $m\in\N_0$ and $n\in\N$ with $m<n$, we have
\begin{equation}
s_{n}^m = (\deg f)^n + (\deg f)^m.
\end{equation}
\end{cor}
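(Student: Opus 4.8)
The plan is to reduce the weighted count $s_n^m$ to the weighted count of fixed points of an iterate of $f$, to which Theorem~\ref{thmNoFixedPts} directly applies. The bridge is the elementary observation that $f^m(x)=f^n(x)$ holds if and only if $f^m(x)$ is a fixed point of $f^{n-m}$, so that preperiodic points with parameters $m,n$ are exactly the $f^m$-preimages of fixed points of $f^{n-m}$.

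First I would set $k=n-m\in\N$, recall from Remarks~\ref{rmExpanding} that $f^k$ is again an expanding Thurston map, and write $\operatorname{Fix}=\{y\in S^2 : f^k(y)=y\}$. I claim that, as a set, $S_n^m=\bigsqcup_{y\in\operatorname{Fix}}f^{-m}(y)$ is a disjoint union: indeed $x\in S_n^m$ iff $f^m(x)=f^{n-m}(f^m(x))$, i.e.\ iff $y:=f^m(x)$ lies in $\operatorname{Fix}$, and distinct $y$ obviously have disjoint $f^{-m}$-fibers. This partitions $S_n^m$ according to the value of $f^m$.

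Next I would handle the weights by iterating \eqref{eqLocalDegreeProduct}. For $x\in f^{-m}(y)$ with $y\in\operatorname{Fix}$ we have $f^n(x)=f^{n-m}(f^m(x))=f^k(y)=y$, and \eqref{eqLocalDegreeProduct} gives $\deg_{f^n}(x)=\deg_{f^m}(x)\,\deg_{f^k}(f^m(x))=\deg_{f^m}(x)\,\deg_{f^k}(y)$. Summing over the partition above,
\begin{equation*}
s_n^m=\sum_{x\in S_n^m}\deg_{f^n}(x)=\sum_{y\in\operatorname{Fix}}\deg_{f^k}(y)\sum_{x\in f^{-m}(y)}\deg_{f^m}(x).
\end{equation*}
By \eqref{eqDeg=SumLocalDegree} applied to the branched covering $f^m$ (together with \eqref{eqDegreeProduct}), the inner sum equals $\deg(f^m)=(\deg f)^m$ for every $y$. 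Pulling this constant out and applying Theorem~\ref{thmNoFixedPts} to $f^k$, which gives $\sum_{y\in\operatorname{Fix}}\deg_{f^k}(y)=1+\deg(f^k)=1+(\deg f)^k$, I obtain
\begin{equation*}
s_n^m=(\deg f)^m\bigl(1+(\deg f)^{k}\bigr)=(\deg f)^m+(\deg f)^{m+k}=(\deg f)^m+(\deg f)^n.
\end{equation*}

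There is no serious obstacle once Theorem~\ref{thmNoFixedPts} is in hand; the only point requiring care is the bookkeeping of weights, namely that the weight $\deg_{f^n}$ in the definition \eqref{eqNoPrePeriodicPts} of $s_n^m$ factors correctly along the decomposition $f^n=f^{n-m}\circ f^m$ used to partition $S_n^m$ in \eqref{eqSetPrePeriodicPts}. This is exactly the chain rule \eqref{eqLocalDegreeProduct} for local degrees combined with the fiber-sum identity \eqref{eqDeg=SumLocalDegree}; none of the finer combinatorial machinery (cell decompositions, visual metrics, invariant Jordan curves) is needed beyond what already entered the proof of Theorem~\ref{thmNoFixedPts}.
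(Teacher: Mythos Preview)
Your proof is correct and follows essentially the same approach as the paper: partition $S_n^m$ by the fibers of $f^m$ over the fixed points of $f^{n-m}$, factor the weight $\deg_{f^n}$ via \eqref{eqLocalDegreeProduct}, evaluate the inner sum by \eqref{eqDeg=SumLocalDegree} and \eqref{eqDegreeProduct}, and the outer sum by Theorem~\ref{thmNoFixedPts}. The paper's proof is simply a terser version of what you wrote.
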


\begin{proof}
For all $m\in\N_0$ and $n\in\N$ with $m<n$, we have
\begin{align*}
s_{n}^m & = \sum\limits_{x\in S_{n}^m}  \deg_{f^n}(x) = \sum\limits_{y=f^{n-m}(y)} \sum\limits_{x\in f^{-m}(y)} \deg_{f^n}(x) \\
          & = \sum\limits_{y=f^{n-m}(y)}  \deg_{f^{n-m}}(y) \sum\limits_{x\in f^{-m}(y)} \deg_{f^m}(x) \\
          & = \( (\deg f)^{n-m} + 1 \) (\deg f)^m.
\end{align*}
The last equality follows from (\ref{eqDeg=SumLocalDegree}), (\ref{eqDegreeProduct}), and Theorem~\ref{thmNoFixedPts}.
\end{proof}

Finally, for expanding Thurston maps with no periodic critical points, we derive a formula for $p_{n,f}$, $n\in\N$, from Theorem~\ref{thmNoFixedPts} and the M\"obius inversion formula (see for example, \cite[Section~2.4]{Bak85}).

\begin{definition} \label{defMobiusFn}
The \defn{M\"obius function}, $\mu(u)$, is defined by
$$
\mu(n) =\begin{cases} 1 & \text{if } n=1; \\ (-1)^r & \text{if } n=p_1 p_2 \dots p_r, \text{ and $p_1,\dots,p_r$ are distinct primes}; \\ 0  & \text{otherwise.} \end{cases}
$$
\end{definition}

\begin{cor}
Let $f$ be an expanding Thurston map without any periodic critical points. Then for each $n\in\N$, we have
$$
p_{n,f}=\sum\limits_{d|n} \mu(d)p_{1,f^{n/d}} = \begin{cases} \sum\limits_{d|n} \mu(d) (\deg f)^{n/d}  & \text{if } n>1; \\ 1 + \deg f  & \text{if } n=1. \end{cases}
$$
\end{cor}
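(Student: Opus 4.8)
The plan is to derive the first equality by M\"obius inversion applied to a partition of the fixed-point sets of the iterates of $f$, and then to evaluate the resulting alternating sum using Theorem~\ref{thmNoFixedPts}.

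First I would record the two simplifications that come from the hypothesis that $f$ has no periodic critical points. By the corollary preceding Corollary~\ref{corNoPrePeriodicPts}, for every $k\in\N$ and every $x\in S^2$ with $f^k(x)=x$ one has $\deg_{f^k}(x)=1$; hence $p_{k,f}=\card P_{k,f}$, and, applying the same fact to the iterate $f^k$ (which is again an expanding Thurston map with $\deg f^k=(\deg f)^k$ by (\ref{eqDegreeProduct}), and which again has no periodic critical points), also $p_{1,f^k}=\card\{x\in S^2\,|\,f^k(x)=x\}$. Next, for each $n\in\N$ I would partition the set of fixed points of $f^n$ according to exact period: if $f^n(x)=x$, the exact period $d$ of $x$ divides $n$, so
\begin{equation*}
\{x\in S^2\,|\,f^n(x)=x\}=\bigsqcup_{d\mid n} P_{d,f},
\end{equation*}
a disjoint union. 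Taking cardinalities and using the previous reduction yields the summation identity $p_{1,f^n}=\sum_{d\mid n}p_{d,f}$ for all $n\in\N$. Applying the M\"obius inversion formula (see \cite[Section~2.4]{Bak85}) to this identity gives
$$
p_{n,f}=\sum_{d\mid n}\mu(d)\,p_{1,f^{n/d}}.
$$

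Finally I would substitute the explicit value of $p_{1,f^{n/d}}$: by Theorem~\ref{thmNoFixedPts} applied to the expanding Thurston map $f^{n/d}$, together with (\ref{eqDegreeProduct}), we have $p_{1,f^{n/d}}=1+(\deg f)^{n/d}$. Therefore
$$
p_{n,f}=\sum_{d\mid n}\mu(d)+\sum_{d\mid n}\mu(d)(\deg f)^{n/d},
$$
and since $\sum_{d\mid n}\mu(d)$ equals $1$ when $n=1$ and $0$ when $n>1$, the claimed case distinction follows at once (for $n=1$ only the $d=1$ term survives, giving $1+\deg f$). I do not expect a genuine obstacle here: this is the classical periodic-point count via M\"obius inversion. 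The only points that require care are the triviality of all the weights $\deg_{f^k}$ at fixed points of $f^k$ — which genuinely uses the no-periodic-critical-points hypothesis, without which one would instead be inverting an identity among the weighted counts $p_{k,f}$ — and the disjointness of the exact-period decomposition, both of which are immediate.
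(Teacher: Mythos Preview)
Your proof is correct and follows essentially the same route as the paper: establish $p_{1,f^n}=\sum_{d\mid n}p_{d,f}$, apply M\"obius inversion, substitute $p_{1,f^{n/d}}=1+(\deg f)^{n/d}$ from Theorem~\ref{thmNoFixedPts}, and use $\sum_{d\mid n}\mu(d)=[n=1]$. Your write-up is in fact more careful than the paper's, since you make explicit that the identity $p_{1,f^n}=\sum_{d\mid n}p_{d,f}$ genuinely relies on the absence of periodic critical points (otherwise a point of exact period $d\mid n$ contributes $(\deg_{f^d}(x))^{n/d}$ to $p_{1,f^n}$ rather than $\deg_{f^d}(x)$).
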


\begin{proof}
The first equality follows from the M\"obius inversion formula and the equation $p_{1,f^n}=\sum\limits_{d|n} p_{d,f}$, for $n\in\N$. The second equality follows from Theorem~\ref{thmNoFixedPts} and the following fact (see for example, \cite[Section~2.4]{Bak85}):
$$
\sum\limits_{d|n} \mu(d) =\begin{cases} 1 & \text{if } n=1, \\ 0  & \text{if } n>1. \end{cases}
$$
\end{proof}

\section{Equidistribution}  \label{sctEquidistribution}

In this section, we derive various equidistribution results as stated in Theorem~\ref{thmWeakConvPreImg}, Theorem~\ref{thmWeakConvPrePerPts}, and Corollary~\ref{corWeakConvPerPts}. We prove these results by first establishing a general statement in Theorem~\ref{thmWeakConv} on the convergence of the distributions of the white $n$-tiles in the tile decompositions discussed in Section~\ref{sctThurstonMap}, in the weak* topology, to the unique measure of maximal entropy of an expanding Thurston map.

Let us now review the concept of measure of maximal entropy for dynamical systems. Then we prove in Theorem~\ref{thmWeakConv} that the distributions of the points, each of which is located ``near'' its corresponding white $n$-tile where the correspondence is a bijection, converges in the weak* topology to $\mu_f$ as $n\longrightarrow+\infty$. Then Theorem~\ref{thmWeakConvPreImg} follows from Theorem~\ref{thmWeakConv}. Theorem~\ref{thmWeakConvPrePerPts} finally follows after we prove a technical bound in Lemma~\ref{lmCoverEdges} generalizing a corresponding lemma from \cite{BM10}. As a special case, we obtain Corollary~\ref{corWeakConvPerPts}.

We start with recalling concepts of entropy for dynamical systems. We follow closely the notation from \cite[Chapter 20]{BM10}.

Let $(X,d)$ be a compact metric space and $g\:X\rightarrow X$ a continuous map. For each $n\in\N$ and $x,y\in X$,
$$
d^n_g(x,y)=\operatorname{max}\{d(g^k(x),g^k(y))\,|\,k=0,\dots,n-1\}
$$
defines a metric on $X$. Let $D(g,\epsilon,n)$ be the minimum number of $\epsilon$-balls in $(X,d^n_g)$ whose union covers $X$.

One can show that the \defn{topological entropy} $h_{\operatorname{top}}(g)$ of $g$, defined as
$$
h_{\operatorname{top}}(g)=\lim\limits_{\epsilon\to 0}\lim\limits_{n\to+\infty} \frac1n \log(D(g,\epsilon,n)),
$$
is well-defined and independent of $d$ as long as the topology on $X$ defined by $d$ remains the same \cite[Proposition 3.1.2]{KH95}.

Let $\mu\in \MMM(X,g)$. Let $I, J$ be countable index sets. A \defn{measurable partition} $\xi$ for $(X,\mu)$ is a countable collection $\xi=\{A_i\,|\,i\in I\}$ of Borel sets with $\mu(A_i\cap A_j)=0$ for all $i,j\in I$ with $i\neq j$, and
$$
\mu \bigg( X\setminus\bigcup\limits_{i\in I}A_i \bigg) =0.
$$

Let $\xi=\{A_i\,|\,i\in I\}$ and $\eta=\{B_j\,|\,j\in J\}$ be measurable partitions of $(X,\mu)$. Then the \defn{common refinement} $\xi \vee \eta$ of $\xi$ and $\eta$ defined as
$$
\xi \vee \eta = \{A_i\cap B_j \,|\, i\in I, j\in J\}
$$
is also a measurable partition. Let $g^{-1}(\xi)=\{g^{-1}(A_i) \,|\,i\in I\}$, and define for each $n\in\N$,
$$
\xi^n_g=\xi\vee g^{-1}(\xi)\vee\cdots\vee g^{-(n-1)}(\xi).
$$
The \defn{entropy} of $\xi$ is
$$
H_{\mu}(\xi)= - \sum\limits_{i\in I} \mu(A_i)\log\(\mu (A_i)\),
$$
where $0\log 0$ is equal to $0$ by convention. One can show (see \cite[Chapter 4]{Wa82}) that if $H_{\mu}(\xi)<+\infty$, then the following limit exists
$$
h_{\mu}(g,\xi)=\lim\limits_{n\to+\infty} \frac{1}{n} H_{\mu}(\xi^n_g) \in[0,+\infty).
$$

Then we denote the \defn{measure-theoretic entropy} of $g$ for $\mu$ by
\begin{align*}
h_{\mu}(g)=\sup\{h_{\mu}(g,\xi)\,|\, & \xi \text{ is a measurable partition of }\\
                                  &(X,\mu) \text{ with } H_{\mu}(\xi)<+\infty\}.
\end{align*}

By the variational principle (see \cite[Theorem~8.6]{Wa82}), we have 
$$
h_{\operatorname{top}}(g)=\sup\{h_{\mu}(g)\,|\,\mu\in \MMM(X,g)\}.
$$
A measure $\mu$ that achieves the supreme above is called \defn{a measure of maximal entropy} of $g$.

If $f$ is an expanding Thurston map, then
\begin{equation}   \label{eqTopEntropy}
h_{\operatorname{top}} (f)=\log(\deg f),
\end{equation}
and there exists a unique measure of maximal entropy $\mu_f$ for $f$ (see \cite[Theorem~20.9]{BM10} and \cite[Section~3.4 and Section~3.5]{HP09}). Moreover, for each $n\in\N$, the unique measure of maximal entropy $\mu_{f^n}$ of the expanding Thurston map $f^n$ is equal to $\mu_f$ (see \cite[Theorem~20.7 and Theorem~20.9]{BM10}).

We recall that in a compact metric space $(X,d)$, a sequence of finite Borel measures $\mu_n$ converges in the weak$^*$ topology to a finite Borel measure $\mu$, or $\mu_n \stackrel{w^*}{\longrightarrow} \mu$, as $n\longrightarrow +\infty$ if and only if $\lim\limits_{n\to+\infty} \int\! u\,\mathrm{d}\mu_n = \int\! u\,\mathrm{d}\mu$ for each $u\in \CCC(X)$.

We need the following lemmas for weak$^*$ convergence.

\begin{lemma}  \label{lmPushforwardConv}
Let $X$ and $\widetilde{X}$ be two compact metric spaces and $\phi \:X \rightarrow \widetilde{X}$ a continuous map. Let $\mu$ and $\mu_i$, for $i\in\N$, be finite Borel measures on $X$. If
$$
\mu_i \stackrel{w^*}{\longrightarrow} \mu \text{ as } i\longrightarrow +\infty,
$$
then $\phi_*(\mu)$ and $\phi_*(\mu_i)$, $i\in\N$, are finite Borel measures on $\widetilde{X}$, and
$$
\phi_*(\mu_i) \stackrel{w^*}{\longrightarrow} \phi_*(\mu) \text{ as } i\longrightarrow +\infty.
$$
\end{lemma}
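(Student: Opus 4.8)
The plan is to reduce the statement to the change-of-variables formula for pushforward measures together with the very definition of weak$^*$ convergence. First I would recall that for a finite Borel measure $\nu$ on $X$ and a Borel measurable map $\phi\:X\rightarrow\widetilde X$, the pushforward $\phi_*(\nu)$ is the set function $A\mapsto \nu(\phi^{-1}(A))$ defined on Borel subsets $A\subseteq\widetilde X$. Since $\phi$ is continuous, it is Borel measurable, so $\phi^{-1}(A)$ is a Borel subset of $X$ for every Borel $A\subseteq\widetilde X$; hence $\phi_*(\nu)$ is well-defined. Countable additivity of $\phi_*(\nu)$ follows from that of $\nu$ together with the facts that $\phi^{-1}$ commutes with countable unions and preserves disjointness, and $\phi_*(\nu)(\widetilde X)=\nu(X)<+\infty$, so $\phi_*(\nu)$ is a finite Borel measure on $\widetilde X$. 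Applying this to $\nu=\mu$ and to $\nu=\mu_i$ for each $i\in\N$ gives the first assertion.

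Next I would invoke the standard change-of-variables identity
$$
\int_{\widetilde X}\! u \,\mathrm{d}\phi_*(\nu) = \int_X\! (u\circ\phi)\,\mathrm{d}\nu,
$$
valid for every bounded Borel function $u\:\widetilde X\rightarrow\R$ and every finite Borel measure $\nu$ on $X$. This is proved in the usual way: it holds for indicator functions by the very definition of $\phi_*(\nu)$, extends to simple functions by linearity, and then to bounded Borel functions by a monotone approximation argument.

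Finally, I would fix $u\in\CCC(\widetilde X)$. Since $\phi$ and $u$ are continuous and $X$ is compact, $u\circ\phi\in\CCC(X)$. The hypothesis $\mu_i \stackrel{w^*}{\longrightarrow}\mu$ then yields
$$
\lim_{i\to+\infty}\int_X\!(u\circ\phi)\,\mathrm{d}\mu_i = \int_X\!(u\circ\phi)\,\mathrm{d}\mu,
$$
and combining this with the change-of-variables identity applied to $\nu=\mu_i$ and to $\nu=\mu$ gives
$$
\lim_{i\to+\infty}\int_{\widetilde X}\! u\,\mathrm{d}\phi_*(\mu_i) = \int_{\widetilde X}\! u\,\mathrm{d}\phi_*(\mu).
$$
Since $u\in\CCC(\widetilde X)$ was arbitrary, this is precisely $\phi_*(\mu_i)\stackrel{w^*}{\longrightarrow}\phi_*(\mu)$, completing the argument. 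I do not expect a genuine obstacle here; the only points that require any care are the measurability of $\phi^{-1}(A)$, which is handled by continuity of $\phi$, and the justification of the change-of-variables formula, both of which are routine.
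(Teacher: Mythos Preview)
Your argument is correct and follows essentially the same approach as the paper: both reduce to the change-of-variables identity $\int_{\widetilde X} u\,\mathrm{d}\phi_*\nu = \int_X (u\circ\phi)\,\mathrm{d}\nu$ and then apply the definition of weak$^*$ convergence to the continuous function $u\circ\phi$. Your version is simply more explicit about why the pushforwards are finite Borel measures and why the change-of-variables formula holds, whereas the paper compresses this into a single displayed line with a reference to the Riesz representation theorem.
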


Recall for a continuous map $\phi\: X\rightarrow \widetilde X$ between two metric spaces and a Borel measure $\nu$ on $X$, the \emph{push-forward} $\phi_*(\nu)$ of $\nu$ by $\phi$ is defined to be the unique Borel measure that satisfies $(\phi_*(\nu))(B) = \nu\(\phi^{-1}(B)\)$ for each Borel set $B\subseteq \widetilde X$.

\begin{proof}
By the Riesz representation theorem (see for example, \cite[Chapter~7]{Fo99}), the lemma follows if we observe that for each $h\in C(X)$, we have
$$
\int_{\widetilde{X}} \! h \, \mathrm{d}\phi_*\mu_i=\int_X \! (h\circ \phi) \,\mathrm{d}\mu_i \stackrel{i\longrightarrow +\infty}{\longrightarrow} \int_{X} \! (h\circ \phi)\,\mathrm{d}\mu = \int_{\widetilde{X}} \!h\,\mathrm{d}\phi_*\mu.
$$
\end{proof}

\begin{lemma}  \label{lmConvexCombConv}
Let $(X,d)$ be a compact metric space, and $I$ be a finite set. Suppose that $\mu$ and $\mu_{i,n}$, for $i\in I$ and $n\in\N$, are finite Borel measures on $X$, and $w_{i,n}\in [0,+\infty)$, for $i\in I$ and $n\in\N$ such that
\begin{enumerate}
\smallskip

\item $\mu_{i,n} \stackrel{w^*}{\longrightarrow} \mu$, as $n\longrightarrow +\infty$, for each $i\in I$,

\smallskip

\item $\lim\limits_{n\to+\infty} \sum\limits_{i\in I} w_{i,n}  = r$ for some $r\in\R$.
\end{enumerate}
Then $\sum\limits_{i\in I} w_{i,n}\mu_{i,n} \stackrel{w^*}{\longrightarrow}  r\mu$ as $n\longrightarrow +\infty$.
\end{lemma}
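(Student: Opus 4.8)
The plan is to test against an arbitrary continuous function and reduce everything to the defining convergence $\int u \, \mathrm{d}\mu_{i,n} \to \int u\,\mathrm{d}\mu$ supplied by hypothesis (1). So fix $u\in\CCC(X)$. By linearity of the integral,
\[
\int_X u \, \mathrm{d}\Big(\sum_{i\in I} w_{i,n}\mu_{i,n}\Big) = \sum_{i\in I} w_{i,n} \int_X u\,\mathrm{d}\mu_{i,n},
\]
so it suffices to show the right-hand side converges to $r\int_X u\,\mathrm{d}\mu = \int_X u\,\mathrm{d}(r\mu)$ as $n\to+\infty$ (note $r\ge 0$ since the $w_{i,n}$ are nonnegative, so $r\mu$ is again a finite Borel measure).

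The key algebraic move is the splitting
\[
\sum_{i\in I} w_{i,n} \int_X u\,\mathrm{d}\mu_{i,n} = \sum_{i\in I} w_{i,n}\Big(\int_X u\,\mathrm{d}\mu_{i,n} - \int_X u\,\mathrm{d}\mu\Big) + \Big(\sum_{i\in I} w_{i,n}\Big)\int_X u\,\mathrm{d}\mu .
\]
By hypothesis (2) the second summand tends to $r\int_X u\,\mathrm{d}\mu$. For the first summand I would first record that the weights are uniformly bounded: since each $w_{i,n}\ge 0$ and $\sum_{i\in I} w_{i,n}\to r$, the sequence $\big(\sum_{i\in I} w_{i,n}\big)_n$ is bounded, say by $M\ge 0$, whence $0\le w_{i,n}\le M$ for all $i\in I$ and $n\in\N$. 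Then
\[
\Big|\sum_{i\in I} w_{i,n}\Big(\int_X u\,\mathrm{d}\mu_{i,n} - \int_X u\,\mathrm{d}\mu\Big)\Big| \le M\sum_{i\in I}\Big|\int_X u\,\mathrm{d}\mu_{i,n} - \int_X u\,\mathrm{d}\mu\Big|,
\]
and since $I$ is finite and each term on the right tends to $0$ by hypothesis (1), this expression tends to $0$. Adding the two contributions gives the claimed convergence, i.e. $\sum_{i\in I} w_{i,n}\mu_{i,n} \stackrel{w^*}{\longrightarrow} r\mu$.

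There is no genuine obstacle here beyond bookkeeping; the one point that deserves care — and the reason the lemma hypothesizes convergence of the \emph{sum} rather than of the individual weights — is that the $w_{i,n}$ need not converge in $n$. The argument nonetheless goes through because all the integrals $\int_X u\,\mathrm{d}\mu_{i,n}$ approach the single common value $\int_X u\,\mathrm{d}\mu$, so after factoring the uniformly bounded weights out of the difference terms, those terms are annihilated uniformly over the finite index set $I$.
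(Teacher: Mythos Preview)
Your proof is correct and follows essentially the same approach as the paper: both split $\sum_i w_{i,n}\int u\,\mathrm{d}\mu_{i,n} - r\int u\,\mathrm{d}\mu$ into a weighted sum of the differences $\int u\,\mathrm{d}\mu_{i,n}-\int u\,\mathrm{d}\mu$ plus a term controlled by $\big|\sum_i w_{i,n}-r\big|$. Your version is slightly more explicit in recording the uniform bound $w_{i,n}\le M$ on the individual weights, which the paper leaves implicit when asserting that the first sum tends to zero.
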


\begin{proof}
For each $u\in\CCC(X)$ and each $n\in\N$,
\begin{align*}
     & \Abs{\int \! u \,\mathrm{d}\bigg(\sum\limits_{i\in I} w_{i,n}\mu_{i,n} \bigg)  -  r \int \! u \,\mathrm{d}\mu  } \\
\leq & \sum\limits_{i\in I} w_{i,n} \Abs{\int \! u \,\mathrm{d}\mu_{i,n} -  \int \! u \,\mathrm{d}\mu  }  +  \bigg\lvert r-\sum\limits_{i\in I} w_{i,n} \bigg\rvert \Norm{\mu}.
\end{align*}
Since $\mu_{i,n} \stackrel{w^*}{\longrightarrow} \mu$, as $n\longrightarrow +\infty$, for each $i\in I$, and $\lim\limits_{n\to+\infty} \sum\limits_{i\in I} w_{i,n}  = r$, we can conclude that the right-hand side of the inequality above tends to $0$ as $n\longrightarrow +\infty$.
\end{proof}

We record the following well-known lemma, sometimes known as the Portmanteau Theorem, and refer the reader to \cite[Theorem~2.1]{Bi99} for the proof.

\begin{lemma}   \label{lmPortmanteau}
Let $(X,d)$ be a compact metric space, and $\mu$ and $\mu_i$, for $i\in\N$, be Borel probability measures on $X$. Then the following are equivalent:
\begin{enumerate}
\smallskip

\item $\mu_i \stackrel{w^*}{\longrightarrow} \mu$ as $i\longrightarrow +\infty$;

\smallskip

\item $\limsup\limits_{i\to+\infty} \mu_i(F) \leq \mu(F)$ for each closed set $F\subseteq X$;

\smallskip

\item $\liminf\limits_{i\to+\infty} \mu_i(G) \geq \mu(G)$ for each open set $G\subseteq X$;

\smallskip

\item $\lim\limits_{i\to+\infty} \mu_i(B)= \mu(B)$ for each Borel set $B\subseteq X$ with $\mu(\partial B) = 0$.
\end{enumerate}

\end{lemma}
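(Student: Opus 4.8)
The plan is to prove the four conditions equivalent by running the cycle $(1)\Rightarrow(2)\Leftrightarrow(3)$, then $(2)\wedge(3)\Rightarrow(4)$, and finally $(4)\Rightarrow(1)$; all four statements are purely measure-theoretic and do not need anything from earlier in the paper beyond the fact that $X$ is a compact metric space (so that every $u\in\CCC(X)$ is bounded and closed sets can be squeezed by continuous functions). For $(1)\Rightarrow(2)$, given a closed set $F\subseteq X$ I would build the continuous approximations $u_k(x)=\max\{0,\,1-k\,d(x,F)\}$ for $k\in\N$: each satisfies $\mathbf 1_F\leq u_k\leq 1$, and since $F$ is closed, $u_k(x)\downarrow\mathbf 1_F(x)$ pointwise. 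Then $\mu_i(F)\leq\int u_k\,\mathrm{d}\mu_i$ for all $i,k$, so $(1)$ gives $\limsup_{i\to+\infty}\mu_i(F)\leq\int u_k\,\mathrm{d}\mu$; letting $k\to+\infty$ and applying monotone (or dominated) convergence on the right yields $\limsup_{i\to+\infty}\mu_i(F)\leq\mu(F)$.

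The equivalence $(2)\Leftrightarrow(3)$ is then immediate by passing to complements: $G$ is open iff $X\setminus G$ is closed, and because every measure in sight is a probability measure, $\mu_i(G)=1-\mu_i(X\setminus G)$ and $\mu(G)=1-\mu(X\setminus G)$, so an upper bound on measures of closed sets converts exactly into a lower bound on measures of open sets. For $(2)\wedge(3)\Rightarrow(4)$, let $B$ be Borel with $\mu(\partial B)=0$; using $(3)$ on the open set $\inte B$ and $(2)$ on the closed set $\clos B$ together with $\inte B\subseteq B\subseteq\clos B$, I would chain
\[
\mu(\inte B)\leq\liminf_{i\to+\infty}\mu_i(\inte B)\leq\liminf_{i\to+\infty}\mu_i(B)\leq\limsup_{i\to+\infty}\mu_i(B)\leq\limsup_{i\to+\infty}\mu_i(\clos B)\leq\mu(\clos B).
\]
Since $\mu(\partial B)=0$ forces $\mu(\inte B)=\mu(\clos B)=\mu(B)$, all six quantities coincide and $\lim_{i\to+\infty}\mu_i(B)=\mu(B)$.

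For $(4)\Rightarrow(1)$, fix $u\in\CCC(X)$; by compactness $u$ is bounded, so after adding a constant and rescaling I may assume $0\leq u\leq 1$. I would invoke the layer-cake identity $\int u\,\mathrm{d}\nu=\int_0^1\nu(\{u>t\})\,\mathrm{d}t$, valid for any Borel probability measure $\nu$. Since $u$ is continuous, $\{u>t\}$ is open and $\partial\{u>t\}\subseteq\{u=t\}$; the sets $\{u=t\}$, $t\in[0,1]$, are pairwise disjoint, so $\mu(\{u=t\})>0$ for at most countably many $t$, and hence $\mu(\partial\{u>t\})=0$ for Lebesgue-almost every $t\in[0,1]$. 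For each such $t$, condition $(4)$ gives $\mu_i(\{u>t\})\to\mu(\{u>t\})$; since the integrands are uniformly bounded by $1$ on $[0,1]$, the dominated convergence theorem yields $\int_0^1\mu_i(\{u>t\})\,\mathrm{d}t\to\int_0^1\mu(\{u>t\})\,\mathrm{d}t$, i.e. $\int u\,\mathrm{d}\mu_i\to\int u\,\mathrm{d}\mu$. As $u\in\CCC(X)$ was arbitrary, $(1)$ follows. The only step demanding genuine care is this last one, $(4)\Rightarrow(1)$: one must convert convergence on a distinguished family of Borel sets into convergence of all continuous integrals, and the essential trick is the layer-cake representation combined with the observation that only countably many level sets of $u$ can carry positive $\mu$-mass. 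The other implications are routine squeezing arguments.
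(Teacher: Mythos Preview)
Your proof is correct and complete; the cycle $(1)\Rightarrow(2)\Leftrightarrow(3)\Rightarrow(4)\Rightarrow(1)$ is carried out cleanly, and the layer-cake argument for $(4)\Rightarrow(1)$ is the standard and efficient way to close the loop. The paper, however, does not give its own proof of this lemma at all: it simply records the statement as the well-known Portmanteau Theorem and refers the reader to \cite[Theorem~2.1]{Bi99}. So rather than matching or diverging from the paper's approach, you have supplied what the paper deliberately omitted.
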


\begin{lemma}   \label{lmLocalPerturb}
Let $(X,d)$ be a compact metric space. Suppose that $A_i  \subseteq X$, for $i\in\N$, are finite subsets of $X$ with maps $\phi_i\:A_i\rightarrow X$ such that 
\begin{equation*}
\lim\limits_{i\to+\infty} \max\{d(x,\phi_i(x))\,|\, x\in A_i\}=0.
\end{equation*}
Let $m_i\:A_i\rightarrow \R$, for $i\in\N$, be functions that satisfy
\begin{equation*}
\sup_{i\in\N} \Norm{m_i}_1 = \sup_{i\in\N} \sum\limits_{x\in A_i}\abs{m_i(x)} <+\infty.
\end{equation*}
Define for each $i\in\N$,
\begin{equation*}
\mu_i=\sum\limits_{x\in A_i} m_i(x)\delta_x, \quad \widetilde{\mu}_i=\sum\limits_{x\in A_i} m_i(x)\delta_{\phi_i(x)}.
\end{equation*}
If
\begin{equation*}
\mu_i  \stackrel{w^*}{\longrightarrow} \mu \text{ as } i\longrightarrow +\infty,
\end{equation*}
for some finite Borel measure $\mu$ on $X$, then
\begin{equation*}
\widetilde\mu_i  \stackrel{w^*}{\longrightarrow} \mu  \text{ as } i\longrightarrow +\infty.
\end{equation*}
\end{lemma}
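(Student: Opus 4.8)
The plan is to verify the defining property of weak$^*$ convergence directly, testing against an arbitrary $u\in\CCC(X)$ and exploiting the uniform continuity that compactness of $X$ provides. First I would record that, since $\sup_{i\in\N}\Norm{m_i}_1<+\infty$, each $\mu_i$ and each $\widetilde\mu_i$ is a finite signed Borel measure whose total variation is bounded by $\sup_{i\in\N}\Norm{m_i}_1$, so every integral appearing below is well defined.

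Now fix $u\in\CCC(X)$ and $\ve>0$. Since $X$ is compact, $u$ is uniformly continuous, so there is $\delta>0$ with $\Abs{u(x)-u(y)}<\ve$ whenever $d(x,y)<\delta$. By hypothesis there is $i_0\in\N$ such that $d(x,\phi_i(x))<\delta$ for every $x\in A_i$ and every $i\geq i_0$. For all such $i$ one estimates
\begin{equation*}
\Abs{\int\! u\,\mathrm{d}\widetilde\mu_i-\int\! u\,\mathrm{d}\mu_i}=\Abs{\sum\limits_{x\in A_i}m_i(x)\bigl(u(\phi_i(x))-u(x)\bigr)}\leq\ve\sum\limits_{x\in A_i}\abs{m_i(x)}\leq\ve\sup\limits_{j\in\N}\Norm{m_j}_1.
\end{equation*}
Since $\ve>0$ is arbitrary and $\sup_{j\in\N}\Norm{m_j}_1<+\infty$, it follows that $\int u\,\mathrm{d}\widetilde\mu_i-\int u\,\mathrm{d}\mu_i\to 0$ as $i\to+\infty$. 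Combining this with the hypothesis $\int u\,\mathrm{d}\mu_i\to\int u\,\mathrm{d}\mu$ gives $\int u\,\mathrm{d}\widetilde\mu_i\to\int u\,\mathrm{d}\mu$; as $u\in\CCC(X)$ was arbitrary, this is precisely $\widetilde\mu_i\stackrel{w^*}{\longrightarrow}\mu$.

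There is essentially no serious obstacle here: the argument uses only the uniform continuity of continuous functions on the compact space $X$ together with the uniform $L^1$-bound on the weights $m_i$, which jointly force the two functionals $u\mapsto\int u\,\mathrm{d}\widetilde\mu_i$ and $u\mapsto\int u\,\mathrm{d}\mu_i$ to have vanishing difference in the limit. The only point worth a word of care is that $\phi_i$ need not be injective, so the atoms of $\widetilde\mu_i$ may partially cancel; but the displayed estimate is blind to this, so no additional bookkeeping is required.
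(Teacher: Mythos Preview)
Your proof is correct and follows essentially the same approach as the paper's own argument: both show that $\int u\,\mathrm{d}\widetilde\mu_i-\int u\,\mathrm{d}\mu_i\to 0$ for each $u\in\CCC(X)$ by combining uniform continuity of $u$ on the compact space $X$ with the uniform bound $\sup_i\Norm{m_i}_1$. Your write-up is in fact slightly more explicit than the paper's, spelling out the $\delta$ from uniform continuity and the well-definedness of the integrals.
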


\begin{proof}
It suffices to prove that for each continuous function $g\in\CCC(X)$,
$$
\int \! g \, \mathrm{d} \mu_i - \int \! g \,\mathrm{d} \widetilde\mu_i \longrightarrow 0 \text{ as } i \longrightarrow +\infty.
$$
Indeed, $g$ is uniformly continuous, so for each $\epsilon > 0$, there exists $N\in\N$ such that for each $n>N$ and for each $x\in A_n$, we have $\Abs{g(x)-g(\phi_n(x))}<\epsilon$. Thus
$$
\Abs{\int \! g \, \mathrm{d} \mu_n - \int \! g \,\mathrm{d} \widetilde\mu_n}  \leq \sum\limits_{x\in A_n} \Abs{g(x)-g(\phi_n(x))}\Abs{m_n(x)} \leq \epsilon \sup_{n\in\N}\Norm{m_n}_1.
$$
\end{proof}

The following lemma is a reformulation of Lemma~20.2 in \cite{BM10}. We will later generalize it in Lemma \ref{lmCoverEdges}.

\begin{lemma}[M.~Bonk \& D.~Meyer, 2010]  \label{lmCoverEdgesBM}
Let $f$ be an expanding Thurston map, and $\CC\subseteq S^2$ be an $f^N$-invariant Jordan curve containing $\post{f}$ for some $N\in\N$. Then there exists a constant $L_0 \in [1,\deg{f})$ with the following property:

For each $m\in\N_0$ with $m \equiv 0 \pmod N$, there exists a constant $C_0>0$ such that for each $k\in\N_0$ with $k \equiv 0 \pmod N$ and each $m$-edge $e$, there exists a collection $M_0$ of $(m+k)$-tiles with $\card{M_0}\leq C_0 L_0^k$ and $e \subseteq \inte\Big( \bigcup\limits_{X\in M_0} X \Big)$.
\end{lemma}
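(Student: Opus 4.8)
The proof I have in mind has three parts: a reduction to the case $N=1$, a short ``counting $\Rightarrow$ covering'' step that turns \cite[Lemma~20.2]{BM10} into the stated form, and a remark isolating where the real work is.

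\emph{Reduction to $N=1$.} Set $F=f^N$. Then $F$ is again an expanding Thurston map with $\post F=\post f$, the curve $\CC$ is $F$-invariant and contains $\post F$, and $\deg F=(\deg f)^N$ by (\ref{eqDegreeProduct}). Since $F$ is cellular for $(\DD^N(f,\CC),\DD^0(f,\CC))$ (Proposition~\ref{propCellDecomp}(i)), the uniqueness of the cell decompositions $\DD^{\bullet}$ (\cite[Lemma~5.4]{BM10}, exactly as in the proof of Lemma~\ref{lmCexists}) gives $\DD^{jN}(f,\CC)=\DD^{j}(F,\CC)$ for every $j\in\N_0$; in particular an $m$-edge of $(f,\CC)$ with $m\equiv0\pmod N$ is exactly a $\frac mN$-edge of $(F,\CC)$, and an $(m+k)$-tile of $(f,\CC)$ with $k\equiv0\pmod N$ is exactly a $\frac{m+k}{N}$-tile of $(F,\CC)$. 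Hence it suffices to prove the statement for $F$ with $N=1$: given $L_1\in[1,\deg F)$ and, for each $j\in\N_0$, a constant $C_1$ furnishing the covering property at levels $j$ and $j+l$ for all $l\in\N_0$, one takes $L_0:=L_1^{1/N}<\deg f$ and $C_0:=C_1$ for $f$ at levels $m=jN$ and $m+k=(j+l)N$. So from now on $\CC$ is $f$-invariant.

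\emph{From counting to covering.} By \cite[Lemma~20.2]{BM10}, applied to $f$ with its invariant curve $\CC$, there are constants $C\ge1$ and $L_0\in[1,\deg f)$ such that for all $m,k\in\N_0$ and every $m$-edge $e$ the number of $(m+k)$-tiles meeting $e$ is at most $C(\deg f)^m L_0^k$. Fix $m$, and for an $m$-edge $e$ let $M_0$ be the collection of \emph{all} $(m+k)$-tiles meeting $e$, so $\card M_0\le C_0 L_0^k$ with $C_0:=C(\deg f)^m$. It remains to check $e\subseteq\inte\big(\bigcup_{X\in M_0}X\big)$. Because $\CC$ is $f$-invariant we have $f^{-m}(\CC)\subseteq f^{-(m+k)}(\CC)$, so by Proposition~\ref{propCellDecomp}(iii) the $m$-edge $e$ lies in the $1$-skeleton of $\DD^{m+k}(f,\CC)$; since $\DD^{m+k}(f,\CC)$ refines $\DD^m(f,\CC)$ (\cite[Lemma~5.4]{BM10}), $e$ is the union of the $(m+k)$-edges and $(m+k)$-vertices contained in it. If $x\in\inte e'$ for an $(m+k)$-edge $e'\subseteq e$, the two $(m+k)$-tiles having $e'$ in their boundary lie in $M_0$ (each meets $e'\subseteq e$) and their union contains a neighborhood of $x$; if $x$ is an $(m+k)$-vertex of $e$, the finitely many $(m+k)$-tiles containing $x$ (finite by Definition~\ref{defcelldecomp}(iv)) lie in $M_0$ and their union contains a neighborhood of $x$. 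In either case $x\in\inte\big(\bigcup_{X\in M_0}X\big)$, which is what we wanted.

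\emph{Where the difficulty lies.} The only substantive input is the cited \cite[Lemma~20.2]{BM10}, and this is where I expect the main obstacle to be. To reprove it one would proceed roughly as follows. Since $f$ is expanding and $\CC$ is $f$-invariant, Lemma~\ref{lmBMCellSizeBounds} gives $\max\{\diam_d(X):X\in\X^n(f,\CC)\}\le C\Lambda^{-n}\to0$ for a visual metric $d$, so—disjoint $0$-edges being at a fixed positive distance—there is $n_0\in\N$ with no $n_0$-tile joining opposite sides of $\CC$. Pushing forward by $f^{n_0}$ via Proposition~\ref{propCellDecomp}(i), and using that each $n_0$-tile meets $\CC$ only in boundary cells whose $f^{n_0}$-images lie among the finitely many $0$-cells of $\CC$, one obtains a submultiplicative recursion for the number $g(n)$ of $n$-tiles meeting $\CC$; transferring from $\CC$ to an arbitrary $m$-edge $e$ by the homeomorphism $f^m|_e$ of Proposition~\ref{propCellDecomp}(i), and absorbing a factor $(\deg f)^m$ coming from the two endpoint flowers of $e$, reduces the claim to $g(n)\le C'L_0^n$ with $L_0<\deg f$. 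The delicate point is the \emph{strict} inequality $L_0<\deg f$: the naive recursion only yields growth rate $\le\deg f$, and closing that gap (using the combinatorial-expansion property of $\CC$ in an essential way) is the crux of \cite[Lemma~20.2]{BM10}; everything else here is bookkeeping with the cell decompositions of Section~\ref{sctThurstonMap}.
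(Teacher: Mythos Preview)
The paper does not give its own proof of this lemma; it is simply stated as a reformulation of \cite[Lemma~20.2]{BM10} and attributed to Bonk and Meyer. Your proposal is correct and aligned with the paper's treatment: both rest entirely on \cite[Lemma~20.2]{BM10}, and your reduction to $N=1$ via $F=f^N$ (using $\DD^{jN}(f,\CC)=\DD^j(F,\CC)$) together with the counting-to-covering step is precisely the bookkeeping that the word ``reformulation'' is covering. Your third paragraph correctly locates the only substantive content---the strict inequality $L_0<\deg f$---inside the cited lemma.
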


Let $F$ be an expanding Thurston map with an $F$-invariant Jordan curve $\CC\subseteq S^2$ containing $\post{F}$. As before, we let $w_w=\card \X^1_{ww} $ denote the number of white $1$-tiles contained in the white $0$-tile, $b_w=\card \X^1_{bw} $ the number of black $1$-tiles contained in the white $0$-tile, $w_b=\card \X^1_{wb} $ the number of white $1$-tiles contained in the black $0$-tile, and $b_b=\card \X^1_{bb} $ the number of black $1$-tiles contained in the black $0$-tile. We define
\begin{equation} \label{eqDefwb}
w=\frac{b_w}{b_w+w_b}, \quad  b=\frac{w_b}{b_w+w_b}.
\end{equation}
Note that (see the discussion in \cite{BM10} proceeding Lemma~20.1 in Chapter 20) $b_w,w_b, w,b>0$, $w+b=1$, and
\begin{equation}  \label{eqWw-Bw<deg}
\abs{w_w-b_w} < \deg F.
\end{equation}

M.~Bonk and D.~Meyer give the following characterization of the unique measure of maximal entropy of $F$ (see \cite[Proposition~20.7 and Theorem~20.9]{BM10}):

\begin{theorem}[M.~Bonk \& D.~Meyer, 2010]    \label{thmBMCharactMOME}
Let $F$ be an expanding Thurston map with an $F$-invariant Jordan curve $\CC\subseteq S^2$. Then there is a unique measure of maximal entropy $\mu_F$ of $F$, which is characterized among all Borel probability measures by the following property:

for each $n\in\N_0$ and each $n$-tile $X^n\in\X^n(F,\CC)$, 
\begin{equation}
\mu(X^n) = \begin{cases} w(\deg F)^{-n} & \text{if } X^n \in \X^n_w(F,\CC), \\ b(\deg F)^{-n} & \text{if } X^n \in \X^n_b(F,\CC). \end{cases}
\end{equation}
\end{theorem}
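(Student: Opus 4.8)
This is the content of \cite[Proposition~20.7 and Theorem~20.9]{BM10}; here is the route I would take. Write $d=\deg F$, $W_n=\card\X^n_w(F,\CC)$, $B_n=\card\X^n_b(F,\CC)$. From the cellular structure, the $(n+1)$-tiles contained in a white $n$-tile $X^n$ are exactly the preimages under the homeomorphism $F^n|_{X^n}$ of the $1$-tiles inside $X^0_w$, so $X^n$ contains precisely $w_w$ white and $b_w$ black $(n+1)$-tiles, and similarly a black $n$-tile contains $w_b$ white and $b_b$ black ones. Hence the prescription $\mu(X^n)=w\,d^{-n}$ for white and $\mu(X^n)=b\,d^{-n}$ for black $n$-tiles is consistent under passing from level $n$ to level $n+1$: the required identities $w_w w+b_w b=wd$ and $w_b w+b_b b=bd$ follow at once from $w_w+w_b=b_w+b_b=d$ together with the definitions of $w,b$ in (\ref{eqDefwb}), and $wW_n+bB_n=d^n$ by induction, so the total mass is $1$. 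To promote this to an honest Borel probability measure I would first use Lemma~\ref{lmCoverEdgesBM}: an $n$-edge $e$ lies in the interior of a union of at most $C_0L_0^k$ many $(n+k)$-tiles with $L_0<d$, whence any probability measure obeying the tile formula assigns $e$ mass $\le C_0L_0^k\max\{w,b\}\,d^{-(n+k)}\to0$ as $k\to\infty$. Thus every $n$-edge, the whole $1$-skeleton $\bigcup_n F^{-n}(\CC)$, and every point of $S^2$ are null for any such measure, and a Carath\'eodory extension produces a well-defined Borel probability measure $\mu$ on $S^2$ taking the prescribed values on all tiles.

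Next I would verify $F$-invariance. All critical values of $F$ lie in $\post F\subseteq\CC$, hence in $\partial X^n$ for every $n$-tile $X^n$, so $F\colon F^{-1}(\inte X^n)\to\inte X^n$ is an honest covering of degree $d$ over a simply connected base; therefore $F^{-1}(\inte X^n)$ is the disjoint union of the interiors of exactly $d$ many $(n+1)$-tiles, each of which has the same colour as $X^n$ since $F^{n+1}(Y)=F^n(F(Y))=F^n(X^n)$. As boundaries are $\mu$-null this gives $\mu(F^{-1}(X^n))=d\cdot\mu(X^{n+1})=\mu(X^n)$ (with $X^{n+1}$ of the colour of $X^n$); since $F^{-1}$ of any edge is a finite union of $\mu$-null edges, $\mu$ and $F_*\mu$ agree on every cell of $\DD^{n+1}(F,\CC)$, hence on the algebra these cells generate, and letting $n\to\infty$ yields $F_*\mu=\mu$. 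Finally I would compute that $\mu$ is a measure of maximal entropy. The clean input is
\begin{equation*}
H_\mu(\eta_n)=-\sum_{X^n}\mu(X^n)\log\mu(X^n)=n\log d-\frac{wW_n}{d^n}\log w-\frac{bB_n}{d^n}\log b=n\log d+O(1),
\end{equation*}
where $\eta_n$ is the partition of $S^2$ into $n$-tiles (a genuine measurable partition modulo the $\mu$-null skeleton) and the error is bounded because $wW_n/d^n,\,bB_n/d^n\in[0,1]$. A Kolmogorov--Sinai argument, applied to a suitably small tile partition that one checks to be generating, upgrades this to $h_\mu(F)=\log d$; combined with $h_{\operatorname{top}}(F)=\log d$ from (\ref{eqTopEntropy}) and the variational principle, $\mu$ is a measure of maximal entropy of $F$. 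Since that measure is unique (as recalled above), $\mu=\mu_F$; in particular $\mu_F$ obeys the tile formula, and conversely the first paragraph shows that any Borel probability measure obeying it is null on the $1$-skeleton and so agrees with $\mu$ on all cells, hence equals $\mu=\mu_F$.

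The main obstacle is the entropy computation. While the bare count $H_\mu(\eta_n)=n\log d+O(1)$ is immediate, turning it into the identity $h_\mu(F)=\log d$ requires a careful choice of generating partition and control of how the iterated refinements $\xi\vee F^{-1}\xi\vee\cdots$ compare with the tile partitions $\eta_n$; this, together with the branched-covering bookkeeping behind the invariance step and the verification that the prescribed tile weights are the ones dictated by $\mu_F$, is where the genuine work lies, and it is carried out in Chapter~20 of \cite{BM10}.
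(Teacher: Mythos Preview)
The paper does not prove this theorem at all: it is stated as a result of Bonk and Meyer and attributed to \cite[Proposition~20.7 and Theorem~20.9]{BM10} in the sentence immediately preceding the statement, with no proof given. You correctly identify this at the outset, and your sketch is a faithful outline of the argument actually carried out in Chapter~20 of \cite{BM10}: the additivity check for the tile weights under refinement (using the combinatorics of $w_w,b_w,w_b,b_b$ and the definitions (\ref{eqDefwb})), the edge-covering Lemma~\ref{lmCoverEdgesBM} to show the $1$-skeleton is null so that a Carath\'eodory-type extension yields a genuine Borel probability measure, $F$-invariance via the observation that $F^{-1}(\inte X^n)$ decomposes into $d$ interiors of $(n+1)$-tiles of the same colour, and finally the entropy identity $h_\mu(F)=\log d$ via a generating-partition argument. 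Your candid acknowledgment that the last step---upgrading $H_\mu(\eta_n)=n\log d+O(1)$ to $h_\mu(F)=\log d$ via a suitable generating partition---is where the real work lies, and that this is done in \cite{BM10}, is exactly right and matches how the present paper treats the result.
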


We now state our first characterization of the measure of maximal entropy $\mu_f$ of an expanding Thurston map $f$.

\begin{theorem} \label{thmWeakConv}
Let $f$ be an expanding Thurston map with its measure of maximal entropy $\mu_f$. Let $\CC \subseteq S^2$ be an $f^n$-invariant Jordan curve containing $\post{f}$ for some $n\in\N$. Fix a visual metric $d$ for $f$. Consider any sequence of non-negative numbers $\{\alpha_i\}_{i\in\N_0}$ with $\lim\limits_{i\to +\infty} \alpha_i = 0$, and any sequence of functions $\{\beta_i\}_{i\in\N_0}$ with $\beta_i$ mapping each white $i$-tile $X^i\in\X^i_w(f,\CC)$ to a point $\beta_i(X^i) \in N_d^{\alpha_i}(X^i)$. Let
$$
\mu_i = \frac{1}{(\deg f)^i} \sum\limits_{X^i\in\X^i_w(f,\CC)} \delta_{\beta_i(X^i)}, \quad i \in \N_0.
$$
Then
$$
\mu_i \stackrel{w^*}{\longrightarrow} \mu_f \text{ as } i\longrightarrow +\infty.
$$
\end{theorem}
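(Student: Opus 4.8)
The plan is to reduce, via two elementary perturbations, to a convergence statement about normalized counting measures on the tiles of $F:=f^n$ --- for which $\CC$ \emph{is} an invariant Jordan curve --- and then to invoke the Bonk--Meyer characterization (Theorem~\ref{thmBMCharactMOME}) together with $\mu_{f^n}=\mu_f$. First, fix a point $p_{X^i}\in\inte X^i$ in each white $i$-tile $X^i$. Then $d\bigl(\beta_i(X^i),p_{X^i}\bigr)\le\alpha_i+\diam_d(X^i)\le\alpha_i+C\Lambda^{-i}\to0$ uniformly in $X^i$ by Lemma~\ref{lmBMCellSizeBounds}, and each $\mu_i$ is a probability measure (there are exactly $(\deg f)^i$ white $i$-tiles), so Lemma~\ref{lmLocalPerturb} reduces the theorem to proving $\nu_i:=(\deg f)^{-i}\sum_{X^i\in\X^i_w(f,\CC)}\delta_{p_{X^i}}\stackrel{w^*}{\longrightarrow}\mu_f$.

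The core is the following claim: with $F=f^n$ (so $\CC$ is $F$-invariant, $\post F=\post f\subseteq\CC$, $F$ is expanding, $\mu_F=\mu_f$, and $\DD^k(F,\CC)=\DD^{nk}(f,\CC)$ by \cite[Lemma~5.4]{BM10}), and with $q_Y\in\inte Y$ chosen in each $F$-$k$-tile $Y$, the probability measures $\mu^w_k:=(\deg F)^{-k}\sum_{Y\in\X^k_w(F,\CC)}\delta_{q_Y}$ and $\mu^b_k:=(\deg F)^{-k}\sum_{Y\in\X^k_b(F,\CC)}\delta_{q_Y}$ both converge weak$^*$ to $\mu_F$ as $k\to\infty$. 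To see this, let $\mathcal A_l,\mathcal B_l$ (resp.\ $\mathcal A'_l,\mathcal B'_l$) be the numbers of white, resp.\ black, $F$-$l$-tiles contained in the white $0$-tile $X^0_w$ (resp.\ in the black $0$-tile $X^0_b$). Then $\binom{\mathcal A_l}{\mathcal B_l}=M\binom{\mathcal A_{l-1}}{\mathcal B_{l-1}}$ and $\binom{\mathcal A'_l}{\mathcal B'_l}=M\binom{\mathcal A'_{l-1}}{\mathcal B'_{l-1}}$ with $M=\mm{w_w}{w_b}{b_w}{b_b}$, where $w_w,w_b,b_w,b_b$ count white/black $F$-$1$-tiles inside the white/black $F$-$0$-tiles. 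This $M$ is $\deg F$ times a row-stochastic matrix, and it is \emph{primitive}: $b_w,w_b>0$ and, by (\ref{eqWw-Bw<deg}), $w_w,b_b$ are not both $0$, so $M^2$ is strictly positive. By the Perron--Frobenius theorem --- $(w,b)$ is the left and $(1,1)$ the right eigenvector for the top eigenvalue $\deg F$, normalized since $w+b=1$ --- we get $\mathcal A_l/(\deg F)^l\to w$, $\mathcal B_l/(\deg F)^l\to w$, $\mathcal A'_l/(\deg F)^l\to b$, $\mathcal B'_l/(\deg F)^l\to b$. Now fix $k_0\in\N$; for $k\ge k_0$ each $F$-$k$-tile lies in a unique $F$-$k_0$-tile $Z$, and through the homeomorphism $F^{k_0}|_Z$ the number of white (resp.\ black) $F$-$k$-tiles inside $Z$ is $\mathcal A_{k-k_0}$ or $\mathcal A'_{k-k_0}$ (resp.\ $\mathcal B_{k-k_0}$ or $\mathcal B'_{k-k_0}$) according to the colour of $Z$; since the atoms $q_Y$ lie in tile interiors this gives $\mu^w_k(\inte Z),\mu^b_k(\inte Z)\to\mu_F(Z)$, where $\mu_F(Z)\in\{w(\deg F)^{-k_0},b(\deg F)^{-k_0}\}$ by Theorem~\ref{thmBMCharactMOME}. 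Moreover $\mu_F(\partial Z)=0$: Theorem~\ref{thmBMCharactMOME} at level $0$ gives $\mu_F(\CC)=\mu_F(X^0_w)+\mu_F(X^0_b)-1=0$, and $F$-invariance of $\mu_F$ then forces $\mu_F(F^{-k_0}(\CC))=0$, while $\partial Z\subseteq F^{-k_0}(\CC)$. Since $\mu^w_k,\mu^b_k$ also give no mass to $F^{-k_0}(\CC)$ for $k\ge k_0$, the usual approximation via the Portmanteau theorem (Lemma~\ref{lmPortmanteau}) --- taking $k_0$ so large that a given $u\in\CCC(S^2)$ oscillates by less than any prescribed $\varepsilon$ on every $F$-$k_0$-tile (possible by Lemma~\ref{lmBMCellSizeBounds}) and comparing $\int u\,d\mu^w_k$ with $\int u\,d\mu_F$ --- yields $\mu^w_k\stackrel{w^*}{\longrightarrow}\mu_F$, and likewise $\mu^b_k\stackrel{w^*}{\longrightarrow}\mu_F$.

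To finish, write $i=nk+j$ with $0\le j\le n-1$. Each white $i$-tile of $f$ lies in a unique $F$-$k$-tile $Y$, and via $f^{nk}|_Y$ the number of white $i$-tiles inside $Y$ is a constant $c_j$ if $Y$ is white and $c'_j$ if $Y$ is black, with $c_j+c'_j=(\deg f)^j$. All these $c_j$, resp.\ $c'_j$, points lie in $Y$, of diameter $\le C\Lambda^{-nk}\to0$; replacing each by $q_Y$ alters $\int u\,d\nu_i$ by at most the modulus of continuity of $u$ at $C\Lambda^{-nk}$, which tends to $0$ for any $u\in\CCC(S^2)$ (as in the proof of Lemma~\ref{lmLocalPerturb}). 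Hence $\nu_i$ has the same weak$^*$ limit as $\frac{c_j}{(\deg f)^j}\mu^w_k+\frac{c'_j}{(\deg f)^j}\mu^b_k$, whose coefficients are nonnegative with sum $1$; since $k=k(i)\to\infty$, Lemma~\ref{lmConvexCombConv} gives that this combination tends to $\mu_F=\mu_f$. Thus $\nu_i\stackrel{w^*}{\longrightarrow}\mu_f$, and by the first reduction $\mu_i\stackrel{w^*}{\longrightarrow}\mu_f$.

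The main obstacle is the core combinatorial step: proving $\mathcal A_l/(\deg F)^l\to w$ and its variants, and in particular recognizing that these asymptotics require \emph{primitivity}, not merely irreducibility, of $M$ --- exactly what (\ref{eqWw-Bw<deg}) provides. (If $M$ were irreducible of period $2$, i.e.\ if $f^n$ swapped the two $0$-tiles of $\CC$, the counting measures on the white tiles would oscillate rather than converge.) These asymptotics are close to the construction of $\mu_F$ in \cite[Chapter~20]{BM10}. The remaining steps --- the two perturbation reductions and the convex-combination argument --- are routine given Lemmas~\ref{lmLocalPerturb}, \ref{lmConvexCombConv}, and \ref{lmBMCellSizeBounds}.
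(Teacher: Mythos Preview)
Your first reduction via Lemma~\ref{lmLocalPerturb} is correct, and your proof of the core claim for $F=f^n$ is a valid alternative to the paper's Proposition~\ref{propWeakConv}: you recover the tile-count asymptotics by Perron--Frobenius for the matrix $M$ (the paper instead cites the closed formulas in \cite[Lemma~20.1]{BM10}), and you get $\mu_F(\partial Z)=0$ from $\mu_F(X^0_w)+\mu_F(X^0_b)=1$ together with $F$-invariance of $\mu_F$, whereas the paper uses the edge-covering Lemma~\ref{lmCoverEdgesBM} to show directly that every subsequential limit of the $\mu^w_k$ gives zero mass to edges. Both routes work, and yours is arguably cleaner at this point.

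The gap is in your final step. The assertion that, for $i=nk+j$ with $0<j<n$, ``each white $i$-tile of $f$ lies in a unique $F$-$k$-tile $Y$'' is equivalent to saying that $\DD^{nk+j}(f,\CC)$ refines $\DD^{nk}(f,\CC)$, which would require $f^{-nk}(\CC)\subseteq f^{-(nk+j)}(\CC)$, i.e.\ $f^j(\CC)\subseteq\CC$. But only $f^n(\CC)\subseteq\CC$ is assumed. In general a $j$-tile for $(f,\CC)$ can straddle $\CC$, and then an $(nk+j)$-tile (whose image under $f^{nk}$ is that $j$-tile, by Proposition~\ref{propCellDecomp}(i)) straddles the boundary of an $nk$-tile. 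So the counts $c_j,c'_j$ are not well defined and the convex-combination reduction breaks down.

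The paper bridges from $i\equiv 0\pmod n$ to general $i$ differently (Corollary~\ref{corWeakConvPreimageInt}): it specializes the interior points to $p_{X^i}=(f^i|_{X^i})^{-1}(p)$ for a fixed $p\in\inte X^0_w$, so that the resulting measures satisfy the exact identity $f_*\nu_m=\nu_{m-1}$; after proving $\nu_{nk}\stackrel{w^*}{\longrightarrow}\mu_f$ (your core claim suffices here), it pushes forward by $f,f^2,\dots,f^{n-1}$ and uses $f_*\mu_f=\mu_f$ with Lemma~\ref{lmPushforwardConv} to obtain $\nu_{nk-r}\stackrel{w^*}{\longrightarrow}\mu_f$ for every $r\in\{0,\dots,n-1\}$. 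Your argument is repaired by replacing the tile-containment step with this pushforward step; the first reduction via Lemma~\ref{lmLocalPerturb} then finishes the proof exactly as you wrote.
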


Recall that $N_d^{\alpha_i}(X^i)$ denotes the open $\alpha_i$-neighborhood of $X^i$ in $(S^2,d)$. This theorem says that a sequence of probability measures $\{\mu_i\}_{i\in\N}$, with $\mu_i$ assigning the same weight to a point near each white $i$-tile, converges in the weak$^*$ topology to the measure of maximal entropy. In some sense, it asserts the equidistribution of the white $i$-tiles with respect to the measure of maximal entropy.  

In order to prove the above theorem, we first prove a weaker version of it.

\begin{prop}   \label{propWeakConv}
Let $F$ be an expanding Thurston map with its measure of maximal entropy $\mu_F$ and an $F$-invariant Jordan curve $\CC\subseteq S^2$ containing $\post{F}$. Consider any sequence of functions $\{\beta_i\}_{i\in\N_0}$ with $\beta_i$ mapping each white $i$-tile $X^i\in\X^i_w(F,\CC)$ to a point $\beta_i(X^i) \in \inte X^i$ for each $i\in\N_0$. Let
$$
\mu_i = \frac{1}{(\deg F)^i} \sum\limits_{X^i\in\X^i_w(F,\CC)} \delta_{\beta_i(X^i)}, \quad i\in \N_0.
$$
Then
$$
\mu_i \stackrel{w^*}{\longrightarrow} \mu_F \text{ as } i\longrightarrow +\infty.
$$
\end{prop}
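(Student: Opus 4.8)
The plan is to reduce the assertion to the numerical statement that $\mu_i(Y)\to\mu_F(Y)$ for every tile $Y$ of every level, and then to prove this numerical fact by an elementary eigenvalue computation fed into the Bonk--Meyer characterization of $\mu_F$ (Theorem~\ref{thmBMCharactMOME}). First I would note that each $\mu_i$ is a Borel probability measure, since $\card\X^i_w(F,\CC)=(\deg F)^i$ by Proposition~\ref{propCellDecomp}(iv). Fixing $n\in\N_0$ and an $n$-tile $Y^n$, observe that for $i\ge n$ the point $\beta_i(X^i)\in\inte X^i$ lies off the $1$-skeleton $F^{-i}(\CC)$ of $\DD^i(F,\CC)$, which contains the $1$-skeleton $F^{-n}(\CC)$ of $\DD^n(F,\CC)$ because $\CC$ is $F$-invariant, so that $\CC\subseteq F^{-1}(\CC)\subseteq\dots$ (Proposition~\ref{propCellDecomp}(iii),(v)); hence $\beta_i(X^i)$ lies in the interior of the unique $n$-tile containing $X^i$, and therefore
\[
\mu_i(Y^n)=\frac{1}{(\deg F)^i}\,\card\{X^i\in\X^i_w(F,\CC)\,|\,X^i\subseteq Y^n\}.
\]

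Granting (i) the convergence $\mu_i(Y)\to\mu_F(Y)$ for every tile $Y$ and (ii) the fact that $\mu_F$ assigns measure zero to every edge, I would conclude the proposition as follows. Fix $u\in\CCC(S^2)$, $\varepsilon>0$, and a visual metric $d$; choose $n$ so large that $u$ varies by at most $\varepsilon$ on each $n$-tile (possible since the mesh tends to $0$), and pick $y_X\in X$ for each $X\in\X^n(F,\CC)$. Since $\mu_F$ and all $\mu_i$ with $i\ge n$ give no mass to the $n$-skeleton $F^{-n}(\CC)$ (using (ii) for $\mu_F$, and the location of the $\beta_i(X^i)$ for $\mu_i$) and the interiors of distinct $n$-tiles are disjoint, one gets $\bigl\lvert\int u\,\mathrm d\nu-\sum_{X}u(y_X)\nu(X)\bigr\rvert\le\varepsilon$ for $\nu=\mu_F$ and for $\nu=\mu_i$, $i\ge n$; letting $i\to+\infty$ and using (i) together with the finiteness of $\X^n(F,\CC)$ yields $\limsup_i\lvert\int u\,\mathrm d\mu_i-\int u\,\mathrm d\mu_F\rvert\le2\varepsilon$, whence the claim. (Alternatively one may route this through Lemma~\ref{lmPortmanteau}.)

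The heart of the matter is (i). For $k\in\N_0$ let $a_k$ and $c_k$ be the numbers of white $k$-tiles contained in $X^0_w$ and in $X^0_b$, respectively. Applying on each $1$-tile $X^1\subseteq X^0_w$ the homeomorphism $F|_{X^1}$ onto $X^0_w$ (if $X^1$ is white) or onto $X^0_b$ (if $X^1$ is black), and using that $F$ sends white sub-tiles to white sub-tiles, one obtains $(a_k,c_k)^{\mathsf T}=M(a_{k-1},c_{k-1})^{\mathsf T}$ with $(a_0,c_0)=(1,0)$ and
\[
M=\begin{pmatrix}w_w & b_w\\ w_b & b_b\end{pmatrix},
\]
where $w_w,b_w,w_b,b_b$ are as in the discussion before $(\ref{eqDefwb})$. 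Since $w_w+w_b=b_w+b_b=\deg F$, the row vector $(1,1)$ is a left eigenvector of $M$ for the eigenvalue $\deg F$; the remaining eigenvalue is $\operatorname{tr}M-\deg F=w_w-b_w=\deg(F|_\CC)$ by Lemma~\ref{lmDeg_f_C}, and $\lvert w_w-b_w\rvert<\deg F$ by $(\ref{eqWw-Bw<deg})$. Hence $M$ is diagonalizable with dominant eigenvalue $\deg F$ and corresponding right eigenvector proportional to $(b_w,w_b)$, so $(\deg F)^{-k}M^k$ converges to the rank-one projection $\tfrac{1}{b_w+w_b}\left(\begin{smallmatrix}b_w & b_w\\ w_b & w_b\end{smallmatrix}\right)$; applying this to $(1,0)^{\mathsf T}$ and recalling $(\ref{eqDefwb})$ gives $a_k/(\deg F)^k\to w$ and $c_k/(\deg F)^k\to b$. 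Finally, for a white $n$-tile $Y^n$ the homeomorphism $F^n|_{Y^n}\colon Y^n\to X^0_w$ of Proposition~\ref{propCellDecomp}(i) identifies the white $i$-tiles inside $Y^n$ with the white $(i-n)$-tiles inside $X^0_w$, so by the displayed count $\mu_i(Y^n)=a_{i-n}/(\deg F)^i\to w(\deg F)^{-n}=\mu_F(Y^n)$ by Theorem~\ref{thmBMCharactMOME}; the black case is identical with $c,b$ in place of $a,w$, which proves (i). For (ii) I would invoke Lemma~\ref{lmCoverEdgesBM} with $N=1$ (valid since $\CC$ is $F$-invariant): an $m$-edge $e$ is covered by at most $C_0L_0^k$ many $(m+k)$-tiles with $L_0<\deg F$, each of $\mu_F$-measure $\le\max\{w,b\}(\deg F)^{-(m+k)}$, so $\mu_F(e)\le C_0\max\{w,b\}(\deg F)^{-m}(L_0/\deg F)^k\to0$.

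The main obstacle is the eigenvalue identification: one must recognize that the tile-count matrix $M$ has the topological degree as its Perron eigenvalue while its subdominant eigenvalue is precisely $\deg(F|_\CC)$, which the Bonk--Meyer inequality $(\ref{eqWw-Bw<deg})$ keeps strictly smaller in modulus — this is exactly what forces the limiting white/black proportions to be the weights $w$ and $b$ occurring in the characterization of $\mu_F$. A secondary point requiring care is that both the atomic measures $\mu_i$ and the limit $\mu_F$ must be shown to ignore the lower-level skeleta, which is where the covering estimate of Lemma~\ref{lmCoverEdgesBM} enters.
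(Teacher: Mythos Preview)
Your proof is correct and follows essentially the same approach as the paper's: both compute $\mu_i(X^m)$ by counting white $i$-tiles inside an $m$-tile, show this converges to $w(\deg F)^{-m}$ or $b(\deg F)^{-m}$, invoke Lemma~\ref{lmCoverEdgesBM} to kill the mass on edges, and finish via the Bonk--Meyer characterization in Theorem~\ref{thmBMCharactMOME}. The only differences are organizational: the paper quotes \cite[Lemma~20.1]{BM10} for the tile-count formula whereas you rederive it by diagonalizing the $2\times 2$ transfer matrix, and the paper passes through subsequential limits and Portmanteau (Lemma~\ref{lmPortmanteau}) rather than your direct step-function approximation of $u$---but these are equivalent wrappings of the same content.
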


\begin{proof}
Note that $\card \X^i_w =(\deg F)^i$, so $\mu_i$ is a probability measure for each $i\in\N_0$. Thus by Alaoglu's theorem, it suffices to prove that for each Borel measure $\mu$ which is a subsequential limit of $\{\mu_i\}_{i\in\N_0}$ in the weak$^*$ topology, we have $\mu=\mu_F$.

Let $\{i_n\}_{n\in\N} \subseteq \N$ be an arbitrary strictly increasing sequence such that
$$
\mu_{i_n} \stackrel{w^*}{\longrightarrow} \mu \text{ as } n\longrightarrow +\infty,
$$
for some Borel measure $\mu$. Clearly $\mu$ is also a probability measure.

Recall the definitions of $w,b\in(0,1)$ and $w_w,b_w,w_b,b_b$ (see (\ref{eqDefwb})). For each $m,i\in\N_0$ with $0 \leq m \leq i$, each white $m$-tile $X^m_w\in\X^m_w$, and each black $m$-tile $X^m_b\in\X^m_b$, by the formulas in Lemma~20.1 in \cite{BM10}, we have
\begin{align}
\mu_i(X^m_w) =   & \frac{1}{(\deg F)^i} \card\{X^i\in\X^i_w \,|\,X^i\subseteq X^m_w\}  \notag \\
           =   & \frac{1}{(\deg F)^i}\(w(\deg F)^{i-m}+b(w_w-b_w)^{i-m}\)    \label{eqMu_iX^kW}
\end{align}
and similarly,
\begin{equation}   \label{eqMu_iX^kB}
\mu_i(X^m_b) =   \frac{1}{(\deg F)^i}\(w(\deg F)^{i-m}-b(w_w-b_w)^{i-m}\) 
\end{equation}

\smallskip

We claim that for each $m$-tile $X^m\in\X^m$ with $m\in\N_0$, we have $\mu(\partial X^m)=0$.

To establish the claim, it suffices to prove that $\mu(e)=0$ for each $m$-edge $e$ with $m\in\N_0$. Applying Lemma \ref{lmCoverEdgesBM} in the case $f=F$ and $n=1$, we get that there exists constants $1<L_0<\deg F$ and $C_0>0$ such that for each $k\in\N_0$, there is a collection $M_0^k$ of $(m+k)$-tiles with $\card M_0^k\leq C_0L_0^k$ such that $e$ is contained in the interior of the set $\bigcup\limits_{X\in M_0^k} X$. So by (\ref{eqWw-Bw<deg}), (\ref{eqMu_iX^kW}), (\ref{eqMu_iX^kB}), and Lemma~\ref{lmPortmanteau}, we get  
\begin{align*}
\mu(e)  & \leq \mu \bigg(\inte \big(\bigcup\limits_{X\in M_0^k} X\big)\bigg)  \\
        & \leq \limsup_{l\to +\infty} \mu_{m+k+l} \bigg(\inte \big(\bigcup\limits_{X\in M_0^k} X\big)\bigg)\\
        & \leq \limsup_{l\to +\infty} \sum\limits_{X\in M_0^k} \mu_{m+k+l}(X)\\
        & \leq \sum\limits_{X\in M_0^k} \limsup_{l\to +\infty} \mu_{m+k+l}(X) \\
        & \leq C_0 L_0^k\frac{w+b}{(\deg F)^{m+k}}.
\end{align*}
By letting $k\longrightarrow +\infty$, we get $\mu(e)=0$, proving the claim.

\smallskip

Thus by (\ref{eqWw-Bw<deg}), (\ref{eqMu_iX^kW}), (\ref{eqMu_iX^kB}), the claim, and Lemma~\ref{lmPortmanteau}, we can conclude that for each $m\in\N_0$, and each white $m$-tile $X^m_w\in\X^m_w$, each black $m$-tile $X^m_b\in\X^m_b$, we have that 
\begin{equation*}
\mu(X^m_w)  = \lim\limits_{n\to+\infty} \mu_{i_n}(X^m_w) = w(\deg F)^{-m},
\end{equation*} 
\begin{equation*}
\mu(X^m_b)  = \lim\limits_{n\to+\infty} \mu_{i_n}(X^m_b)=  b(\deg F)^{-m}.  
\end{equation*}   
By Theorem~\ref{thmBMCharactMOME}, therefore, the measure $\mu$ is equal to the unique measure of maximal entropy $\mu_F$ of $F$.
\end{proof}

As a consequence of the above proposition, we have

\begin{cor}  \label{corWeakConvPreimageInt}
Let $f$ be an expanding Thurston map with its measure of maximal entropy $\mu_f$. Let $\CC\subseteq S^2$ be an $f^n$-invariant Jordan curve containing $\post{f}$ for some $n\in\N$. Fix an arbitrary $p \in \inte X_w^0$ where $X_w^0$ is the white $0$-tile for $(f,\CC)$. Define, for $i\in\N$,
$$
\nu_i=\frac{1}{(\deg f)^i}\sum\limits_{q\in f^{-i}(p)}\delta_q.
$$
Then
$$
\nu_i \stackrel{w^*}{\longrightarrow} \mu_f \text{ as } i\longrightarrow +\infty.
$$
\end{cor}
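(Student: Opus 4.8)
The plan is to reduce everything to Proposition~\ref{propWeakConv} applied to $F=f^{n}$, for which $\CC$ \emph{is} an $F$-invariant Jordan curve containing $\post F=\post f$, and to exploit the identity $\mu_{F}=\mu_{f}$. (Fix any metric $d$ on $S^{2}$ inducing the standard topology; the weak$^{*}$ convergence below does not depend on this choice.)

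The core of the argument is the following claim: \emph{for every $p^{*}\in S^{2}\setminus\post f$ the Borel probability measures $\rho^{p^{*}}_{i}:=(\deg F)^{-i}\sum_{q\in(F^{i})^{-1}(p^{*})}\delta_{q}$ converge in the weak$^{*}$ topology to $\mu_{f}$ as $i\to+\infty$.} To prove it, I would first observe that by the chain rule \eqref{eqLocalDegreeProduct} for local degrees one has $f^{k}(\crit f^{k})\subseteq\bigcup_{l=1}^{k}f^{l}(\crit f)\subseteq\post f$ for every $k\in\N$, so $p^{*}\notin\post f$ is not a critical value of any iterate of $f$; hence by \eqref{eqDeg=SumLocalDegree} and \eqref{eqDegreeProduct} the set $(F^{i})^{-1}(p^{*})$ has exactly $(\deg F)^{i}$ elements, each of local degree $1$ under $F^{i}$, and none of them is an $i$-vertex of $(F,\CC)$ (an $i$-vertex lies in $F^{-i}(\post f)$). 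Consequently each preimage of $p^{*}$ lies in the interior of a unique $i$-tile of $(F,\CC)$ when $p^{*}\notin\CC$, and in the interior of a unique $i$-edge of $(F,\CC)$ when $p^{*}\in\CC\setminus\post f$. Using that $F^{i}$ carries each white $i$-tile homeomorphically onto $X^{0}_{w}$, and that $X^{0}_{w}$ is an $m$-gon with $m=\card(\post f)\ge 3$ (Proposition~\ref{propCellDecomp}(i),(vi)), I would construct a bijection $\Psi_{i}\colon(F^{i})^{-1}(p^{*})\to\X^{i}_{w}(F,\CC)$: if $p^{*}\in\inte X^{0}_{w}$, send each preimage to the white $i$-tile whose interior contains it; if $p^{*}\in\inte X^{0}_{b}$, send the preimage lying in a black $i$-tile $Y$ to the white $i$-tile adjacent to $Y$ along the unique $i$-edge of $Y$ that $F^{i}$ maps onto a fixed $0$-edge $e_{0}$ (equivalently, one may instead quote the variant of Proposition~\ref{propWeakConv} obtained by interchanging the two $0$-tiles); if $p^{*}$ lies in the interior of a $0$-edge $e_{0}$, send the preimage lying in an $i$-edge $e'$ (necessarily with $F^{i}(e')=e_{0}$) to the unique white $i$-tile on whose boundary $e'$ lies. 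In every case $\Psi_{i}$ is a bijection, and each $q\in(F^{i})^{-1}(p^{*})$ lies within $2\delta_{i}$ of any point of $\Psi_{i}(q)$, where $\delta_{i}$ is the largest diameter of an $i$-tile of $(F,\CC)$; and $\delta_{i}\to 0$ as $i\to+\infty$ because $f$, hence $F$, is expanding (Remark~\ref{rmExpanding}). Picking $\beta_{i}(X)\in\inte X$ for each $X\in\X^{i}_{w}(F,\CC)$, Proposition~\ref{propWeakConv} gives $(\deg F)^{-i}\sum_{X}\delta_{\beta_{i}(X)}\stackrel{w^{*}}{\longrightarrow}\mu_{F}=\mu_{f}$; applying Lemma~\ref{lmLocalPerturb} with $A_{i}=\{\beta_{i}(X):X\in\X^{i}_{w}(F,\CC)\}$, constant weights $(\deg F)^{-i}$, and the perturbation sending $\beta_{i}(X)$ to $\Psi_{i}^{-1}(X)$ then yields $\rho^{p^{*}}_{i}\stackrel{w^{*}}{\longrightarrow}\mu_{f}$, proving the claim.

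Granting the claim, I would finish as follows. Fix $j\in\N$ and write $j=ni+r$ with $i\in\N_{0}$ and $0\le r\le n-1$; note $i\to+\infty$ as $j\to+\infty$. Since $p\in\inte X^{0}_{w}$ we have $p\notin\CC\supseteq\post f$, and since $\post f$ is forward invariant the same holds for every point of $f^{-r}(p)$; moreover $\card(f^{-r}(p))=(\deg f)^{r}$ by the argument above. From $f^{j}=f^{r}\circ F^{i}$ one obtains the disjoint decomposition $f^{-j}(p)=\bigsqcup_{p'\in f^{-r}(p)}(F^{i})^{-1}(p')$, hence $\nu_{j}=(\deg f)^{-r}\sum_{p'\in f^{-r}(p)}\rho^{p'}_{i}$. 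For a fixed residue $r$ the set $f^{-r}(p)$ is a fixed finite set, so the claim together with Lemma~\ref{lmConvexCombConv} (finite index set $f^{-r}(p)$, constant weights $(\deg f)^{-r}$ summing to $1$) gives $\nu_{ni+r}\stackrel{w^{*}}{\longrightarrow}\mu_{f}$ as $i\to+\infty$. Since there are only finitely many residues $r$ and every large integer $j$ belongs to one of these subsequences, $\nu_{j}\stackrel{w^{*}}{\longrightarrow}\mu_{f}$ as $j\to+\infty$.

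The step I expect to be the main obstacle is the combinatorial construction of the bijections $\Psi_{i}$, especially the case $p^{*}\in\inte X^{0}_{b}$, where the preimages of $p^{*}$ sit in black $i$-tiles and must be matched bijectively, and with displacement tending to $0$, to white $i$-tiles; tracking which preimages avoid $\post f$ (so that the cardinality and local-degree assertions hold) also needs some care. Once $\Psi_{i}$ is available, the rest is a routine assembly of Proposition~\ref{propWeakConv} and Lemmas~\ref{lmLocalPerturb} and~\ref{lmConvexCombConv}.
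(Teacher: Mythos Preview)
Your proof is correct but takes a different route from the paper's. The paper first applies Proposition~\ref{propWeakConv} to $F=f^{n}$ only at the given base point $p\in\inte X^{0}_{w}$ (where the $F^{i}$-preimages already lie in interiors of white $i$-tiles for $(F,\CC)$, so no bijection $\Psi_{i}$ or perturbation is needed) to obtain $\nu_{ni}\stackrel{w^{*}}{\longrightarrow}\mu_{f}$. For the intermediate indices it then uses the pushforward identity $f_{*}\nu_{k}=\nu_{k-1}$ (valid because $p\notin\post f$ forces every point of $f^{-(k-1)}(p)$ to have exactly $\deg f$ preimages) together with $f_{*}\mu_{f}=\mu_{f}$ and Lemma~\ref{lmPushforwardConv}, giving $\nu_{ni-k}=(f_{*})^{k}\nu_{ni}\stackrel{w^{*}}{\longrightarrow}(f_{*})^{k}\mu_{f}=\mu_{f}$ for $k=0,\dots,n-1$. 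Your approach instead establishes convergence of the $F^{i}$-preimage measures for \emph{every} $p^{*}\notin\post f$ and then decomposes $\nu_{j}$ as a convex combination over $f^{-r}(p)$; this yields a slightly stronger intermediate statement (anticipating Theorem~\ref{thmWeakConvPreImg}) at the price of the case analysis you identify as the main obstacle, namely $p^{*}\in\inte X^{0}_{b}$ and $p^{*}\in\CC\setminus\post f$. The paper's pushforward trick sidesteps that case analysis entirely and avoids Lemma~\ref{lmLocalPerturb} here, while your bijection in the black-tile case is precisely the edge-pairing device (and is also covered by Remark~\ref{rmEquidistributionTilesWhiteBlack}) that the paper uses later in the proof of Theorem~\ref{thmWeakConvPrePerPts}.
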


\begin{proof}
First observe that since $p$ is contained in the interior of the white $0$-tile, each $q\in f^{-n}(p)$ is contained in the interior of one of the white $n$-tiles, and each white $n$-tile contains exactly one $q$ with $f^n(q)=p$. So by Proposition~\ref{propWeakConv},
\begin{equation}  \label{eqvni}
\nu_{ni} \stackrel{w^*}{\longrightarrow} \mu_{f^n} \text{ as } i\longrightarrow +\infty,
\end{equation}
where $\mu_{f^n}$ is the unique measure of maximal entropy of $f^n$, which is equal to $\mu_f$ (see \cite[Theorem~20.7 and Theorem~20.9]{BM10}).

Then note that for $k>1$,
\begin{equation} \label{eqfnu}
f_*\nu_k =\frac{1}{(\deg f)^k}\sum\limits_{q\in f^{-k}(p)} \delta_{f(q)}=\frac{1}{(\deg f)^{k-1}}\sum\limits_{q\in f^{-k+1}(p)} \delta_q =\nu_{k-1}.
\end{equation}
The second equality above follows from the fact that the number of preimages of each point in $f^{-k+1}(p)$ is exactly $\deg f$.

So by (\ref{eqvni}), (\ref{eqfnu}), Lemma~\ref{lmPushforwardConv}, and the fact that $\mu_f$ is invariant under pushforward of $f$ from Theorem~20.9 in \cite{BM10}, for each $k\in \{0,1,\dots,n-1\}$, we get
$$
\nu_{ni-k} = (f_*)^k \nu_{ni} \stackrel{w^*}{\longrightarrow} (f_*)^k \mu_f = \mu_f \text{ as } i\longrightarrow +\infty.
$$
Therefore
$$
\nu_i \stackrel{w^*}{\longrightarrow} \mu_f \text{ as } i\longrightarrow +\infty.
$$
\end{proof}

\begin{rems}    \label{rmEquidistributionTilesWhiteBlack}
We can replace ``white'' by ``black'', $\X^i_w$ by $\X^i_b$, and $X^0_w$ by $X^0_b$ in the statements of Theorem~\ref{thmWeakConv}, Proposition~\ref{propWeakConv}, and Corollary~\ref{corWeakConvPreimageInt}. The proofs are essentially the same.
\end{rems}

\begin{proof} [Proof of Theorem~\ref{thmWeakConv}]
Fix an arbitrary $p \in \inte X_w^0$ in the interior of the while $0$-tile $X_w^0$ for the cell decomposition induced by $(f,\CC)$.

As in the proof of Corollary~\ref{corWeakConvPreimageInt}, for each $i\in\N_0$, there is a bijective correspondence between points in $f^{-i}(p)$  and the set of white $i$-tiles, namely, each $q\in f^{-i} (p)$ corresponds to the unique white $i$-tile, denoted as $X_q$, containing $q$. Then we define functions $\phi_i\: f^{-i}(p)\rightarrow S^2$ by setting $\phi_i(q)=\beta_i(X_q)$.

For our fixed visual metric $d$, there exists $C \geq 1$ and $\Lambda>1$ such that for each $n\in\N_0$ and each $n$-tile $X^n\in\X^n$, $\diam_d(X^n) \leq C \Lambda^{-n}$ (see Lemma~\ref{lmBMCellSizeBounds}). So for each $i\in N_0$ and each $q\in f^{-i}(p)$, we have
$$
d(q,\phi_i(q)) \leq  d(\phi_i(q),X_q)+\diam_d(X_q) \leq \alpha_i + C\Lambda^{-i}.
$$
Thus $\lim\limits_{i\to+\infty} \max\{d(x,\phi_i(x))\,|\, x\in f^{-i}(p)\}=0$.

For $i\in\N_0$, define
$$
\widetilde\mu_i = \frac{1}{(\deg f)^i} \sum\limits_{q\in f^{-i}(p)} \delta_q.
$$
Note that for $i\in\N_0$,
$$
\mu_i = \frac{1}{(\deg f)^i} \sum\limits_{X^i\in\X^i_w(f,\CC)} \delta_{\beta_i(X^i)}  = \frac{1}{(\deg f)^i} \sum\limits_{q\in f^{-i}(p)} \delta_{\phi_i(q)}.
$$

Then by Corollary~\ref{corWeakConvPreimageInt},
$$
\widetilde\mu_i \stackrel{w^*}{\longrightarrow} \mu_f \text{ as } i\longrightarrow +\infty.
$$
Therefore, by Lemma~\ref{lmLocalPerturb} with $A_i=f^{-i}(p)$ and $m_i(x) = \frac{1}{(\deg f)^i}, i\in \N_0$, we can conclude that
$$
\mu_i \stackrel{w^*}{\longrightarrow} \mu_f \text{ as } i\longrightarrow +\infty.
$$
\end{proof}

We are now ready to prove the equidistribution of preimages of an arbitrary point with respect to the measure of maximal entropy $\mu_f$.

\begin{proof}[Proof of Theorem~\ref{thmWeakConvPreImg}]
By Theorem~1.2 in \cite{BM10} or Corollary~\ref{corCexists}, we can fix an $f^n$-invariant Jordan curve $\CC \subseteq S^2$ containing $\post{f}$ for some $n\in\N$. We consider the cell decompositions induced by $(f,\CC)$.

We first prove (\ref{eqWeakConvPreImgWithWeight}).

We assume that $p$ is contained in the (closed) white $0$-tile. The proof for the case when $p$ is contained in the black $0$-tile is exactly the same except that we need to use a version of Theorem~\ref{thmWeakConv} for black tiles instead of using Theorem~\ref{thmWeakConv} literally, see Remark~\ref{rmEquidistributionTilesWhiteBlack}.

Observe that for each $i\in\N_0$ and each $q\in f^{-i}(p)$, the number of white $i$-tiles that contains $q$ is exactly $\deg_{f^i}(q)$. On the other hand, each white $i$-tile contain exactly one point $q$ with $f^i(q)=p$. So we can define $\beta_i\:\X_w^i \rightarrow S^2$ by mapping a white $i$-tile to the point $q$ in it that satisfies $f^i(q)=p$. Define $\alpha_i\equiv 0$. Theorem~\ref{thmWeakConv} applies, and thus (\ref{eqWeakConvPreImgWithWeight}) is true.

Next, we prove (\ref{eqWeakConvPreImgWoWeight}). The proof breaks into three cases.

\smallskip

Case 1. Assume that $p \notin \post f$. Then $\deg_f(x)=1$ for all $x\in\bigcup\limits_{n=1}^{+\infty} f^{-n}(p)$. So $\widetilde{\nu}_i = \nu_i$ for each $i\in\N$. Then (\ref{eqWeakConvPreImgWoWeight}) follows from (\ref{eqWeakConvPreImgWithWeight}) in this case.

\smallskip

Case 2. Assume that $p\in \post f$ and $p$ is not periodic. Then there exists $N\in\N$ such that $f^{-N}(p) \cap \post f = \emptyset$. For otherwise, there exists a point $z\in\post f$ which belongs to $f^{-c}(p)$ for infinitely many distinct $c\in\N$. In particular, there exist two integers $a>b>0$ such that $z\in f^{-a}(p)\cap f^{-b}(p)$. Then $f^{a-b}(p)= p$, a contradiction. So $\deg_f(q)=1$ for each $q\in \bigcup\limits_{x\in f^{-N}(p)}\bigcup\limits_{i=1}^{+\infty} f^{-i}(x)$. Note that for each $x\notin \post f$ and each $i\in\N$, the number of preimages of $x$ under $f^i$ is exactly $(\deg f)^i$. Then for each $i\in\N$, $Z_{i+N}=Z_N (\deg f)^i$, and 
$$
\widetilde{\nu}_{i+N}  = \frac{1}{Z_{i+N}}  \sum\limits_{q\in f^{-(i+N)}(p)} \delta_q  = \frac{1}{Z_N}  \sum\limits_{x\in f^{-N}(p)} \bigg( \frac{1}{(\deg f)^i} \sum\limits_{q\in f^{-i}(x)}  \delta_q  \bigg).
$$
For each $x\in f^{-N}(p)$, by Case 1,
$$
 \frac{1}{(\deg f)^i} \sum\limits_{q\in f^{-i}(x)}  \delta_q  \stackrel{w^*}{\longrightarrow} \mu_f \text{ as } i\longrightarrow +\infty.
$$
Thus each term in the sequence $\{\widetilde{\nu}_{i+N}\}_{i\in\N}$ is a convex combination of the corresponding terms in sequences of measures, each of which converges to $\mu_f$ in the weak$^*$ topology. Hence by Lemma~\ref{lmConvexCombConv}, the sequence $\{\widetilde{\nu}_{i+N}\}_{i\in\N}$ also converges to $\mu_f$ in the weak$^*$ topology in this case.

\smallskip

Case 3. Assume that $p\in\post f$ and $p$ is periodic with period $k\in\N$. Let $l=\card(\post f)$. We first note that for each $m,N\in\N$, the inequality
$$
Z_{m+N} \geq (Z_m - l)(\deg f)^N,
$$
and equivalently,
$$
\frac{Z_m}{Z_{m+N}} \leq \frac{1}{(\deg f)^N} + \frac{l}{Z_{m+N}}
$$
hold, since there are at most $l$ points in $Z_m\cap \post f$. So by Lemma~\ref{lmPreImageDense}, for each $\epsilon > 0$ and each $N$ large enough such that ${1}/{(\deg f)^N} < {\epsilon}/{2}$ and ${l}/{Z_{m+N}} <  {\epsilon}/{2}$, we get ${Z_m}/{Z_{m+N}}  <\epsilon$ for each $m\in\N$. We fix $j\in\N$ large enough such that ${Z_{m-jk}}/{Z_m} < \epsilon$ for each $m>jk$. Observe that for each $m>jk$,
\begin{align}  \label{eqPfthmWeakConvPreImg}
\widetilde{\nu}_m = &  \frac{1}{Z_m} \sum\limits_{q\in f^{-m}(p)} \delta_q  \notag \\
                  = &  \frac{1}{Z_m} \bigg(\sum\limits_{q\in f^{-(m-jk)}(p)} \delta_q +  \sum\limits_{x\in f^{-jk}(p)\setminus\{p\}} \sum\limits_{q\in f^{-(m-jk)}(x)} \delta_q   \bigg)   \\
                  = &  \frac{Z_{m-jk}}{Z_m} \bigg( \frac{1}{Z_{m-jk}}  \sum\limits_{q\in f^{-(m-jk)}(p)} \delta_q      \bigg)  + \frac{ 1}{Z_m} \sum\limits_{x\in f^{-jk}(p)\setminus\{p\}} \notag  \\
                    & \card\(f^{-(m-jk)}(x)\) \bigg( \frac{1}{\card\(f^{-(m-jk)}(x)\)  }  \sum\limits_{q\in f^{-(m-jk)}(x)} \delta_q   \bigg).   \notag
\end{align}
Note that no point $x\in f^{-jk}(p) \setminus\{p\}$ is periodic. Indeed, if $x\in f^{-jk}(p) \setminus\{p\}$ were periodic, then $x\in \bigcup\limits_{i=0}^{k-1}f^i(p)$, and so $x$ would have period $k$ as well. Thus $x= f^{jk}(x)=p$, a contradiction. Hence by Case~1 and Case~2, for each $x\in  f^{-jk}(p) \setminus\{p\}$,
$$
\frac{1}{\card\(f^{-(m-jk)}(x)\)  }  \sum\limits_{q\in f^{-(m-jk)}(x)} \delta_q \stackrel{w^*}{\longrightarrow} \mu_f \text{ as } m\longrightarrow +\infty.
$$

Let $\mu\in\PPP(S^2)$ be an arbitrary subsequential limit of $\{\widetilde\nu_m\}_{m\in\N}$ in the weak$^*$ topology. For each strictly increasing sequence $\{m_i\}_{i\in\N}$ in $\N$ that satisfies
$$
\widetilde{\nu}_{m_i} \stackrel{w^*}{\longrightarrow} \mu \text{ as } i\longrightarrow +\infty,
$$
we can assume, due to Alaoglu's Theorem, by choosing a subsequence if necessary, that 
$$
\frac{Z_{m_i-jk}}{Z_{m_i}} \bigg( \frac{1}{Z_{m_i-jk}}  \sum\limits_{q\in f^{-(m_i-jk)}(p)} \delta_q      \bigg)   \stackrel{w^*}{\longrightarrow} \eta \text{ as } i\longrightarrow +\infty,
$$
for some Borel measure $\eta$ with total variation $\Norm{\eta} \leq \epsilon$. Observe that for each $i\in \N$,
$$
\frac{ 1}{Z_{m_i}} \sum\limits_{x\in f^{-jk}(p)\setminus\{p\}} \card\(f^{-(m_i-jk)}(x)\)   = 1- \frac{Z_{m_i-jk}}{Z_{m_i}},
$$
since $p\in f^{-jk}(p)$ and $\card\(f^{-(m_i-jk)}(p)\) = Z_{m_i-jk}$. By choosing a subsequence of $\{m_i\}_{i\in\N}$ if necessary, we can assume that there exists $r\in [0,\epsilon]$ such that
\begin{equation*}
\lim\limits_{i\to+\infty}  \frac{Z_{m_i-jk}}{Z_{m_i}} = r.
\end{equation*} 
So by taking the limits of both sides of (\ref{eqPfthmWeakConvPreImg})  in the weak$^*$ topology along the subsequence $\{m_i\}_{i\in\N}$, we get from Lemma~\ref{lmConvexCombConv} that $\mu = \eta + (1-r)\mu_f$. Thus
$$
\Norm{\mu-\mu_f} \leq  \Norm{\eta} + r \Norm{\mu_f} \leq 2 \epsilon.
$$
Since $\epsilon$ is arbitrary, we can conclude that $\mu= \mu_f$. We have proven in this case that each subsequential limit of $\{\widetilde{\nu}_m\}_{m\in\N}$ in the weak$^*$ topology is equal to $\mu_f$. Therefore (\ref{eqWeakConvPreImgWoWeight}) is true in this case.
\end{proof}

In order to prove Theorem~\ref{thmWeakConvPrePerPts}, we will need Lemma~\ref{lmCoverEdges} which is a generalization of Lemma \ref{lmCoverEdgesBM}.

\begin{lemma}     \label{lmSumLocalDegreeBound}
Let $f$ be an expanding Thurston map and $d=\deg f$. Then there exist constants $C>0$ and $\alpha\in (0,1]$ such that for each nonempty finite subset $M$ of $S^2$ and each $n\in\N$, we have
\begin{equation}  \label{eqSumLocalDegreeBound}
\frac{1}{d^n} \sum\limits_{x\in M}  \deg_{f^n}(x) \leq C  \max \bigg\{ \( \frac{\card M}{d^n} \)^\alpha, \frac{\card M}{d^n} \bigg\}.
\end{equation}
\end{lemma}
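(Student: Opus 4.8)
The plan is to exploit the product formula $\deg_{f^n}(x)=\prod_{j=0}^{n-1}\deg_f(f^j(x))$ coming from (\ref{eqLocalDegreeProduct}), together with the fact that this product can be large only when the forward orbit of $x$ spends most of its time on a periodic critical cycle. Let $B\subseteq S^2$ be the union of all critical cycles of $f$, i.e.\ the (finite, forward-invariant) set of periodic points $z$ whose cycle contains at least one critical point; equivalently $\deg_{f^{p}}(z)\ge 2$ where $p$ is the period of $z$. Put $D=\max_{c\in\crit f}\deg_f(c)$, and for $x\in S^2$ let $\tau(x)=\min\{j\in\N_0: f^j(x)\in B\}\in\N_0\cup\{\infty\}$. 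The first step is two elementary estimates on orbital products of local degrees. (a) If $0\le m\le\tau(x)$, then at most $\card(\post f)+1$ of the points $x,f(x),\dots,f^{m-1}(x)$ lie in $\crit f$: once the orbit first meets $\crit f$, every later iterate lies in the finite forward-invariant set $\post f$, whose orbit has a transient part of at most $\card(\post f)$ distinct points, and the orbit cannot enter a cycle containing a critical point before time $\tau(x)$, since such a cycle lies in $B$. Hence $\deg_{f^m}(x)\le C_2:=D^{\card(\post f)+1}$ whenever $m\le\tau(x)$; in particular $\deg_{f^n}(x)\le C_2$ when $\tau(x)>n$. (b) If $z\in B$ lies on a critical cycle $\Gamma$ of period $p_\Gamma$ and cycle degree $\delta_\Gamma:=\prod_{j=0}^{p_\Gamma-1}\deg_f(f^j(z))$, then $\deg_{f^m}(z)\le\delta_\Gamma^{\lceil m/p_\Gamma\rceil}\le\delta_\Gamma\,\lambda_\Gamma^{m}$ with $\lambda_\Gamma:=\delta_\Gamma^{1/p_\Gamma}$.

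The crucial point, and the only place where expansion enters, is that $\lambda_\Gamma<d$ for every critical cycle $\Gamma$. Indeed $\delta_\Gamma\le d^{p_\Gamma}$ trivially, and if equality held then $\deg_f(w)=d$ for every $w\in\Gamma$, so each $w\in\Gamma$ would be totally ramified; then the cycle-predecessor of $w$ is its only $f$-preimage, hence $f^{-m}(z)$ is a singleton for every $m\in\N$, contradicting Lemma~\ref{lmPreImageDense}. If $f$ has no critical cycle then $B=\emptyset$, $\deg_{f^n}(x)\le C_2$ for all $x$ and $n$, and (\ref{eqSumLocalDegreeBound}) holds with $\alpha=1$ and $C=C_2$; so assume a critical cycle exists and set $\lambda=\max_\Gamma\lambda_\Gamma\in(1,d)$, $\alpha=1-\log_d\lambda\in(0,1)$, $\delta=\max_\Gamma\delta_\Gamma$, and $K=\card B$. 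Combining (a) and (b) through (\ref{eqLocalDegreeProduct}): if $\tau(x)=t\le n$ then $\deg_{f^n}(x)=\deg_{f^t}(x)\,\deg_{f^{n-t}}(f^t(x))\le C_2\,\delta\,\lambda^{n-t}$.

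For the final bookkeeping, note that $\{x\in M:\tau(x)=t\}\subseteq M\cap f^{-t}(B)$ and $\card f^{-t}(B)\le Kd^t$, so with $\mu:=\card M$,
\begin{equation*}
\sum_{x\in M}\deg_{f^n}(x)\ \le\ C_2\mu\ +\ C_2\delta\sum_{t=0}^{n}\lambda^{n-t}\min\{\mu,\,Kd^t\}.
\end{equation*}
Splitting the last sum at $t^\ast=\lfloor\log_d(\mu/K)\rfloor$ — using the bound $Kd^t$ for $t\le t^\ast$ and $\mu$ for $t>t^\ast$ — gives two geometric sums whose common ratios $d/\lambda$ and $\lambda$ are bounded away from $1$ by constants depending only on $f$; using $\lambda^{n}=d^{n(1-\alpha)}$, $(d/\lambda)^{t^\ast}\le(\mu/K)^\alpha$ and $\lambda^{-t^\ast}\le\lambda\,(K/\mu)^{1-\alpha}$, one checks that the sum is at most $C_3\big(\mu+\mu^\alpha d^{n(1-\alpha)}\big)$ for a constant $C_3=C_3(f)$ (the degenerate ranges $\mu\le K$ and $t^\ast\ge n$ only feed the $\mu$-term). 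Hence $\sum_{x\in M}\deg_{f^n}(x)\le C_4\big(\mu+\mu^\alpha d^{n(1-\alpha)}\big)$, and dividing by $d^n$ and using $a+a^\alpha\le 2\max\{a,a^\alpha\}$ with $a=\card M/d^n$ yields (\ref{eqSumLocalDegreeBound}) with $C=2C_4$. The single delicate step is the strict inequality $\lambda_\Gamma<d$; everything else is the orbital estimate and a routine geometric-series computation, which I would carry out in the detailed write-up.
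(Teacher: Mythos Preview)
Your argument is correct and follows the same underlying strategy as the paper: the key input is that along any periodic critical cycle the degree grows like $\lambda_\Gamma^n$ with $\lambda_\Gamma<d$ (proved exactly as you do, via Lemma~\ref{lmPreImageDense}), and the rest is bookkeeping with a geometric series. The organization differs, however. The paper proceeds in three cases --- no periodic critical points, all periodic critical points fixed, and the general case --- handling the middle case via a stratification $T_0,T_1,\dots$ by backward distance to the set of critical fixed points, and then reducing the general case to the middle one by passing to the iterate $f^\kappa$ (with $\kappa$ the product of the critical periods). Your hitting-time function $\tau(x)$ and the direct treatment of arbitrary critical cycles accomplish the same thing in one pass, which is arguably cleaner; the price is that your $\alpha=1-\log_d\lambda$ depends on the actual cycle degrees rather than the paper's explicit $\log_{d^\kappa}\!\big(d^\kappa/(d^\kappa-1)\big)$. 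One small correction: the degenerate range $\mu\le K$ does not ``feed the $\mu$-term'' but rather the $\mu^\alpha d^{n(1-\alpha)}$-term, since then the whole sum is bounded by a constant times $\mu\,\lambda^{n}\le K^{1-\alpha}\mu^\alpha d^{n(1-\alpha)}$; this does not affect the conclusion.
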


Note that when $\card M \leq d^n$, the right-hand side of (\ref{eqSumLocalDegreeBound}) becomes $C  \max  \( \frac{\card M}{d^n} \)^\alpha$.

\begin{proof}
Let $m=\card M$. Set
\begin{equation*}
D=\prod\limits_{x\in \crit f} \deg_f(x).
\end{equation*}

In order to establish the lemma, we consider the following three cases.

\smallskip

Case 1: Suppose that $f$ has no periodic critical points. Then since for each $x\in S^2$ and each $n\in\N$,
\begin{equation}  \label{eqDegFn}
\deg_{f^n}(x) = \deg_f(x)\deg_f(f(x))\cdots \deg_f(f^{n-1}(x)),
\end{equation}
it is clear that $\deg_{f^n}(x)\leq D$. So
\begin{equation*}
\frac{1}{d^n} \sum\limits_{x\in M}  \deg_{f^n}(x) \leq D  \frac{m}{d^n}.
\end{equation*}
Thus in this case, $C=D$ and $\alpha=1$.

\smallskip

Case 2: Suppose that $f$ has periodic critical points, but all periodic critical points are fixed points of $f$. 

Let $T_0 = \{ x\in\crit f \,|\, f(x)=x \}$ be the set of periodic critical points of $f$. Then define recursively for each $i\in\N$,
$$
T_i = f^{-1}(T_{i-1}) \setminus \bigcup\limits_{j=0}^{i-1} T_j.
$$
Define $T_{-1}=S^2\setminus \bigcup\limits_{j=0}^{+\infty} T_j$, and $\widetilde T_i = S^2\setminus \bigcup\limits_{j=0}^{i} T_j$ for each $i\in\N_0$. Set $t_0 = \card T_0$. Since $\crit f$ is a finite set, we have $1\leq t_0 < +\infty$. Then for each $i\in\N$, we have
$$
\card T_i \leq d^i t_0. 
$$

We note that if $\deg_f(x)=d$ for some $x\in T_0$, then $f^{-i}(x)=\{x\}$ for each $i\in\N$, contradicting Lemma~\ref{lmPreImageDense}. So $\deg_f(x)\leq d-1$ for each $x\in T_0$. Thus for each $x\in T_0$ and each $m\in\N$, we have
$$
\deg_{f^m} (x)\leq  (d-1)^m.
$$
Moreover, for each $i,m\in\N$ with $i<m$ and each $x\in T_i$, we get
\begin{align*}
\deg_{f^m}(x) & = \deg_f(x)\deg_f(f(x))\cdots\deg_f(f^{i-1}(x))\deg_{f^{m-i}}(f^i(x))  \\
              & \leq D (d-1)^{m-i}.
\end{align*}
Similarly, for each $i,m\in\N$ with $i \geq m$ and each $x\in \widetilde T_i$, we have
$$
\deg_{f^m} (x) \leq D.
$$

Thus for each $n\in\N$,
\begin{align*}
     & \frac{1}{d^n}\sum\limits_{x\in M} \deg_{f^n}(x) \\
=    & \frac{1}{d^n}\sum\limits_{j=-1}^{+\infty}  \sum\limits_{x\in M\cap T_j}  \deg_{f^{n}}(x)\\
\leq & \frac{1}{d^n} \bigg( \sum\limits_{j=0}^{n}  \sum\limits_{x\in M\cap T_j} D(d-1)^{n-j} + \sum\limits_{x\in M\cap \widetilde T_n} D \bigg).
\end{align*}
Note that the more points in $M$ lie in $T_j$ with $j\in [0,n]$ as small as possible, the larger the right-hand side of the last inequality is. So the right-hand side of the last inequality is
\begin{align*}
\leq & \frac{1}{d^n} \Bigg( \sum\limits_{j=0}^{\lceil \log_d \lceil\frac{m}{t_0}\rceil\rceil} (\card T_j)D(d-1)^{n-j}  +mD \Bigg)  \\
\leq & \frac{D t_0}{d^n}\sum\limits_{j=0}^{\lceil \log_d \lceil\frac{m}{t_0}\rceil\rceil}  d^j (d-1)^{n-j}  + \frac{mD}{d^n} \\
\leq & D t_0 \( \frac{d-1}{d} \)^n \sum\limits_{j=0}^{\lceil \log_d m \rceil} \( \frac{d}{d-1} \)^j  + \frac{mD}{d^n}\\
=    & D t_0 \( \frac{d-1}{d} \)^n \frac{\( \frac{d}{d-1} \)^{\lceil \log_d m \rceil+1}  -1 }{\frac{d}{d-1} -1 }  + \frac{mD}{d^n} \\
\leq & D t_0 \( \frac{d-1}{d} \)^n \( \frac{d}{d-1} \)^{2+ \log_d m } (d-1) + \frac{mD}{d^n}\\
\leq & D t_0 \frac{d^2}{d-1} \(  \( \frac{d-1}{d} \)^{n- \log_d m } +  \frac{m}{d^n} \) \\
=    & \frac12 E_f \( d^{(n- \log_d m ) \log_d \frac{d-1}{d}} +  \frac{m}{d^n} \) \\
\leq & E_f \max \Big\{\( \frac{m}{d^n} \)^{\log_d \frac{d}{d-1}}, \frac{m}{d^n} \Big\},
\end{align*}
where $E_f= 2 D t_0 \frac{d^2}{d-1}$ is a constant that only depends on $f$. Thus in this case, $C=E_f$ and $\alpha= \log_d \frac{d}{d-1}\in(0,1]$.

\smallskip

Case 3: Suppose that $f$ has periodic critical points that may not be fixed points of $f$. 

Set $\kappa$ to be the product of the periods of all periodic critical points of $f$. 

We claim that each periodic critical point of $f^\kappa$ is a fixed point of $f^\kappa$. Indeed, if $x$ is a periodic critical point of $f$ satisfying $f^{\kappa p}(x)=x$ for some $p\in \N$, then by (\ref{eqDegFn}), there exists an integer $i\in\{0,1,\dots,\kappa -1\}$ such that $f^i(x)\in \crit f$. Then $f^i(x)$ is a periodic critical point of $f$, so $f^\kappa(f^i(x)) = f^i(x)$. Thus
\begin{align*}
f^\kappa(x) & = f^{\kappa-i}(f^i(x)) = f^{\kappa-i + \kappa}(f^i(x)) = \dots \\
            & = f^{\kappa-i + (p-1)\kappa}(f^i(x)) = f^{\kappa p} (x) = x.
\end{align*}
The claim now follows.

Note that for each $n\in\N$,
\begin{equation*}
       \frac{1}{d^n}\sum\limits_{x\in M} \deg_{f^n}(x)  
 \leq  d^\kappa \frac{1}{d^{\kappa \lceil \frac{n}{\kappa}\rceil }}\sum\limits_{x\in M} \deg_{f^{\kappa \lceil \frac{n}{\kappa}\rceil }}(x).
\end{equation*}
Hence by applying Case 2 for $f^\kappa$, we get a constant $E_{f^\kappa}$ that depends only on $f$, such that the right-hand side of the above inequality is
\begin{align*}
\leq  &  d^\kappa E_{f^\kappa} \max \Bigg\{ \(\frac{m}{d^{\kappa \lceil \frac{n}{\kappa}\rceil }} \)^{\log_{d^\kappa}  \frac{d^\kappa}{d^\kappa -1} }  , \frac{m}{d^{\kappa \lceil \frac{n}{\kappa}\rceil }}  \Bigg\}   \\
\leq  &  d^\kappa E_{f^\kappa}  \max \Big\{ \(\frac{m}{d^n} \)^{\log_{d^\kappa}  \frac{d^\kappa}{d^\kappa -1} } ,  \frac{m}{d^n}  \Big\}.
\end{align*}
Thus in this case $C= d^\kappa E_{f^\kappa} $ and $\alpha = \log_{d^\kappa} \frac{d^\kappa}{d^\kappa -1} \in (0,1] $.
\end{proof}

Now we formulate a generalization of Lemma~\ref{lmCoverEdgesBM}.

\begin{lemma} \label{lmCoverEdges}
Let $f$ be an expanding Thurston map, and $\CC\subseteq S^2$ be an $f^N$-invariant Jordan curve containing $\post{f}$ for some $N\in\N$. Then there exists a constant $L \in [1, \deg{f})$ with the following property:

For each $m\in\N_0$, there exists a constant $D>0$ such that for each $k\in\N_0$ and each $m$-edge $e$, there exists a collection $M$ of $(m+k)$-tiles with $\card{M}\leq D L^k$ and $e \subseteq \inte \Big( \bigcup\limits_{X\in M} X\Big)$.
\end{lemma}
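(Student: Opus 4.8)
The plan is to strip off, one after the other, the two congruence restrictions ($m\equiv 0$ and $k\equiv 0\pmod N$) present in Lemma~\ref{lmCoverEdgesBM}. I would take $L:=L_0$, where $L_0\in[1,\deg f)$ is the constant produced by Lemma~\ref{lmCoverEdgesBM} for $f$ and its $f^N$-invariant curve $\CC$; the key feature is that $L_0$ is independent of the edge level, only the auxiliary constant in that lemma being level-dependent, which is exactly the form of uniformity we need here.

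First I would reduce to the case $m=0$. Given an $m$-edge $e$, the map $f^m$ sends $e$ homeomorphically onto the $0$-edge $e':=f^m(e)$ by Proposition~\ref{propCellDecomp}(i). If $M'$ is any collection of $k$-tiles with $e'\subseteq\inte\bigl(\bigcup_{V\in M'}V\bigr)$, put $M:=\{W\in\X^{m+k}(f,\CC):f^m(W)\in M'\}$; then $\bigcup_{W\in M}W=f^{-m}\bigl(\bigcup_{V\in M'}V\bigr)$ by Proposition~\ref{propCellDecomp}(ii), at most $(\deg f)^m$ many $(m+k)$-tiles lie over each $k$-tile, and $f^{-m}$ of an open set is open, so $\card M\le(\deg f)^m\card M'$ and $e=f^{-m}(e')\subseteq\inte\bigl(\bigcup_{W\in M}W\bigr)$. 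It therefore suffices to produce, for each $0$-edge $e'$ and each $k\in\N_0$, a collection $M'$ of $k$-tiles with $\card M'\le D_0L^k$ and $e'\subseteq\inte\bigl(\bigcup_{V\in M'}V\bigr)$, where $D_0$ depends only on $f,\CC,N$; then $D:=(\deg f)^mD_0$ works.

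For a $0$-edge $e'$, write $k=jN+r$ with $0\le r<N$. The case $r=0$ is Lemma~\ref{lmCoverEdgesBM} with edge-level $0$. For $r>0$ I would apply Lemma~\ref{lmCoverEdgesBM} with step $jN$ to get a family $M_1$ of $jN$-tiles with $\card M_1\le C_0L_0^{jN}\le C_0L^k$ and $e'\subseteq\inte\bigl(\bigcup_{Y\in M_1}Y\bigr)$, and then let $M'$ be the \emph{set} of all $k$-tiles that meet some $Y\in M_1$. Since the $k$-tiles cover $S^2$ they cover each $Y$, so $\bigcup_{V\in M'}V\supseteq\bigcup_{Y\in M_1}Y$ and hence $\inte\bigl(\bigcup_{V\in M'}V\bigr)\supseteq\inte\bigl(\bigcup_{Y\in M_1}Y\bigr)\ni e'$; everything comes down to estimating $\card M'$. (The finitely many small values of $k$ for which the argument below does not apply can be disposed of by the crude remark that a $0$-edge meets at most $2(\deg f)^k$ many $k$-tiles, whose union is a neighborhood of $e'$.)

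The step I expect to be the main obstacle is this bound on $\card M'$. By a connectedness argument, a $k$-tile meeting a $jN$-tile $Y$ is either contained in $Y$, or meets the relative interior of one of the $m$ $jN$-edges on $\partial Y$, or contains one of the $m$ $jN$-vertices of $Y$. Organizing $M'$ (as a set) according to these three possibilities and pushing cells forward under $f^{jN}$, which by Proposition~\ref{propCellDecomp}(i) restricts to a homeomorphism on every cell sending each $jN$-cell to a $0$-cell and each $k$-cell to an $r$-cell with $r<N$, the $k$-tiles contained in a fixed $jN$-tile number at most $2(\deg f)^{N-1}$, and the $k$-tiles meeting a fixed $jN$-edge number at most a constant depending only on $f$ and $N$; these two types contribute $O(C_0L^k)$ after summing over the $\le(m+1)\card M_1$ relevant tiles and edges. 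The genuinely delicate contribution is from the $jN$-vertices: the set $V$ of $jN$-vertices occurring on some $Y\in M_1$ has $\card V\le m\card M_1\le mC_0L_0^{jN}$, and around a $jN$-vertex $v$ there are $2\deg_{f^k}(v)\le 2(\deg f)^{N-1}\deg_{f^{jN}}(v)$ many $k$-tiles, with $\deg_{f^{jN}}(v)$ unbounded in $j$ once the forward orbit of $v$ is trapped in a cycle carrying a critical point. Here taking $M'$ as a set matters: each such $k$-tile is counted once, so the vertex contribution is at most $2(\deg f)^{N-1}\sum_{v\in V}\deg_{f^{jN}}(v)$, and Lemma~\ref{lmSumLocalDegreeBound} is exactly what controls this sum---since $\card V/(\deg f)^{jN}\to0$ it gives $\sum_{v\in V}\deg_{f^{jN}}(v)\le C(\deg f)^{jN(1-\alpha)}(\card V)^{\alpha}\le C'\bigl((\deg f)^{1-\alpha}L_0^{\alpha}\bigr)^{jN}$, and the base $(\deg f)^{1-\alpha}L_0^{\alpha}$ lies in $[1,\deg f)$ since $L_0<\deg f$ and $\alpha\in(0,1]$. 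Enlarging $L$ to the maximum of $L_0$ and this base (still below $\deg f$), using $jN\le k$ and $L\ge1$, and absorbing the remaining constants (all depending only on $f,\CC,N$) into $D_0$, we obtain $\card M'\le D_0L^k$, which finishes the proof.
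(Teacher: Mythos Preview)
Your proposal is close in spirit to the paper's proof---both rely on Lemma~\ref{lmSumLocalDegreeBound} to tame the vertex contribution, and both separate the reduction in $m$ from the reduction in $k$---but your treatment of the ``edge case'' contains a genuine gap.

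You assert that the number of $k$-tiles meeting the interior of a fixed $jN$-edge $e''$ (and containing no $jN$-vertex) is bounded by a constant depending only on $f$ and $N$, arguing by pushing forward under $f^{jN}$. The pushforward does show that each such $k$-tile maps to one of at most $2(\deg f)^{N-1}$ many $r$-tiles meeting the $0$-edge $e_0=f^{jN}(e'')$. What is missing is the multiplicity bound: given an $r$-tile $V'$, how many $k$-tiles $V$ with $f^{jN}(V)=V'$ can meet $\inte e''$? This is \emph{not} automatic. Because $\CC$ is only $f^N$-invariant and $r<N$, the decomposition $\DD^k$ need not refine $\DD^{jN}$, so a $k$-tile $V$ meeting $\inte e''$ need not lie inside the union $Y_1\cup Y_2$ of the two $jN$-tiles bordering $e''$; it can wander across other $jN$-edges. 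Thus the local-homeomorphism picture near $\inte e''$ does not pin $V$ down to one of two branches. Your pushforward argument works cleanly for the ``contained in $Y$'' case precisely because there $f^{jN}|_Y$ is a global homeomorphism onto a $0$-tile; no such global inverse is available along an edge.

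The paper sidesteps this difficulty entirely by inducting on $\kappa=k\bmod N$ \emph{one level at a time}: from an $(m+k-1)$-cover it passes to an $(m+k)$-cover via flowers (\cite[Lemma~7.11]{BM10}: every $i$-tile is covered by $\le\beta$ many $(i{+}1)$-flowers), so the only count that appears is $\sum_{v\in F}\deg_{f^{m+k}}(v)$ over a controlled vertex set $F$, exactly where Lemma~\ref{lmSumLocalDegreeBound} applies. There is no separate edge case. The paper also reduces in $m$ by pushing forward one step at a time ($e\mapsto f(e)$) rather than pulling back by $f^{-m}$ as you do; your pullback works fine (with the harmless $(\deg f)^m$ factor in $D$), but note the typo $e=f^{-m}(e')$---only $e\subseteq f^{-m}(e')$ holds. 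If you want to salvage your direct $jN\to jN+r$ jump, the cleanest fix is to replace the trichotomy by the flower device: cover each $Y\in M_1$ by $\beta$ many $(jN{+}1)$-flowers, take the resulting tiles as the new cover, and repeat $r$ times; this converts every case to the vertex case and lands you back on the paper's argument.
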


\begin{proof}
We denote $d=\deg f$, and consider the cell decompositions induced by $(f,\CC)$ in this proof.

\smallskip

Step 1: We first assume that for some $m\in \N$, there exist constants $L\in [1,d)$ and $D>0$ such that for each $k\in\N_0$ and each $m$-edge $e$, there exists a collection $M$ of $(m+k)$-tiles with $\card M \leq DL^k$ and $e\subseteq \inte \Big( \bigcup\limits_{X\in M} X\Big)$. Then by Proposition~\ref{propCellDecomp}(i), for each $(m-1)$-edge $e$, we can choose an $m$-edge $e'$ such that $f(e')=e$. For each $k\in\N_0$, there exists a collection $M'$ of $(m+k)$-tiles with $\card M' \leq DL^k$ and $e'\subseteq \inte \Big( \bigcup\limits_{X\in M'} X\Big)$. We set $M$ to be the collection $\{ f(X) \,|\, X\in M'\}$ of $(m-1+k)$-tiles. Then $\card M \leq \card M' \leq DL^k$ and $e\subseteq \inte \Big( \bigcup\limits_{X\in M} X\Big)$. Hence, it suffices to prove the lemma for ``each $m\in\N_0$ with $m\equiv 0 \pmod N$'' instead of ``each $m\in\N_0$''.

\smallskip

Step 2: We will prove the following statement by induction on $\kappa$:
\begin{itemize}

\smallskip
\item[] For each $\kappa \in \{0,1,\dots,N-1\}$, there exists a constant $L_\kappa \in [1, d)$ with the following property:

For each $m\in\N_0$ with $m \equiv 0 \pmod N$, there exists a constant $D_\kappa>0$ such that for each $k\in\N_0$ with $k \equiv \kappa \pmod N$ and each $m$-edge $e$, there exists a collection $M_{m,k,e}$ of $(m+k)$-tiles that satisfies $\card{M_{m,k,e}}\leq D_\kappa L_\kappa^k$ and $e \subseteq \inte \Big( \bigcup\limits_{X\in M_{m,k,e}} X\Big)$.
\smallskip

\end{itemize}

Lemma~\ref{lmCoverEdgesBM} gives the case for $\kappa=0$. For the induction step, we assuming the above statement for some $\kappa \in[0,N-1]$.

Let $i\in\N_0$ and $p\in S^2$ be an $i$-vertex. We define the \defn{$i$-flower} $W^i(p)$ as in \cite{BM10} by
$$
W^i(p) = \bigcup  \{ \inte c \,|\, c \in \DD^i, p\in c \}.
$$
Note that the number of $i$-tiles in $W^i(p)$ is $2\deg_{f^i}(p)$, i.e.,
\begin{equation}   \label{eqCardFlower}
\card \{X\in\X^i \,|\, p\in X   \}  =  2  \deg_{f^i}(p).
\end{equation}

By \cite[Lemma~7.11]{BM10}, there exists a constant $\beta\in\N$, which depends only on $f$ and $\CC$, such that for each $i\in\N$ and each $i$-tile $X\in\X^i$, $X$ can be covered by a union of at most $\beta$ $(i+1)$-flowers.

Fix an arbitrary $m\in\N_0$ with $m\equiv 0 \pmod N$, and fix an arbitrary $m$-edge $e$. 

By the induction hypothesis, there exist constants $D_\kappa >0$ and $L_\kappa \in [1,d)$ such that for each $k\in\N_0$ with $k\equiv \kappa +1 \pmod N$, there exists a collection $M_{m,k-1,e}$ of $(m+k-1)$-tiles with $\card M_{m,k-1,e} \leq D_\kappa L_\kappa^{k-1}$ and $e \subseteq \inte \Big(\bigcup\limits_{X\in M_{m,k-1,e}} X\Big)$. Each $X\in M_{m,k-1,e}$ can be covered by $\beta$ $(m+k)$-flowers $W^{m+k}(p)$. We can then construct a set $F\subseteq \V^{m+k}$ of $(m+k)$-vertices such that 
\begin{equation}   \label{eqPflmCoverEdges1}
\card F \leq \beta D_\kappa L_\kappa^{k-1}
\end{equation}
and
\begin{equation}
\bigcup\limits_{X\in M_{m,k-1,e}} X \subseteq \bigcup\limits_{p\in F} W^{m+k}(p).
\end{equation}
We define
\begin{equation}
M_{m,k,e}  = \{ X\in \X^{m+k} \,|\, X\cap F\neq \emptyset   \}.
\end{equation}
Then $e \subseteq \inte \Big( \bigcup\limits_{X\in M_{m,k,e}} X\Big)$, and by (\ref{eqCardFlower}),
\begin{equation}  \label{eqPflmCoverEdges2}
\card M_{m,k,e}  \leq \sum\limits_{p\in F} 2 \deg_{f^{m+k}}(p).
\end{equation}
 
Since $L_\kappa \in [1,d)$, there exists $K\in\N$, depending only on $f,\CC, m$, and $\kappa$, such that for each $i\geq K$, we have $\beta D_\kappa L_\kappa^{i-1} \leq d^{m+i}$.

Thus by (\ref{eqPflmCoverEdges1}), (\ref{eqPflmCoverEdges2}), and Lemma~\ref{lmSumLocalDegreeBound}, for each $k\geq K$ with $k\equiv \kappa + 1 \pmod N$, there exists constants $C>0$ and $\alpha\in(0,1]$, both of which depend only on $f$, such that
\begin{align}  \label{eqPflmCoverEdges3}
\card M_{m,k,e}  &\leq  2  \sum\limits_{p\in F} \deg_{f^{m+k}}(p) \notag\\
                 &\leq  2 C d^{(m+k)(1-\alpha)}  \( \beta D_\kappa L_\kappa^{k-1} \)^\alpha   \\
                 &   =  2 C d^{m(1-\alpha)} \beta^\alpha D_\kappa^\alpha L_\kappa^{-\alpha} \( d^{1-\alpha} L_\kappa^{\alpha} \)^k. \notag
\end{align}
Let $L_{\kappa +1} = d^{1-\alpha} L_\kappa^{\alpha}$. Since $L_\kappa \in [1,d)$, we get $L_{\kappa +1}\in [L_\kappa,d)\subseteq [1,d)$. Note that $L_{\kappa+1}$ only depends on $f,\CC$, and $\kappa$. We define
\begin{equation*}
\tau = \max \bigg\{ 2  \sum\limits_{p\in V} \deg_{f^{m+i}}(p)   \,|\, i \leq K, V\subseteq \V^{m+i}, \card V\leq  \beta D_\kappa L_\kappa^{k-1}  \bigg\}.
\end{equation*}
Since $\tau$ is the maximum over a finite set of numbers, $\tau< +\infty$. We set
\begin{equation} \label{eqPflmCoverEdges4}
D_{\kappa +1} = \max \{ \tau,  2 C d^{m(1-\alpha)} \beta^\alpha D_\kappa^\alpha L_\kappa^{-\alpha} \}.
\end{equation}
Then by (\ref{eqPflmCoverEdges2}), (\ref{eqPflmCoverEdges3}), and (\ref{eqPflmCoverEdges4}), we get that for each $k\in\N_0$ with $k\equiv \kappa+1 \pmod N$,
\begin{equation}
\card M_{m,k,e}  \leq \sum\limits_{p\in F} 2 \deg_{f^{m+k}}(p) \leq D_{\kappa+1} L_{\kappa +1}^k.
\end{equation}
We note that $\tau$ only depends on $f,\CC, m$, and $\kappa$, so $D_{\kappa+1}$ also only depends on $f,\CC,m$, and $\kappa$.

This completes the induction.

\smallskip

Step 3: Now we define
$$
L=\max\{L_\kappa \,|\, \kappa\in\{0,1,\dots,N-1\}   \}.
$$
For each fixed $m\in\N_0$ with $m\equiv 0 \pmod N$, we set
$$
D=\max\{D_\kappa \,|\, \kappa\in\{0,1,\dots,N-1\}   \},
$$
and for each given $k\in\N_0$ and $e\in\E^m$, let $M=M_{m,k,e}$. Then we have $\card M \leq DL^k$ and $e\subseteq \inte \Big(\bigcup\limits_{X\in M} X\Big)$. We note that here $L$ only depends on $f$ and $\CC$, and on the other hand, $D$ only depends on $f,\CC$, and $m$. The proof is now complete.
\end{proof}

\begin{rems}
It is also possible to prove the previous lemma by observing that $\CC$ equipped with the restriction of a visual metric $d$ for $f$ is a quasicircle (see \cite[Theorem~1.8]{BM10}), and $S^2$ equipped with $d$ is linearly locally connected (see \cite[Proposition~16.3]{BM10}). A metric space $X$, that is homeomorphic to the plane and with $\overline{X}$ linearly locally connected and $\partial X$ a Jordan curve, has the property that $\partial X$ is porous in $\overline{X}$ (see \cite[Theorem~IV.14]{Wi07}). Then we can mimic the original proof of Lemma~20.2 in \cite{BM10}. Our proof adopted above is more elementary and self-contained.
\end{rems}

We are finally ready to prove the equidistribution of the preperiodic points with respect to the measure of maximal entropy $\mu_f$. 

\begin{proof}[Proof of Theorem~\ref{thmWeakConvPrePerPts}]
Fix an arbitrary $N \geq N(f)$ where $N(f)$ is an constant as given in Corollary~\ref{corCexists} depending only on $f$. We also fix an $f^N$-invariant Jordan curve $\CC$ containing $\post f$ such that no $N$-tile in $\DD^N(f,\CC)$ joins opposite sides of $\CC$ as given in Corollary~\ref{corCexists}. In the proof below, we consider the cell decompositions $\DD^i(f,\CC), i\in\N_0,$ induced by $(f,\CC)$, and denote $d=\deg f$. 

Since $\xi_n^m$ and $\widetilde\xi_n^m$ are Borel probability measures for all $m\in\N_0$ and $n\in\N$ with $m<n$, by Alaoglu's Theorem, it suffices to prove that in the weak$^*$ topology, every convergent subsequence of $\{\xi_n^{m_n} \}_{n\in\N}$ and $\{\widetilde\xi_n^{m_n} \}_{n\in\N}$ converges to $\mu_f$.

\smallskip

Proof of (\ref{eqWeakConvPrePerPtsWithWeight}):

\smallskip

Let $\{ n_i \}_{i\in\N}$ be a strictly increasing sequence with 
$$
\xi_{n_i}^{m_{n_i}}   \stackrel{w^*}{\longrightarrow} \mu \text{ as } i\longrightarrow +\infty,
$$
for some measure $\mu$. 

\smallskip

Case 1 for (\ref{eqWeakConvPrePerPtsWithWeight}): We assume in this case that there is no constant $K\in\N$ such that for all $i\in\N$, $n_i-m_{n_i} \leq K$. Then by choosing a subsequence of $\{ n_i \}_{i\in\N}$ if necessary, we can assume that $n_i - m_{n_i}\longrightarrow +\infty$ as $i\longrightarrow +\infty$.

Here is the idea of the proof in this case. By the spirit of Lemma~\ref{lmAtLeast1} and Lemma~\ref{lmAtMost1}, there is an almost bijective correspondence between the fixed points of $f^{n-m_n}$ and the $(n-m_n)$-tiles containing such points. The correspondence is particularly nice away from $\CC$. Thus there is almost a bijective correspondence between the preperiodic points in $S_n^{m_n}$ and the $n$-tiles containing such points. So if we can control the behavior near $\CC$, then Theorem~\ref{thmWeakConv} applies and we finish the proof in this case. Now the control we need is provided by Lemma~\ref{lmCoverEdges}.

Now we start to implement this idea. So we fix a 0-edge $e_0 \subseteq \CC$. We observe that for each $i\in\N$, we can pair a white $i$-tile $X_w^i\in\X_w^i$ and a black $i$-tile $X_b^i\in\X_b^i$ whose intersection $X_w^i\cap X_b^i$ is an $i$-edge contained in $f^{-i}(e_0)$.  There are a total of $d^i$ such pairs and each $i$-tile is in exactly one such pair. We denote by $\PP_i$ the collection of the unions $X_w^i\cup X_b^i$ of such pairs, i.e.,
$$
\PP_i=\{X_w^i\cup X_b^i  \,|\, X_w^i\in\X_w^i,X_b^i\in\X_b^i, X_w^i\cap X_b^i\cap f^{-i}(e_0) \in \E^i \}.
$$
We denote $\PP'_i=\{A\in \PP_i \,|\, A\cap\CC = \emptyset\}$.

By Lemma~\ref{lmCoverEdges}, there exists $1 \leq L < d$ and $C>0$ such that for each $i\in\N$ there exists a collection $M$ of $i$-tiles with $\card M\leq CL^i$ such that $\CC$ is contained in the interior of the set $\bigcup\limits_{X\in M} X$. Note that $L$ and $C$ are constants independent of $i$. Observe that for each $A\in \PP_i$ that does not contain any $i$-tile in the collection $M$, we have $A \cap X \subseteq \partial \Big( \bigcup\limits_{X\in M} X\Big)$ for each $X\in M$, so $A \cap \inte \Big( \bigcup\limits_{X\in M} X \Big)= \emptyset$. Since the number of distinct $A\in \PP_i$ that contains an $i$-tile in $M$ is bounded above by $CL^i$, we get
\begin{equation}   \label{eqCardP'_i}
\card(\PP'_i) \geq d^i -CL^i.
\end{equation}

Note that for each $i\in\N$ and each $A\in\PP'_i$, either $A\subseteq X_w^0$ or $A\subseteq X_b^0$ where $X_w^0$ (resp.\ $X_b^0$) is the white (resp.\ black) $0$-tile for $(f,\CC)$. So by Proposition~\ref{propCellDecomp}(i) and Brouwer's Fixed Point Theorem, there is a map $\tau\:\PP'_i\rightarrow P_{1,f^i}$ from $\PP'_i$ to the set of fixed points of $f^i$ such that $\tau(A)\in A$. Note if a fixed point $x$ of $f^i$ has weight $\deg_{f^i}(x)>1$, then $x$ has to be contained in $\post f \subseteq \CC$. Thus $\deg_{f^i}(\tau(A))=1$ for all $A\in\PP'_i$. 

If for some $A\in\PP'_i$, the point $\tau(A)$ were on the boundaries of the two $i$-tiles whose union is $A$, then $\tau(A)$ would have to be contained in $\CC$ since the boundaries are mapped into $\CC$ under $f^i$. Thus for each $A\in\PP'_i$, the point $\tau(A)$ is contained in the interior of one of the two $i$-tiles whose union is $A$. Hence $\tau$ is injective. Moreover, 
\begin{equation}  \label{eqDeg=1AwayFromC}
\deg_{f^{i+j}}(x) = 1 \text{ for each } j\in\N_0 \text{ and each } x \in \bigcup\limits_{A\in\PP'_i} f^{-j}(\tau(A)).
\end{equation}

For each $i\in\N$, we choose a map $\beta_{n_i} \: \X_w^{n_i} \rightarrow S^2$ by letting $\beta_{n_i}(X)$ be the unique point in $f^{-m_{n_i}}(\tau(A))\cap B$ where $B\in\PP_{n_i}$ with $X\subseteq B$, if there exists $A\in \PP'_{n_i-m_{n_i}}$ with $f^{m_{n_i}}(X) \subseteq A$; and by letting $\beta_{n_i}(X)$ be an arbitrary point in $X$ if there exists no $A\in \PP'_{n_i-m_{n_i}}$ with $f^{m_{n_i}}(X) \subseteq A$.  

We fix a visual metric $d$ for $f$ with an expansion factor $\Lambda >1$. Note that $\Lambda$ can be chosen to depend only on $f$ and $d$. Then $\diam_d(A) < c \Lambda^{-i}$ for each $i\in\N$, where $c\geq 1$ is a constant depending only on $f$, $d$, and $\CC$ (See Lemma~\ref{lmBMCellSizeBounds}(ii)). Define $\alpha_n=c\Lambda^{-n}$ for each $n\in\N$. Thus $\alpha_{n_i}$ and $\beta_{n_i}$ satisfies the hypothesis in Theorem~\ref{thmWeakConv}. Define $\mu_{n_i}$ as in Theorem~\ref{thmWeakConv}. Then
\begin{equation}  \label{eqPfThmWeakConvPrePerPts1}
\mu_{n_i}   \stackrel{w^*}{\longrightarrow} \mu_f \text{ as } i\longrightarrow +\infty,
\end{equation}
by Theorem~\ref{thmWeakConv}.

We claim that the total variation $\Norm{\mu_{n_i} - \xi_{n_i}^{m_{n_i}}}$ of $\mu_{n_i} - \xi_{n_i}^{m_{n_i}}$ converges to $0$ as $i \longrightarrow +\infty$.

Assuming the claim, then by (\ref{eqPfThmWeakConvPrePerPts1}), we can conclude that (\ref{eqWeakConvPrePerPtsWithWeight}) holds in this case.

To prove the claim, by Corollary~\ref{corNoPrePeriodicPts}, we observe that for each $i\in\N$,
\begin{align} \label{eqPfThmWeakConvPrePerPts2}
\Norm{\mu_{n_i}-\xi_{n_i}^{m_{n_i}}}   \leq  & \norm{\mu_{n_i} - \frac{1}{d^{n_i-m_{n_i}}} \sum\limits_{A\in\PP'_{n_i-m_{n_i}}} \frac{1}{d^{m_{n_i}}} \sum\limits_{q\in f^{-m_{n_i}}(\tau(A))}  \delta_q  }    \notag \\
                                         + & \norm{\(\frac{1}{d^{n_i}} - \frac{1}{d^{n_i} + d^{m_{n_i}}} \) \sum\limits_{A\in\PP'_{n_i-m_{n_i}}}   \sum\limits_{q\in f^{-m_{n_i}}(\tau(A))}  \delta_q  }    \\
                                         + & \norm{\frac{1}{d^{n_i-m_{n_i}} + 1} \sum\limits_{A\in\PP'_{n_i-m_{n_i}}} \frac{1}{d^{m_{n_i}}} \sum\limits_{q\in f^{-m_{n_i}}(\tau(A))}  \delta_q   - \xi_{n_i}^{m_{n_i}}  }. \notag
\end{align}
In the first term on the right-hand side of (\ref{eqPfThmWeakConvPrePerPts2}), each $\delta_q$ in the summations cancels with the corresponding term in the definition of $\mu_{n_i}$. So the first term on the right-hand side of  (\ref{eqPfThmWeakConvPrePerPts2}) is equal to the difference of the total variations of the two measures, which by (\ref{eqCardP'_i}), is
$$
\leq 1- \frac{(d^{n_i-m_{n_i}}-CL^{n_i-m_{n_i}})d^{m_{n_i}}}{d^{n_i}}  = C \(\frac{L}{d} \)^{n_i-m_{n_i}}.
$$
In the second term on the right-hand side of (\ref{eqPfThmWeakConvPrePerPts2}), the total number of terms in the summations is bounded above by $d^{n_i}$. So the second term on the right-hand-side of (\ref{eqPfThmWeakConvPrePerPts2}) is
$$
\leq \Abs{\frac{1}{d^{n_i}}  - \frac{1}{d^{n_i} + d^{m_i}} }  d^{n_i}.
$$
In the third term on the right-hand side of (\ref{eqPfThmWeakConvPrePerPts2}), by (\ref{eqDeg=1AwayFromC}), $\deg_{f^{n_i}}(q) =1$ for each $A\in\PP'_{n_i-m_{n_i}}$ and each $q\in f^{-m_{n_i}}(\tau(A))$. So by (\ref{eqDistrPrePerPts}) and Corollary~\ref{corNoPrePeriodicPts}, each $\delta_q$ in the summations cancels with the corresponding $\delta_q$ in $\xi_{n_i}^{m_{n_i}}$. So the third term on the right-hand-side of (\ref{eqPfThmWeakConvPrePerPts2}) is equal to the difference of the total variations of the two measures, which by (\ref{eqCardP'_i}) and Corollary~\ref{corNoPrePeriodicPts}, is
$$
\leq 1- \frac{(d^{n_i-m_{n_i}}-CL^{n_i-m_{n_i}})d^{m_{n_i}} }{(d^{n_i-m_{n_i}} + 1)d^{m_{n_i}}}  = \frac{1+CL^{n_i-m_{n_i}}}{d^{n_i-m_{n_i}} + 1}.
$$
Since $n_i - m_{n_i}\longrightarrow +\infty$ as $i\longrightarrow +\infty$, each term on the right-hand-side of (\ref{eqPfThmWeakConvPrePerPts2}) converges to $0$ as $i \longrightarrow +\infty$. So 
$$
\Norm{\mu_{n_i} - \xi_{n_i}^{m_{n_i}}}  \longrightarrow 0 \text{ as } i\longrightarrow +\infty
$$
as claimed.

\smallskip

Case 2 for (\ref{eqWeakConvPrePerPtsWithWeight}): We assume in this case that there is a constant $K\in\N$ such that for all $i\in\N$, $n_i-m_{n_i} \leq K$. Then by choosing a subsequence of $\{ n_i \}_{i\in\N}$ if necessary, we can assume that there exists some constant $l\in[0,K]$ such that for all $i\in\N$, $n_i-m_{n_i} = l$. Note that in this case, $m_{n_i} \longrightarrow +\infty$ as $i \longrightarrow +\infty$.

Then by Corollary~\ref{corNoPrePeriodicPts} and Theorem~\ref{thmNoFixedPts},
\begin{align*}
\xi_{n_i}^{m_{n_i}} & = \frac{1}{d^{m_{n_i}}(d^l +1 )} \sum\limits_{x\in S_{n_i}^{m_{n_i}}}  \deg_{f^{n_i}} (x) \delta_x  \\
                  & = \frac{1}{d^l+1}  \sum\limits_{y=f^l(y)} \deg_{f^l}(y) \bigg( \frac{1}{d^{m_{n_i}}} \sum\limits_{x\in f^{-m_{n_i}}(y)} \deg_{f^{m_{n_i}}} (x) \delta_x   \bigg).
\end{align*}
By Theorem~\ref{thmWeakConvPreImg}, for each $y\in S^2$,
$$
\frac{1}{d^{m_{n_i}}} \sum\limits_{x\in f^{-m_{n_i}}(y)} \deg_{f^{m_{n_i}}} (x) \delta_x    \stackrel{w^*}{\longrightarrow} \mu_f \text{ as } i\longrightarrow +\infty.
$$
So each term in the sequence $\{ \xi_{n_i}^{m_{n_i}}  \}_{i\in\N}$ is a convex combination of the corresponding terms in sequences of measures, each of which converges in the weak$^*$ topology to $\mu_f$. Hence by Lemma~\ref{lmConvexCombConv}, $\{ \xi_{n_i}^{m_{n_i}}  \}_{i\in\N}$ also converges to $\mu_f$ in the weak$^*$ topology. It then follows that $\mu=\mu_f$. Thus (\ref{eqWeakConvPrePerPtsWithWeight}) follows in this case.

\smallskip

Proof of (\ref{eqWeakConvPrePerPtsWoWeight}):

\smallskip

Let $\{ n_i \}_{i\in\N}$ be a strictly increasing sequence with 
$$
\widetilde\xi_{n_i}^{m_{n_i}}   \stackrel{w^*}{\longrightarrow} \widetilde\mu \text{ as } i\longrightarrow +\infty,
$$
for some measure $\widetilde\mu$. 

\smallskip

Case 1 for (\ref{eqWeakConvPrePerPtsWoWeight}): We assume in this case that there is no constant $K\in\N$ such that for all $i\in\N$, $n_i-m_{n_i} \leq K$. Then by choosing a subsequence of $\{ n_i \}_{i\in\N}$ if necessary, we can assume that $n_i - m_{n_i}\longrightarrow +\infty$ as $i\longrightarrow +\infty$.

The idea of the proof in this case is similar to that of the proof of Case 1 for (\ref{eqWeakConvPrePerPtsWithWeight}).

We use the same notation as in the proof of Case 1 for (\ref{eqWeakConvPrePerPtsWithWeight}). Then (\ref{eqWeakConvPrePerPtsWoWeight}) follows in this case if we can prove that $\Norm{\mu_{n_i} - \widetilde\xi_{n_i}^{m_{n_i}}}$ converges to $0$ as $i \longrightarrow +\infty$.

As before, we observe that
\begin{align} \label{eqPfThmWeakConvPrePerPts3}
\Norm{\mu_{n_i}-\widetilde\xi_{n_i}^{m_{n_i}}}   \leq  & \norm{\mu_{n_i} - \frac{1}{d^{n_i-m_{n_i}}} \sum\limits_{A\in\PP'_{n_i-m_{n_i}}} \frac{1}{d^{m_{n_i}}} \sum\limits_{q\in f^{-m_{n_i}}(\tau(A))}  \delta_q  }    \notag \\
                                         + & \norm{\(\frac{1}{d^{n_i}} - \frac{1}{\widetilde s_{n_i}^{m_{n_i}}} \) \sum\limits_{A\in\PP'_{n_i-m_{n_i}}} \sum\limits_{q\in f^{-m_{n_i}}(\tau(A))}  \delta_q  }    \\
                                         + & \norm{\frac{1}{\widetilde s_{n_i}^{m_{n_i}}} \sum\limits_{A\in\PP'_{n_i-m_{n_i}}} \sum\limits_{q\in f^{-m_{n_i}}(\tau(A))}  \delta_q   - \widetilde\xi_{n_i}^{m_{n_i}}  }. \notag
\end{align}
As the first term on the right-hand side of (\ref{eqPfThmWeakConvPrePerPts2}) discussed before, the first term on the right-hand-side of (\ref{eqPfThmWeakConvPrePerPts3}) is
$$
\leq 1 - \frac{(d^{n_i-m_{n_i}}-CL^{n_i-m_{n_i}})d^{m_{n_i}}}{d^{n_i}}  = C \(\frac{L}{d} \)^{n_i-m_{n_i}}.
$$
In the second term on the right-hand side of (\ref{eqPfThmWeakConvPrePerPts3}), the total number of terms in the summations is bounded above by $d^{n_i}$. By (\ref{eqCardP'_i}), (\ref{eqDeg=1AwayFromC}), and Corollary~\ref{corNoPrePeriodicPts}, we have
\begin{align}   \label{eqPfThmWeakConvPrePerPts4}
      & d^{m_{n_i}}(d^{n_i-m_{n_i}} +1)  = s_{n_i}^{m_{n_i}} \geq  \widetilde s_{n_i}^{m_{n_i}} \\
 \geq & d^{m_{n_i}} \card(\PP'_{n_i-m_{n_i}}) \geq d^{m_{n_i}}(d^{n_i-m_{n_i}}-CL^{n_i-m_{n_i}}).   \notag
\end{align}
So the second term on the right-hand-side of (\ref{eqPfThmWeakConvPrePerPts3}) is
$$
\leq  \Abs{\frac{1}{d^{n_i}} - \frac{1}{\widetilde s_{n_i}^{m_{n_i}}} } d^{n_i} = \Abs{1-\frac{d^{n_i}}{\widetilde s_{n_i}^{m_{n_i}}}}   \leq \max\Big\{ \frac{1}{d^{n_i-m_{n_i}}}, \frac{CL^{n_i-m_{n_i}}}{d^{n_i-m_{n_i}}} \Big\}.
$$
In the third term on the right-hand side of (\ref{eqPfThmWeakConvPrePerPts3}), by (\ref{eqDeg=1AwayFromC}), $\deg_{f^{n_i}}(q) =1$ for each $A\in\PP'_{n_i-m_{n_i}}$ and each $q\in f^{-m_{n_i}}(\tau(A))$. So by (\ref{eqDistrPrePerPts}), each $\delta_q$ in the summations cancels with the corresponding $\delta_q$ in $\widetilde\xi_{n_i}^{m_{n_i}}$. So the third term on the right-hand-side of (\ref{eqPfThmWeakConvPrePerPts3}) is equal to the difference of the total variations of the two measures, which by (\ref{eqPfThmWeakConvPrePerPts4}) and (\ref{eqCardP'_i}), for $n_i-m_{n_i}$ large enough, is
$$
\leq \frac{d^{n_i} + d^{m_{n_i}} -(d^{n_i - m_{n_i}}-CL^{n_i-m_{n_i}})d^{m_{n_i}} }{\widetilde s_{n_i}^{m_{n_i}}} \leq \frac{1+CL^{n_i-m_{n_i}}}{d^{n_i-m_{n_i}} -CL^{n_i-m_{n_i}}}.
$$
Since $n_i - m_{n_i}\longrightarrow +\infty$ as $i\longrightarrow +\infty$, each term on the right-hand-side of (\ref{eqPfThmWeakConvPrePerPts3}) converges to $0$ as $i\longrightarrow+\infty$. So we can conclude that
$$
\Norm{\mu_{n_i} - \widetilde \xi_{n_i}^{m_{n_i}}}  \longrightarrow 0 \text{ as } i\longrightarrow +\infty.
$$
So $\widetilde\mu=\mu_f$. Thus (\ref{eqWeakConvPrePerPtsWoWeight}) follows in this case.

\smallskip

Case 2 for (\ref{eqWeakConvPrePerPtsWoWeight}): We assume in this case that there is a constant $K\in\N$ such that for all $i\in\N$, $n_i-m_{n_i} \leq K$. Then by choosing a subsequence of $\{ n_i \}_{i\in\N}$ if necessary, we can assume that there exists some constant $l\in[0,K]$ such that for all $i\in\N$, $n_i-m_{n_i} = l$. Note that in this case, $m_{n_i} \longrightarrow +\infty$ as $i \longrightarrow +\infty$.

Then for each $i\in\N$, we have
$$
\widetilde\xi_{n_i}^{m_{n_i}}   = \frac{1}{\widetilde s_{n_i}^{m_{n_i}}}  \sum\limits_{x\in S_{n_i}^{m_{n_i}}} \delta_x = \frac{1}{\widetilde s_{n_i}^{m_{n_i}}}  \sum\limits_{y=f^l(y)} Z_{m_{n_i},y} \bigg( \frac{1}{Z_{m_{n_i},y}} \sum\limits_{x\in f^{-m_{n_i}}(y)} \delta_x  \bigg),
$$
where $Z_{m,y} = \card \( f^{-m}(y) \)$ for each $y\in S^2$ and each $m\in\N_0$. Note that for each $i\in\N$, we have 
$$
\widetilde s_{n_i}^{m_{n_i}} = \sum\limits_{y=f^l(y)} Z_{m_{n_i},y}.
$$
Denote, for each $i\in\N$ and each $y\in S^2$, the Borel probability measure $\mu_{i,y}=  \frac{1}{Z_{m_{n_i},y}} \sum\limits_{x\in f^{-m_{n_i}}(y)} \delta_x$. Then by Theorem~\ref{thmWeakConvPreImg}, we have 
$$
\mu_{i,y}  \stackrel{w^*}{\longrightarrow} \mu_f \text{ as } i\longrightarrow +\infty.
$$
So each term in $\{ \widetilde \xi_{n_i}^{m_{n_i}}  \}_{i\in\N}$ is a convex combination of the corresponding terms in sequences of measures, each of which converges in the weak$^*$ topology to $\mu_f$. Hence by Lemma~\ref{lmConvexCombConv}, $\{ \widetilde \xi_{n_i}^{m_{n_i}}  \}_{i\in\N}$ also converges to $\mu_f$ in the weak$^*$ topology. It then follows that $\widetilde\mu=\mu_f$. Thus (\ref{eqWeakConvPrePerPtsWoWeight}) follows in this case.
\end{proof}

The proof of Theorem~\ref{thmWeakConvPrePerPts} also gives us the following corollary.

\begin{cor}   \label{corAsympRatioNoPrePerPts}
Let $f$ be an expanding Thurston map. If $\{m_n\}_{n\in\N}$ is a sequence in $\N_0$ such that $m_n <n$ for each $n\in\N$ and $\lim\limits_{n\to+\infty} n-m_n = +\infty$, then
\begin{equation}  \label{eqAsympRatioNoPrePerPts}
\lim\limits_{n\to+\infty}  \frac{\widetilde s_n^{m_n}}{s_n^{m_n}} = 1.
\end{equation}
\end{cor}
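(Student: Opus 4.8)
The plan is to read off the statement from the squeeze already buried in the proof of Theorem~\ref{thmWeakConvPrePerPts}. Write $d=\deg f$. By Corollary~\ref{corNoPrePeriodicPts} we have the exact value $s_n^{m_n}=d^{m_n}\bigl(d^{\,n-m_n}+1\bigr)$, and since each weight $\deg_{f^n}(x)\geq 1$ we trivially get $\widetilde s_n^{m_n}=\card S_n^{m_n}\leq s_n^{m_n}$. So the only thing to supply is a lower bound for $\widetilde s_n^{m_n}$ that is asymptotically as large as $s_n^{m_n}$, and this is precisely what the combinatorial construction in the proof of Theorem~\ref{thmWeakConvPrePerPts} already provides.

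Concretely, I would reuse, verbatim, the set-up from that proof: fix an $f^N$-invariant Jordan curve $\CC\subseteq S^2$ with $\post f\subseteq\CC$ and no $N$-tile joining opposite sides of $\CC$ (Corollary~\ref{corCexists}); for each $i\in\N$ form the collection $\PP'_i$ of unions of ``paired'' $i$-tiles that are disjoint from $\CC$; and, via Proposition~\ref{propCellDecomp}(i) and Brouwer's fixed point theorem, produce the injection $\tau\:\PP'_i\to P_{1,f^i}$ with $\tau(A)\in A$. Taking $i=n-m_n$ and then $f^{m_n}$-preimages of the points $\tau(A)$, $A\in\PP'_{n-m_n}$, yields $d^{m_n}\card(\PP'_{n-m_n})$ distinct points of $S_n^{m_n}$: each $\tau(A)$ is fixed by $f^{n-m_n}$, so each of its $f^{m_n}$-preimages lies in $S_n^{m_n}$; by (\ref{eqDeg=1AwayFromC}) all these preimages have $f^{m_n}$-local degree $1$, hence each $\tau(A)$ has exactly $d^{m_n}$ of them; and distinctness across different $A$ follows from injectivity of $\tau$. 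Combining with the bound (\ref{eqCardP'_i}), namely $\card(\PP'_{n-m_n})\geq d^{\,n-m_n}-CL^{\,n-m_n}$ with the constants $1\leq L<d$ and $C>0$ from Lemma~\ref{lmCoverEdges} (which depend only on $f$ and $\CC$, not on $n$), we obtain exactly the chain (\ref{eqPfThmWeakConvPrePerPts4}):
\[
d^{m_n}\bigl(d^{\,n-m_n}+1\bigr)=s_n^{m_n}\;\geq\;\widetilde s_n^{m_n}\;\geq\;d^{m_n}\bigl(d^{\,n-m_n}-CL^{\,n-m_n}\bigr),
\]
valid for every $n$ with $m_n<n$.

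Dividing through by $s_n^{m_n}=d^{m_n}(d^{\,n-m_n}+1)$ gives
\[
1\;\geq\;\frac{\widetilde s_n^{m_n}}{s_n^{m_n}}\;\geq\;\frac{d^{\,n-m_n}-CL^{\,n-m_n}}{d^{\,n-m_n}+1}\;=\;\frac{1-C\,(L/d)^{\,n-m_n}}{1+d^{-(n-m_n)}},
\]
and since $L<d$ and $n-m_n\to+\infty$, both $(L/d)^{\,n-m_n}\to 0$ and $d^{-(n-m_n)}\to 0$, so the right-hand side tends to $1$. The squeeze then yields (\ref{eqAsympRatioNoPrePerPts}). There is essentially no genuine obstacle: all the real work lies in Lemma~\ref{lmCoverEdges} and in the proof of Theorem~\ref{thmWeakConvPrePerPts}, which are already in hand. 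The one point that must be stated carefully is that $C$ and $L$ are independent of $n$ (so that the lower bound above is uniform and its limit is really $1$) — but that uniformity is exactly the content of Lemma~\ref{lmCoverEdges}, so nothing new is needed.
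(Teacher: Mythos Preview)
Your proposal is correct and follows essentially the same route as the paper: the paper's proof simply cites the chain of inequalities (\ref{eqPfThmWeakConvPrePerPts4}) from the proof of Theorem~\ref{thmWeakConvPrePerPts}, divides by $s_n^{m_n}$, and applies the squeeze using $1\leq L<d$ and $n-m_n\to+\infty$. You have reproduced this argument with more explicit detail about where the lower bound comes from, but the strategy and the key inputs are identical.
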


\begin{proof}
By the proof of Theorem~\ref{thmWeakConvPrePerPts}, especially (\ref{eqPfThmWeakConvPrePerPts4}), we get that for each $n\in\N$,
\begin{equation}
\frac{d^{n-m_n}-CL^{n-m_n}}{d^{n-m_n}+1}   \leq  \frac{\widetilde s_n^{m_n}}{s_n^{m_n}} \leq 1,
\end{equation}
where $d=\deg f$.
Then (\ref{eqAsympRatioNoPrePerPts}) follows from the fact that $1\leq L <d$ and the condition that $\lim\limits_{n\to+\infty} n-m_n = +\infty$.
\end{proof}

By (\ref{eqTopEntropy}), Theorem~\ref{thmNoFixedPts}, and Corollary~\ref{corAsympRatioNoPrePerPts} with $m_n=0$ for each $n\in\N$, we get the following corollary, which is an analog of the corresponding result for expansive homeomorphisms on compact metric spaces with the \emph{specification property} (see, for example, \cite[Theorem~18.5.5]{KH95}).

\begin{cor}   \label{corAsympRatioNoPerPts}
Let $f$ be an expanding Thurston map. Then for each constant $c\in (0,1)$, there exists a constant $N\in\N$ such that for each $n \geq N$,
\begin{align*}
c e^{n h_{\operatorname{top}}(f)} = c (\deg f)^n & < \card \{x\in S^2 \,|\, f^n(x)=x \} \\
                                                 & \leq \sum\limits_{x=f^n(x)}\deg_{f^n}(x) = (\deg f)^n +1 < \frac{1}{c} e^{n h_{\operatorname{top}}(f)}.
\end{align*}
In particular, 
$$
\lim\limits_{n\to +\infty}\frac{\card \{x\in S^2 \,|\, f^n(x)=x \}}{\exp\(n h_{\operatorname{top}}(f)\)} = \lim\limits_{n\to +\infty}\frac{\card \{x\in S^2 \,|\, f^n(x)=x \}}{(\deg f)^n } = 1.
$$
\end{cor}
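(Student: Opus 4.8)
The plan is to assemble the displayed chain of (in)equalities from three ingredients already established. First, the identity $h_{\operatorname{top}}(f)=\log(\deg f)$ from (\ref{eqTopEntropy}), together with (\ref{eqDegreeProduct}), gives $\exp(n h_{\operatorname{top}}(f))=(\deg f)^n$ for every $n$, which immediately accounts for the two equalities in the chain that involve the entropy. Second, since $f^n$ is again an expanding Thurston map by Remarks~\ref{rmExpanding} and $\deg(f^n)=(\deg f)^n$ by (\ref{eqDegreeProduct}), Theorem~\ref{thmNoFixedPts} applied to $f^n$ yields $\sum_{x=f^n(x)}\deg_{f^n}(x)=1+(\deg f)^n$, which is the middle equality; and $\card\{x\in S^2\,|\,f^n(x)=x\}\leq\sum_{x=f^n(x)}\deg_{f^n}(x)$ holds trivially because each local degree is at least $1$. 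Third, I would apply Corollary~\ref{corAsympRatioNoPrePerPts} to the constant sequence $m_n=0$; this is legitimate since $0<n$ for all $n\in\N$ and $n-m_n\to+\infty$. Here $S_n^0=\{x\,|\,f^n(x)=x\}$, so $\widetilde s_n^0=\card\{x\,|\,f^n(x)=x\}$, while $s_n^0=(\deg f)^n+1$ by Corollary~\ref{corNoPrePeriodicPts}; hence $\card\{x\in S^2\,|\,f^n(x)=x\}\big/\bigl((\deg f)^n+1\bigr)=\widetilde s_n^0/s_n^0\to 1$ as $n\to+\infty$.

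Next I would extract the two strict inequalities for a fixed $c\in(0,1)$. Since the ratio just displayed converges to $1>c$, there is $N_1\in\N$ with $\card\{x\,|\,f^n(x)=x\}>c\bigl((\deg f)^n+1\bigr)\geq c\,(\deg f)^n$ for all $n\geq N_1$. For the upper bound, $(\deg f)^n+1<c^{-1}(\deg f)^n$ is equivalent to $(\deg f)^n(c^{-1}-1)>1$, which holds for all $n$ beyond some $N_2$ because $\deg f\geq 2$ and $c^{-1}-1>0$. Setting $N=\max\{N_1,N_2\}$ then gives the entire chain for $n\geq N$.

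Finally, the ``in particular'' statement follows by sandwiching: for every $c\in(0,1)$ and every $n\geq N(c)$ the chain gives $c<\card\{x\,|\,f^n(x)=x\}/(\deg f)^n<c^{-1}$, so this ratio tends to $1$; and since $\exp(n h_{\operatorname{top}}(f))=(\deg f)^n$, the two limits in the statement coincide.

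I do not anticipate a genuine obstacle: essentially all the work is carried by Theorem~\ref{thmNoFixedPts} and by Corollary~\ref{corAsympRatioNoPrePerPts}. The only point that needs a little care is the bookkeeping — verifying that the constant sequence $m_n\equiv 0$ is admissible in Corollary~\ref{corAsympRatioNoPrePerPts} and correctly identifying $\widetilde s_n^0$ and $s_n^0$ with $\card\{x\,|\,f^n(x)=x\}$ and $(\deg f)^n+1$ in the displayed chain.
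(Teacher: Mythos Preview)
Your proposal is correct and follows essentially the same approach as the paper, which simply cites (\ref{eqTopEntropy}), Theorem~\ref{thmNoFixedPts}, and Corollary~\ref{corAsympRatioNoPrePerPts} with $m_n=0$; you have merely spelled out the bookkeeping that the paper leaves implicit.
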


Finally, we get the equidistribution of the periodic points with respect to the measure of maximal entropy $\mu_f$ as an immediate corollary.

\begin{proof}[Proof of Corollary~\ref{corWeakConvPerPts}]
We get (\ref{eqWeakConvPerPts1}) and (\ref{eqWeakConvPerPts2}) from Theorem~\ref{thmWeakConvPrePerPts} with $m_n=0$ for all $n\in\N$. Then (\ref{eqWeakConvPerPts3}) follows from (\ref{eqWeakConvPerPts2}) and Corollary~\ref{corAsympRatioNoPerPts}.
\end{proof}

\section{Expanding Thurston maps as factors of the left-shift}  \label{sctFactor}

M.~Bonk and D.~Meyer \cite{BM10} proved that for an expanding Thurs\-ton map $f$, the topological dynamical system $(S^2,f)$ is a factor of a certain classical topological dynamical system, namely, the left-shift on the one-sided infinite sequences of $\deg f$ symbols. The goal of this section is to generalize this result to the category of measure-preserving dynamical systems. The invariant measure for each measure-preserving dynamical system considered in this section is going to be the unique measure of maximal entropy of the corresponding system.

Let $X$ and $\widetilde{X}$ be topological spaces, and $f\:X\rightarrow X$ and $\widetilde{f}\:\widetilde{X}\rightarrow\widetilde{X}$ be continuous maps. We say that the topological dynamical system $(X,f)$ is a \defn{factor of the topological dynamical system} $(\widetilde{X},\widetilde{f})$ if there is a surjective continuous map $\varphi\:\widetilde X\rightarrow X$ such that $\varphi\circ\widetilde{f}=f\circ\varphi$. For measure-preserving dynamical systems $(X,g,\mu)$ and $(\widetilde{X},\widetilde{g},\widetilde{\mu})$ where $X$ and $\widetilde{X}$ are measure spaces, $g\:X\rightarrow X$ and $\widetilde{g}\:\widetilde{X}\rightarrow\widetilde{X}$ measurable maps, and $\mu\in \MMM(X,g)$ and $\widetilde{\mu}\in \MMM(\widetilde{X},\widetilde{g})$, we say that the measure-preserving dynamical system $(\widetilde{X},\widetilde{g},\widetilde{\mu})$ is a \defn{factor of the measure-preserving dynamical system} $(X,g,\mu)$ if there is a measurable map $\varphi\:\widetilde{X}\rightarrow X$ such that $\varphi\circ\widetilde{g}=g\circ\varphi$ and $\varphi_*\widetilde{\mu}=\mu$. Thus we get the following commutative diagram:
\begin{equation*}
    \xymatrix{
      \widetilde{X} \ar[r]^{\widetilde{f}} \ar@{->}[d]_\varphi &  \widetilde{X} \ar@{->}[d]^\varphi      \\
      X \ar[r]^{f} & X
    }
\end{equation*}

We recall a classical example of symbolic dynamical systems, namely $(J_k^\omega,\Sigma)$, where the \emph{alphabet} $J_k=\{0,1,\dots,k-1\}$ for some $k\in\N$, the \emph{set of infinite words} $J_k^\omega=\prod\limits_{i=1}^{+\infty} J_k$, and $\Sigma$ is the left-shift operator with
$$
\Sigma(i_1,i_2,\dots)=(i_2,i_3,\dots)
$$
for each $(i_i,i_2,\dots)\in J_k^\omega$. We equip $J_k^\omega$ with a metric $d$ such that the distance between two distinct infinite words $(i_1,i_2,\dots)$ and $(j_1,j_2,\dots)$ is $\frac{1}{m}$, where $m=\min\{n\in\N\,|\,i_n\neq j_n\}$.

Define the \emph{set of words of length $n$} as $J_k^n = \prod_{i=1}^n  J_k$, for $n\in\N$ and $J_k^0=\{\emptyset\}$ where $\emptyset$ is considered as a word of length $0$. Denote the \emph{set of finite words} by $J_k^*=\bigcup\limits_{n=0}^{+\infty} J_k^n$. Then the left-shift operator $\Sigma$ is defined on $J_k^*\setminus J_k^0$ naturally by 
$$
\Sigma(i_1,i_2,\dots,i_n)=(i_2,i_3,\dots,i_n).
$$

It is well-known that the dynamical system $(J_k^\omega,\Sigma)$ has a unique measure of maximal entropy $\mu_\Sigma$, which is characterized by the property that
$$
\mu_\Sigma \(C(j_1,j_2,\dots,j_n)\) = k^{-n},
$$
for $n\in\N$ and $j_1,j_2,\dots,j_n \in J_k$, where
\begin{equation}  \label{eqCylinder}
C(j_1,j_2,\dots,j_n) = \{ (i_1,i_2,\dots)\in J_k^\omega \,|\, i_1=j_1,i_2=j_2,\dots,i_n=j_n \}
\end{equation}
is the \defn{cylinder set} determined by $j_1,j_2,\dots,j_n$ (see for example, \cite[Section~4.4]{KH95}).

We will prove that for each expanding Thurston map $f$ with $\deg f=k$ and its measure of maximal entropy $\mu_f$, the measure-preserving dynamical system $(S^2,f,\mu_f)$ is a factor of the system $(J_k^\omega,\Sigma,\mu_\Sigma)$.

We now review a construction from \cite{BM10} for the convenience of the reader.

Let $f\:S^2\rightarrow S^2$ be an expanding Thurston map, and $\CC\subseteq S^2$ a Jordan curve with $\post f\subseteq \CC$. Consider the cell decompositions induced by the pair $(f,\CC)$. Let $k=\deg f$. Fix an arbitrary point $p\in \inte X_w^0$. Let $q_1,q_2,\dots,q_k$ be the distinct points in $f^{-1}(p)$. For $i=1,\dots,k$, we pick a continuous path $\alpha_i\:[0,1] \rightarrow S^2\setminus \post f$ with $\alpha_i(0)=p$ and $\alpha_i(1)=q_i$.

We construct $\psi \:J_k^* \rightarrow S^2$ inductively such that $\psi(I)\in f^{-n}(p)$, for each $n\in\N_0$ and $I\in J_k^n$, in the following way:

Define $\psi(\emptyset)=p$, and $\psi((i))=q_i$ for each $(i)\in J_k^1$. Suppose that $\psi$ has been defined for all $I\in \bigcup\limits_{j=0}^n  J_k^j$, where $n\in\N$. Now for each $(i_1,i_2,\dots,i_{n+1})\in J_k^{n+1}$, the point $\psi((i_1,i_2,\dots,i_{n}))\in f^{-n}(p)$ has already been defined. Since $f^n(\psi((i_1,i_2,\dots,i_{n})))=p$ and $f^n\:S^2\setminus f^{-n}(\post f) \rightarrow  S^2\setminus \post f$ is a covering map, the path $\alpha_{i_{n+1}}$ has a unique lift $\widetilde{\alpha}_{i_{n+1}}\:[0,1]\rightarrow S^2$ with $\widetilde{\alpha}_{i_{n+1}} (0) = \psi((i_1,i_2,\dots,i_n))$ and $f^n\circ \widetilde{\alpha}_{i_{n+1}} = \alpha_{i_{n+1}}$. We now define $\psi((i_1,i_2,\dots,i_{n+1}))=  \widetilde{\alpha}_{i_{n+1}}(1)$. Note that then
\begin{align*}
  &f^{n+1}(\psi((i_1,i_2,\dots,i_{n+1})))\\
= &f^{n+1}( \widetilde{\alpha}_{i_{n+1}}(1)) = f( \alpha_{i_{n+1}}(1)) = f(q_{i_{n+1}}) =p.
\end{align*}
Hence $ \psi((i_1,i_2,\dots,i_{n+1})) \in f^{-(n+1)}(p)$. This completes the inductive construction of $\psi$.

Note that $\psi\: J_k^* \rightarrow S^2$ induces a map $\widetilde\psi \: J_k^* \rightarrow \bigcup\limits_{n=0}^{+\infty} \X_w^n$ by mapping each $(i_1,i_2,\dots,i_n)\in J_k^n$ to the unique white $n$-tile $X_w^n \in \X_w^n$ containing $\psi((i_1,i_2,\dots,i_n)) \in f^{-n}(p)$. 

By the proof of Theorem~1.6 in Chapter 9 of \cite{BM10}, for each $n\in\N$, $\psi |_{J_k^n}\:J_k^n \rightarrow f^{-n}(p)$ is a bijection. Hence $\widetilde\psi|_{J^n_k} \: J_k^n \rightarrow \X_w^n$ for $n\in\N_0$, and $\widetilde\psi\: J_k^* \rightarrow \bigcup\limits_{n=0}^{+\infty} \X_w^n$ are also bijections. Moreover, by the proof of Theorem~1.6 in \cite{BM10}, we have that for each $(i_1,i_2,\dots)\in J_k^\omega$, $\{\psi((i_1,i_2,\dots,i_n))\}_{n\in\N}$ is a Cauchy sequence in $(S^2,d)$, for each visual metric $d$ for $f$. So as shown in the proof of Theorem~1.6 in \cite{BM10}, the map  $\varphi \:J_k^\omega \rightarrow S^2$ defined by
\begin{equation}
\varphi ((i_1,i_2,\dots))  = \lim\limits_{n\to+\infty} \psi((i_1,i_2,\dots,i_n))
\end{equation}
satisfies
\begin{enumerate}

\smallskip
\item $\varphi$ is continuous,

\smallskip
\item $f \circ \varphi = \varphi \circ \Sigma$,

\smallskip
\item $\varphi \:J_k^\omega \rightarrow S^2$ is surjective.
\end{enumerate}

So we can now reformulate Theorem~1.6 from \cite{BM10} in the following way.

\begin{theorem}[M.~Bonk \& D.~Meyer 2010] \label{thmBMfactor}  
Let $f\:S^2\rightarrow S^2$ be an expanding Thurston map with $\deg f = k$. Then $(S^2,f)$ is a factor of the topological dynamical system $(J_k^{\omega},\Sigma)$. More precisely, the surjective continuous map $\varphi\: J_k^\omega \rightarrow S^2$ defined above satisfies $f\circ \varphi =\varphi \circ \Sigma$.
\end{theorem}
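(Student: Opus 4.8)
The plan is to read the theorem off directly from the construction carried out just above: all that is needed is the assertion that the map $\varphi\colon J_k^\omega\to S^2$ produced there is a well-defined, continuous, surjective intertwiner of $\Sigma$ and $f$, and this is exactly the definition of a factor. So the proof is bookkeeping, and the only quantitative point is that $\varphi$ is well-defined, i.e.\ that for each $(i_1,i_2,\dots)\in J_k^\omega$ the sequence $\{\psi((i_1,\dots,i_n))\}_{n\in\N}$ converges in $(S^2,d)$ for a visual metric $d$. First I would establish this: the points $\psi((i_1,\dots,i_n))$ and $\psi((i_1,\dots,i_{n+1}))$ are the endpoints of the lift $\widetilde\alpha_{i_{n+1}}$ of one of the finitely many fixed paths $\alpha_1,\dots,\alpha_k$ under $f^n$, and using the expansion of $f$ together with the size bounds $\diam_d(X)\le C\Lambda^{-n}$ for $n$-tiles $X$ from Lemma~\ref{lmBMCellSizeBounds} one obtains an estimate $d(\psi((i_1,\dots,i_n)),\psi((i_1,\dots,i_{n+1})))\le C'\Lambda^{-n}$ with $C',\Lambda$ independent of the word. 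Summing this geometric series shows the sequence is Cauchy, hence convergent, and the same estimate, being uniform in the word, yields continuity of $\varphi$. This is precisely the argument in the proof of \cite[Theorem~1.6]{BM10}, which I would cite rather than reproduce.

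With $\varphi$ in hand, the relation $f\circ\varphi=\varphi\circ\Sigma$ follows by applying $f$ inside the limit defining $\varphi((i_1,i_2,\dots))$ and matching it with the limit defining $\varphi((i_2,i_3,\dots))$, again as in \cite{BM10}. For surjectivity I would argue that $\varphi(J_k^\omega)$ is the continuous image of the compact space $J_k^\omega$, hence a closed subset of $S^2$; by the uniform estimate above each point of $f^{-n}(p)=\psi(J_k^n)$ lies within $C'\Lambda^{-n}/(1-\Lambda^{-1})$ of $\varphi(J_k^\omega)$, and since $\bigcup_{n\ge 1}f^{-n}(p)$ is dense in $S^2$ by Lemma~\ref{lmPreImageDense} (whence $\bigcup_{n\ge N}f^{-n}(p)$ is dense for every $N$, as $f$ is open), it follows that $\varphi(J_k^\omega)$ is dense, and being closed it equals $S^2$.

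Finally, having verified that $\varphi$ is continuous, surjective, and satisfies $\varphi\circ\Sigma=f\circ\varphi$, the theorem is immediate from the definition of a factor of a topological dynamical system. The one genuine obstacle is the Cauchy and continuity estimate in the first step, which rests on the expansion of $f$ through Lemma~\ref{lmBMCellSizeBounds}; since it is carried out in \cite{BM10}, here it suffices to invoke it, and the present proof is just a translation of \cite[Theorem~1.6]{BM10} into the language of factors.
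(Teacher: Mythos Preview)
Your proposal is correct and matches the paper's treatment: the paper does not give an independent proof of this theorem but simply records, immediately before the statement, that the construction of $\varphi$ and the verification of properties (1) continuity, (2) $f\circ\varphi=\varphi\circ\Sigma$, and (3) surjectivity are all ``as shown in the proof of Theorem~1.6 in \cite{BM10}.'' Your outline fills in a bit more detail on surjectivity via Lemma~\ref{lmPreImageDense}, but the approach is the same.
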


We will strengthen Theorem~\ref{thmBMfactor} in the following theorem.

\begin{theorem} \label{thmLfactor}
Let $f\:S^2\rightarrow S^2$ be an expanding Thurston map with $\deg f = k$. Then $(S^2,f,\mu_f)$ is a factor of the measure-preserving dynamical system $(J_k^{\omega},\Sigma,\eta_\Sigma)$, where $\mu_f$ and $\eta_\Sigma$ are the unique measures of maximal entropy of $(S^2,f)$ and $(J_k^\omega,\Sigma)$, respectively. More precisely, the surjective continuous map $\varphi\: J_k^\omega \rightarrow S^2$ defined above satisfies $f\circ \varphi =\varphi \circ \Sigma$ and $\phi_*\eta_\Sigma=\mu_f$.
\end{theorem}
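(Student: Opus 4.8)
The plan is to deduce the theorem from Theorem~\ref{thmWeakConv} together with the construction of $\varphi$ recalled just before Theorem~\ref{thmBMfactor}. First I would record the easy part: by Theorem~\ref{thmBMfactor} the map $\varphi\:J_k^\omega\rightarrow S^2$ is continuous and surjective with $f\circ\varphi=\varphi\circ\Sigma$, so $\varphi_*\eta_\Sigma\in\PPP(S^2)$ and $f_*(\varphi_*\eta_\Sigma)=(f\circ\varphi)_*\eta_\Sigma=(\varphi\circ\Sigma)_*\eta_\Sigma=\varphi_*\eta_\Sigma$; thus $\varphi_*\eta_\Sigma$ is an $f$-invariant Borel probability measure, and since $\mu_f$ is the \emph{unique} measure of maximal entropy of $f$, everything reduces to identifying $\varphi_*\eta_\Sigma$ with $\mu_f$. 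Because Theorem~\ref{thmWeakConv} is stated only for an $f^{n_0}$-invariant Jordan curve, I would at this point arrange that the curve $\CC$ used in the construction of $\varphi$ is $f^{n_0}$-invariant for some $n_0\in\N$, which is legitimate by Corollary~\ref{corCexists} (if one insists on an arbitrary $\CC$ with $\post f\subseteq\CC$, a preliminary remark is needed that the pushforward measure is independent of the choices of $\CC$, of $p$, and of the paths $\alpha_i$; this follows from the argument below, since the limiting measure turns out to be $\mu_f$ in every case).

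The heart of the argument is to realize $\varphi_*\eta_\Sigma$ as a limit of the measures appearing in Theorem~\ref{thmWeakConv}. For each $i\in\N_0$ and each word $I\in J_k^i$ choose an infinite word $w_I\in J_k^\omega$ having $I$ as a prefix, e.g. $w_I=(I,0,0,\dots)$, so that $w_I\in C(I)$. Since the atomic measures $\frac{1}{k^i}\sum_{I\in J_k^i}\delta_{w_I}$ assign mass $k^{-\abs{J}}$ to every cylinder $C(J)$ with $\abs{J}\leq i$, exactly as $\eta_\Sigma$ does by its characterization, a routine uniform-continuity estimate gives $\frac{1}{k^i}\sum_{I\in J_k^i}\delta_{w_I}\stackrel{w^*}{\longrightarrow}\eta_\Sigma$. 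Next I would extract from the proof of Theorem~\ref{thmBMfactor} (the Cauchy estimate for $\{\psi((i_1,\dots,i_n))\}_n$, combined with Lemma~\ref{lmBMCellSizeBounds}) a constant $c>0$ and the expansion factor $\Lambda>1$ of the chosen visual metric $d$ such that $d(\psi(I\cdot a),\psi(I))\leq c\Lambda^{-\abs{I}}$ for every finite word $I$ and symbol $a\in J_k$; summing the geometric series yields $d(\varphi(w_I),\psi(I))\leq\frac{c}{\Lambda-1}\Lambda^{-i}$ for $I\in J_k^i$. Since $\psi(I)$ lies in the white $i$-tile $\widetilde\psi(I)$ and $\widetilde\psi|_{J_k^i}\:J_k^i\rightarrow\X^i_w(f,\CC)$ is a bijection, setting $\alpha_i=\frac{c}{\Lambda-1}\Lambda^{-i}\to 0$ and $\beta_i(\widetilde\psi(I))=\varphi(w_I)$ produces functions $\beta_i$ mapping each white $i$-tile $X^i$ into $N_d^{\alpha_i}(X^i)$, which is precisely the data required in Theorem~\ref{thmWeakConv}.

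It then remains to assemble the pieces. By Theorem~\ref{thmWeakConv} the measures $\mu_i=\frac{1}{k^i}\sum_{X^i\in\X^i_w(f,\CC)}\delta_{\beta_i(X^i)}$ converge weak$^*$ to $\mu_f$; but by the definition of $\beta_i$ one has $\mu_i=\frac{1}{k^i}\sum_{I\in J_k^i}\delta_{\varphi(w_I)}=\varphi_*\bigl(\frac{1}{k^i}\sum_{I\in J_k^i}\delta_{w_I}\bigr)$, so Lemma~\ref{lmPushforwardConv} and the convergence $\frac{1}{k^i}\sum_{I}\delta_{w_I}\stackrel{w^*}{\longrightarrow}\eta_\Sigma$ give $\mu_i\stackrel{w^*}{\longrightarrow}\varphi_*\eta_\Sigma$. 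Uniqueness of weak$^*$ limits forces $\varphi_*\eta_\Sigma=\mu_f$, and together with Theorem~\ref{thmBMfactor} this shows that $(S^2,f,\mu_f)$ is a factor of $(J_k^\omega,\Sigma,\eta_\Sigma)$. The step I expect to be the main obstacle is the quantitative estimate $d(\varphi(w_I),\psi(I))\lesssim\Lambda^{-i}$: one must make effective the geometric decay of the successive jumps $d(\psi(I\cdot a),\psi(I))$ that is implicit in Bonk and Meyer's proof of Theorem~\ref{thmBMfactor}, and match it against the tile-size bounds of Lemma~\ref{lmBMCellSizeBounds}. The remaining ingredients—that $\frac{1}{k^i}\sum_{I}\delta_{w_I}\to\eta_\Sigma$, and (if needed) the independence of $\varphi_*\eta_\Sigma$ from the choices in the construction—are comparatively routine.
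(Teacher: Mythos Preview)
Your proposal is correct and follows essentially the same route as the paper's proof: define atomic measures on $J_k^\omega$ by choosing one infinite word per cylinder of depth $i$, show these converge weak$^*$ to $\eta_\Sigma$, push forward by $\varphi$, verify via the geometric decay $d(\psi(I\cdot a),\psi(I))\lesssim\Lambda^{-\abs{I}}$ that the resulting points fit the hypotheses of Theorem~\ref{thmWeakConv}, and invoke Lemma~\ref{lmPushforwardConv} together with uniqueness of weak$^*$ limits. The paper obtains the decay estimate by citing \cite[Lemma~8.11]{BM10} (diameter bounds for lifts of paths) rather than Lemma~\ref{lmBMCellSizeBounds} directly, and it does not pause over the $f^{n_0}$-invariance of $\CC$ as you do; otherwise the arguments coincide.
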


\begin{proof}
Let $\CC \subseteq S^2$ be a Jordan curve containing $\post f$. Let $d$ be a visual metric on $S^2$ for $f$ with an expansion factor $\Lambda>1$. Note that $\Lambda$ can be chosen to depend only on $f,d$, and $\CC$. Consider the cell decompositions induced by $(f,\CC)$.

By Theorem~\ref{thmBMfactor}, it suffices to prove that $\phi_*\nu=\mu_f$.

For each $n \in\N$, we fix a function $\widetilde\beta_n\: J_k^n\rightarrow J_k^\omega$ which maps each $(i_1,i_2,\dots,i_n)\in J_k^n$ to $(i_1,i_2,\dots,i_n,i_{n+1},\dots)\in J_k^\omega$, for some arbitrarily chosen $i_{n+1},i_{n+2},\dots \in J_k$ depending on $i_1,i_2,\dots, i_n$. In other words, $\widetilde\beta_n$ extends a finite word of length $n$ to an arbitrary infinite word. 

Define $\beta_n = \varphi \circ \widetilde\beta_n \circ \widetilde\psi^{-1}$, for each $n\in \N$, where $\widetilde\psi$ is defined earlier in this section.

\smallskip

We claim that the maps $\beta_n \:\X_w^n \rightarrow S^2$ with $n\in\N$ satisfy the hypothesis for $\beta_n$ in Theorem~\ref{thmWeakConv}, namely,
$$
\max \{ d(\beta_n(X_w^n), X_w^n)  \,|\, X_w^n \in \X_w^n    \} \longrightarrow 0 \text{ as } n\longrightarrow +\infty.
$$

Indeed, by the construction of $\varphi,\widetilde\beta_n$, $\psi$ and $\widetilde\psi$ above, we have that $\beta_n$ maps a white $n$-tile $X_w^n$ to the limit of a Cauchy sequence 
\begin{equation*}
\(\psi((j_1,j_2,\dots,j_m)) \)_{m\in\N}
\end{equation*}
such that $\psi((j_1,j_2,\dots,j_n))\in X_w^n$. Since for each $m\in\N$,  the points $\psi((j_1,j_2,\dots,j_m))$ and $\psi((j_1,j_2,\dots,j_{m+1}))$ are joined by a lift of one of the paths $\alpha_1,\alpha_2,\dots,\alpha_k$ (defined above) by $f^m$, by Lemma~8.11 in \cite{BM10}, we have that 
$$
d\(\psi((j_1,j_2,\dots,j_m)),\psi((j_1,j_2,\dots,j_{m+1}))\) \leq C \Lambda^{-m},
$$
for all $m\in\N$, where $C>0$ and $\Lambda >1$ are constants depending only on $f,\CC$, and $d$, in particular, independent of $m$ and $(j_1,j_2,\dots)\in J_k^\omega$. So $d(\beta_n(X^n_w),X^n_w) \leq C\frac{\Lambda^n}{1-\Lambda}$ for each $n\in\N$ and each $X^n_w\in\X^n_w$. The above claim follows.

\smallskip

For $i\in\N$, define
$$
\eta_i= \frac{1}{k^i} \sum\limits_{I\in J_k^i} \delta_{\widetilde\beta_i(I)}.
$$
Observe that for all $n\in\N$ and $m\in\N$ with $m\geq n$, and each $(i_1,i_2,\dots,i_n)\in J_k^n$, we have
\begin{equation*}
\mu_m(C(i_1,i_2,\dots,i_n)) = \mu_\Sigma(C(i_1,i_2,\dots,i_n)),
\end{equation*}
where $C(i_1,i_2,\dots,i_n)$ is defined in (\ref{eqCylinder}). So by the uniform continuity of each continuous function on $J_k^\omega$, it is easy to see that
\begin{equation} \label{eqConvLS}
\eta_i \stackrel{w^*}{\longrightarrow} \eta_\Sigma \text{ as } i\longrightarrow +\infty.
\end{equation}

Note that since $\widetilde\psi|_{J^n_k} \: J_k^n \rightarrow \X_w^n$ is a bijection for each $n\in\N_0$, we have for each $i\in\N$,
\begin{equation*}
\varphi_* \eta_i  
= \frac{1}{k^i} \sum\limits_{I\in J_k^i} \delta_{\varphi \circ\widetilde\beta_i(I)}
= \frac{1}{k^i} \sum\limits_{X^i\in \X_w^i} \delta_{\varphi \circ \widetilde\beta_i \circ \widetilde\psi^{-1}(X^i)}
= \frac{1}{k^i} \sum\limits_{X^i\in \X_w^i} \delta_{\beta_i(X^i)}.
\end{equation*}

Hence, by Theorem~\ref{thmWeakConv},
\begin{equation} \label{eqConvPushForwd}
\varphi_*\eta_i \stackrel{w^*}{\longrightarrow} \mu_f \text{ as } i\longrightarrow +\infty.
\end{equation}

Therefore, by (\ref{eqConvLS}), (\ref{eqConvPushForwd}), and Lemma~\ref{lmPushforwardConv}, we can conclude that $\phi_*\eta_\Sigma=\mu_f$.
\end{proof}

\section{A random iteration algorithm for producing the measure of maximal entropy}  \label{sctIteration}
In this section, we follow the idea of \cite{HT03} to prove that for each $p\in S^2$, the measure of maximal entropy $\mu_f$ of an expanding Thurston map $f$ is almost surely the limit of
$$
\frac1n \sum\limits_{i=0}^{n-1} \delta_{q_i}
$$
as $n \longrightarrow +\infty$ in the weak* topology, where $q_0=p$, and $q_i$ is one of the points $x$ in $f^{-1}(q_{i-1})$, chosen with probability $\frac{\deg_f(x)}{\deg f}$, for each $i\in\N$.

To give a more precise formulation, we will use the language of Markov process from the probability theory (see, for example, \cite{Du10} for an introduction).

\smallskip

Let $(X,d)$ be a compact metric space. Equip the space $\PPP(X)$ of probability measures with the weak$^*$ topology. A continuous map $X\rightarrow \PPP(X)$ assigning to each $x\in X$ a measure $\mu_x$ defines a \defn{random walk} on $X$. We define the corresponding \defn{Markov operator} $Q\:\CCC(X)\rightarrow\CCC(X)$ by
\begin{equation}
Q\phi(x)=\int \! \phi(y) \, \mathrm{d} \mu_x(y).
\end{equation}
Let $Q^*$ be the adjoint operator of $Q$, i.e., for each $\phi \in\CCC(X)$ and $\rho\in\PPP(X)$,
\begin{equation}
\int \! Q\phi \,\mathrm{d} \rho = \int \! \phi \, \mathrm{d} (Q^*\rho) .
\end{equation}

Consider a stochastic process $(\Omega,\mathcal{F},P)$, where
\begin{enumerate}

\smallskip
\item $\Omega = \{(\omega_0,\omega_1,\dots) \,|\, \omega_i\in X, i\in\N_0  \} =\prod\limits_{i=0}^{+\infty} X$, equipped with the product topology,

\smallskip
\item $\mathcal{F}$ is the Borel $\sigma$-algebra on $\Omega$,

\smallskip
\item $P\in\PPP(\Omega)$.

\end{enumerate}
This process is a \defn{Markov process with transition probabilities $\{\mu_x\}_{x\in X}$} if
\begin{equation}
P\{\omega_{n+1} \in A \,|\, \omega_0=z_0,\omega_1=z_1, \dots, \omega_n=z_n \} = \mu_{z_n}(A)
\end{equation}
for all $n\in\N_0$, Borel subsets $A\subseteq X$, and $z_0,z_1,\dots,z_n\in X$.

The transition probabilities $\{\mu_x\}_{x\in X}$ are determined by the operator $Q$ and so we can speak of a \defn{Markov process determined by $Q$}.

\smallskip

Let $f\:S^2\rightarrow S^2$ be an expanding Thurston map with $\deg f = k$. The continuous map $S^2 \rightarrow \PPP(S^2)$ assigning to each $x\in S^2$ the probability measure
\begin{equation}
\mu_x= \frac1k\sum\limits_{y\in f^{-1}(x)}  \deg_f(y) \delta_y
\end{equation}
induces the Markov operator $Q\:\CCC(S^2)\rightarrow\CCC(S^2)$ which satisfies
\begin{equation}  \label{eqQ}
Q\phi(x)=\frac1k\sum\limits_{y\in f^{-1}(x)}  \deg_f(y) \phi(y)
\end{equation}
for all $\phi\in\CCC(S^2)$ and $x\in S^2$. To show $Q$ is well-defined, we need to prove $Q\phi(x)$ is continuous in $x\in S^2$ for each $\phi\in \CCC(S^2)$. Indeed, by fixing an arbitrary Jordan curve $\CC\subseteq S^2$ containing $\post f$, we know for each $x$ in the white $0$-tile $X^0_w$, 
$$
Q\phi(x)=\frac1k\sum\limits_{X\in X^1_w}\phi(y_X), 
$$
where $y_X$ is the unique point contained in the white $1$-tile $X$ with the property that $f(y_X)=x$. If we move $x$ around continuously within $X^0_w$, then each $y_X$ moves around continuously within $X$. Thus $Q\phi(x)$ restricted to $X^0_w$ is continuous in $x$. Similarly, $Q\phi(x)$ restricted to $X^0_b$ is also continuous. Hence $Q\phi(x)$ is continuous in $x\in S^2$.

Fix an arbitrary $z\in S^2$. Then there exists a unique Markov process $(\Omega,\mathcal{F},P_z)$ determined by $Q$ with
\begin{enumerate}

\smallskip
\item $\Omega = \prod\limits_{i=0}^{+\infty} S^2$, equipped with the product topology,

\smallskip
\item $\mathcal{F}$ being the Borel $\sigma$-algebra on $\Omega$,

\smallskip
\item $P_z$ being a Borel probability measure on $\Omega$ satisfying
\begin{equation*}
P_z\{\omega_{n+1} \in A \,|\, \omega_0=z,\omega_1=z_1, \dots, \omega_n=z_n \} = \mu_{\omega_n}(A)
\end{equation*}
for all $n\in\N$, Borel subset $A\subseteq S^2$, and $z_1,z_2,\dots,z_n \in S^2$.
\end{enumerate}
The existence and uniqueness of $P_z$ follows from \cite[Theorem~1.4.2]{Lo77}. Since the Markov process $(\Omega,\mathcal{F},P_z)$ is determined by $f$ as well, we will also call $(\Omega,\mathcal{F},P_z)$ the \defn{Markov process determined by $f$}.

Now we can formulate our main theorem for this section.

\begin{theorem} \label{thmRandomIntConv}
Let $f\:S^2\rightarrow S^2$ be an expanding Thurston map with its measure of maximal entropy $\mu_f$. Let $(\Omega,\mathcal{F},P_z)$ be the  Markov process determined by $f$. Then for each $z\in S^2$, we have that $P_z$-almost surely,
\begin{equation}
\frac1n\sum\limits_{j=0}^{n-1} \delta_{\omega_j} \stackrel{w^*}{\longrightarrow} \mu_f \text{ as } n\longrightarrow +\infty.
\end{equation}
\end{theorem}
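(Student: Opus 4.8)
The plan is to combine the equidistribution of weighted preimages (Theorem~\ref{thmWeakConvPreImg}) with a martingale argument for Markov chains. The key structural fact is that $\mu_f$ is the \emph{unique} $Q^*$-invariant Borel probability measure on $S^2$; granting this, the conclusion follows from the relation $\int\! \phi\,\mathrm{d}E_n - \int\! Q\phi\,\mathrm{d}E_n\to 0$ (for $E_n := \frac1n\sum_{j=0}^{n-1}\delta_{\omega_j}$) together with a weak$^*$ compactness argument. The main obstacle is concentrated entirely in Step~1 below, which is where the equidistribution of preimages enters; Steps~2 and~3 are soft Markov-chain ergodic theory that needs no quantitative rate.

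\emph{Step 1: uniqueness of the stationary measure.} Using \eqref{eqLocalDegreeProduct} and \eqref{eqQ} one checks by induction that, for every $\phi\in\CCC(S^2)$ and $x\in S^2$,
\[
Q^n\phi(x)=\frac{1}{(\deg f)^n}\sum\limits_{q\in f^{-n}(x)}\deg_{f^n}(q)\,\phi(q)=\int\! \phi\,\mathrm{d}\nu_n^x,
\]
where $\nu_n^x$ denotes the measure $\nu_n$ of Theorem~\ref{thmWeakConvPreImg} with $p=x$. Hence for any $\nu\in\PPP(S^2)$,
\[
\int\! \phi\,\mathrm{d}\big((Q^*)^n\nu\big)=\int\! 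Q^n\phi\,\mathrm{d}\nu=\int\!\Big(\int\! \phi\,\mathrm{d}\nu_n^x\Big)\,\mathrm{d}\nu(x)\longrightarrow \int\! \phi\,\mathrm{d}\mu_f
\]
as $n\to+\infty$, by Theorem~\ref{thmWeakConvPreImg} and dominated convergence (the inner integral is bounded by $\Norm{\phi}$). Thus $(Q^*)^n\nu\stackrel{w^*}{\longrightarrow}\mu_f$ for every $\nu\in\PPP(S^2)$; in particular any $Q^*$-invariant probability measure equals $\mu_f$, and since weak$^*$ cluster points of $\frac1N\sum_{n=0}^{N-1}(Q^*)^n\nu$ are $Q^*$-invariant (the Krylov--Bogolyubov argument applies because $Q$ maps $\CCC(S^2)$ into $\CCC(S^2)$), such a measure exists and must be $\mu_f$.

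\emph{Step 2 (martingale law of large numbers) and Step 3 (conclusion).} Fix $\phi\in\CCC(S^2)$ and $z\in S^2$, and set $\mathcal{F}_n=\sigma(\omega_0,\dots,\omega_n)$. By the Markov property together with \eqref{eqQ}, $\E_{P_z}[\phi(\omega_j)\mid\mathcal{F}_{j-1}]=\int\! \phi\,\mathrm{d}\mu_{\omega_{j-1}}=(Q\phi)(\omega_{j-1})$, so $\xi_j:=\phi(\omega_j)-(Q\phi)(\omega_{j-1})$, $j\ge1$, is a bounded martingale difference sequence, $\abs{\xi_j}\le 2\Norm{\phi}$. By the strong law for such sequences (e.g.\ Azuma--Hoeffding together with Borel--Cantelli), $\frac1n\sum_{j=1}^n\xi_j\to0$ $P_z$-almost surely; since $\frac1n(\phi(\omega_n)-\phi(\omega_0))\to0$, this gives $\int\! \phi\,\mathrm{d}E_n-\int\!(Q\phi)\,\mathrm{d}E_n\to0$ $P_z$-almost surely. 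Now let $\{\phi_k\}_{k\in\N}$ be dense in $\CCC(S^2)$; intersecting the full-measure events above over all $k$, we have $P_z$-almost surely that $\int\! \phi_k\,\mathrm{d}E_n-\int\!(Q\phi_k)\,\mathrm{d}E_n\to0$ for every $k$. On this event, since $\PPP(S^2)$ is weak$^*$-compact and metrizable, it suffices to show that every weak$^*$ subsequential limit $\rho$ of $\{E_n\}$ is $\mu_f$; for such $\rho$, using $Q\phi_k\in\CCC(S^2)$ and passing to the limit yields $\int\! \phi_k\,\mathrm{d}\rho=\int\!(Q\phi_k)\,\mathrm{d}\rho$ for all $k$, hence $\rho$ is $Q^*$-invariant, so $\rho=\mu_f$ by Step~1. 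Therefore $E_n\stackrel{w^*}{\longrightarrow}\mu_f$ $P_z$-almost surely.

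The hard part, as noted, is Step~1: it is where the full strength of the equidistribution of weighted preimages is consumed, and it is the only place the geometry of expanding Thurston maps enters. The remaining care-points are routine: the rigorous construction of $(\Omega,\mathcal{F},P_z)$ and the conditional-expectation identity in Step~2, plus the (already-verified) Feller property of $Q$. As an aside, Birkhoff's theorem applied to the shift-invariant measure $\int P_z\,\mathrm{d}\mu_f(z)$ on $\Omega$ yields the conclusion for $\mu_f$-a.e.\ starting point; upgrading to \emph{every} starting point is exactly what the martingale argument above accomplishes.
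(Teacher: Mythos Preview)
Your proof is correct and follows the same overall architecture as the paper: reduce to uniqueness of the $Q^*$-invariant probability measure (your Step~1, the paper's Lemma~\ref{lmQInvMeasure}), then invoke a general Markov-chain result. The paper dispatches the second half by citing the Furstenberg--Kifer theorem (Theorem~\ref{thmFK83}) as a black box, whereas you supply a self-contained martingale-difference argument (Steps~2--3) that is essentially a proof of the relevant special case of Furstenberg--Kifer; this costs a few extra lines but avoids an external reference. In Step~1 the paper goes slightly further and proves that $Q^n\phi(x)\to\int\phi\,\mathrm{d}\mu_f$ \emph{uniformly} in $x$ (using the tile structure and the visual metric), which it later records as Corollary~\ref{corConvPullbackMeasure}; for the purpose at hand your dominated-convergence argument is sufficient and a bit shorter, though you lose that uniform statement as a byproduct.
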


In other words, if we fix a point $z\in S^2$ and set it as the first point in an infinite sequence, and choose each of the following points randomly according to the Markov process determined by $f$, then $P_z$-almost surely, the probability measure equally distributed on the first $n$ points in the sequence converges in the weak$^*$ topology to $\mu_f$ as $n\longrightarrow +\infty$. 

In order to prove Theorem~\ref{thmRandomIntConv}, we need a theorem by H.~Furstenberg and Y.~Kifer from \cite{FK83}.

\begin{theorem}[H.~Furstenberg \& Y.~Kifer 1983]  \label{thmFK83}
Let $\Omega=\{\omega_n \in X\,|\, n\in\N_0\}$ be the Markov process determined by the operator $Q$. Assume that there exists a unique Borel probability measure $\mu$ that is invariant under the adjoint operator $Q^*$ on $\PPP(X)$. Then for each $\omega_0 \in X$, we have that $P_{\omega_0}$-almost surely,
\begin{equation}
\frac1n\sum\limits_{j=0}^{n-1} \delta_{\omega_j} \stackrel{w^*}{\longrightarrow} \mu \text{ as } n\longrightarrow +\infty.
\end{equation}
\end{theorem}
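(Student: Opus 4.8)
The plan is to deduce Theorem~\ref{thmRandomIntConv} from Theorem~\ref{thmFK83} by verifying its single hypothesis: that the adjoint Markov operator $Q^*$ of the operator $Q$ defined in (\ref{eqQ}) has a \emph{unique} $Q^*$-invariant Borel probability measure on $\PPP(S^2)$, and by identifying that unique measure with $\mu_f$. Once this is done, Theorem~\ref{thmFK83} applied with $X=S^2$, the operator $Q$ from (\ref{eqQ}), and $\mu=\mu_f$, gives exactly the conclusion of Theorem~\ref{thmRandomIntConv} for every $z\in S^2$. So the entire content of the proof is the claim
$$
\PPP(S^2)\ni\nu \text{ is } Q^*\text{-invariant} \iff \nu=\mu_f.
$$

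First I would unwind the definition of $Q^*$. For $\nu\in\PPP(S^2)$ and $\phi\in\CCC(S^2)$, using (\ref{eqQ}),
$$
\int\! Q\phi\,\mathrm{d}\nu = \int\! \frac1k\sum\limits_{y\in f^{-1}(x)}\deg_f(y)\phi(y)\,\mathrm{d}\nu(x) = \int\!\phi\,\mathrm{d}(Q^*\nu),
$$
where $k=\deg f$. The key observation is that the right-hand side can be rewritten as $\frac1k\int\!\phi\circ f\,\mathrm{d}(f_*\text{-type pullback})$; more precisely, one shows directly that $Q^*\nu$ is the Borel measure characterized by
$$
\int\!\phi\,\mathrm{d}(Q^*\nu) = \frac{1}{k}\int\!\Big(\sum\limits_{y\in f^{-1}(x)}\deg_f(y)\phi(y)\Big)\mathrm{d}\nu(x),
$$
and that this is precisely the transfer (Ruelle--Perron--Frobenius-type) operator on measures dual to the normalized pushforward. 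In particular, $Q^*\nu=\nu$ says that $\nu$ is balanced: for every Borel set, the $\nu$-mass distributes evenly by local degree under preimages. I would then cut the argument into two halves.

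\emph{Existence and identification.} I claim $Q^*\mu_f=\mu_f$. By Theorem~20.9 of \cite{BM10} (already invoked in the excerpt), $\mu_f$ is the unique measure of maximal entropy and satisfies the combinatorial characterization of Theorem~\ref{thmBMCharactMOME}: for an $f$-invariant Jordan curve $\CC$ and each $n$-tile $X^n$, $\mu_f(X^n)=w(\deg f)^{-n}$ or $b(\deg f)^{-n}$ according to color. Because $f$ maps each white (resp.\ black) $(n+1)$-tile homeomorphically onto a white (resp.\ black) $n$-tile, and the preimage of an $n$-tile under $f$ is exactly the union of the $(n+1)$-tiles mapping onto it (Proposition~\ref{propCellDecomp}(ii)), a direct count — of exactly the same type as in (\ref{eqMu_iX^kW}) and (\ref{eqMu_iX^kB}), together with the identity $\deg_{f}(\cdot)$ summing correctly over a flower — shows that $Q^*\mu_f$ assigns to each $n$-tile the same value prescribed by Theorem~\ref{thmBMCharactMOME}. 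Since these values on all tiles, over all $n$, determine a Borel probability measure uniquely (the tiles generate the Borel $\sigma$-algebra, and by the argument in the proof of Proposition~\ref{propWeakConv} the boundaries are null), we get $Q^*\mu_f=\mu_f$. Alternatively and more cleanly, one can verify $Q^*\mu_f=\mu_f$ by a change-of-variables computation using that $\mu_f$ is $f$-invariant (Theorem~20.9 of \cite{BM10}) together with the fact that $f$ is a branched covering of degree $k$: for $\phi\in\CCC(S^2)$, $\int\! Q\phi\,\mathrm{d}\mu_f = \int\!\phi\,\mathrm{d}\mu_f$ because summing $\phi$ over preimages weighted by local degree and dividing by $k$ is the adjoint of the operator $\phi\mapsto\phi\circ f$ under which $\mu_f$ is invariant.

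\emph{Uniqueness.} Suppose $\nu\in\PPP(S^2)$ satisfies $Q^*\nu=\nu$. Iterating, $\nu=(Q^*)^i\nu$ for all $i$, and unwinding the iterate one finds that for every $\phi\in\CCC(S^2)$,
$$
\int\!\phi\,\mathrm{d}\nu = \int\!\Big(\frac{1}{k^i}\sum\limits_{y\in f^{-i}(x)}\deg_{f^i}(y)\phi(y)\Big)\mathrm{d}\nu(x),
$$
using (\ref{eqLocalDegreeProduct}). By Theorem~\ref{thmWeakConvPreImg} (already proved in the excerpt), for each fixed $x\in S^2$ the inner measure $\nu_i^{(x)}=\frac{1}{k^i}\sum_{y\in f^{-i}(x)}\deg_{f^i}(y)\delta_y$ converges weak$^*$ to $\mu_f$ as $i\to+\infty$, so $\int\!\phi\,\mathrm{d}\nu_i^{(x)}\to\int\!\phi\,\mathrm{d}\mu_f$ pointwise in $x$; moreover $|\int\!\phi\,\mathrm{d}\nu_i^{(x)}|\le\Norm{\phi}_\infty$, so by dominated convergence
$$
\int\!\phi\,\mathrm{d}\nu = \lim\limits_{i\to+\infty}\int\!\Big(\int\!\phi\,\mathrm{d}\nu_i^{(x)}\Big)\mathrm{d}\nu(x) = \int\!\Big(\int\!\phi\,\mathrm{d}\mu_f\Big)\mathrm{d}\nu(x) = \int\!\phi\,\mathrm{d}\mu_f.
$$
Since $\phi\in\CCC(S^2)$ was arbitrary, $\nu=\mu_f$. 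This establishes the uniqueness hypothesis of Theorem~\ref{thmFK83}, and Theorem~\ref{thmRandomIntConv} follows immediately by applying that theorem with $Q$ as in (\ref{eqQ}) and $\mu=\mu_f$, for each starting point $z\in S^2$.

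The main obstacle I anticipate is the bookkeeping in the identification step $Q^*\mu_f=\mu_f$: one must be careful that the weight $\deg_f(y)$ in (\ref{eqQ}) is the \emph{correct} normalization so that the dual operator is genuinely the adjoint of $\phi\mapsto\phi\circ f$ on $L^1(\mu_f)$ — this is exactly why $\mu_f$, rather than some other measure, is $Q^*$-invariant — and also that $Q$ is well-defined and continuous, which the excerpt has in fact already checked in the paragraph following (\ref{eqQ}). Given that verification and the already-proved Theorem~\ref{thmWeakConvPreImg}, the dominated-convergence argument for uniqueness is routine, and the whole proof is short.
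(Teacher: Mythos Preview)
You have misidentified the target. Theorem~\ref{thmFK83} is the Furstenberg--Kifer result, quoted from \cite{FK83} as a black box; the paper does not prove it, so there is no ``paper's own proof'' to compare against. What you have written is not a proof of Theorem~\ref{thmFK83} at all: you explicitly take it as given and instead verify its hypothesis in order to deduce Theorem~\ref{thmRandomIntConv}. That verification is exactly the content of Lemma~\ref{lmQInvMeasure}, which the paper \emph{does} prove.

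Comparing your argument to the paper's proof of Lemma~\ref{lmQInvMeasure}: the overall structure is the same (show $Q^*\mu_f=\mu_f$, then show any $Q^*$-invariant $\nu$ equals $\mu_f$), but the details differ in two places. For the invariance step, the paper gives a short argument using $Q^*\nu_n=\nu_{n+1}$ and Theorem~\ref{thmWeakConvPreImg}, while you propose a direct combinatorial check via Theorem~\ref{thmBMCharactMOME} (and, as an alternative, the duality with $\phi\mapsto\phi\circ f$); your first route requires an $f$-invariant $\CC$, which need not exist, so it does not work in general, but your alternative and the paper's argument both do. For uniqueness, the paper upgrades the pointwise convergence $Q^n\varphi(x)\to\int\varphi\,\mathrm{d}\mu_f$ to \emph{uniform} convergence in $x$ (by moving $x$ inside each $0$-tile and using Lemma~\ref{lmBMCellSizeBounds}) before integrating against $\nu$, whereas you use the pointwise convergence from Theorem~\ref{thmWeakConvPreImg} together with dominated convergence. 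Both are valid; your route is marginally shorter, while the paper's uniform-convergence claim is reused to prove Corollary~\ref{corConvPullbackMeasure}.
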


Theorem~\ref{thmRandomIntConv} follows immediately from Theorem~\ref{thmFK83} and the following lemma.

\begin{lemma}   \label{lmQInvMeasure}
Let $f\:S^2\rightarrow S^2$ be an expanding Thurston map. Then the unique measure of maximal entropy $\mu_f$ for $f$ is the only measure that is invariant under the adjoint operator $Q^*\:\PPP(S^2)\rightarrow\PPP(S^2)$ of $Q\:\CCC(S^2)\rightarrow\CCC(S^2)$, where $Q$ is defined in (\ref{eqQ}).
\end{lemma}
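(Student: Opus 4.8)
The plan is to identify the $n$-th iterate of the dual operator $Q^*$ applied to a point mass with the weighted preimage measure $\nu_n$ from Theorem~\ref{thmWeakConvPreImg}, and then let the already-established equidistribution of preimages do essentially all the work. Concretely, writing $k=\deg f$, I would first record that iterating the formula (\ref{eqQ}) for $Q$ and using the multiplicativity (\ref{eqLocalDegreeProduct}) of the local degree along orbits gives, for each $\phi\in\CCC(S^2)$, each $x\in S^2$, and each $n\in\N_0$,
\[
Q^n\phi(x)=\frac{1}{k^n}\sum_{q\in f^{-n}(x)}\deg_{f^n}(q)\,\phi(q);
\]
in other words $(Q^*)^n\delta_x=\nu_n$, where $\nu_n$ is exactly the measure defined in (\ref{eqDistrPreImg}) with base point $p=x$. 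Hence (\ref{eqWeakConvPreImgWithWeight}) of Theorem~\ref{thmWeakConvPreImg} says that $(Q^*)^n\delta_x\stackrel{w^*}{\longrightarrow}\mu_f$ for every $x\in S^2$, equivalently $\lim_{n\to+\infty}Q^n\phi(x)=\int\!\phi\,\mathrm{d}\mu_f$ pointwise in $x$ for every $\phi\in\CCC(S^2)$.

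Before using this I would also note two soft properties of $Q$. Since $Q$ is positivity-preserving and $Q\mathbf{1}=\frac1k\sum_{y\in f^{-1}(x)}\deg_f(y)=\mathbf{1}$ by (\ref{eqDeg=SumLocalDegree}), the adjoint $Q^*$ maps $\PPP(S^2)$ into itself; and since $Q$ sends $\CCC(S^2)$ into $\CCC(S^2)$ (as verified in the text preceding the lemma), $Q^*\:\MMM(S^2)\rightarrow\MMM(S^2)$ is continuous in the weak$^*$ topology, because $\int\!\phi\,\mathrm{d}(Q^*\rho_n)=\int\!(Q\phi)\,\mathrm{d}\rho_n$ and $Q\phi\in\CCC(S^2)$.

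With these in hand the two halves of the lemma follow quickly. For invariance of $\mu_f$: fixing any $x$, we have $(Q^*)^{n+1}\delta_x=Q^*\!\bigl((Q^*)^n\delta_x\bigr)\stackrel{w^*}{\longrightarrow}Q^*\mu_f$ by weak$^*$ continuity of $Q^*$, while also $(Q^*)^{n+1}\delta_x\stackrel{w^*}{\longrightarrow}\mu_f$, so uniqueness of weak$^*$ limits forces $Q^*\mu_f=\mu_f$. For uniqueness: if $\mu\in\PPP(S^2)$ satisfies $Q^*\mu=\mu$, then $\int\!\phi\,\mathrm{d}\mu=\int\!Q^n\phi\,\mathrm{d}\mu$ for all $n\in\N_0$ and $\phi\in\CCC(S^2)$; each $Q^n\phi$ is continuous hence Borel measurable, satisfies $\abs{Q^n\phi(x)}\le\sup_{y\in S^2}\abs{\phi(y)}$ by $Q\mathbf{1}=\mathbf{1}$ and positivity, and converges pointwise to the constant $\int\!\phi\,\mathrm{d}\mu_f$; so the dominated convergence theorem yields $\int\!\phi\,\mathrm{d}\mu=\lim_n\int\!Q^n\phi\,\mathrm{d}\mu=\int\!\phi\,\mathrm{d}\mu_f$ for every $\phi\in\CCC(S^2)$, whence $\mu=\mu_f$.

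I do not expect a genuine obstacle here: the substance is entirely contained in Theorem~\ref{thmWeakConvPreImg}, and what remains is bookkeeping. The only points needing a little care are the identity $(Q^*)^n\delta_x=\nu_n$, which rests on the chain rule for local degrees, and checking the (trivial) hypotheses of the dominated convergence theorem, namely measurability and uniform boundedness of the $Q^n\phi$. As an alternative to the weak$^*$-continuity argument for existence, one could instead invoke the Schauder--Tychonoff fixed point theorem for the continuous self-map $Q^*$ of the convex weak$^*$-compact set $\PPP(S^2)$ to produce a fixed point, which the uniqueness argument then identifies with $\mu_f$.
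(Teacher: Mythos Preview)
Your proof is correct and follows the same overall strategy as the paper's: identify $(Q^*)^n\delta_x$ with the weighted preimage measure $\nu_n$ of (\ref{eqDistrPreImg}), invoke Theorem~\ref{thmWeakConvPreImg} to get $Q^n\phi(x)\to\int\!\phi\,\mathrm{d}\mu_f$, and read off both invariance and uniqueness. The one genuine difference is in the uniqueness half. You pass the limit $\lim_n\int\!Q^n\phi\,\mathrm{d}\mu$ inside the integral via the dominated convergence theorem, using only the pointwise convergence supplied by Theorem~\ref{thmWeakConvPreImg} together with the uniform bound $\abs{Q^n\phi}\le\sup\abs{\phi}$. The paper instead upgrades the pointwise convergence to \emph{uniform} convergence of $Q^n\phi$ on $S^2$: fixing a Jordan curve $\CC\supseteq\post f$ and a visual metric, it observes that as $x$ varies within a $0$-tile each preimage under $f^n$ stays within a single $n$-tile of diameter $\leq C\Lambda^{-n}$ (Lemma~\ref{lmBMCellSizeBounds}), and then appeals to the uniform continuity of $\phi$. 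Your route is shorter and avoids the cell-decomposition machinery entirely; the paper's route yields the stronger uniform statement, which it immediately reuses to prove Corollary~\ref{corConvPullbackMeasure} (that $(Q^*)^n\mu\stackrel{w^*}{\longrightarrow}\mu_f$ for \emph{every} $\mu\in\PPP(S^2)$, not only fixed points of $Q^*$).
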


\begin{proof}
Let $k=\deg f$. Fix a Jordan curve $\CC\subseteq S^2$ with $\post f \subseteq \CC$. Let $d$ be a visual metric on $S^2$ for $f$ with an expansion factor $\Lambda>1$. Note that $\Lambda$ can be chosen to depend only on $f$ and $d$. Consider the cell decompositions induced by $(f,\CC)$.

Recall $\nu_n$ defined in (\ref{eqDistrPreImg}) for a fixed $p\in S^2$. Observe that $Q^*\nu_n=\nu_{n+1}$ for all $n\in\N_0$. By Theorem~\ref{thmWeakConvPreImg},
\begin{align*}
\int \! \varphi \, \mathrm{d} (Q^*\mu_f) & =  \int \! Q \varphi \, \mathrm{d} \mu_f = \lim\limits_{n\to+\infty} \int \! Q\varphi \, \mathrm{d} \nu_n = \lim\limits_{n\to+\infty} \int \!  \varphi \, \mathrm{d} Q^* \nu_n  \\
                                     &= \lim\limits_{n\to+\infty} \int \!  \varphi \, \mathrm{d} \nu_{n+1} = \int \!  \varphi \, \mathrm{d} \mu_f .
\end{align*}
Thus $Q^*\mu_f=\mu_f$, and so $\mu_f$ is indeed invariant under $Q^*$.

By (\ref{eqQ}), for each $x\in S^2$, each $n\in\N$ and each $\varphi\in\CCC(S^2)$, we have
\begin{equation}   \label{eqQnPhi}
Q^n\varphi(x)= \frac{1}{k^n}\sum\limits_{y\in f^{-n}(x)} \deg_{f^n}(y) \varphi(y).
\end{equation}
So by Theorem~\ref{thmWeakConvPreImg}, we get
\begin{equation}  \label{eqQConvPtw}
Q^n\varphi(x) -\int \! \varphi \, \mathrm{d} \mu_f \longrightarrow \ 0 \text{ as } n\longrightarrow +\infty.
\end{equation}

\smallskip

We claim that the convergence in (\ref{eqQConvPtw}) is uniform in $x$.

To prove the claim, we first assume that $x$ is in the (closed) white $0$-tile $X_w^0$. If we move $x$ around continuously within $X_w^0$, then each preimage of $x$ under $f^n$ moves around continuously within one of the white $n$-tiles $X_w^n\in\X_w^n$, for each $n\in\N$. By Lemma~\ref{lmBMCellSizeBounds}, there exists a constant $C\geq 1$ depending only on $f,\CC$, and $d$ such that $\diam_d(X^n_w) \leq C\Lambda^{-n}$ for each $n\in\N$ and each $X_w^n\in\X_w^n$. Then by the uniform continuity of $\varphi$ and (\ref{eqQnPhi}), we have that $Q^n\varphi(x)$ converges uniformly to $\int \! \varphi \, \mathrm{d} \mu_f$  over $X_w^0$ as $n \longrightarrow+\infty$. Similarly, we have that the convergence in (\ref{eqQConvPtw}) is uniform over the black $0$-tile $X_b^0$. Hence, the convergence in (\ref{eqQConvPtw}) is uniform over $S^2$. The claim is proved.

\smallskip

Suppose that $\mu\in\PPP(S^2)$ satisfies $Q^*\mu = \mu$. Then for each $\varphi\in\CCC(S^2)$, by the claim above, we have
\begin{align*}
  \int \! \varphi(x) \, \mathrm{d} \mu(x) = & \lim\limits_{n\to+\infty}   \int \! \varphi(x) \, \mathrm{d} (Q^*)^n\mu(x) \\
   = & \lim\limits_{n\to+\infty}   \int \! Q^n\varphi(x) \, \mathrm{d} \mu(x)\\
   = & \int \! \varphi(x) \, \mathrm{d} \mu_f(x).
\end{align*}
Hence $\mu=\mu_f$.
\end{proof}

As a special case of \cite[Theorem~3.4.11]{HP09}, the next corollary follows immediately from the uniform convergence in (\ref{eqQConvPtw}):

\begin{cor}   \label{corConvPullbackMeasure}
Let $f\:S^2\rightarrow S^2$ be an expanding Thurston map with its measure of maximal entropy $\mu_f$. Then for each Borel probability measure $\mu$ on $S^2$, we have
\begin{equation}   \label{eqConvPullbackMeasure}
\(Q^*\)^n \mu \stackrel{w^*}{\longrightarrow} \mu_f \text{ as } n\longrightarrow +\infty,
\end{equation}
where $Q^*\:\PPP(S^2)\rightarrow\PPP(S^2)$ is the adjoint operator of $Q\:\CCC(S^2)\rightarrow\CCC(S^2)$ defined in (\ref{eqQ}).
\end{cor}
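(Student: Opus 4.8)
The plan is to prove Corollary~\ref{corConvPullbackMeasure} by dualizing: one tests the measures $(Q^*)^n\mu$ against an arbitrary continuous function and transfers the iterated operator onto the function side, where the uniform estimate already obtained in the proof of Lemma~\ref{lmQInvMeasure} does all the work. Since $S^2$ is compact, it suffices to show that $\int\!\varphi\,\mathrm{d}\big((Q^*)^n\mu\big)\to\int\!\varphi\,\mathrm{d}\mu_f$ as $n\to+\infty$ for each $\varphi\in\CCC(S^2)$, because weak$^*$ convergence of Borel probability measures on a compact metric space is tested by continuous functions.

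First I would apply the defining property of the adjoint operator $n$ times to write, for each $n\in\N$ and each $\varphi\in\CCC(S^2)$,
\begin{equation*}
\int\!\varphi\,\mathrm{d}\big((Q^*)^n\mu\big)=\int\! Q^n\varphi\,\mathrm{d}\mu,
\end{equation*}
where $Q^n\varphi$ is given explicitly by (\ref{eqQnPhi}). Then I would invoke the key input established in the proof of Lemma~\ref{lmQInvMeasure}: the convergence $Q^n\varphi(x)-\int\!\varphi\,\mathrm{d}\mu_f\to0$ in (\ref{eqQConvPtw}) is \emph{uniform} in $x\in S^2$ (there this uniformity was deduced from Lemma~\ref{lmBMCellSizeBounds} on the diameters of tiles in a visual metric, the uniform continuity of $\varphi$, and Theorem~\ref{thmWeakConvPreImg}). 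Integrating this uniform bound against the probability measure $\mu$ gives
\begin{equation*}
\Abs{\int\!\varphi\,\mathrm{d}\big((Q^*)^n\mu\big)-\int\!\varphi\,\mathrm{d}\mu_f}\leq\sup_{x\in S^2}\Abs{Q^n\varphi(x)-\int\!\varphi\,\mathrm{d}\mu_f},
\end{equation*}
and the right-hand side tends to $0$ as $n\to+\infty$. Since $\varphi\in\CCC(S^2)$ was arbitrary, (\ref{eqConvPullbackMeasure}) follows.

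I do not expect any genuine obstacle: all the substance lies in the uniform convergence (\ref{eqQConvPtw}), which is already available from the proof of Lemma~\ref{lmQInvMeasure}, so what remains is only the one-line estimate above together with the standard duality description of weak$^*$ convergence. (Alternatively, the statement is a special case of \cite[Theorem~3.4.11]{HP09}; the argument sketched here is self-contained and shorter.)
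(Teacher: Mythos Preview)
Your proof is correct and is essentially identical to the paper's own argument: both use the duality $\int\varphi\,\mathrm{d}((Q^*)^n\mu)=\int Q^n\varphi\,\mathrm{d}\mu$ and then invoke the uniform convergence of $Q^n\varphi$ to $\int\varphi\,\mathrm{d}\mu_f$ established in the proof of Lemma~\ref{lmQInvMeasure} to conclude. The paper's proof is even slightly terser than yours, but the content is the same.
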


\begin{proof}
By the claim proved in the proof of Lemma~\ref{lmQInvMeasure}, the convergence in (\ref{eqQConvPtw}) is uniform in $x$ for each $\varphi\in\CCC(S^2)$. Thus by integrating (\ref{eqQConvPtw}) over $S^2$ with respect to $\mu$, we get
\begin{equation*} 
\int \! Q^n\varphi \, \mathrm{d} \mu  \longrightarrow \ \int \! \varphi \, \mathrm{d} \mu_f  \text{ as } n\longrightarrow +\infty,
\end{equation*}
from which (\ref{eqConvPullbackMeasure}) follows.
\end{proof}

\begin{rem}
The operator $Q$ as defined in (\ref{eqQ}) is actually the Ruelle operator for an expanding Thurston map, in the special case when the potential is identically $0$. For some background of the Ruelle operator and the thermodynamical formalism, see, for example, \cite{Ru89, PU10}, and in the context of expanding Thurston maps, see \cite{Li13}. More generally, we prove in \cite{Li13} that for each expanding Thurston map and each H\"{o}lder continuous (with respect to any visual metric) potential $\phi$, there exists a unique equilibrium state, which is exact, and in particular, mixing. As a generalization of the measure of maximal entropy, an equilibrium state is an invariant probability measure that maximizes the pressure, which in turn is a generalization of the topological entropy. Moreover, we prove in \cite{Li13} that the equilibrium state is the unique probability measure invariant under the adjoint of the Ruelle operator $\mathcal{R}_{\widetilde\phi}$ corresponding to the H\"{o}lder continuous potential $\widetilde\phi$ determined by $\phi$. In the case when $\phi=0$, we have $\widetilde\phi=0$. Thus Lemma~\ref{lmQInvMeasure} follows from the more general result in \cite{Li13}. The direct proof of Lemma~\ref{lmQInvMeasure} we included above is much simpler though.
\end{rem}


\begin{thebibliography}{99}



\bibitem[Bak85]{Bak85}
{\sc Baker,~A.},
{\it A concise introduction to the theory of numbers},
Cambridge Univ.\ Press, Cambridge, 1985.


\bibitem[Bar88]{Bar88}
{\sc Barnsley,~M.},
{\it Fractals everywhere},
Academic Press Professional, San Diego, 1988.

\bibitem[BD11]{BD11}
{\sc Baker,~M.} and {\sc DeMarco,~L.},
Preperiodic points and unlikely intersections.
\textit{ Duke Math.\ J.} 159 (2011), 1--29.


\bibitem[Bi99]{Bi99}
{\sc Billingsley,~P.},
{\it Convergence of probability measures},
John Wiley \& Sons, New York, 1999.

\bibitem[BK98]{BK98}
{\sc Bruin,~H.} and {\sc Keller,~G.},
Equilibrium states for $S$-unimodal maps.
\textit{Ergod.\ Th.\ \& Dynam.\ Sys.} 18 (1998), 765--789.


\bibitem[BM10]{BM10} 
{\sc Bonk,~M.} and {\sc Meyer,~D.},
Expanding Thurston maps. Preprint, (arXiv:1009.3647), 2010.





\bibitem[Bo71]{Bo71}
{\sc Bowen,~R.},
Periodic points and measures for Axiom A diffeomorphisms. 
\textit{Trans.\ Amer.\ Math.\ Soc.} 154 (1971), 377--397.


\bibitem[Bo72]{Bo72}
{\sc Bowen,~R.},
Periodic orbits for hyperbolic flows.
\textit{Amer.\ J.\ Math.} 94 (1972), 1--30.


\bibitem[Bo75]{Bo75}
{\sc Bowen,~R.},
{\it Equilibrium states and the ergodic theory of Anosov diffeomorphisms}, volume 470 of {\it Lect.\ Notes in Math.}, Springer, Berlin, 1975.         
         
   
\bibitem[Br10]{Br10}
{\sc Bracci,~F.},
Local holomorphic dynamics of diffeomorphisms in dimension one.
In {\sc Contreras,~M.~D.} and {\sc D\'{i}az-Madrigal,~S.} (Eds.), \textit{Five lectures in complex analysis} (pp.\ 1--42). 
Amer.\ Math.\ Soc., Providence, RI, 2010.



\bibitem[Bu79]{Bu79}
{\sc Burckel,~R.},
{\it An introduction to classical complex analysis}, vol.\ 1,
Academic Press, New York, 1979.



\bibitem[BS03]{BS03}
{\sc Buzzi,~J.} and {\sc Sarig,~O.},
Uniqueness of equilibrium measures for countable Markov shifts and multidimensional piecewise expanding maps.
\textit{Ergod.\ Th.\ \& Dynam.\ Sys.} 23 (2003), 1383--1400.


\bibitem[Bu99]{Bu99}
{\sc Buzzi,~J.},
Markov extensions for multi-dimensional dynamical systems.
\textit{Israel J.\ Math. } 112 (1999), 357--380.



\bibitem[CFP01]{CFP01}
{\sc Cannon,~J.W.}, {\sc Floyd,~W.J.} and {\sc Parry,~R.},
Finite subdivision rules.
\textit{Conform.\ Geom.\ Dyn.} 5 (2001), 153--196.


\bibitem[CFPP09]{CFPP09}
{\sc Cannon,~J.}, {\sc Floyd,~W.}, {\sc Parry,~R.} and {\sc Pilgrim,~K.},
Subdivision rules and virtual endomorphisms.
\textit{Geom.\ Dedicata} 141 (2009), 181--195.



\bibitem[CG93]{CG93}
{\sc Carleson,~L.} and {\sc Gamelin,~T.H.},
{\it Complex dynamics},
Springer, New York, 1993.



\bibitem[DU92]{DU92}
{\sc Denker,~M.} and {\sc Urba\'nski,~M.},
The dichotomy of Hausdorff measures and equilibrium states for parabolic rational maps.
In {\it Ergodic theory and related topics III}, volume 1514 of {\it Lecture Notes in Math.}, pp.\ 90--113, Springer, Berlin, 1992. 


\bibitem[DH93]{DH93}
{\sc Douady,~A.} and {\sc Hubbard,~J.H.},
A proof of Thurston's topological characterization of rational functions.  
\textit{Acta Math.} 171 (1993), 263--297. 


\bibitem[Du10]{Du10}
{\sc Durrett,~R.},
{\it Probability: theory and examples},
Cambridge Univ.\ Press, Cambridge, 2010.




\bibitem[Fo99]{Fo99}
{\sc Folland,~G.B.},
{\it Real analysis: modern techniques and their applications}, 2nd ed.,
Wiley, New York, 1999.


\bibitem[FK83]{FK83}
{\sc Furstenberg,~H.} and {\sc Kifer,~Y.},
Random matrix products and measures on projective spaces.
\textit{Israel J.\ Math.} 46 (1983), 12--32.

               
            
\bibitem[FLM83]{FLM83}
{\sc Freire,~A., Lopes,~A.} and {\sc Ma\~{n}\'{e},~R.},
An invariant measure for rational maps.
\textit{Bol.\ Bras.\ Mat.\ Soc.} 14 (1983), 45--62.
             


\bibitem[GW10]{GW10}
{\sc Gelfert,~K.} and  {\sc Wolf,~C.},
On the distribution of periodic orbits.
\textit{Discrete Contin.\ Dyn.\ Syst.} 26 (2010), 949--966.


\bibitem[GP10]{GP10}
{\sc Guillemin,~V.} and  {\sc Pollack,~A.},
{\it Differential topology},
Amer.\ Math.\ Soc., Providence, RI, 2010.


\bibitem[HP09]{HP09}
{\sc Ha\"{\i}ssinsky,~P.} and  {\sc Pilgrim,~K.},
Coarse expanding conformal dynamics.
\textit{Ast\'{e}risque} 325 (2009).

\bibitem[HP11]{HP11}
{\sc Ha\"{\i}ssinsky,~P.} and  {\sc Pilgrim,~K.},
Finite type coarse expanding conformal dynamics.
\textit{Groups Geom.\ Dyn.} 5 (2011), 603--661.

\bibitem[HP12]{HP12}
{\sc Ha\"{\i}ssinsky,~P.} and  {\sc Pilgrim,~K.},
An algebraic characterization of expanding Thurston maps.
\textit{J.\ Mod.\ Dyn.} 6 (2012), 451--476.


\bibitem[Ha02]{Ha02}
{\sc Hatcher,~A.},
{\it Algebraic topology},
Cambridge Univ.\ Press, Cambridge, 2002.


\bibitem[HT03]{HT03}
{\sc Hawkins,~J.} and  {\sc Taylor,~M.},
Maximal entropy measure for rational maps and a random iteration algorithm for Julia sets.
\textit{Intl.\ J.\ of Bifurcation and Chaos} 13 (6) (2003), 1442--1447.



\bibitem[Hu06]{Hu06}
{\sc Hubbard,~J.H.},
{\it Teichm\"uller theory and applications to geometry, topology, and dynamics}, Vol.\ 1,
Matrix Editions, Ithaca, 2006.



\bibitem[KH95]{KH95}
{\sc Katok,~A.} and  {\sc Hasselblatt,~B.},
{\it Introduction to the modern theory of dynamical systems},
Cambridge Univ.\ Press, Cambridge, 1995.

\bibitem[KPS09]{KPS09}
{\sc Koch,~S.}, {\sc Pilgrim,~K.} and {\sc Selinger,~N.},
Pullback invariants of Thurston maps.
In {\it Complex dynamics, families and friends}, 561--583, AK Peters, Wellesley, 2009. 


\bibitem[Li13]{Li13} 
{\sc Li,~Z.},
Equilibrium states for expanding Thurston maps. In preparation.


\bibitem[Lo77]{Lo77}
{\sc Lo\`eve,~M.},
{\it Probability theory}, vol.\ 1, 4th ed.,
Springer, New York, 1977.



\bibitem[Ly83]{Ly83}
{\sc Lyubich,~M.},
Entropy properties of rational endomorphisms of the Riemann sphere.
\textit{Ergod.\ Th.\ \& Dynam.\ Sys.} 3 (1983), 351--385.

\bibitem[Ma83]{Ma83}
{\sc Ma\~n\'e,~R.},
On the uniqueness of the maximising measure for rational maps.
\textit{Bol.\ Bras.\ Mat.\ Soc.} 14 (1983), 27--43.



\bibitem[Mi06]{Mi06}
{\sc Milnor,~J.},
{\it Dynamics in one complex variable}, 3rd ed., 
Princeton Univ.\  Press, Princeton, 2006.



\bibitem[Mo77]{Mo77}
{\sc Moise,~E.},
{\it Geometric topology in dimensions $2$ and $3$}, 
Springer, New York, 1977.



\bibitem[Ol03]{Ol03}
{\sc Oliveira,~K.},
Equilibrium states for non-uniformly expanding maps.
\textit{Ergod.\ Th.\ Dynam.\ Sys.} 23 (2003), 1891--1905.


\bibitem[Ol12]{Ol12}
{\sc Oliveira,~K.},
Every expanding measure has the nonuniform specification property.
\textit{Proc.\ Amer.\ Math.\ Soc.} 140 (2012), 1309--1320.




\bibitem[OV06]{OV06}
{\sc Oliveira,~K.} and  {\sc Viana,~M.}, 
Existence and uniqueness of maximizing measures for robust classes of local diffeomorphisms.
\textit{Discrete Contin.\ Dyn.\ Syst.} 15 (2006), 225--236.




\bibitem[Pa64]{Pa64}
{\sc Parry,~W.},
On intrinsic Markov chains.
\textit{Trans.\ Amer.\ Math.\ Soc.} 112 (1964), 55--66.


\bibitem[PU10]{PU10}
{\sc Przytycki,~F.} and {\sc Urba\'{n}ski,~M.},
{\it Conformal fractals: ergodic theory methods},
Cambridge Univ.\ Press, Cambridge, 2010.





\bibitem[Ru89]{Ru89}
{\sc Ruelle,~D.},
The thermodynamical formalism for expanding maps.
\textit{Comm.\ Math.\ Phys.} 125 (1989), 239--262.

\bibitem[Sa03]{Sa03}
{\sc Sarig,~O.},
Existence of Gibbs measures for countable Markov shifts.
\textit{Proc.\ Amer.\ Math.\ Soc.} 131 (2003), 1751--1758.

\bibitem[Si72]{Si72}
{\sc Sinai,~Ya.},
Gibbs measures in ergodic theory.
\textit{Russian Math.\ Surveys} 27 (1972), 21--69.


\bibitem[Ur98]{Ur98}
{\sc Urba\'nski,~M.},
Hausdorff measures versus equilibrium states of conformal infinite iterated function systems.
\textit{Period.\ Math.\ Hungar.} 37 (1998), 153--205. 


\bibitem[Wa82]{Wa82}
{\sc Walters,~P.},
{\it An introduction to ergodic theory},
Springer, New York, 1982.


\bibitem[Wi07]{Wi07}
{\sc Wildrick,~K.M.},
Quasisymmetric parameterizations of two-dimensional metric spaces. PhD thesis, University of Michigan, 2007.


\bibitem[Yi11a]{Yi11a} 
{\sc Yin,~Q.},
Latt\`es maps and combinatorial expansion. Preprint, (arXiv:1109.2664), 2011.



\bibitem[Yi11b]{Yi11b} 
{\sc Yin,~Q.},
Thurston maps and asymptotic upper curvature. Preprint, (arXiv:1109.2980), 2011.



\bibitem[Yu99]{Yu99}
{\sc Yuri,~M.},
Thermodynamic formalism for certain nonhyperbolic maps.
\textit{Ergod.\ Th.\ \& Dynam.\ Sys.} 19 (1999), 1365--1378.


\bibitem[Yu00]{Yu00}
{\sc Yuri,~M.},
Weak Gibbs measures for certain non-hyperbolic systems.
\textit{Ergod.\ Th.\ \& Dynam.\ Sys.} 20 (2000), 1495--1518.


\bibitem[Yu03]{Yu03}
{\sc Yuri,~M.},
Thermodynamic formalism for countable to one markov systems.
\textit{Trans.\ Amer.\ Math.\ Soc.} 335 (2003), 2949--2971.







\end{thebibliography}
\end{document}